\newcommand{\overbar}[1]{\mkern 1.5mu\overline{\mkern-1.5mu#1\mkern-1.5mu}\mkern 1.5mu}
\newcommand{\void}{\emptyset}
\newcommand{\nind}{\noindent}
\newcommand{\righttail}{\mathrel{\tikz [x=1.4ex,y=1.4ex,line width=.14ex, baseline] {\draw (0,0.5) -- (1,0.5); \draw [<-] (0.95,0.5) -- (1,0.5);}}}%
\newcommand{\leftrighttail}{\mathrel{\tikz [x=1.4ex,y=1.4ex,line width=.14ex, baseline] {\draw (0,0.5) -- (1,0.5); \draw [<-] (0.05,0.5) -- (0,0.5); \draw [<-] (0.95,0.5) -- (1,0.5);}}}%
\newtheoremstyle{thmstyle}{6mm}{3mm}{\itshape}{}{\bfseries}{}{\newline}{}
\newtheoremstyle{defstyle}{6mm}{3mm}{\normalfont}{}{\bfseries}{}{\newline}{}
\theoremstyle{thmstyle}
\newtheorem{thm}{Theorem}[chapter]
\newtheorem{cor}[thm]{Corollary}
\newtheorem{lem}[thm]{Lemma}
\theoremstyle{defstyle}
\newtheorem{defn}{Definition}[chapter]
\newtheorem{rem}{Remark}[chapter]
\newtheorem{ax}{Axiom}
\title{Constructive Set Theory from a Weak Tarski Universe}
\author{Cesare Gallozzi}
\begin{document}
\begin{titlepage}
\begin{figure}[h]
\begin{center}
\includegraphics[width=.1\textwidth]{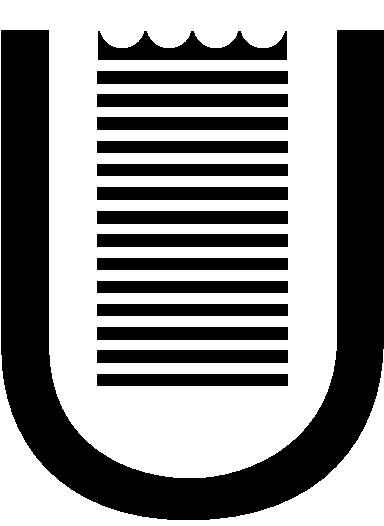}

\end{center}
\end{figure}

    \begin{center}
        {\Large \textsc{Università degli Studi di Roma Tor Vergata}}\\
        \vspace{1 em}
        {\Large \textsc{Facoltà di Scienze Matematiche Fisiche e Naturali}}\\
        \vspace{1 em}
        {\Large \textsc{Corso di laurea magistrale in Matematica Pura ed Applicata}}\\
        \vspace{4em}
        {\Huge \textbf{Constructive Set Theory from a Weak Tarski Universe}}\\
       \vspace{4em}
{\Large \textbf{25th September 2014}}\\
       \vspace{4em}
        {\Large \textsc{Academic Year 2013/2014}}\\
        \vspace{4 em}      
    \end{center}
\vspace{6cm}    
%\begin{flushleft}
{\Large \textbf{Advisors:} }\hspace{8.8cm}{\Large \textbf{Candidate:}}\\
 {\large Prof. Anna Barbara Veit}\hspace{6.75cm}{\large Cesare Gallozzi}\\ 
 {\large Prof. Ieke Moerdijk}\hspace{7.75cm}{\large }
% \end{flushleft}
 \newpage
\end{titlepage}
%%%%%%%%%%%%%%%%%%%%%%%%%%%%%%%%%%%%%%%%%%%%%%%%%%%%%%%%%%%%%%
%%%%%%%%%%%%%%%%%%%%%%%%%%%%%%%%%%%%%%%%%%%%%%%%%%%%%%%%%%%%%%
%-----------------------------------------------------------

\begin{abstract}\centering
The aim of this thesis is to give a concise introduction to homotopy type theory, to Aczel's constructive set theory and to simplicial sets and their homotopy theory in particular referring to their standard model structure, showing some of their interactions.\\
The original part of this thesis consists in the final chapter where we introduce in the type theoretic context a definition of weak Tarski universe motivated by categorical models like the one given by simplicial sets. The weakening of this notion, although present in some imprecise form in mathematical folklore was not published before, at the best of our knowledge. Moreover, we show using the axiom of function extensionality that the type theoretic interpretation of constructive set theory generalises to homotopy type theory with a weak Tarski universe.
\end{abstract}

\tableofcontents

\chapter*{Introduction}

\lettrine{F}{oundations} of mathematics are usually settled in one of these three context: set theory, type theory or category theory.\\
On the categorical side there are two quite different approaches: one focuses on the category of all categories whereas the second relies on the notion of topos. Topoi are well-behaved categories modelled on the key examples of the category of sets and the categories of sheaves over a topological space, they can be conceived as categories of continuously variable sets.

Type theory is certainly the less known of these three outside the circle of logicians, for this reason we briefly sketch some of its features.\\
Type theories were introduced the first time by Russell and Whitehead in the celebrated \textit{Principia Mathematica} as a way to overtake the paradoxes. Self-reference is avoided imposing a stratification: atoms can be only elements of sets, sets can be only elements of "sets of sets", and so on; the type of a term, being the level of the stratification in which it takes place. Contemporary type theories have some differences but they share the characterising tract to require every element to have a fixed type. Moreover, types can be conceived as intensional sets.\\
One of the most important features of type theory is the so-called \textit{curry-Howard isomorphism} (although it is not an isomorphism), sometimes referred as the \textit{propositions-as-types} paradigm. The underlying idea is that propositions are identified with the type of their proofs, conversely a term in a type is sometimes called a proof of this type. This symmetry extends to the logical connectives $\land$, $\lor$, $\Rightarrow$ which correspond respectively to the type-theoretic product, the sum and the function type. Moreover, normalisation theorems for proofs correspond to normalisation theorems for terms.\\
The kind of theory of our interest is Martin-L\"of type theory with intensional identity types, it is an intuitionistic type theory that formalizes dependencies i.e. families of types parametrised by a "base type". Some constructors are available: products, sums, well-founded trees, which produce a type from a family of types. Moreover, types for the natural numbers and for finite sets are available and usually also a universe type is added. We will focus on Tarski universes that are universes of names for small types, together with a constructor function which builds the small types from the names. One of the motivations for the development of Martin-L\"of type theory was to extend the propositions-as-types correspondence to intuitionistic predicate logic, so that dependent products and dependent sums correspond respectively to universal and existential quantification. Finally, the most distinctive feature of Martin-L\"of type theory is its rich treatment of equality; indeed, two kinds of equality are available, a \textit{definitional equality}, that is a syntactical equality between terms, and a \textit{propositional equality} that is the type corresponding to an equality in first order logic under the Curry-Howard isomorphism. Moreover, the version of Martin-L\"of type theory that will be discussed in this thesis has intensional identity types, where two terms can be equal in (intensionally) different ways accordingly to different proofs of their equality. One of the best successes of homotopy type theory is to give an homotopical interpretation of identity types.

Now we return to the three ways to approach foundations; we will focus only on topoi on the categorical side (for the reader interested in the formalisation of the category of all categories we refer to \cite{McL}), so that we have a correspondence between topos theory, a kind of intuitionistic set theory and on the type-theoretic side, intuitionistic higher order logic, as in the following diagram:

$$\xymatrix@1{
& & \mbox{Sets} \ar@{<->}[ld] \ar@{<->}[rd]\\
& \mbox{Topoi} \ar@{<->}[rr]
&
& \mbox{Types}
}$$

\nind
For example, starting from set theory we can build the category of sets which is a topos. Moreover, every topos has an internal language which allows to interpret type theory inside it. For more details about this tripos see \cite{Awo1}.\\
The content of this thesis can be seen as a path in a similar tripos that we shall sketch: the constituents are simplicial sets, homotopy type theory with a weak Tarski universe and Aczel's constructive set theory (or constructive Zermelo-Fraenkel, CZF for short).

A remarkable example of topos is the one given by the category of simplicial sets. The idea behind singular homology is to describe the topology of a space using the algebraic structure of the singular simplices inside it. Simplicial sets are an abstract combinatorial generalisation of Euclidean simplicial complexes in which for each dimension a set of n-faces is provided, together with degeneracy and face maps that describe the structure of the complex: which faces are glued and which vertices are collapsed. This sequence of sets can be organised in a functor describing simplicial sets as a presheaf category.\\
The simplicial set given by two vertices linked by a 1-face is the simplicial interval, which allows to define homotopies between simplicial maps, and to develop a theory of homotopy for simplicial sets. A careful analysis of the common features shared by topological and simplicial homotopy theory yields to the definition of model structure on a category that is an abstract playground for homotopy even in absence of an interval. A model structure consists in three distinguished classes of maps modelled on (topological) weak equivalences, Serre fibrations and cofibration. We will see that identity types of Martin-L\"of type theory, can be interpreted categorically using model structures.

The history of the interplay between type theory and homotopy begun with the groupoid interpretation of Martin-L\"of type theory by Hofmann and Streicher \cite{HS} in which identity types are interpreted as the $\mathit{Hom}$-sets of isomorphisms between two objects.\\
The basic ideas about homotopy type theory were developed in independent work by Awodey and Warren \cite{AW} and by Voevodsky \cite{Voe} around 2006. 
Homotopy type theory can be synthetically described as Martin-L\"of type theory with some further axioms and rules added, namely, the univalence axiom, some axioms needed to describe identity types of dependent products and possibly higher inductive types. The expression "homotopy type theory", should be thought as a family of theories, as we usually refer to set theory. However, in order to simplify the exposition we will omit higher inductive types, which are also excluded from the presentation of the core theory given in \cite{KLV}.\\
The most important aspect of homotopy type theory is the \textit{univalence axiom}\index{axiom!univalence}; in presence of a universe it can be stated as follows: under the Curry-Howard isomorphism we can form the types of identity and equivalence of any two small types, identical types are also equivalent so that there is a canonical map from the identity type to the equivalence one. The univalence axiom states that this map is itself an equivalence. It allows to obtain a proof of identity from a proof of equivalence, identifying equivalent objects, as it is done everyday in mathematical practice.\\
More conceptually, the univalence axiom can be restated as a rule expressing the principle of \textit{indiscernibility of equivalents}.\\
This thesis will not deal with higher categories (categories with objects, morphisms, morphisms between morphisms and so on where the associativity and unit conditions hold up to higher isomorphisms), but we need to mention these structures in order to properly contextualise it, providing motivations for the study of homotopy type theory. In fact model categories present a specific kind of $\infty$-categories in which all $k$-morphisms for $k>1$ are invertible, called $(\infty , 1)$-categories. The generalisations of the notion of topos to the higher setting is still an open field of study, but it is conjectured that a good notion of $(\infty , 1)$-topos should have some kind of homotopy type theory as internal language (see \cite{Awo2} and \cite{Joy}).\\
Hopefully, homotopy type theory can give some contribution to the long-standing open problem of the calculation of homotopy groups of sphere, in fact some classical calculations were already performed synthetically in homotopy type theory.

Before talking about CZF we need to briefly introduce predicativism: it is a flavour of constructivism, that does not allow the kind of circular arguments that provide the existence of an element quantifying over a totality to which this element belongs. A typical example is the definition of supremum as the least upper bound of a set of real numbers.\\
CZF is a predicative version of ZF where the underlying logic is intuitionistic. The axioms of ZF incompatible with predicative principles that are separation, powerset and foundation, are weakened. As a compensation, some other axioms are strengthened in order to make them working predicatively.\\
In the series of three articles \cite{Acz1} \cite{Acz2} and \cite{Acz3} Aczel developed the type-theoretic interpretation of constructive set theory building a model of CZF from Martin-L\"of type theory with extensional identity types as a justification of CZF from the already established type-theoretic setting, and used it to justify some further axioms like choice principles and axioms for predicative inductive definitions (namely the \textit{regular extension axiom}\index{axiom!regular extension}).

We can finally see how the work of this thesis fits in the tripos: we take motivations from the model category of simplicial sets in order to justify the introduction of weak Tarski universes and then construct a model of CZF with dependent choices and the regular extension axiom from homotopy type theory with a weak universe.

$$\xymatrix@1{
& & \mbox{CZF}  \ar@{<-}[rd]\\
& \mbox{SSet} \ar[rr]
&
& \mbox{HoTT + weak universe}
}$$

\section*{Organisation}

The first two chapters are devoted to algebraic topology, the former will deal with the basics of simplicial sets, presented without the language of model categories which are introduced in the second chapter, the definitions being motivated by the theory developed in first one.\\
The third chapter provides an introduction to Martin-L\"of type theory and to homotopy type theory. 
\\
The fourth chapter, following the article \cite{KLV}, sketches the proof of the interpretation of type theory in the model category of simplicial sets, in particular of the univalence axiom. We will refer to the article for the many technical proofs involved, especially for the coherence issues needed to interpret the type-theoretic substitution.\\
The fifth chapter presents CZF and develops some basic constructions needed in the sequel.\\
Finally the last chapter contains the proof of the generalised type-theoretic interpretation. In this chapter, proofs are given in detail.\\
A recall of the basics of locally cartesian closed categories is confined in the appendix.\\
In the chapters regarding simplicial sets (namely, the first two and the fourth) we will use freely the excluded middle, the axiom of choice and impredicative definitions. In the fourth chapter when dealing with the interpretation of type theory in simplicial sets we will assume the existence of two inaccessible cardinals. 

We assume the reader familiar with category theory, in particular the language of limits, colimits and adjunctions, with the basics of algebraic topology namely singular homology, higher homotopy groups of spaces, the long exact sequence of a fibration and the like. On the logical side we assume familiarity with intuitionism and constructivism, in particular some knowledge about natural deduction for intuitionistic logic; for set theory just little familiarity with Zermelo-Fraenkel set theory is needed. We have tried to keep prerequisites to a minimum.\\
For the reader in need to fill some gaps we suggest \cite{Mac} for category theory, and \cite{Hat}, \cite{Spa}, \cite{May1} for algebraic topology, these three references has increasing categorical flavour.\\
Finally, we mention \cite{vPla} for the logical prerequisites (for its high readability and conceptual insights \cite{Abr} is warmly suggested to the Italian reader).

\section*{Acknowledgements}
I am very grateful to Prof. Ieke Moerdijk for the time he has spent following my work and for the idea he gave me to study homotopy type theory with a weakening of the notion of universe.\\
I am in debt with Prof. Anna Barbara Veit for her wise suggestions and all the support and guidance during these years starting from my bachelor thesis.\\
I also thank Giovanni Caviglia and Urs Schreiber for helpful discussions, Benno van den Berg for clearing my doubts about categorical interpretations of type theory, Paolo Salvatore for useful corrections and Mike Shulman for suggesting me the right definition of weak Tarski universe.\\
The stay at Radboud Universiteit Nijmegen was partially funded by the international thesis scholarship granted by the University of Rome 2 "Tor Vergata".

%%%%%%%%%%%%%%%%%%%%%%%%%%%%%%%%%%%%%%%%%%%%%%%%%%%%%%%%%%%%%%
%%%%%%%%%%%%%%%%%%%%%%%%%%%%%%%%%%%%%%%%%%%%%%%%%%%%%%%%%%%%%%
%-----------------------------------------------------------
\chapter{Simplicial Sets}

\lettrine{A} \; simplicial set is an abstract generalisation of geometrical simplices which are a standard tool in basic algebraic topology. Simplicial sets turn out to be combinatorial models of nice topological spaces, encoding their homotopic structure and allowing explicit calculations.\\
The theory of simplicial sets also provides an abstract playground for homotopy theory and allows to detect some common features with the classical theory of homotopy for topological spaces. A convenient axiomatisation yields to the definition of model categories, which are the usual setting for abstract homotopy theory.\\
In the first two chapters we will use the axiom of choice, its main consequence is the existence theorem for minimal fibrations.\\
The main source for the first two chapters is \cite{Hov} integrated with parts from \cite{GZ} and from these notes on simplicial homotopy theory \cite{JT}.

\section*{Definition of Simplicial Set}

We now start with some basic definitions; the abstract definition of simplicial sets can obscure at first the geometrical intuition of these objects.\\
The intuition behind the formalism is that a simplicial set is a graded set whose elements of degree $n$ are $n$-simplices that can be glued along common boundaries. Simplices of arbitrary dimension are allowed as well as degenerated simplices obtained collapsing some vertices.\\
Recall that the singular complex of a topological space is defined as $S(X)_n := \{ f: \Delta_n \to X \}$, where $\Delta_n$ is the standard Euclidean $n$-simplex in $\mathbb R^n$, with face and degeneracy maps satisfying the simplicial identities. \\
We want to express concisely this idea and have a similar description of a simplicial set in terms of maps from a "standard simplex". Recall that the statement of Yoneda lemma have this structure: $\mathit{Nat}(\mathit{Hom}(r,-),K) \cong Kr$, hence we may choose to define simplicial sets as functors from an appropriate base category, morphisms as natural transformations and a "standard $n$-simplex" as an $\mathit{Hom}$-functor. So we start defining the base category,:

\begin{defn}
\begin{enumerate}[label=(\alph*)]
\item[]

\item The \textbf{simplicial category} \index{simplicial!category}\index{category!simplicial}, written $\Delta$, is the the category of finite ordinals and nondecreasing maps. We will denote its objects as $[n]$.

\item We will write $\partial_n^i : [n-1] \to [n]$ for the injective map which omits the value $i$.
\item We will write $\sigma_n^i : [n+1] \to [n]$ for the  surjective map which takes twice the value $i$.
\end{enumerate}
\end{defn}

\begin{rem}\label{decomposizione}
\begin{enumerate}
\item[]

\item It is easy to check that these maps satisfy the so called \textit{cosimplicial identities} \index{cosimplicial!identities} $\partial^j \partial^i = \partial^i \partial^{j-1}$ if $i<j$\\
$\sigma^j \sigma^i = \sigma^i \sigma^{j+1}$ if $i \leq j$\\
$\sigma^j \partial^i = \begin{cases}
\partial^i \sigma^{j-1}\quad \mbox{ if }i<j\\
1_{[n-1]} \quad \; \mbox{ if }i=j \; \mbox{ or } i=j+1\\
\partial^{i-1} \sigma^{j} \quad \mbox{ if } i> j+1\\
\end{cases}$

\item It is straightforward to check that every nondecreasing map $\mu : [m] \to [n]$ can be written in a unique way as $$\mu = \partial^{i_s} \partial^{i_{s-1}} \dots \partial^{i_1}\sigma^{j_t}\sigma^{j_{t-1}} \dots \sigma^{j_1}$$ with $n \geq i_s \geq \dots \geq i_1 \geq 0$,  $0 \leq j_1 \leq \dots \leq j_t$ and $n=m-t+s$.

\item By the previous remarks $\Delta$ can be identified with the category generated by the objects $[n]$, the arrows $\partial, \sigma$ and the cosimplicial identities.

\item Note that the epimorphisms are exactly the surjections, and that the monomorphisms are exactly the injections. Hence, every epi is a split epi and every monic is a split monic.
\end{enumerate}
\end{rem}

\begin{defn}
\begin{enumerate}[label=(\alph*)]
\item[]

\item A \textbf{simplicial set} \index{simplicial!set} is an object in the functor category $\mathit{Funct}(\Delta^{op}, \textbf{Set})$.\\
Similarly, a \textbf{simplicial object} \index{simplicial!object} in a category $\mathcal C$ is an object in the functor category $\mathit{Funct}(\Delta^{op}, \mathcal C)$.\\
A \textbf{cosimplicial object} \index{cosimplicial!object} is an object in the functor category $\mathit{Funct}(\Delta, \mathcal C)$.\\
A \textbf{simplicial map} \index{simplicial!map} is a natural transformation in the appropriate functor category. The category of simplicial sets is written as $\textbf{SSet}$\index{$\textbf{SSet}$}.

\item An $n$-\textbf{simplex}\index{$n$-simplex} is an element $x \in X_n = X[n]$.

\item The \textbf{face maps} \index{face maps} of a simplicial set $X$ are defined as $d_i := X(\partial^i)$.

\item The \textbf{degeneracy maps} \index{degeneracy maps} of a simplicial set $X$ are defined as $s_j := X(\sigma^j)$.

\item A \textbf{subsimplicial set}\index{simplicial!sub- set} $Y$ of a given simplicial set $X$ is a subfunctor of $X$, i.e. a functor such that for each $[n] \in \Delta$ we have $Y[n] \subseteq X[n]$, and for each map $\mu : [m] \to [n]$ that $Y(\mu)$ is a restriction of $X(\mu)$.
\end{enumerate}
\end{defn}

\nind
The face and degeneracy maps of a simplicial set satisfy the \textit{simplicial identities} \index{simplicial!identities} which are the dual of the previous stated identities.

A simplicial set can be equivalently described as a sequence of sets $\{ X_n \}_n$ with face and degeneracy maps satisfying the simplicial identities; these maps are the abstract data encoding the structure of the simplicial set, telling us which simplices are faces of the others, which simplices are glued together and so on.\\
Note that the category of simplicial sets is complete and cocomplete, with limits and colimits calculated pointwise, as all presheaf categories.\\

\vspace{10pt}

\begin{defn}
\begin{enumerate}[label=(\alph*)]
\item[]

\item Any element $v \in X_0$ is called a \textbf{vertex}\index{vertex of a simplicial set}.

\item Any image of a simplex $x \in X_n$ under a face map is called a \textbf{face}\index{simplex!face of}.

\item Similarly, any image of a simplex under a degeneracy map is called a \textbf{degeneracy}\index{simplex!degeneracy of}.

\item A \textbf{nondegenerate simplex}\index{simplex!nondegenerate} $x \in X_n$ is a simplex which is a degeneracy only of itself.\\
A \textbf{degenerate simplex}\index{simplex!degenerate} is a simplex which is not nondegenerate.

\item A simplicial set is \textbf{finite}\index{simplicial!finite simplicial set} iff it has only a finite number of nondegenerate simplices.

\end{enumerate}
\end{defn}

Now we provide some examples of common simplicial sets and give a description of simplices in term of maps from a standard simplex:

\begin{defn}
The \textbf{standard $n$-simplex} \index{standard $n$-simplex} is defined to be the complex $[p] \mapsto \Delta([p],[n])$ and it is written $\Delta[n]$\index{$\Delta[n]$}.
\end{defn}

\nind
Geometrically, the interior of the standard $n$-simplex is represented by the identity map $1 : [n] \to [n]$, the faces are represented by the face maps $\partial [n-1] \to [n]$, and nondegenerate simplices are represented by injective monotone maps. Therefore, the intuition of the standard simplex is just an the abstract combinatorial structure of the usual Euclidean standard simplex.\\

\begin{defn}
\begin{enumerate}[label=(\alph*)]
\item[]
\item The \textbf{boundary}\index{boundary of the standard $n$-simplex} of the standard $n$-simplex, written $\partial \Delta[n]$\index{$\partial \Delta[n]$}, has nondegenerate $r$-simplices the non-identity injective monotone maps $i: [r] \to [n]$.

\item Given a $k$ with $0 \leq k \leq n$ the $k$-\textbf{horn}\index{horn} $\Lambda^k[n]$\index{$\Lambda^k[n]$} has non-degenerate $r$-simplices all injective order-preserving maps $[r] \to [n]$ except the identity and the injective order-preserving maps $\partial^k: [n-1] \to [n]$ whose image does not contain $k$.

\item The \textbf{simplicial circle}\index{simplicial!circle} is the coequalizer of the pair of morphisms $\Delta(\partial^0), \Delta(\partial^1) : \Delta[0] \to \Delta[1]$.

\end{enumerate}
\end{defn}

\nind
The non-degenerate simplices of the boundary $\partial \Delta[n]$ are exactly the ones of the standard $n$-simplex except the interior represented by the identity.\\
Whereas the horn can be thought as obtained by the standard simplex omitting the interior and the face opposed to the $k$-th vertex.\\

\begin{defn}
A \textbf{singular simplex}\index{simplex!singular} of a simplicial set $X$ is a simplicial map $\Delta[n] \to X$.\\
The \textbf{category of simplices}\index{category!of simplices} of a given simplicial set $K$ is just the category of singular simplices and natural transformations between them. We write it as $\Delta K$.
\end{defn}

\nind
By the Yoneda lemma we have the isomorphism $\textbf{SSet}(\Delta[n], K) \cong K_n$; so simplices $x \in X$ correspond to singular simplices, accordingly to our initial example of the singular complex of a topological space.

\begin{lem}
Given a simplicial set $K$, it is the colimit of the functor $\Delta K \to \textbf{SSet}$ which takes the singular simplex $f: \Delta[n] \to K$ to $\Delta[n]$ itself.
\end{lem}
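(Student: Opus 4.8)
The plan is to prove this as an instance of the density theorem (the co-Yoneda lemma): every presheaf is canonically the colimit of the representables that map into it. Write $P\colon \Delta K \to \textbf{SSet}$ for the functor in the statement. On objects $P([n],x)=\Delta[n]$, where I identify a singular simplex $f\colon\Delta[n]\to K$ with the pair $([n],x)$ given by $x=f(\mathrm{id}_{[n]})\in K_n$ under the Yoneda isomorphism $\textbf{SSet}(\Delta[n],K)\cong K_n$. A morphism $([m],y)\to([n],x)$ in $\Delta K$ is a map $\mu\colon[m]\to[n]$ in $\Delta$ with $K(\mu)(x)=y$, and $P$ sends it to the induced map $\Delta(\mu)\colon\Delta[m]\to\Delta[n]$ between standard simplices.

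First I would exhibit the canonical cocone with vertex $K$. For each object $([n],x)$ let $\widehat{x}\colon\Delta[n]\to K$ be the singular simplex corresponding to $x$. Given a morphism $\mu\colon([m],y)\to([n],x)$, the triangle $\widehat{x}\circ\Delta(\mu)=\widehat{y}$ commutes: this is exactly the equation $K(\mu)(x)=y$ read through the naturality of the Yoneda correspondence in the index variable, since precomposing $\widehat{x}$ with $\Delta(\mu)$ corresponds to applying $K(\mu)$ to $x$. Hence the family $\{\widehat{x}\}$ is a cocone from $P$ to $K$.

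Next I would verify the universal property. Let $L$ be any simplicial set equipped with a cocone $\{\phi_{([n],x)}\colon\Delta[n]\to L\}$. Using Yoneda again, set $\psi_n(x):=\phi_{([n],x)}(\mathrm{id}_{[n]})\in L_n$, defining functions $\psi_n\colon K_n\to L_n$. That $\psi$ is a simplicial map follows from the cocone condition applied to the morphism $\mu\colon([m],K(\mu)(x))\to([n],x)$, which yields $\phi_{([n],x)}\circ\Delta(\mu)=\phi_{([m],K(\mu)(x))}$ and hence $L(\mu)\,\psi_n(x)=\psi_m(K(\mu)(x))$, the required naturality. By construction $\psi\circ\widehat{x}=\phi_{([n],x)}$, and any mediating map must satisfy $\psi_n(x)=\psi_n(\widehat{x}(\mathrm{id}_{[n]}))=\phi_{([n],x)}(\mathrm{id}_{[n]})$, so $\psi$ is forced and therefore unique. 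This proves $K\cong\varinjlim P$.

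Alternatively, since colimits in $\textbf{SSet}$ are computed pointwise, I could evaluate at each $[m]$ and reduce to a colimit of sets: $(\varinjlim P)_m=\varinjlim_{([n],x)}\Delta([m],[n])$, whose elements are represented by pairs $(([n],x),\mu\colon[m]\to[n])$ modulo the identifications coming from morphisms of $\Delta K$. The assignment $(([n],x),\mu)\mapsto K(\mu)(x)$ then descends to a bijection onto $K_m$: every $z\in K_m$ is hit by $(([m],z),\mathrm{id}_{[m]})$, and each representative $(([n],x),\mu)$ is identified with the canonical one $(([m],K(\mu)(x)),\mathrm{id}_{[m]})$ via the morphism $\mu\colon([m],K(\mu)(x))\to([n],x)$, so the fibres collapse exactly. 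Either route is essentially routine bookkeeping; the only thing to watch carefully is the direction of the morphisms in the category of elements together with the contravariance of $K$, and I do not expect any genuine obstacle beyond keeping the Yoneda correspondence consistent throughout.
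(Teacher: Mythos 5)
Your proposal is correct and follows the same route as the paper, which simply cites the well-known density (co-Yoneda) theorem that every presheaf is canonically a colimit of representables; you invoke exactly this fact and then verify it in full for $\Delta K$. The extra detail (the explicit cocone, the Yoneda bookkeeping, and the pointwise alternative) is all accurate, but it amounts to proving the cited theorem rather than taking a different approach.
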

\begin{proof}
It is an immediate corollary of the well-known fact that every presheaf is canonically a colimit of representable functors.
\end{proof}

%----------------------------------------------------------
\section*{Geometric Realisation}

\begin{thm}
Let $\mathcal C$ be a category with small colimits, then the copresheaf category $\mathcal C^{\Delta}$ is equivalent to the category of adjunctions $\textbf{SSet} \to \mathcal C$. We denote the image of a simplicial object $A$ under this equivalence by $(A \otimes -, \mathcal C(A, -), \varphi
)$.
\end{thm}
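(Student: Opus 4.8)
The plan is to prove this via the classical nerve–realisation (or tensor–hom) paradigm, realising the equivalence as the statement that restriction along the Yoneda embedding $y : \Delta \to \textbf{SSet}$, $[n] \mapsto \Delta[n]$, identifies cosimplicial objects with left adjoints. Concretely I would construct two functors in opposite directions and show they are mutually quasi-inverse. First I would build $F : \mathcal{C}^{\Delta} \to \mathrm{Adj}(\textbf{SSet}, \mathcal{C})$. Given a cosimplicial object $A : \Delta \to \mathcal{C}$, set
$$A \otimes X := \int^{[n] \in \Delta} X_n \cdot A[n] \;\cong\; \operatorname*{colim}_{(\Delta[n] \to X)\, \in\, \Delta X} A[n],$$
the coend of the copower $X_n \cdot A[n]$ (an $X_n$-indexed coproduct of copies of $A[n]$), which exists because $\mathcal{C}$ has small colimits; this is exactly the left Kan extension of $A$ along $y$. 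Its candidate right adjoint is $\mathcal{C}(A,-) : \mathcal{C} \to \textbf{SSet}$, $c \mapsto \big([n] \mapsto \mathcal{C}(A[n],c)\big)$, and the adjunction isomorphism $\varphi$ is assembled from the chain
$$\mathcal{C}(A \otimes X, c) \cong \int_{[n]} \mathcal{C}(X_n \cdot A[n], c) \cong \int_{[n]} \textbf{Set}\big(X_n, \mathcal{C}(A[n],c)\big) \cong \textbf{SSet}\big(X, \mathcal{C}(A,c)\big),$$
where the first step uses that $\mathcal{C}(-,c)$ carries the coend to an end, the second is the defining property of the copower, and the last is the end formula for natural transformations.

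Next I would define $G : \mathrm{Adj}(\textbf{SSet}, \mathcal{C}) \to \mathcal{C}^{\Delta}$ by restriction along Yoneda: an adjunction with left adjoint $L$ is sent to the cosimplicial object $L \circ y$, i.e. $[n] \mapsto L(\Delta[n])$. For functoriality I would fix the convention that a morphism in $\mathrm{Adj}(\textbf{SSet}, \mathcal{C})$ is a natural transformation of left adjoints (equivalently, its mate between right adjoints); then $G$ acts by restricting such a transformation along $y$, while $F$ sends a natural transformation $A \Rightarrow A'$ of cosimplicial objects to the induced map $A \otimes - \Rightarrow A' \otimes -$ coming from functoriality of the coend, together with the corresponding mate on right adjoints.

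Then I would verify that the two composites are naturally isomorphic to the identities. For $G F \cong \mathrm{id}$, evaluating the tensor on a representable gives $A \otimes \Delta[n] = \int^{[m]} \Delta([m],[n]) \cdot A[m] \cong A[n]$ by the co-Yoneda (density) lemma, naturally in $[n]$, so $(A \otimes -) \circ y \cong A$. For $F G \cong \mathrm{id}$, the crucial input is the Lemma already established above: every simplicial set $X$ is the colimit of the representables indexed by its category of simplices $\Delta X$. Since any left adjoint $L$ preserves colimits, $L(X) \cong \operatorname*{colim}_{\Delta[n] \to X} L(\Delta[n]) = (L y) \otimes X$ naturally in $X$, whence $L \cong (Ly) \otimes -$; the right adjoint is then recovered up to canonical isomorphism by essential uniqueness of adjoints, and one checks these isomorphisms respect composition so that $F$ and $G$ are genuinely inverse equivalences.

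The main obstacle is the bookkeeping inside the construction of $F$: one must check that the coend genuinely furnishes an adjunction with $\mathcal{C}(A,-)$ and that every isomorphism above is natural simultaneously in $X$, in $c$ and in $A$, and that under the chosen notion of morphism of adjunctions the assignment $A \mapsto (A \otimes -, \mathcal{C}(A,-), \varphi)$ is functorial with the mates matching up. Once the tensor–hom adjunction and these naturalities are in place, the equivalence itself is driven entirely by the density Lemma together with the fact that left adjoints preserve colimits, so the conceptual content beyond that point is light.
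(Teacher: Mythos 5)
Your proposal is correct and follows essentially the same route as the paper: restriction along $[n]\mapsto\Delta[n]$ in one direction, the tensor $A\otimes X \cong \operatorname{colim}_{\Delta X}A[n]$ (your coend is the same colimit) in the other, the adjunction established by the same chain of hom-isomorphisms, and the two composites handled by co-Yoneda for $A\otimes\Delta[n]\cong A[n]$ and by density plus colimit-preservation of left adjoints for the other direction. Your treatment is somewhat more careful than the paper's about functoriality and mates, but the mathematical content is the same.
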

\begin{proof}
Start with an adjunction $(F,U,\varphi): \textbf{SSet} \to \mathcal C$, and let consider the functor $D: \Delta \to \textbf{SSet}$ which takes $[n]$ to $\Delta[n]$, then we can consider the composite $\displaystyle \Delta \mathop{\longrightarrow}^{D} \textbf{SSet} \mathop{\longrightarrow}^{F} \mathcal C$. This defines a functor from adjunctions to copresheaves.\\
Conversely, given a cosimplicial object $K$, there is a functor $\Delta K \to \Delta$ which takes a singular simplex $f: \Delta[n] \to K$ to $[n]$. We then have the corresponding restriction functor $\mathcal C^{\Delta} \to \mathcal C^{\Delta K}$ and the usual colimit functor $\mathcal C^{\Delta K} \to \mathcal C$. Then we define $A \otimes -$ to be the image of $A$ under this composite functor. Since a map of cosimplicial sets induces a functor $\Delta K \to \Delta L$ this assignment actually defines a functor.\\
The cosimplicial set $\mathcal C(A,Y)$ is defined to have $n$-simplices $\mathcal C (A[n],Y)$, and the adjointness isomorphism is the composite $$\mathcal C(A \otimes K,Y) \cong \mathcal C( \mbox{colim}_{\Delta K}A[n],Y)  \cong \mbox{lim} \mathcal C (A[n],Y) \cong \mbox{lim} \textbf{SSet}(\Delta[n], \mathcal C(A,Y)) \cong$$ 
 $$ \cong \textbf{SSet}(\mbox{colim}_{\Delta K} \Delta[n], \mathcal C(A,Y) ) \cong \textbf{SSet}(K, \mathcal C (A,Y))$$ Since the identity map of $\Delta[n]$ is cofinal in the category $\Delta \Delta[n]$, we get an isomorphism $A \otimes \Delta[n] \cong A[n]$. Conversely, if $F$ preserves limits, then there is a natural isomorphism $F(\Delta[-]) \otimes K \cong FK$.
\end{proof}

\begin{rem}
\begin{enumerate}
\item[]

\item By the previous theorem each functor $\mathcal C \to \mathcal C^{\Delta}$ gives rise to a functor $\mathcal C \to \mathit{Adj}(\textbf{SSet}, \mathcal C)$, which in turn gives a bifunctor $- \otimes - : \mathcal C \times \textbf{SSet} \to \mathcal C$.

\item We have an obvious functor $\textbf{SSet} \to \textbf{SSet}^{\Delta}$ that takes a simplicial set $K$ to the cosimplicial simplicial set $K \times \Delta[-]$. Under the correspondence of the previous point we get the associated bifunctor which is simply the product on $\textbf{SSet}$, since the product commutes with colimits. The functor $K \times -$ has a right adjoint like all other presheaf categories with its usual description: $n$-simplices of $ \underline{\mathit{Hom}}(K,L)$ are simplicial maps $K \times \Delta[n] \to L$.
\end{enumerate}
\end{rem}

Now we recall the definition and the basic properties of a kind of convenient topological spaces that will be useful in the study of the topological spaces associated to a simplicial sets.

\begin{defn}
A \textbf{Kelley space}\index{Kelley space} $X$ is an Hausdorff topological space such that a subset $F \subseteq X$ is closed whenever its intersection with each compact subset of $X$ is closed.
We will write $\textbf{Ke}$\index{$\textbf{Ke}$} for the full subcategory of Kelley spaces.
\end{defn}

\begin{defn}
Given an Hausdorff topological space $Y$ we can the associated topological space $Y_{Ke}$ with the same underlying set, whose closed sets are the ones whose intersection with all compact subset of $Y$ is closed in $Y$. $Y_{Ke}$ is called the \textbf{Kelleyfication}\index{Kelleyfication} of the Hausdorff space $Y$.
\end{defn}

\begin{thm}
The category of Kelley spaces is a full coreflective subcategory of the category of Hausdorff spaces. Moreover, it is complete, cocomplete and cartesian closed with internal $\underline{\mathit{Hom}}$ given by the Kelleyfication of the $\mathit{Hom}$-space with the compact open topology.
\end{thm}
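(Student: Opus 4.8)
The plan is to establish the three claimed properties of the category $\textbf{Ke}$ of Kelley spaces separately, leaning on the standard fact that Kelleyfication provides a right adjoint to the inclusion functor.

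First I would verify coreflectivity. The inclusion $\textbf{Ke} \hookrightarrow \textbf{Haus}$ has $(-)_{Ke}$ as a right adjoint: for a Kelley space $X$ and a Hausdorff space $Y$, every continuous map $X \to Y$ is automatically continuous as a map $X \to Y_{Ke}$, because the preimage of a $Y_{Ke}$-closed set is closed in $X$ (one checks this against compact subsets of $X$, using that continuous images of compacta are compact). Since the identity $Y_{Ke} \to Y$ is continuous, this gives the natural bijection $\textbf{Ke}(X, Y_{Ke}) \cong \textbf{Haus}(X, Y)$ exhibiting $\textbf{Ke}$ as coreflective, hence a full coreflective subcategory.

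Next, completeness and cocompleteness follow formally. A coreflective subcategory is closed under colimits, so colimits in $\textbf{Ke}$ are computed as in $\textbf{Haus}$; and limits in $\textbf{Ke}$ are obtained by taking the limit in $\textbf{Haus}$ and then applying the coreflector $(-)_{Ke}$. Here I would invoke that $\textbf{Haus}$ is itself complete and cocomplete (limits as in $\textbf{Top}$, colimits via Hausdorffification of the $\textbf{Top}$-colimit), so both constructions land where we need them.

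The main obstacle is the cartesian closed structure, and this is the step I expect to require real care. I would define the internal hom $\underline{\mathit{Hom}}(X,Z)$ as the Kelleyfication of the mapping space $C(X,Z)$ carried with the compact-open topology, and the product $X \times_{Ke} Z$ as the Kelleyfication of the ordinary product. The heart is the exponential adjunction $\textbf{Ke}(X \times_{Ke} Z, W) \cong \textbf{Ke}(X, \underline{\mathit{Hom}}(Z,W))$. One knows classically that for the compact-open topology the exponential law $C(X \times Z, W) \cong C(X, C(Z,W))$ holds on underlying sets and is a homeomorphism when the spaces are suitably nice (e.g. when $Z$ is locally compact, or after passing to compactly generated spaces); the subtlety is that the naive product topology fails to be cartesian closed, which is precisely why one Kelleyfies the product. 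I would therefore check that evaluation $\underline{\mathit{Hom}}(Z,W) \times_{Ke} Z \to W$ is continuous and that currying a map out of the Kelleyfied product yields a continuous map into $\underline{\mathit{Hom}}(Z,W)$, using that a map out of $(-)_{Ke}$ is continuous iff its restriction to every compact subset is. Verifying associativity and naturality of this adjunction, and that the unit and counit behave correctly, constitutes the bulk of the work; I would cite the standard treatment of compactly generated spaces for the point-set homeomorphism details rather than reproduce them.
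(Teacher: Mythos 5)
Your proposal is correct and follows the standard route: Kelleyfication as right adjoint to the inclusion, completeness and cocompleteness derived formally from coreflectivity together with the corresponding properties of Hausdorff spaces, and cartesian closedness via the compact-open topology with Kelleyfied products. The paper gives no argument of its own here---it simply cites chapter VII.8 of \cite{Mac}---and your outline is essentially the proof found in that reference, with the same point-set verifications (evaluation, currying, idempotence of Kelleyfication) deferred to the literature.
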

\begin{proof}
See chapter VII.8 in \cite{Mac}.
\end{proof}

\nind
Note that we have a cosimplicial topological space $|\Delta[-]|$. By the previous theorem it gives rise to an adjunction $(| \cdot|, S, \varphi): \textbf{SSet} \to \textbf{Top}$. $| \cdot|$ is a left adjoint, therefore it preserves colimits.\\
Since $|\Delta[n]|$ is a compact Hausdorff space and the category of Kelley spaces $\textbf{Ke}$ is closed under colimits the functor $| \cdot |$ takes values in $\textbf{Ke}$.

\begin{defn}
We call \textbf{geometric realisation}\index{geometric realisation functor} the functor $| \cdot|: \textbf{SSet} \to \textbf{Top}$ defined in the previous remark.\\
Its right adjoint $S: \textbf{Top} \to \textbf{SSet}$ is called the \textbf{singular functor}\index{singular functor}.\\
We will give the same names for the functors $| \cdot|: \textbf{SSet} \to \textbf{Ke}$ and $S: \textbf{Ke} \to \textbf{SSet}$ where we restrict to Kelley spaces.
\end{defn}

\nind
In order to help the reader to develop some intuition about simplicial sets we state the following:

\begin{thm}
The geometric realisation of a simplicial set is a CW-complex.
\end{thm}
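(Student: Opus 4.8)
The plan is to exhibit $|X|$ with its standard CW structure, having exactly one $n$-cell for each nondegenerate $n$-simplex. The starting point is the explicit coend description of the realisation, namely
$$|X| \cong \left(\coprod_n X_n \times |\Delta[n]|\right)\big/\!\sim,$$
where each $X_n$ carries the discrete topology, $|\Delta[n]|$ denotes the topological standard simplex, and the relation is generated by $(x, |\Delta(\mu)|(p)) \sim (X(\mu)(x), p)$ for every $\mu:[m]\to[n]$. This is simply the image of $X$ under the colimit-preserving functor $|\cdot| = -\otimes|\Delta[-]|$ supplied by the theorem on geometric realisation, so I may freely use that $|\cdot|$ preserves all colimits.

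First I would establish the Eilenberg--Zilber decomposition: using the unique epi-mono factorisation of Remark \ref{decomposizione}, every simplex $x\in X_n$ is uniquely of the form $x = X(\sigma)(y)$ for a surjection $\sigma\colon[n]\twoheadrightarrow[m]$ and a nondegenerate $y\in X_m$. Since the relation $\sim$ collapses precisely the degeneracies, this shows that every point of $|X|$ has a unique representative $(y,p)$ with $y$ nondegenerate and $p$ in the interior of $|\Delta[n]|$. Hence, as a set, $|X|$ is the disjoint union of open cells $e_y \cong \mathrm{int}\,|\Delta[n]|$ indexed by the nondegenerate $n$-simplices $y$, which is exactly the cell decomposition we want.

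Next I would filter $X$ by its skeleta. Let $\mathrm{sk}_n X$ be the subsimplicial set generated by the simplices of degree $\le n$; then $X = \mathrm{colim}_n\, \mathrm{sk}_n X$, and for each $n$ there is a pushout of simplicial sets
\[
\begin{CD}
\coprod_{y} \partial\Delta[n] @>>> \mathrm{sk}_{n-1}X \\
@VVV @VVV \\
\coprod_{y} \Delta[n] @>>> \mathrm{sk}_n X
\end{CD}
\]
with $y$ ranging over the nondegenerate $n$-simplices. Applying $|\cdot|$, which preserves these colimits, and using the identifications $|\Delta[n]|\cong D^n$ and $|\partial\Delta[n]|\cong S^{n-1}$ coming from the cosimplicial structure, the realised square attaches one $n$-disk along $S^{n-1}$ for each such $y$. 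This is precisely the inductive cell attachment defining a CW-complex, the characteristic maps being the composites $D^n \cong |\Delta[n]| \to |\mathrm{sk}_n X| \hookrightarrow |X|$.

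Finally I would verify the CW axioms. Closure-finiteness is immediate, since each simplex has only finitely many faces, so the closure of a cell meets only finitely many cells; and the weak topology on $|X| = \mathrm{colim}_n |\mathrm{sk}_n X|$ is exactly the colimit topology produced by the realisation, while passing to $\textbf{Ke}$ guarantees the result is Hausdorff (each $|\mathrm{sk}_n X|$ being a finite-dimensional complex). The step demanding the most care is the identification of cells: one must check that the realisation genuinely separates the interiors of distinct nondegenerate simplices and that each characteristic map restricts to a homeomorphism onto its open cell. Both rest entirely on the uniqueness in the Eilenberg--Zilber decomposition together with the corresponding classical statement for the finite complexes $|\Delta[n]|$, so this is where the substantive work lies, the rest being a formal consequence of $|\cdot|$ preserving colimits.
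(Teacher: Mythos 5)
Your argument is correct and is precisely the standard one: the paper itself offers no internal proof of this theorem, deferring to section 1.3 of \cite{JT}, and the proof there is exactly your combination of the Eilenberg--Zilber decomposition with the realisation of the skeletal pushouts $\coprod_y \partial\Delta[n] \to \coprod_y \Delta[n]$, using that $|\cdot|$ preserves colimits. The only small imprecision is at the end: Hausdorffness of $|X|$ follows from the inductive cell-attachment construction itself (as for any CW-complex), not from passage to $\textbf{Ke}$ --- the paper's remark about Kelley spaces is used to place $|\cdot|$ in a cartesian closed category, not to repair separation.
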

\begin{proof}
See section 1.3 in \cite{JT}.
\end{proof}

\begin{thm}
The geometric realisation $| \cdot|: \textbf{SSet} \to \textbf{Ke}$ preserves finite products.
\end{thm}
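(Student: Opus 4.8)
The plan is to prove that geometric realisation $|\cdot| : \textbf{SSet} \to \textbf{Ke}$ preserves finite products by reducing to the case of standard simplices and then invoking the fact that both functors in question commute with colimits. Since the terminal simplicial set is $\Delta[0]$ and $|\Delta[0]|$ is a point, preservation of the terminal object is immediate; the content is preservation of binary products, i.e. the construction of a natural homeomorphism $|K \times L| \cong |K| \times |L|$, where the product on the right is taken in $\textbf{Ke}$ (this is exactly why we pass to Kelley spaces, so that the product is well-behaved and cartesian closed, as recorded in the earlier theorem).

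First I would reduce to representables. By the lemma stated above, every simplicial set is canonically the colimit of standard simplices indexed by its category of simplices $\Delta K$, and since $|\cdot|$ is a left adjoint it preserves these colimits. The product functor $K \times -$ on $\textbf{SSet}$ preserves colimits (it is a left adjoint, with right adjoint $\underline{\mathit{Hom}}(K,-)$ as noted in the earlier remark), and crucially the product $- \times -$ on $\textbf{Ke}$ also preserves colimits in each variable, precisely because $\textbf{Ke}$ is cartesian closed. Writing $K = \mathrm{colim}_i \Delta[n_i]$ and $L = \mathrm{colim}_j \Delta[m_j]$, one expands $|K \times L|$ and $|K| \times |L|$ as iterated colimits and checks that the canonical comparison map is built componentwise from the maps $|\Delta[n] \times \Delta[m]| \to |\Delta[n]| \times |\Delta[m]|$. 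Hence it suffices to prove the homeomorphism when $K = \Delta[n]$ and $L = \Delta[m]$ are standard simplices.

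It then remains to verify the base case $|\Delta[n] \times \Delta[m]| \cong |\Delta[n]| \times |\Delta[m]|$. I would do this by analysing the nondegenerate simplices of the product simplicial set $\Delta[n] \times \Delta[m]$: these are indexed by the monotone lattice paths (shuffles) from $(0,0)$ to $(n,m)$, and they assemble into a triangulation of the topological prism $\Delta^n \times \Delta^m$. Exhibiting this explicit triangulation and checking that the induced continuous comparison map is a bijection which is open (equivalently, a homeomorphism, using compactness of $|\Delta[n]|\times|\Delta[m]|$ together with Hausdorffness) completes the argument.

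The main obstacle is the base case, and specifically the role of Kelley spaces. For general topological spaces the comparison map $|K \times L| \to |K| \times |L|$ is a continuous bijection but can fail to be a homeomorphism when the simplicial sets have infinitely many nondegenerate simplices, because the product topology on $|K| \times |L|$ is too coarse; this is the classical subtlety that forces the target category to be $\textbf{Ke}$. The delicate point is therefore to use that in $\textbf{Ke}$ the product commutes with the colimits expressing $K$ and $L$ as glued simplices, so that the comparison map remains a homeomorphism even in the non-finite case. For the finite standard-simplex base case the spaces are compact Hausdorff and the argument is purely geometric, but the reduction step genuinely relies on the cartesian closed structure of $\textbf{Ke}$ established earlier.
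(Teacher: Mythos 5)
Your proposal is correct and follows essentially the same route as the paper: reduce via colimit-preservation (of realisation and of the product functors) to the case of standard simplices, then handle $|\Delta[m]\times\Delta[n]| \to |\Delta[m]|\times|\Delta[n]|$ by the shuffle/maximal-chain decomposition and the observation that a continuous bijection between compact Hausdorff spaces is a homeomorphism. The only difference is one of detail: the paper actually carries out the combinatorial step you sketch, exhibiting the coequalizer of simplices indexed by maximal chains and writing down the explicit coordinate formulas proving bijectivity.
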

\begin{proof}
Since the product functor is a left adjoint it preserves colimits, therefore it suffices to show that the canonical map $| \Delta[m] \times \Delta[n] | \to |\Delta[m]| \times | \Delta[n] |$ is a homeomorphism. We will prove that the domain and the codomain are compact Hausdorff spaces, hence it will suffice to show that this map is a bijection.\\
We start with some combinatorial preliminaries studying the nondegenerate simplices of $\Delta[m] \times \Delta[n]$; a $p$-simplex of this simplicial set is the same thing as a monotone map $[p] \to [m] \times [n]$, with the order in the codomain given by $(a,b) \leq (a',b')$ iff $a \leq a'$ and $b \leq b'$. A nondegenerate $p$-simplex is an injective map $[p] \to [m] \times [n]$, then it can be thought as a chain in this ordered set.\\
Any chain can be extended to a maximal chain, and therefore any simplex in $\Delta[m] \times \Delta[n]$ is a face of a nondegenerate $(m+n)$-simplex. Such a maximal chain is a path in the ordered set $[m] \times [n]$ from $(0,0)$ to $(m,n)$ which goes only right or up. We can conveniently label the vertices of the rectangle $[m] \times [n]$ moving from left to right and then bottom up row by row. In this way we can label the maximal chains as $m$-subsets of $\{ 1, \dots, m+n  \}$; of which there are ${m+n \choose m}$.\\
Now, let $c(i)$ for $0 \leq i \leq {m+n \choose m}$ be the list of maximal chains of the rectangle $[m] \times [n]$. Given any chain $c$, let $n_c$ be the number of edges in $c$. From what we have said before is easy to check that the following is a coequalizer diagram: 

$$ f,g : \coprod_{1 \leq i < j \leq {m+n \choose m}} \Delta[n_{c(i) \cap c(j)} ] \rightrightarrows  \coprod_{1 \leq i \leq {m+n \choose m}} \Delta[n_{c(i)}] \to \Delta[m] \times \Delta[n]$$

\nind
where $f$ and $g$ are induced by the inclusions $c(i) \cap c(j) \to c(i)$ and $c(i) \cap c(j) \to c(j)$, respectively.

Standard simplices are compact Hausdorff, and since the geometric realisation commutes with coequalizers we get that $|\Delta[m] \times \Delta[n]|$ is a compact Hausdorff space as well.\\
we now describe the maps $h_i : |\Delta[m+n]| \to |\Delta[m]| \times |\Delta[n]|$, defined by the composite $|\Delta[n_{c(i)}] | \to |\Delta[m] \times \Delta[n]| \to |\Delta[m]| \times |\Delta[n]|$. We denote a point in $|\Delta[m+n]|$ as $z=(z_1 , \dots , z_{n+m})$ where $0 \leq z_i$ and $\sum_i z_i \leq 1$. Suppose $c(i)$ corresponds to the $m$-subset $\{ a_1 < \dots <a_m \}$ of $\{ 1 , \dots , m+n \}$ whose complement is $\{ b_1 < \dots < b_n \}$. We write $a_{m+1} = n+m+1 = b_{n+1}$, then $h_i(z) = (u,v) = (u_1 , \dots , u_m , v_1 , \dots , v_n) \in |\Delta[m]| \times |\Delta[n]|$ where $u_j = \sum_{k=a_j}^{a_{j+1} -1} z_k$ and $v_j = \sum_{k=b_j}^{b_{j+1}-1} z_k$. Now it is easy to check that $h_i$ is injective.\\
Given a point $(u,v) \in |\Delta[m]| \times |\Delta[n]|$ we must find a chain $c(i)$ and a point in $|\Delta[n_{c(i)}]|$ whose image under $h_i$ is $(u,v)$. We must also show that different choices of $c(i)$ are related by a coequalizer diagram describing $\Delta[m] \times \Delta[n]$.\\
To find $c(i)$ we let $w_j = u_j + \dots + u_m$ and $x_j = v_j + \dots + v_n$. We then write the set of $x_j$ and $w_j$ in descending order $y_1 \geq \dots \geq y_{m+n}$. Each $w_j$ must be some $y_{k_j}$. The set of the $k_j$ is an $m$-subset of $m+n$ so corresponds to a maximal chain $c(i)$. Now let $z_j=y_j - y_{j+1}$, where $y_{m+n+1} = 0$. Then $h_i(z_1 , \dots , z_{m+n})=(u,v)$ as required. It is not difficult to check that the ambiguity in the choice of $c(i)$ corresponds exactly to points in $\coprod |\Delta[n_{c(i) \cap c(j)}]|$.
\end{proof}

\begin{thm}
The geometric realisation $| \cdot|: \textbf{SSet} \to \textbf{Ke}$ preserves finite limits.
\end{thm}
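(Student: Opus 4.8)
The plan is to leverage the previous theorem: since $|\cdot|$ already preserves finite products, and a functor out of a category with finite limits preserves all finite limits as soon as it preserves finite products and equalizers, it suffices to show that $|\cdot|$ carries equalizers to equalizers. Concretely, given an equalizer $E \hookrightarrow X \rightrightarrows Y$ in $\textbf{SSet}$ (so that $E$ is the subsimplicial set of $X$ on which $f$ and $g$ agree), I would prove that $|E|$ is exactly the equalizer of $|f|, |g| : |X| \rightrightarrows |Y|$ in $\textbf{Ke}$. One inclusion is automatic from functoriality; the entire content lies in the reverse inclusion, showing that a point of $|X|$ equalised by $|f|$ and $|g|$ already lies in $|E|$.

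The key tool will be the normal-form description of points of a geometric realisation. Recalling that $|X|$ is the quotient of $\coprod_n X_n \times |\Delta[n]|$ by the relations $[\mu^\ast x, b] \sim [x, \mu_\ast b]$ for $\mu$ in $\Delta$, the Eilenberg--Zilber lemma guarantees that every point admits a unique representative $[x,b]$ with $x \in X_n$ nondegenerate and $b$ an interior point of $|\Delta[n]|$. I would first establish (or cite) this uniqueness, since it is the engine of the whole argument.

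With the normal form in hand, take $p = [x,b]$ in normal form with $|f|(p) = |g|(p)$. Factor $f(x) = \eta^\ast w$ and $g(x) = \zeta^\ast w'$ by Eilenberg--Zilber, with $\eta, \zeta$ monotone surjections and $w, w'$ nondegenerate; then $|f|(p) = [w, \eta_\ast b]$ and $|g|(p) = [w', \zeta_\ast b]$ are themselves in normal form, because an affine surjection sends the interior point $b$ to an interior point. Uniqueness forces $w = w'$ and $\eta_\ast b = \zeta_\ast b$. The crux is then a small combinatorial rigidity: for an interior $b$ (all barycentric coordinates strictly positive) the partial sums of $b$ are strictly increasing, and these partial sums are exactly the cumulative coordinates of $\eta_\ast b$ read off at the block boundaries of the monotone surjection $\eta$; hence $\eta_\ast b = \zeta_\ast b$ forces the boundaries to coincide, so $\eta = \zeta$. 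Therefore $f(x) = \eta^\ast w = \zeta^\ast w' = g(x)$, giving $x \in E_n$ and $p \in |E|$.

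Finally I would upgrade this set-level identification to a homeomorphism of equalizers. Since $|Y|$ is Hausdorff, the equalizer subspace $\{\,p : |f|(p) = |g|(p)\,\}$ is closed in $|X|$ and hence again a Kelley space, and realisation sends the monomorphism $E \hookrightarrow X$ to the inclusion of a subcomplex, which is a closed embedding carrying the subspace topology; thus $|E|$ is indeed the equalizer in $\textbf{Ke}$. I expect the main obstacle to be precisely the Eilenberg--Zilber normal-form input together with the interior-point rigidity for surjections, whereas the reduction to equalizers and the point-set verification are comparatively routine once preservation of finite products is available.
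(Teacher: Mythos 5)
Your proposal is correct and follows the same route as the paper: reduce finite-limit preservation to equalizer preservation using the already-proved preservation of finite products. The paper simply cites Hovey (lemma 3.2.4) for the equalizer case, whereas you spell out exactly the standard argument that citation contains (Eilenberg--Zilber normal forms, interior-point rigidity for monotone surjections, and the closed-embedding upgrade), so there is no substantive difference in approach.
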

\begin{proof}
It is sufficient to show that it preserves equalizers, for the details see lemma 3.2.4 in \cite{Hov}.
\end{proof}

\begin{defn}
A map $f : X \to Y$ of simplicial sets is a \textbf{weak equivalence}\index{simplicial!weak equivalence}\index{weak equivalence!of simplicial sets} iff $|f|$ is a weak equivalence of topological spaces i.e. it induces isomorphisms of all homotopy groups.
\end{defn}

%--------------------------------------------------
\section*{Anodyne Extensions and Kan Fibrations}

Now we introduce the basic ingredients for the homotopy theory of simplicial sets that are anodyne extensions and Kan fibrations. Usually, the latter are easy to introduce using lifting properties, but also the former are often defined using lifting properties with respect to Kan fibrations. This definition has the advantage of being quick, but it is not transparent at first sight, nor explicit. For that reason we prefer a slightly more explicit one.

\begin{defn}
Given a morphism $f: Y \to Y'$ a morphism $g: X \to X'$ is a \textbf{retract}\index{retract of a morphism} of $f$ iff it fits in a diagram

$$\xymatrix@1{
& X  \ar[r]^{u} \ar[d]^{g} 
& Y  \ar[d]^{f} \ar[r]^{v} 
& X  \ar[d]^{g} \\
& X' \ar[r]^{u'}
& Y' \ar[r]^{v'}
& X'
}$$

\nind
such that $v \circ u = 1_X$ and $v' \circ u' = 1_{X'}$.
\end{defn}

\begin{defn}
A set $A$ of morphisms of simplicial sets is \textbf{saturated}\index{saturated set of morphisms} iff 
\begin{enumerate}[label=(\roman*)]

\item it contains all isomorphisms;

\item it is stable under pushouts;

\item it is stable under retracts;

\item it is stable under countable compositions, i.e. if $f_i : X_i \to X_{i+1}$ are morphisms in $A$, then the canonical map $X_1 \to \mbox{colim} X_i$ is a morphism in $A$;

\item it is stable under arbitrary direct sums.

\end{enumerate}
The intersection of all saturated sets contained in a given set of morphisms $B$ is called the \textbf{saturated set generated by $B$}\index{saturated set generated by a given set}.
\end{defn}

\begin{defn}
The elements of the saturated set generated by all horn inclusions $\Lambda^k[n] \hookrightarrow \Delta[n]$ with $1 \leq n$ and $k \leq n$ are called \textbf{anodyne extensions}\index{anodyne extension}.
\end{defn}

\begin{defn}
Given a map $p:E \to B$ and a map $i: K \to L$ we say that $i$ has the \textbf{left lifting property}\index{left lifting property} (LLP\index{LLP} for short) with respect to $p$ or that $p$ has the \textbf{right lifting property}\index{right lifting property} (RLP\index{RLP} for short) with respect to $i$ iff for every commutative square of the form:

$$\xymatrix@1{
& K  \ar[r]^{u} \ar[d]_{i} 
& E  \ar[d]^{p} \\
& L \ar[r]_{v} \ar@{-->}[ru]^{w}
& B 
}$$

\nind
there is an arrow $w:L \to E$ such that $u = w \circ i$ and $v = p \circ w$.
\end{defn}

\begin{defn}
A map $p: E \to B$ of simplicial sets is a \textbf{Kan fibration}\index{fibration!simplicial}\index{fibration!Kan} or simply a fibration iff it has the right lifting property with respect to all anodyne extensions.
A simplicial set $X$ is called a \textbf{Kan complex}\index{Kan complex} or a \textbf{fibrant object}\index{simplicial!fibrant object} iff the unique map $X \to 1$ is a Kan fibration.
\end{defn}

\begin{rem}
Kan fibrations satisfy the closure conditions dual to the ones of a saturated class.\\
Namely, every isomorphism is a fibration, they are stable under pullbacks, retracts and arbitrary products. Moreover, if $p_i : X_{i+1} \to X_i$ are fibrations, the canonical projection $\mbox{lim}X_i \to X_0$ is a fibration.
\end{rem}

\nind
The following theorem provides examples of Kan fibrations.

\begin{thm}[Moore]\label{Moore}
Every simplicial group is a Kan complex
\end{thm}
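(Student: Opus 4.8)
The plan is to verify that the unique map $G \to 1$ has the right lifting property against every anodyne extension, which reduces to checking it only against the generating horn inclusions $\sih \hookrightarrow \sis$ for $1 \le n$ and $k \le n$. Indeed, for a fixed map the class of morphisms having the left lifting property against it contains all isomorphisms and is stable under pushouts, retracts, countable composition and arbitrary direct sums; hence this class is saturated, and once it is shown to contain every horn inclusion it must contain the entire saturated class they generate, namely all anodyne extensions. So it suffices to solve every lifting problem with left edge a horn inclusion and target $1$, i.e. to extend every map $\sih \to G$ to a map $\sis \to G$.

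Next I would translate this into the classical horn-filling condition via the Yoneda isomorphism $\textbf{SSet}(\sis, G) \cong G_n$. A map $\sih \to G$ amounts to a family of $(n-1)$-simplices $x_0, \dots, x_{k-1}, x_{k+1}, \dots, x_n \in G_{n-1}$, one for each face of $\sis$ except the $k$-th, satisfying the compatibility relations $d_i x_j = d_{j-1} x_i$ whenever $i < j$ and $i,j \neq k$; filling the horn means producing an $x \in G_n$ with $d_i x = x_i$ for all $i \neq k$. This is where the group structure is essential: a general Kan complex offers no recipe for building such an $x$, whereas in a simplicial group we may multiply and invert simplices degreewise.

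The heart of the argument is then an inductive construction of elements $w_r \in G_n$, for $-1 \le r \le n$, with $d_i w_r = x_i$ for all $i \le r$ such that $i \neq k$. I would start from $w_{-1} = e$, the identity of the group $G_n$ (for which the condition is vacuous), set $w_r = w_{r-1}$ when $r = k$, and otherwise define
$$w_r = w_{r-1} \cdot s_r\big( (d_r w_{r-1})^{-1}\, x_r \big).$$
To check the inductive step I would compute the faces of $w_r$ using the simplicial identities $d_i s_r = s_{r-1} d_i$ for $i < r$, together with $d_r s_r = \mathrm{id}$ and $d_i d_r = d_{r-1} d_i$ for $i < r$, and the horn compatibility $d_i x_r = d_{r-1} x_i$. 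For $i < r$ with $i \neq k$ these identities force $d_i$ of the correcting factor $(d_r w_{r-1})^{-1} x_r$ to be the neutral element, so that $d_i w_r = x_i$; and $d_r s_r = \mathrm{id}$ yields directly $d_r w_r = x_r$. The desired filler is then $x = w_n$, satisfying $d_i x = x_i$ for every $i \neq k$.

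The routine but error-prone part, and the step I expect to demand the most care, is the bookkeeping in this induction: invoking each simplicial identity within its correct index range, handling the skipped index $k$ (where no $x_k$ is prescribed and no condition imposed), and confirming that the correction term genuinely vanishes under the lower face maps. Once this algebra is in place, applying the construction for all $n$ and $k$ shows that $G$ is a Kan complex.
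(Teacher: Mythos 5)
Your overall strategy --- reducing to the generating horn inclusions via saturation, translating into the combinatorial horn-filling condition by Yoneda, and then correcting with degeneracies multiplied in the group --- is exactly the classical argument behind Moore's theorem (this is the proof in the reference the paper points to, and in May's \emph{Simplicial Objects}, Theorem 17.1), and your verification of the inductive step is sound as far as it goes. But the induction as you set it up has a genuine gap at its final step. The degeneracy operators landing in $G_n$ are $s_0, \dots, s_{n-1} : G_{n-1} \to G_n$ only; there is no $s_n$ defined on $G_{n-1}$. Consequently your formula $w_n = w_{n-1} \cdot s_n\big((d_n w_{n-1})^{-1} x_n\big)$ is meaningless, and this step is needed precisely when $k \neq n$, i.e.\ for every horn except $\Lambda^n[n]$. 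Nor can you simply substitute $s_{n-1}$ there: since $d_{n-1} s_{n-1} = \mathrm{id}$ rather than $s_{n-2} d_{n-1}$, the correction term no longer vanishes under $d_{n-1}$, so fixing face $n$ this way destroys the already-achieved equality $d_{n-1} w_{n-1} = x_{n-1}$. Your single upward pass therefore proves the theorem only for the horns $\Lambda^n[n]$.

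The standard repair is to run the induction in two passes meeting at $k$: upward for $r = 0, \dots, k-1$ with your formula $w_r = w_{r-1} \cdot s_r\big((d_r w_{r-1})^{-1} x_r\big)$, and then \emph{downward} for $r = n, n-1, \dots, k+1$, starting from $z_{n+1} := w_{k-1}$ and setting $z_r = z_{r+1} \cdot s_{r-1}\big((d_r z_{r+1})^{-1} x_r\big)$; the filler is $x = z_{k+1}$. In the downward pass one uses $d_r s_{r-1} = \mathrm{id}$ to fix face $r$, the identity $d_i s_{r-1} = s_{r-2} d_i$ for $i < r-1$ (together with the horn compatibilities) to preserve the faces below $k$, and $d_i s_{r-1} = s_{r-1} d_{i-1}$ for $i > r$ to preserve the faces already fixed above $r$; the only face disturbed is $r-1$, which is $\geq k$ and hence either the missing face or one handled at the next downward step. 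With this two-directional bookkeeping every degeneracy invoked genuinely exists, and your argument then goes through and completes the proof.
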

\begin{proof}
See theorem 3.1.3 in \cite{JT}.
\end{proof}

\nind
We state now a useful result.

\begin{thm}\label{fibrefibration}
The fibres of a fibration over a connected base have the same homotopy type.
\end{thm}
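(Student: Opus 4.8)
The plan is to reduce the statement to a single edge of the base and to show that the two fibres over its endpoints are each a deformation retract of the restricted total space. Since $B$ is connected, $\pi_0(B)$ is trivial, so any two vertices $v,w$ are joined by a finite zig-zag of $1$-simplices; as homotopy equivalence is symmetric and transitive it is enough to treat one edge $\omega\colon\Delta[1]\to B$ and prove $p^{-1}(v)\simeq p^{-1}(w)$ for its endpoints $v=\omega(0)$, $w=\omega(1)$. Pulling $p$ back along $\omega$ and using that Kan fibrations are stable under pullback (the remark after the definition of Kan fibration), I may assume $B=\Delta[1]$; write $p_\omega\colon E_\omega\to\Delta[1]$ for this fibration and $F_0,F_1$ for the fibres over the two vertices, which are $p^{-1}(v)$ and $p^{-1}(w)$. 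Each $F_i$ is a pullback of $p_\omega$ along a point $\Delta[0]\to\Delta[1]$, hence is a Kan complex. It therefore suffices to show that both fibre inclusions $j_0\colon F_0\hookrightarrow E_\omega$ and $j_1\colon F_1\hookrightarrow E_\omega$ are homotopy equivalences.

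Next I would exhibit these inclusions as deformation retracts by lifting two canonical homotopies of the base. The monotone maps $\max,\min\colon[1]\times[1]\to[1]$ induce, by post-composition on simplices, simplicial maps $\Delta[1]\times\Delta[1]\to\Delta[1]$; the first is a homotopy from $\mathrm{id}_{\Delta[1]}$ to the constant map at $1$, the second from the constant map at $0$ to $\mathrm{id}_{\Delta[1]}$. The end-inclusions $E_\omega\times\{0\}\hookrightarrow E_\omega\times\Delta[1]$ and $E_\omega\times\{1\}\hookrightarrow E_\omega\times\Delta[1]$ are anodyne, being pushout--products of the horn inclusions $\Lambda^0[1]\hookrightarrow\Delta[1]$ and $\Lambda^1[1]\hookrightarrow\Delta[1]$ with $\emptyset\hookrightarrow E_\omega$ (using the closure of anodyne extensions under pushout--products with monomorphisms, cf.\ \cite{GZ}). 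Lifting the $\max$-homotopy against $p_\omega$, with the identity of $E_\omega$ prescribed at the $0$-end where $\max$ restricts to the identity, gives $K\colon E_\omega\times\Delta[1]\to E_\omega$ whose value at the $1$-end is a retraction $r\colon E_\omega\to F_1$, with $K$ witnessing $j_1 r\simeq\mathrm{id}_{E_\omega}$; since $p_\omega$ equals $1$ on $F_1$, the restriction $K|_{F_1\times\Delta[1]}$ stays inside the Kan complex $F_1$ and witnesses $r j_1\simeq\mathrm{id}_{F_1}$, so $j_1$ is a homotopy equivalence. Symmetrically, lifting the $\min$-homotopy with the identity prescribed at the $1$-end yields a retraction onto $F_0$ showing $j_0$ is a homotopy equivalence. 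Hence $F_0\simeq E_\omega\simeq F_1$.

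Finally I would chain these equivalences along the zig-zag provided by connectedness to conclude $p^{-1}(b)\simeq p^{-1}(b')$ for arbitrary vertices. I expect the main technical point to be the closure of anodyne extensions under pushout--products with monomorphisms, which underlies the anodyne-ness of the cylinder end-inclusions and is the one ingredient not already isolated above; a secondary subtlety is that simplicial homotopies are directed, so one must lift $\min$ and $\max$ with the identity prescribed at the appropriate (respectively the $1$- and the $0$-) end, reading off the retraction at the opposite end, rather than attempting a single symmetric contraction of $\Delta[1]$, which does not exist.
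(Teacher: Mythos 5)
Your proposal is correct, but it is not the paper's route: the paper does not prove this theorem at all, it simply cites Corollary 5.4.2 of \cite{GZ}, whose argument runs through the general covering-homotopy machinery (homotopic maps $K \to Y$ pull a fibration back to fibre homotopy equivalent fibrations --- the same machinery this paper later imports from \cite{Hov} to prove Corollary \ref{fibrecor}). You instead give a direct, self-contained lifting argument: reduce by connectedness to one edge, pull back to a fibration over $\Delta[1]$, and deformation-retract the restricted total space onto each of the two end fibres by lifting the $\max$ and $\min$ homotopies, whose end-inclusions are anodyne by the pushout-product theorem already proved in this chapter (so the appeal to \cite{GZ} for that closure property is not even needed). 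All the steps check out: the commuting squares are correct, the lift at the opposite end lands in the fibre since $\max(p_\omega(e),1)=1$ and $\min(p_\omega(e),0)=0$, and the restriction of the lifted homotopy to a fibre stays in that fibre. The one point you pass over quickly is the chaining $F_0 \simeq E_\omega \simeq F_1$: the intermediate object $E_\omega$ is generally not a Kan complex, so composing homotopy equivalences requires transitivity of the homotopy relation on maps into the \emph{fibres}, which holds precisely because you have already observed the fibres are Kan; this is a one-line remark, not a gap. What each approach buys: the paper's citation keeps the chapter short and defers to a standard reference, while your argument is more elementary, uses only tools stated and proved earlier in the same chapter, and in effect reproves by hand the special case of Corollary \ref{fibrecor} needed here (cf.\ also Lemma \ref{retractlem}, whose proof uses exactly your $\max$-type homotopy).
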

\begin{proof}
See corollary 5.4.2 in \cite{GZ}.
\end{proof}

\begin{defn}
The elements of the saturated set generated by all boundary inclusions $\partial \Delta[n] \hookrightarrow \Delta[n]$ are called \textbf{cofibration}\index{simplicial!cofibration}.\\
A simplicial set $X$ is  \textbf{cofibrant}\index{simplicial!cofibrant object} iff the unique map $0 \to X$ is a cofibration.
\end{defn}

\begin{thm}
A map is a cofibration in $\textbf{SSet}$ iff it is a monomorphism. In particular every simplicial set is cofibrant.
\end{thm}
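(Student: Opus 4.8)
The statement asserts an equality of two classes of maps, so I would prove the two inclusions separately. The easy direction is that every cofibration is a monomorphism. The class of monomorphisms of simplicial sets — equivalently, by the presheaf structure of $\textbf{SSet}$, the class of degreewise injective maps — is itself saturated: it contains all isomorphisms, and since colimits and retracts are computed pointwise it is closed under pushouts, retracts, countable composition and arbitrary direct sums. Each generating boundary inclusion $\partial \Delta[n] \hookrightarrow \Delta[n]$ is injective, hence a monomorphism; since the cofibrations form the \emph{smallest} saturated class containing these inclusions, every cofibration is a monomorphism.

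For the converse I would show that an arbitrary monomorphism $f \colon X \to Y$ is a cofibration. First reduce to the case of a genuine subsimplicial set inclusion: $f$ factors as $X \xrightarrow{\cong} f(X) \hookrightarrow Y$, and since a saturated class contains all isomorphisms and is closed under retracts (so in particular under composition with isomorphisms), it suffices to treat the inclusion $X \subseteq Y$. The plan is then a relative skeletal filtration. Writing $Y^{(-1)} = X$ and letting $Y^{(n)}$ be the subsimplicial set generated by $X$ together with all simplices of $Y$ of dimension at most $n$, one has $X = Y^{(-1)} \subseteq Y^{(0)} \subseteq Y^{(1)} \subseteq \cdots$ with $Y = \mbox{colim}_n\, Y^{(n)}$. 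The essential combinatorial input is the Eilenberg--Zilber lemma: every simplex of $Y$ is uniquely a degeneracy of a nondegenerate one. Let $S_n$ be the set of nondegenerate $n$-simplices of $Y$ not belonging to $X$; each such simplex, viewed by Yoneda as a map $\Delta[n] \to Y$, has all its faces of dimension $n-1$, so its restriction to the boundary lands in $Y^{(n-1)}$.

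The heart of the argument is to verify that, for each $n$, the square
$$
\begin{CD}
\coprod_{z\in S_n}\partial\Delta[n] @>>> Y^{(n-1)}\\
@VVV @VVV\\
\coprod_{z\in S_n}\Delta[n] @>>> Y^{(n)}
\end{CD}
$$
is a pushout. Granting this, the inclusion $X \hookrightarrow Y$ is the countable composite of the maps $Y^{(n-1)} \hookrightarrow Y^{(n)}$, each of which is a pushout of a direct sum of generating cofibrations; by the closure conditions defining a saturated class (direct sums, pushouts, countable composition) the composite $X \hookrightarrow Y$ is therefore a cofibration. I expect the verification of this pushout to be the main obstacle: one must use the uniqueness in the Eilenberg--Zilber decomposition to check that gluing the cones $\Delta[n]$ along their boundaries neither collapses distinct nondegenerate simplices nor introduces spurious ones, and that the degenerate simplices appearing in $Y^{(n)}$ are precisely those already forced by the degeneracies internal to each $\Delta[n]$.

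Finally, the concluding clause follows at once: for any simplicial set $X$ the unique map $0 \to X$ out of the initial (empty) object is injective, hence a monomorphism, hence a cofibration by what we have just shown, so $X$ is cofibrant.
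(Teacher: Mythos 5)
Your proof is correct and follows essentially the same route as the paper's: the easy direction via saturation of the class of monomorphisms, and the converse via a relative skeletal induction expressing the inclusion as a countable composite of pushouts of coproducts of boundary inclusions $\partial\Delta[n] \hookrightarrow \Delta[n]$. The only cosmetic difference is that you work with subsimplicial sets $Y^{(n)}$ while the paper keeps track of injections $X_n \to L$; both leave the verification of the pushout square at the same level of detail.
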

\begin{proof}
Recall that the monomorphisms are exactly the injective maps. Since boundary inclusions are injective and injective maps are closed under pushouts, countable compositions, coproducts and retracts we have that every cofibration is a monomorphism.\\
Conversely, given $f:K \to L$ a monomorphism we prove that it is countable composition of pushouts of coproducts of boundary inclusions. By induction, define $X_0 := K$ and having defined $X_n$ and an injection $X_n \to L$ which is an isomorphism on simplices of dimension less than $n$. Then let $S_n$ denote the set of simplices not in the image of $X_n$. Each such simplex $s$ is necessarily nondegenerate and corresponds to a map $\Delta[n] \to L $. Notice that the restriction of $s$ to the boundary factors uniquely through $X_n$. Define $X_{n+1}$ to be the pushout in the diagram:

$$\xymatrix@1{
& \coprod_S \partial \Delta[n]  \ar[r] \ar[d]
& X_n  \ar[d]^{p} \\
& \coprod_S \Delta[n] \ar[r]
& X_{n+1}
}$$

\nind
Then the inclusion $X_n \to L$ extends to a map $X_{n+1} \to L$, which is the desired map.
\end{proof}

\nind
The reader may ask why we have introduced a notion and proved immediately after that it is equivalent to a previous one. The reason is that cofibrations will give an essential part of the definition of model category, which we shall see in the next chapter.

Next we give another useful description of anodyne extensions. Put $B_1$ to be the class of horn inclusions used in the definition of anodyne extension. Let $B_2$ be the class of all inclusions of the form:
$$\Delta[1] \times \partial \Delta[n] + \{ e \} \times \Delta[n] \hookrightarrow \Delta[1] \times \Delta[n]$$
\nind
where $e = 0,1$ and $+$ is another symbol for the coproduct. Finally, let $B_3$ be the class of the more general inclusions:
$$\Delta[1] \times Y + \{ e \} \times X \hookrightarrow \Delta[1] \times X$$
\nind
where $X$ runs through all simplicial sets and $Y$ through the subcomplexes of $X$.

\begin{thm}
The saturated sets generated by $B_1$, $B_2$ and $B_3$ coincide.
\end{thm}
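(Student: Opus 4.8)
The plan is to prove the equality of the three saturated sets by establishing a chain of inclusions. Let me write $\overbar{B_i}$ for the saturated set generated by $B_i$. The obvious containments are $B_2 \subseteq B_3$, which immediately gives $\overbar{B_2} \subseteq \overbar{B_3}$, so the real content lies in proving $\overbar{B_1} = \overbar{B_2}$ and $\overbar{B_3} \subseteq \overbar{B_1}$. Combining these with the trivial inclusion yields the circle $\overbar{B_1} \subseteq \overbar{B_2} \subseteq \overbar{B_3} \subseteq \overbar{B_1}$, forcing all three to coincide.

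\textbf{The inclusion $\overbar{B_3} \subseteq \overbar{B_1}$.} First I would show that every generator of $B_3$ is an anodyne extension. The strategy is a double induction, building up the inclusion $\Delta[1] \times Y + \{e\} \times X \hookrightarrow \Delta[1] \times X$ by attaching the nondegenerate simplices of $X$ that are not already in $Y$, one dimension at a time. When a single $n$-simplex $\Delta[n]$ is glued on (along its boundary, which by induction already sits inside the subcomplex constructed so far), the map $\Delta[1] \times Y' + \{e\} \times X' \hookrightarrow \Delta[1] \times Y' + \{e\} \times X' \cup \Delta[1]\times\Delta[n]$ is obtained as a pushout of a generator of $B_2$, namely $\Delta[1] \times \partial\Delta[n] + \{e\}\times\Delta[n] \hookrightarrow \Delta[1]\times\Delta[n]$. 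Since saturated sets are closed under pushout and countable composition, this shows $B_3 \subseteq \overbar{B_2}$, hence $\overbar{B_3} \subseteq \overbar{B_2}$. It therefore remains only to prove $\overbar{B_2} \subseteq \overbar{B_1}$, i.e.\ that each generator of $B_2$ is anodyne.

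\textbf{The equality $\overbar{B_1} = \overbar{B_2}$.} The inclusion $\overbar{B_1} \subseteq \overbar{B_2}$ is the easier half: each horn inclusion $\Lambda^k[n] \hookrightarrow \Delta[n]$ should be exhibited as a retract of a generator of $B_2$, using the fact that $\Delta[n]$ is a retract of $\Delta[1]\times\Delta[n]$ and that this retraction carries the horn to the appropriate boundary-type subcomplex; stability of saturated sets under retracts then applies. The harder direction, $\overbar{B_2} \subseteq \overbar{B_1}$, is the main obstacle and is the classical combinatorial heart of the theorem (the argument attributed to Gabriel--Zisman). Here one must decompose the prism $\Delta[1]\times\Delta[n]$ into its nondegenerate $(n+1)$-simplices, which are indexed by the shuffles / maximal chains in $[1]\times[n]$, and filter the inclusion $\Delta[1]\times\partial\Delta[n] + \{e\}\times\Delta[n] \hookrightarrow \Delta[1]\times\Delta[n]$ by adjoining these top simplices in a carefully chosen order so that at each stage the new simplex is attached along a horn; each such attachment is then a pushout of a single horn inclusion from $B_1$.

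I expect the combinatorial bookkeeping in this last step to be the principal difficulty: one must verify that for the correct ordering of the shuffles, the intersection of each newly added $(n+1)$-simplex with the union of the previously constructed subcomplex is exactly a horn $\Lambda^k[n+1]$ (with the missing face determined by which of $e=0,1$ was chosen), so that the gluing is genuinely a pushout of a $B_1$-generator. Getting the indices $k$ and the order of attachment right, and checking that no face is omitted or doubly counted, is where the care is needed; the closure properties of saturated sets (pushouts and finite composition) then assemble these elementary pushouts into the desired anodyne extension. Since every generator of $B_2$ lies in $\overbar{B_1}$ and conversely, and since we have sandwiched $\overbar{B_3}$ between them, all three saturated sets are equal.
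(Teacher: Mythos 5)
Your overall architecture --- the trivial containment $B_2 \subseteq B_3$, the skeleton-by-skeleton induction showing $B_3 \subseteq \overbar{B_2}$, and the shuffle filtration of the prism showing $B_2 \subseteq \overbar{B_1}$ --- is exactly the classical Gabriel--Zisman argument that the paper invokes (it proves the theorem by citation to \cite{GZ}, ``reducing each class to the previous one''), and those steps are correctly sketched. However, the remaining step as you state it, namely that each horn inclusion $\Lambda^k[n] \hookrightarrow \Delta[n]$ is a retract of a generator of $B_2$, is false, not merely in need of bookkeeping. Suppose there were a retract diagram exhibiting $\Lambda^k[n] \hookrightarrow \Delta[n]$ as a retract of $j: \Delta[1]\times \partial\Delta[n] + \{e\}\times\Delta[n] \hookrightarrow \Delta[1]\times\Delta[n]$, with horizontal maps $(u,u')$ and $(v,v')$. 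Commutativity of the right-hand square forces $v'$ to carry the subcomplex $\Delta[1]\times\partial\Delta[n] + \{e\}\times\Delta[n]$ into $\Lambda^k[n]$, while $v'u' = 1$ forces $v'(c) = \iota_n$, where $c := u'(\iota_n)$ and $\iota_n$ is the generating $n$-simplex. Now the $k$-th face $c\cdot\delta^k$ is a simplex of the prism on only $n$ vertices, so its projection to $\Delta[n]$ misses at least one vertex of $[n]$; hence $c\cdot\delta^k$ lies in $\Delta[1]\times\partial\Delta[n]$, and therefore $v'(c\cdot\delta^k)$ must lie in $\Lambda^k[n]$. But $v'(c\cdot\delta^k) = \iota_n\cdot\delta^k = d_k\iota_n$, which is exactly the face missing from the horn --- a contradiction. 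The obstruction is that a $B_2$-generator contains \emph{both} copies of the face opposite the vertex $k$, and no retraction restricting to the identity on $\Delta[n]$ can crush them both into $\Lambda^k[n]$.

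The repair is the one Gabriel--Zisman actually use, and it costs you nothing beyond reordering your own steps: retract the horn off a $B_3$-generator instead, taking $X = \Delta[n]$, $Y = \Lambda^k[n]$ and a suitable endpoint $e \in \{0,1\}$ (the retraction $\Delta[1]\times\Delta[n] \to \Delta[n]$ is induced by a poset map such as $(1,j) \mapsto j$, $(0,j)\mapsto \min(j,k)$, with the $\min/\max$ and the choice of $e$ depending on whether $k < n$ or $k > 0$; one checks directly that it carries $\Delta[1]\times\Lambda^k[n] + \{e\}\times\Delta[n]$ into the horn). This gives $B_1 \subseteq \overbar{B_3}$. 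Since your skeletal induction gives $B_3 \subseteq \overbar{B_2}$ and trivially $B_2 \subseteq B_3$, you obtain $\overbar{B_2} = \overbar{B_3}$, and the shuffle argument gives $\overbar{B_2} \subseteq \overbar{B_1}$; hence $\overbar{B_1} \subseteq \overbar{B_3} = \overbar{B_2} \subseteq \overbar{B_1}$ and all three coincide. With that single substitution your proof is the standard one; the shuffle step, whose combinatorics you rightly flag as the delicate point, is carried out in section 2 of chapter IV of \cite{GZ}.
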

\begin{proof}
It proceeds reducing each class to the previous one. See section 2, chapter IV in \cite{GZ}.
\end{proof}

\begin{defn}
Given two maps $f:Y \to X$ and $i: K \to L$, the induced map:
$$i \Box f: P(i,f)=(K \times X )\coprod_{K \times Y} (L \times Y) \to L \times X$$\index{$\Box$}
\nind
is called the \textbf{pushout product}\index{pushout product} of the two maps.\\
The object $P(i,f)$ is also called pushout product.
\end{defn}

\begin{thm}
Given two monomorphisms $i:K \to L$ and $f:Y \to X$ such that the first is an anodyne extension. Then the map $i \Box f: P(i,f) \to L \times X$ is an anodyne extension.
\end{thm}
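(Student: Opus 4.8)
The plan is to reduce everything to the class $B_3$ of generating inclusions $\Delta[1]\times Y + \{e\}\times X \hookrightarrow \Delta[1]\times X$ and to exploit the associativity of the pushout product. The starting observation is that each such $B_3$-generator is itself a pushout product: writing $j_e : \{e\} \hookrightarrow \Delta[1]$ for the inclusion of an endpoint and $m : Y \hookrightarrow X$ for a subcomplex inclusion, one checks directly from the definition that $P(j_e, m) = (\{e\}\times X)\coprod_{\{e\}\times Y}(\Delta[1]\times Y)$ is exactly $\Delta[1]\times Y + \{e\}\times X$, so that $j_e \Box m$ is precisely the $B_3$-generator associated to $m$. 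By the previous theorem the anodyne extensions are the saturated set generated by $B_3$, hence every $j_e \Box m$ with $m$ a monomorphism is already anodyne.

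First I would record two formal properties of the operation $\Box$. The pushout product of two monomorphisms is again a monomorphism (its defining square is a pushout of monos, and monos are stable under pushout in a presheaf topos), and $\Box$ is associative up to canonical isomorphism, $(i\Box m)\Box f \cong i\Box(m\Box f)$, because it is the Leibniz construction built from the cartesian product $\times$, which is itself associative and commutes with colimits in each variable. These facts are routine but essential, and I would verify them before anything else.

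Next, fix a monomorphism $f:Y\to X$ and consider the class
$$\mathcal{A}_f = \{\, i \text{ a monomorphism} : i\Box f \text{ is anodyne} \,\}.$$
The core of the argument is to show that $\mathcal{A}_f$ is saturated. Here one uses that for fixed $f$ the assignment $i\mapsto i\Box f$ carries the colimit constructions appearing in the definition of a saturated set to the corresponding constructions on the target: since $-\times X$ and $-\times Y$ preserve colimits (they are left adjoints in the cartesian closed category $\textbf{SSet}$) and a pushout of a pushout is a pushout, a pushout (resp. retract, countable composition, direct sum) of $i$ is sent to a pushout (resp. retract, countable composition, direct sum) of $i\Box f$, while an isomorphism $i$ is sent to an isomorphism. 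Because anodyne extensions form a saturated class, each of these operations keeps us inside $\mathcal{A}_f$, and isomorphisms lie in $\mathcal{A}_f$ as well. This bookkeeping, tracking how $\Box$ interacts with each individual closure property, is the main obstacle and the one place where genuine care is needed.

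Finally I would check that $\mathcal{A}_f$ contains every $B_3$-generator. Such a generator has the form $j_e\Box m$ for some subcomplex inclusion $m$, and by associativity $(j_e\Box m)\Box f \cong j_e\Box(m\Box f)$. Since $m\Box f$ is again a monomorphism by the first step, $j_e\Box(m\Box f)$ is itself a $B_3$-generator, hence anodyne; thus $j_e\Box m \in \mathcal{A}_f$. Therefore $\mathcal{A}_f$ is a saturated class containing $B_3$, so it contains the saturated set generated by $B_3$, which is exactly the class of anodyne extensions. In particular the given anodyne $i$ lies in $\mathcal{A}_f$, i.e. $i\Box f$ is anodyne, as required; letting $f$ range over all monomorphisms, the same computation yields the conclusion uniformly.
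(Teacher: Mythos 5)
Your proof is correct and follows essentially the same route as the paper: fix the monomorphism $f$, form the class of monomorphisms $i$ for which $i \Box f$ is anodyne, verify that this class is saturated, and reduce the problem to the generators $B_3$. The only difference is cosmetic — where the paper computes $P(i',f)$ explicitly and recognizes the resulting inclusion as a member of $B_3$, you reach the same identification via the associativity $(j_e \Box m)\Box f \cong j_e \Box (m\Box f)$ together with the fact that a pushout product of monomorphisms is a monomorphism, which is a clean repackaging of the same computation.
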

\begin{proof}
Let $A$ be the set of monomorphisms $i': K' \to L'$ such that the induced morphism on the pushout product $P(i' , f) \to L' \times X$ is an anodyne extension. It is easy to prove that $A$ is a saturated set, so that it is sufficient to check that $A$ contains $B_3$. Let then $Y' \to X'$ be a monomorphism, and let $K=\Delta[1] \times Y'  \coprod \, \{ e \} \times X'$ and $L= \Delta[1] \times X$. We then have $P(i,f) = \Delta[1] \times (Y' \times X \coprod X' \times Y ) \coprod \; \{ e \}\times X' \times X$ and $L \times X = \Delta[1]\times X' \times X$. So that the inclusion of $K \times X \coprod L \times Y$ into $L \times X$ belongs to $B_3$ and that the inclusion of $K$ into $L$ belongs to $A$.
 \end{proof}

\begin{thm}\label{homfib}
If $i:K \to L$ is a monomorphism and $p: X \to Y$ a Kan fibration. Then the induced map $\underline{\mathit{Hom}}(i,p): \underline{\mathit{Hom}}(L,X) \to \underline{\mathit{Hom}}(K,X) \times_{\underline{\mathit{Hom}}(K,Y)} \underline{\mathit{Hom}}(L,Y)$ is a fibration.
\end{thm}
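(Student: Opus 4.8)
The plan is to verify directly that $\underline{\mathit{Hom}}(i,p)$ satisfies the right lifting property against every anodyne extension, which is precisely the defining condition for a Kan fibration. The engine of the argument is the two-variable adjunction coming from the cartesian closed structure of $\textbf{SSet}$, which converts lifting problems for the pullback-hom $\underline{\mathit{Hom}}(i,p)$ into lifting problems for a pushout product against $p$.

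Concretely, I would fix an anodyne extension $j: A \to B$ together with a commutative square having $j$ on the left and $\underline{\mathit{Hom}}(i,p)$ on the right, and transpose all the data under the adjunction $\textbf{SSet}(A \times S, T) \cong \textbf{SSet}(A, \underline{\mathit{Hom}}(S,T))$ (and its analogue for $B$). The top map $A \to \underline{\mathit{Hom}}(L,X)$ corresponds to a map $A \times L \to X$, while the two components of the bottom map into the pullback $\underline{\mathit{Hom}}(K,X) \times_{\underline{\mathit{Hom}}(K,Y)} \underline{\mathit{Hom}}(L,Y)$ correspond to maps $B \times K \to X$ and $B \times L \to Y$. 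The condition encoded by the pullback, namely that the $K$-component in $X$ and the $L$-component in $Y$ agree after composing with $p$ and with $i$ respectively, translates exactly into the gluing data defining $P(j,i) = (A \times L)\coprod_{A \times K}(B \times K)$. Hence the transposed data assemble into a single commutative square with the pushout product $j \Box i: P(j,i) \to B \times L$ on the left and $p: X \to Y$ on the right.

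At this point I would invoke the pushout-product theorem proved above: since $j$ is an anodyne extension and $i$ is a monomorphism, $j \Box i$ is again an anodyne extension. Because $p$ is a Kan fibration it has the right lifting property against all anodyne extensions, so the transposed square admits a lift $B \times L \to X$. Transposing this lift back along the adjunction yields a map $B \to \underline{\mathit{Hom}}(L,X)$ which, by naturality of the adjunction isomorphisms, solves the original lifting problem. Therefore $\underline{\mathit{Hom}}(i,p)$ has the right lifting property against every anodyne extension and is a Kan fibration.

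I expect the main obstacle to be the transposition step: one must check with care that the transpose of the \emph{pullback} appearing in the codomain of $\underline{\mathit{Hom}}(i,p)$ is exactly the \emph{pushout} $P(j,i)$, and that the two legs of the square match up correctly on the nose. This is the bookkeeping heart of the argument, the calculus relating pullback-homs and pushout products, and it is best carried out by unwinding the defining universal properties of the limit and the colimit rather than by any explicit computation with simplices.
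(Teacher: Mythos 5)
Your proposal is correct and is exactly the argument the paper intends: the paper's proof simply cites the pushout-product theorem and the definition of Kan fibrations via lifting against anodyne extensions, and your write-up fills in the adjunction/transposition bookkeeping that the paper calls "straightforward." No gaps; the key step (transposing the pullback in the codomain of $\underline{\mathit{Hom}}(i,p)$ into the pushout $P(j,i)$) is identified and handled correctly.
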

\begin{proof}
Straightforward from the previous theorem, recalling that Kan fibrations are defined using a lifting property in terms of anodyne extensions.
\end{proof}

\begin{thm}\label{factor}
Any simplicial map can be factored as an anodyne extension followed by a fibration. Moreover, the factorisation is functorial.
\end{thm}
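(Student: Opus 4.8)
The plan is to prove this by Quillen's small object argument applied to the set $B_1$ of horn inclusions $\Lambda^k[n] \hookrightarrow \Delta[n]$, whose saturation is by definition the class of anodyne extensions. The feature that makes the argument run with merely \emph{countable} compositions, as demanded by clause (iv) of the definition of saturated set, is that each horn $\Lambda^k[n]$ is a finite simplicial set, hence compact: since the maps in the tower below are monomorphisms, their sequential colimit is an increasing union computed levelwise, and any map out of a finite simplicial set into such a union factors through a finite stage (its finitely many nondegenerate simplices land in some $X_m$).

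First I would fix a simplicial map $f: X \to Y$ and build inductively a tower $X = X_0 \to X_1 \to \dots$ over $Y$. Setting $p_0 = f$ and, given $p_m : X_m \to Y$, I would form the set $S_m$ of all commutative squares
$$\xymatrix@1{
& \Lambda^k[n]  \ar[r]^{u} \ar[d]_{\iota}
& X_m  \ar[d]^{p_m} \\
& \Delta[n] \ar[r]_{v}
& Y
}$$
with $\iota$ ranging over the horn inclusions in $B_1$, and define $X_{m+1}$ as the pushout
$$\xymatrix@1{
& \coprod_{S_m} \Lambda^k[n]  \ar[r] \ar[d]
& X_m  \ar[d] \\
& \coprod_{S_m} \Delta[n] \ar[r]
& X_{m+1}
}$$
The universal property of the pushout together with the maps $v$ induces a map $p_{m+1}: X_{m+1} \to Y$. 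I would then put $Z = \mathrm{colim}_m X_m$, take $p : Z \to Y$ to be the induced map, and let $i : X \to Z$ be the canonical map into the colimit.

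By construction every $X_m \to X_{m+1}$ is a pushout of a coproduct of horn inclusions, so $i$ is a countable composition of such maps; since the anodyne extensions form a saturated class, closed under arbitrary direct sums, pushouts and countable compositions, $i$ is anodyne. To see that $p$ is a fibration I would check the right lifting property against each horn inclusion: given a lifting problem consisting of $a : \Lambda^k[n] \to Z$ and $b: \Delta[n] \to Y$ with $p \circ a = b \circ \iota$, compactness lets $a$ factor as $\Lambda^k[n] \to X_m \to Z$, so the induced square lies in $S_m$; the corresponding leg $\Delta[n] \to X_{m+1}$ of the pushout, composed with $X_{m+1} \to Z$, is the required diagonal filler. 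Hence $p$ has the right lifting property with respect to all anodyne extensions, i.e.\ $p$ is a Kan fibration.

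Finally, functoriality is automatic from the construction, precisely because $S_m$ collects \emph{all} admissible squares: a commutative square from $f$ to another map $f'$ carries each square in $S_m$ to one in $S'_m$, inducing compatible maps on the successive pushouts and hence a natural map $Z \to Z'$ commuting with the two factorizations. I expect the only genuinely delicate point to be this compactness argument guaranteeing that maps out of finite horns factor through a finite stage of the tower; the remaining steps are routine verifications of the closure properties already packaged into the notion of saturated set.
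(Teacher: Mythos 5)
Your proposal is correct and is essentially the paper's own proof: both run Quillen's small object argument on the set of horn inclusions, building the same tower of pushouts over $Y$, using closure properties of the saturated class to see that $X \to \mathrm{colim}_m X_m$ is anodyne, and using finiteness of the horns to factor any lifting problem through a finite stage where the pushout supplies the filler. Your treatment of compactness and of functoriality is in fact slightly more explicit than the paper's, but the route is identical.
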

\begin{proof}
Consider the set $L$ of commutative diagrams of the form:
$$\xymatrix@1{
& \Lambda^k[n]   \ar[r] \ar[d]
& X  \ar[d]^{f} \\
& \Delta[n] \ar[r]
& Y
}$$
and we sum over the set $L$ and form the pushout obtaining an anodyne extension $i$:

$$\xymatrix@1{
& \coprod_L \Lambda^k[n]  \ar[r] \ar[d]^{i}
& X  \ar[d]^{i^0} \\
& \coprod_L \Delta[n] \ar[r]
& X^1
}$$
So that we have the commutative diagram:
$$\xymatrix@1{
& X  \ar[rr]^{i^0} \ar[dr]^{f}
&
& X^1  \ar[dl]_{f^1} \\
& 
& Y
&
}$$
We apply the same process, now to $f^1$ obtaining $f^2$ and so on. Finally, we put $E := \mbox{colim} X^n$ and define $p:E \to Y$ as the map induced by the $f^n$. Hence we have a factorisation $f=pi$ where $i$ is anodyne, we have to check that $p$ is a fibration, so consider a commutative diagram:
$$\xymatrix@1{
& \Lambda^k[n]   \ar[r]^{h} \ar[d]
& E  \ar[d]^{p} \\
& \Delta[n] \ar[r]
& Y
}$$
and try to lift it. Since $\Lambda^k[n]$ has only finitely many non-degenerate simplices, $h$ factors through some $X^n$, but then we have a lifting in $X^{n+1}$, and hence a diagonal filler in the starting square.\\
Note that functoriality follows by construction.
\end{proof}

\begin{thm}
A map $i$ is anodyne iff it has the left lifting property with respect to fibrations
\end{thm}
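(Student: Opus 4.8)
The plan is to prove the two implications separately: the forward one is immediate from the definitions, while the converse rests on the functorial factorisation of Theorem~\ref{factor} together with the standard retract trick.

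For the forward implication, suppose $i$ is anodyne. By the very definition of Kan fibration as a map with the right lifting property against all anodyne extensions, every fibration has the RLP with respect to $i$; this is literally the statement that $i$ has the LLP with respect to every fibration. Nothing further is needed here.

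For the converse, suppose $i: K \to L$ has the LLP with respect to all fibrations. I would first apply Theorem~\ref{factor} to factor $i$ as $i = p \circ j$, with $j: K \to E$ anodyne and $p: E \to L$ a fibration. Then I set up the commutative square whose top edge is $j$, whose bottom edge is $1_L$, whose left edge is $i$ and whose right edge is $p$; it commutes since $p \circ j = i = 1_L \circ i$. Because $i$ has the LLP against the fibration $p$, this square admits a diagonal filler $r: L \to E$ with $r \circ i = j$ and $p \circ r = 1_L$. The two identities $r \circ i = j$, $p \circ r = 1_L$ together with $p \circ j = i$ exhibit $i$ as a retract of $j$ in the exact sense of the retract definition: taking $u = v = 1_K$ on the top row and $u' = r$, $v' = p$ on the bottom row, one verifies $v \circ u = 1_K$ and $v' \circ u' = p \circ r = 1_L$, and that both squares commute, since $j \circ 1_K = r \circ i$ and $i \circ 1_K = p \circ j$.

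Finally, since the anodyne extensions form a saturated class and $j$ is anodyne, closure under retracts yields that $i$ is anodyne, which completes the converse. The only genuine content is this retract trick, and the two facts one must invoke are the anodyne--fibration factorisation and the stability of the saturated class under retracts; I do not expect a real obstacle beyond correctly matching the square to the paper's precise definition of \emph{retract}.
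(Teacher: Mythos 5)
Your proposal is correct and follows exactly the paper's argument: factor $i = p \circ j$ via Theorem~\ref{factor}, use the hypothesised lifting property against the fibration $p$ to produce the diagonal filler, and conclude that $i$ is a retract of the anodyne map $j$, hence anodyne by closure of the saturated class under retracts. Your explicit verification of the retract diagram (with $u = v = 1_K$, $u' = r$, $v' = p$) fills in details the paper leaves implicit, but the mathematical content is identical.
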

\begin{proof}
Factor $i$ as $i=pj$ where $j$ in an anodyne extension and $p$ a fibration. since $i$ has the left lifting property with respect to $p$ we can find a diagonal filler:
$$\xymatrix@1{
& A  \ar[r]^{j} \ar[d]^{i}
& E  \ar[d]^{p} \\
& B \ar@{=}[r] \ar@{-->}[ur]^{k}
& B
}$$
hence $i$ is a retract of $j$, so an anodyne extension as well.
\end{proof}

\begin{defn}
A fibration $P: X \to Y$ is a \textbf{locally trivial}\index{fibration!locally trivial} or a \textbf{fibre bundle}\index{fibre bundle} iff for every simplex $y: \Delta[n] \to Y$ of $Y$, the pullback fibration $y^*p : y^*X=\Delta[n] \times_{Y} X \to \Delta[n]$ is isomorphic over $\Delta[n]$ to a product fibration $\pi_1 : \Delta[n] \times F \to \Delta[n]$.
\end{defn}

%------------------------------------------------------
\section*{Homotopy Groups}

\begin{defn}
Given two maps of simplicial sets $f,g:X \to Y$ we say that a \textbf{homotopy}\index{homotopy!of simplicial maps} between them is a simplicial map $h:\Delta[1] \times X \to Y$ such that $h(0,x)=f(x)$ and $h(1,x)=g(x)$ for all $x$.
\end{defn}

\nind
Observe that by the definition of the $\mathit{Hom}$-complex a homotopy between two maps $f$ and $g$ can be equivalently described as $1$-simplex $h$ connecting the two vertices $f$ and $g$ i.e. $d_0 h = f$ and $d_1 h = g$. Hence we give the following more general definition.

\begin{defn}
Given a simplicial set and $x,y \in X_0$ two vertices, we say that $x$ is \textbf{homotopic}\index{homotopy!of simplicial vertices} to $y$, written $x \sim y$ iff there is a $1$-simplex $x \in X_1$ such that $d_0 z = x$ and $d_1 z = y$.
\end{defn}

\nind
Unfortunately, as we shall see in the sequel, this relation is not an equivalence relation for arbitrary simplicial sets. However, we have the following result.

\begin{lem}
If $X$ is a Kan complex, then the homotopy of vertices is an equivalence relation.
\end{lem}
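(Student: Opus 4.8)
The relation $\sim$ must be shown reflexive, symmetric and transitive, and my plan is to treat the three properties separately, with only the last two invoking fibrancy. Reflexivity needs no hypothesis at all: for a vertex $x$ the degenerate $1$-simplex $s_0 x$ satisfies $d_0 s_0 x = d_1 s_0 x = x$ by the simplicial identities $d_0 s_0 = d_1 s_0 = \mathrm{id}$ (dual to the cosimplicial identities of Remark~\ref{decomposizione}), so $x \sim x$. For symmetry and transitivity the strategy is uniform: I would encode the given $1$-simplices and suitable degeneracies as a horn $\Lambda^k[2] \to X$, invoke the Kan condition — every inclusion $\Lambda^k[2] \hookrightarrow \Delta[2]$ is anodyne, and $X \to 1$ has the right lifting property against anodyne extensions, so the horn extends to a $2$-simplex $\sigma \colon \Delta[2] \to X$ — and then read off the one remaining face of $\sigma$.

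For symmetry, assume $x \sim y$ via $z$ with $d_0 z = x$, $d_1 z = y$. I would form the horn $\Lambda^0[2] \to X$ whose face $d_2$ is $z$ and whose face $d_1$ is the degenerate edge $s_0 y$; the single gluing condition (an instance of $d_1 d_2 = d_1 d_1$) reduces to $d_1 z = d_1 s_0 y = y$, which holds, so the horn is well defined. Filling it produces $\sigma$, and the new face $w := d_0 \sigma$ satisfies $d_0 w = y$ and $d_1 w = x$, exhibiting $y \sim x$. (One could equally run this with $\Lambda^2[2]$ and $s_0 x$.)

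For transitivity, assume $x \sim y$ and $y \sim w$ via edges $z_1$ (with $d_0 z_1 = x$, $d_1 z_1 = y$) and $z_2$ (with $d_0 z_2 = y$, $d_1 z_2 = w$). Here I would use the inner horn $\Lambda^1[2] \to X$ with $d_0 \sigma = z_1$ and $d_2 \sigma = z_2$; the compatibility condition (the identity $d_0 d_2 = d_1 d_0$) reduces to $d_0 z_2 = d_1 z_1 = y$, which holds. The filler's remaining face $z_3 := d_1 \sigma$ then satisfies $d_0 z_3 = x$ and $d_1 z_3 = w$, so $x \sim w$. Thus fibrancy supplies exactly the inverse and composite edges that a general simplicial set lacks.

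The only point demanding care is the bookkeeping: choosing the correct horn ($\Lambda^0[2]$ or $\Lambda^2[2]$ for symmetry, the inner $\Lambda^1[2]$ for transitivity), assigning each given edge and each degeneracy to the right face, and verifying that the prescribed faces genuinely assemble into a map out of the horn. Each such verification is a single application of a simplicial identity, so I do not expect any real obstacle beyond keeping the vertex labels $v_0, v_1, v_2$ straight; the mathematical content is entirely carried by the horn-filling property.
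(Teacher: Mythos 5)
Your proof is correct and takes essentially the same route as the paper: reflexivity from the degenerate edge $s_0 x$, symmetry by filling a $\Lambda^0[2]$-horn assembled from the given edge and a degeneracy, and transitivity by filling the inner horn $\Lambda^1[2]$, with each gluing condition checked via a simplicial identity. If anything, your face bookkeeping (the degeneracy $s_0 y$ on the $d_1$-face, and $z_1$, $z_2$ on $d_0$, $d_2$ for transitivity) is the assignment actually consistent with the paper's stated convention $d_0 z = x$, $d_1 z = y$, whereas the paper's own proof tacitly uses the opposite edge orientation.
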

\begin{proof}
It is obviously reflexive, since if $x \in X_0$ we have $d_1 s_0 x = d_0 s_0 x = x$. If $x \sim y$, we have a $1$-simplex $z$ connecting the two vertices. Then we get a map $f : \Lambda^0[2] \to X$ which is $s_0 x$ on $d_1 i_2$ and $z$ on $d_2 i_2$. Because $X$ is fibrant, there is an extension of $f$ to a $2$-simplex $w \in X_2$. Then $d_0 w$ is the required homotopy from $y$ to $x$.
Finally, for the transitivity suppose $x \sim y$ and $y \sim z$, so that we have $1$-simplices $a$ and $b$ connecting respectively $x$ to $y$, and $y$ to $z$. Then $a$ and $b$ define a map $f: \Lambda^1[2] \to X$ which is $a$ on $d_2 i_2$ and $b$ on $d_0 i_2$. Since $X$ is fibrant we can extend $f$ to a $2$-simplex $c \in X_2$. Then $d_1 c$ is the required homotopy.
\end{proof}

\begin{defn}
Given a fibrant simplicial set $X$ and $v \in X_0$ a vertex, the \textbf{$n$-th homotopy group}\index{homotopy!groups} written $\pi_n(X , v)$ is the set of equivalence classes of singular simplices $\alpha : \Delta[n] \to X$ that send $\partial \Delta[n] $ to $v$, under the equivalence relation defined by $\alpha \sim \beta$ iff there is a homotopy $H:\Delta[1] \times \Delta[n] \to X$ such that $H$ is $\alpha$ on $\{0\} \times \Delta[n]$, $\beta$ on $\{1\} \times \Delta[n]$ and is the constant map $v$ on $\Delta[1] \times \partial \Delta[n]$.
\end{defn}

\nind
After this definition the reader may ask if the notion of weak equivalence can be equivalently defined requiring that the map induces isomorphisms in all simplicial homotopy groups.

\nind
Using the fibrancy of $X$ it can be shown that $\sim$ is an equivalence relation.\\
Given a map $f:X \to Y$ there is as usual an induced map $\pi_n(f) : \pi_n(X , v) \to \pi_n(Y , f(v))$, making $\pi_n$ functorial.\\
It is not clear from this definition if $\pi_n$ are actually groups, nor if the notion of weak equivalence can be equivalently defined requiring that the map induces isomorphisms in all simplicial homotopy groups. An answer to both questions is given by the following theorem that we state here although it requires notions from the next section:

\begin{thm}
Let $X$ be a fibrant simplicial set and $v \in X_0$ a vertex. Then there is a natural isomorphism $\pi_n (X,v) \cong \pi_n(|X|, |v|)$.
\end{thm}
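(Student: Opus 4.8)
The plan is to realise the comparison as a natural transformation coming from the functor $|\cdot|$ and then, using the adjunction $|\cdot|\dashv S$, to reduce the entire statement to a single fact about the unit of that adjunction. First I would construct the natural map $\Phi\colon \pi_n(X,v)\to\pi_n(|X|,|v|)$. Given a representative $\alpha\colon\Delta[n]\to X$ sending $\partial\Delta[n]$ to $v$, its realisation $|\alpha|$ is a map $|\Delta[n]|\to|X|$; under the canonical homeomorphisms $|\Delta[n]|\cong D^n$ and $|\partial\Delta[n]|\cong S^{n-1}$ it carries $S^{n-1}$ to $|v|$, hence represents a class in $\pi_n(|X|,|v|)$. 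That $\Phi$ is well defined is exactly where I would use that realisation preserves finite products: a simplicial homotopy $H\colon\Delta[1]\times\Delta[n]\to X$ realises to $|H|\colon|\Delta[1]|\times|\Delta[n]|\cong[0,1]\times D^n\to|X|$, i.e. to a topological homotopy rel boundary. The same product-preservation shows that the combinatorial group operation (defined by filling a horn and reading off a face) realises to the usual concatenation, so $\Phi$ is a homomorphism, and naturality in $(X,v)$ is automatic since $\Phi$ is induced by a functor.

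Next I would reduce to the singular complex. Observe first that $S(T)$ is a Kan complex for every space $T$: each horn inclusion realises to an inclusion $|\Lambda^k[n]|\hookrightarrow|\Delta[n]|$ that admits a retraction, so $S(T)$ has the right lifting property against all horn inclusions by adjunction. Second, by $|\cdot|\dashv S$ an $n$-simplex $\Delta[n]\to S(T)$ collapsing $\partial\Delta[n]$ to a point is the same datum as a map $(D^n,S^{n-1})\to(T,\mathrm{pt})$, and this identification respects homotopies, whence $\pi_n(S(T),w)\cong\pi_n(T,w)$. Applying this with $T=|X|$ and chasing the naturality square of the unit shows that, under the isomorphism $\pi_n(|X|,|v|)\cong\pi_n(S|X|,\eta v)$, the map $\Phi$ becomes exactly $(\eta_X)_*$, the effect on simplicial homotopy groups of the unit $\eta_X\colon X\to S|X|$.

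Thus everything comes down to proving that $\eta_X$ induces an isomorphism on simplicial homotopy groups for a Kan complex $X$, and this is the step I expect to be the main obstacle, being the genuine passage between combinatorics and topology. I would import here the classical fact that the counit $\epsilon_T\colon|S(T)|\to T$ is a weak homotopy equivalence for every space $T$; applied to $T=|X|$, together with the triangle identity $\epsilon_{|X|}\circ|\eta_X|=\mathrm{id}_{|X|}$, this forces $|\eta_X|\colon|X|\to|S|X||$ to be a weak equivalence as well. It then remains to upgrade ``weak equivalence after realisation'' to ``isomorphism on simplicial homotopy groups'' for maps between Kan complexes, which is the simplicial analogue of Whitehead's theorem and can be obtained from the lifting and homotopy-extension properties of Kan fibrations (Theorems \ref{factor} and \ref{homfib}). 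An alternative, more hands-on route avoids the counit altogether: surjectivity of $\Phi$ follows from simplicial approximation on the CW-complex $|X|$ (approximating a map $|\Delta[n]|\to|X|$ by the realisation of a simplicial map out of an iterated subdivision, then clearing the subdivision by means of the last-vertex map and the fibrancy of $X$), and injectivity follows by the same argument applied one dimension up to a homotopy. Either way, the technical heart is the comparison of a subdivided simplicial representative with an honest one, which is precisely what the fibrancy of $X$ is there to supply.
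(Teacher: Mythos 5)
Your steps one to three are sound: the construction of $\Phi$ via product-preservation of $|\cdot|$, the purely adjunction-theoretic isomorphism $\pi_n(S(T),w)\cong\pi_n(T,w)$, and the reduction of the whole theorem to the claim that the unit $\eta_X\colon X\to S|X|$ induces isomorphisms on simplicial homotopy groups. The gap is in the last step, on both of the routes you offer. In the main route, the two facts you import are not available upstream of this theorem. The counit statement ``$\epsilon_T\colon |S(T)|\to T$ is a weak homotopy equivalence'' is, in the development this paper defers to (it cites proposition 3.6.3 of \cite{Hov} for the present result), a consequence of the Quillen equivalence of theorem 3.6.7 there, whose proof rests on proposition 3.6.3 itself; the same ordering occurs in the other standard treatments. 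Worse, your ``simplicial analogue of Whitehead's theorem'' cannot be obtained from Theorems \ref{factor} and \ref{homfib}: those are purely combinatorial lifting statements, while the implication ``$|f|$ a topological weak equivalence $\Rightarrow$ $f_*$ an isomorphism on \emph{simplicial} homotopy groups'' is exactly the topology-to-combinatorics bridge whose existence is the content of the theorem you are proving. Try to carry it out: factor $f$ by Theorem \ref{factor} as an anodyne map followed by a Kan fibration $p$ (the anodyne part is indeed a simplicial homotopy equivalence between Kan complexes, using \ref{homfib}), then reduce via the simplicial long exact sequence to showing the fibre $F$ of $p$ has trivial simplicial homotopy groups; at that point all you know, via Theorem \ref{realisationSerre} and preservation of finite limits, is that $|F|$ has trivial \emph{topological} homotopy groups, and you need the comparison theorem for $F$ to conclude. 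This circularity is structural, not expositional: the paper's own proof that simplicial sets form a model category invokes the present theorem at precisely this point, so any argument routed through the model structure or the Quillen equivalence presupposes what is to be shown.

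Your alternative route is the genuinely non-circular classical one (Kan's), but the step you describe as ``clearing the subdivision by means of the last-vertex map and the fibrancy of $X$'' is the entire difficulty rather than a routine consequence of fibrancy. The last-vertex map $\mathrm{sd}\,\Delta[n]\to\Delta[n]$ points the wrong way, so a simplicial map $\mathrm{sd}^k\Delta[n]\to X$ must be traded, by the adjunction $\mathrm{sd}\dashv\mathrm{Ex}$, for a map $\Delta[n]\to \mathrm{Ex}^k X$, and returning to $X$ requires Kan's theorem that $X\to\mathrm{Ex}\,X$ induces isomorphisms on simplicial homotopy groups for fibrant $X$ --- a substantial result nowhere developed in this paper; note also that classical simplicial approximation does not apply verbatim to $|X|$, which is a CW complex but not a simplicial complex. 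For contrast, the proof the paper points to needs neither ingredient: it proceeds by induction on $n$ using the path--loop fibration $PX\to X$ with fibre $\Omega X$, the fact that realisation carries Kan fibrations to Serre fibrations (Theorem \ref{realisationSerre}) and preserves finite limits, and a comparison of the simplicial and topological long exact sequences, the Serre-fibration theorem being the only topological input and sitting upstream of everything else. If you want to salvage your outline, keep your reduction and replace the final step by this loop-space induction (applied directly to $X$, which in fact makes the passage through the unit unnecessary).
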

\begin{proof}
See proposition 3.6.3 in \cite{Hov}.
\end{proof}

\begin{lem}\label{retractlem}
The vertex $n \in \Delta[n]$ is a deformation retract of $\Delta[n]$, in the sense that there is a homotopy $H : \Delta[1] \times \Delta[n] \to \Delta[n]$ from the identity map to the constant map at $n$, which sends $\Delta[1] \times \{ n \}$ to $\{ n \}$.\\
Moreover, this homotopy restricts to a deformation retraction of $\Lambda^n[n]$ onto its vertex $n$.
\end{lem}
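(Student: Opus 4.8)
The plan is to write the homotopy $H$ down explicitly on simplices and then verify the required properties directly. Since $\Delta[1]\times\Delta[n]$ is a product of representables, a $p$-simplex of it is a pair $(\alpha,\beta)$ consisting of monotone maps $\alpha\colon[p]\to[1]$ and $\beta\colon[p]\to[n]$, and by the Yoneda lemma a map into $\Delta[n]$ amounts to assigning to each such pair a monotone map $[p]\to[n]$, naturally in $[p]$. I would define
$$H_p(\alpha,\beta)(i)=\begin{cases}\beta(i)&\text{if }\alpha(i)=0,\\ n&\text{if }\alpha(i)=1,\end{cases}\qquad i\in[p].$$
Conceptually this is the simplicial map induced by the monotone map of posets $[1]\times[n]\to[n]$ which is the identity on the slice over $0$ and collapses the slice over $1$ to the top element $n$.

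First I would check that each $H_p(\alpha,\beta)$ is genuinely a simplex, i.e.\ monotone: since $\alpha$ is monotone it takes the value $1$ only on a final segment of $[p]$, so $H_p(\alpha,\beta)$ coincides with the monotone map $\beta$ up to some index and is constantly $n$ afterwards, and $\beta(i)\le n$ guarantees monotonicity at the junction. Naturality in $[p]$, i.e.\ compatibility with the face and degeneracy operators, is immediate because the formula reads off $\alpha(i)$ and $\beta(i)$ pointwise and hence commutes with precomposition by any monotone $[q]\to[p]$; thus $H$ is a well-defined simplicial map. Evaluating at the two vertices of $\Delta[1]$ means taking $\alpha$ constantly $0$ or constantly $1$: in the first case $H(0,\beta)=\beta$, so $H$ restricts to the identity on $\{0\}\times\Delta[n]$, while in the second $H(1,\beta)$ is the constant map at $n$, as the definition of homotopy requires. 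Finally, if $\beta$ is a simplex of the subcomplex $\{n\}$, i.e.\ $\beta$ is constantly $n$, then $H_p(\alpha,\beta)$ is constantly $n$ for every $\alpha$, so $H$ sends $\Delta[1]\times\{n\}$ to $\{n\}$ and is therefore a deformation retraction.

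For the \emph{moreover} clause the key input is a combinatorial description of the horn as a subcomplex: a simplex $\beta\colon[p]\to[n]$ lies in $\Lambda^n[n]$ precisely when it factors through a face $\partial^i$ with $i\ne n$, that is, when the image of $\beta$ omits at least one element of $\{0,\dots,n-1\}$. I would then note that the image of $H_p(\alpha,\beta)$ is contained in $\{n\}\cup\operatorname{im}(\beta)$; hence if some $j\in\{0,\dots,n-1\}$ is missing from $\operatorname{im}(\beta)$, then this same $j$, being different from $n$, is missing from $\operatorname{im}\bigl(H_p(\alpha,\beta)\bigr)$, so $H_p(\alpha,\beta)$ again lies in $\Lambda^n[n]$. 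This shows $H$ restricts to a simplicial map $\Delta[1]\times\Lambda^n[n]\to\Lambda^n[n]$, and since $n$ is a vertex of $\Lambda^n[n]$ the endpoint and stationarity conditions carry over to the restriction, yielding the claimed deformation retraction. The only delicate point is exactly this last step: fixing the correct characterization of which simplices belong to $\Lambda^n[n]$ and checking that the image condition is preserved; everything else is a routine unwinding of the definitions.
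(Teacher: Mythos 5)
Your proof is correct and uses essentially the same homotopy as the paper: the simplicial map induced by the monotone poset map $[1]\times[n]\to[n]$ which is the identity over $0$ and constant at $n$ over $1$. The paper's proof is terser—it simply identifies homotopies with ordered maps $[1]\times[n]\to[n]$ and leaves the endpoint checks and the restriction to $\Lambda^n[n]$ implicit—whereas you verify naturality, the endpoint and stationarity conditions, and the horn clause (via the correct characterization of simplices of $\Lambda^n[n]$ as those omitting some $j\ne n$ from their image) explicitly, all correctly.
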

\begin{proof}
A simplex of $\Delta[1] \times \Delta[n]$ is a chain in the ordered set $[1] \times [n]$. Hence a homotopy $H : \Delta[n] \times \Delta[1] \to \Delta[n]$ is the same as an ordered map $[1] \times [n] \to [n]$. We choose tha map that takes $(k,0)$ to $k$ and $(k,1)$ to $n$. The corresponding homotopy is the desired $H$.
\end{proof}

\begin{rem}
The standard simplex $\Delta[n]$ is not fibrant.\\
Indeed, in the previous lemma we have found a homotopy from the identity map of the standard simplex to the constant map $v$. There no homotopy going in the opposite direction, because such a homotopy would be induced by a map of ordered sets that takes $(k,0)$ to $n$ and $(k,1)$ to $k$, and there is no such map. Hence homotopy is not an equivalence relation of the set of endomorphisms of $\Delta[n]$, so that by theorem \ref{homfib} the standard simplex cannot be fibrant.
\end{rem}

\begin{thm}
Suppose $X$ is a fibrant simplicial set, $v \in X_0$ a vertex and $\alpha: \Delta[n] \to X$ a simplex such that $d_i \alpha = v$ for all $i$. Then $[\alpha] = [v] \in \pi_n(X,v)$ iff there is an ($n+1$)-simplex $x \in X$ such that $d_{n+1} x = \alpha$ and $d_i x = v$ for all $i \leq n$.
\end{thm}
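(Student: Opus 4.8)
The statement relates two descriptions of the trivial homotopy class: a genuine nullhomotopy of $\alpha$ relative to the boundary, and the purely combinatorial condition that $\alpha$ be the single possibly-nondegenerate face of an $(n+1)$-simplex. The plan is to prove the two implications separately; the backward one is an explicit construction, while the forward one essentially uses the fibrancy of $X$.

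For the implication ``$\Leftarrow$'', suppose such an $x$ is given. I would exhibit the nullhomotopy directly, by collapsing the standard prism onto $\Delta[n+1]$. Consider the order-preserving map $q \colon [1] \times [n] \to [n+1]$ with $q(0,k)=k$ and $q(1,k)=n+1$; it is monotone, and I write $q$ also for its nerve $\Delta[1] \times \Delta[n] \to \Delta[n+1]$. Setting $H := x \circ q$, I would check the three boundary conditions from the definition of $\pi_n$: on $\{0\} \times \Delta[n]$ the map $q$ restricts to the operator $\partial^{n+1}\colon [n]\to[n+1]$, so $H$ equals $d_{n+1}x = \alpha$ there; on $\{1\} \times \Delta[n]$ the map $q$ is constant at the vertex $n+1$, which $x$ sends to $v$ since $d_0 x = v$ has $n+1$ among its vertices; and the restriction of $q$ to $\Delta[1] \times (\text{the $i$-th face of } \Delta[n])$ lands in the $i$-th face of $\Delta[n+1]$, so $H$ there factors through $d_i x = v$ for every $i \le n$. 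Thus $H$ is a homotopy from $\alpha$ to the constant simplex $v$, stationary on $\partial\Delta[n]$, and $[\alpha]=[v]$.

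For the implication ``$\Rightarrow$'', I start from a homotopy $H \colon \Delta[1] \times \Delta[n] \to X$ that is $\alpha$ on $\{0\}\times\Delta[n]$, is $v$ on $\{1\}\times\Delta[n]$, and is stationary at $v$ on $\Delta[1] \times \partial\Delta[n]$. I would restrict $H$ to the standard triangulation of the prism: for $0 \le j \le n$ let $\tau_j$ be the nondegenerate $(n+1)$-simplex of $\Delta[1]\times\Delta[n]$ cut out by the maximal chain $(0,0),\dots,(0,j),(1,j),\dots,(1,n)$, and put $y_j := H \circ \tau_j$. A face computation, using that $H$ is $v$ on the top and on the sides, should show that each $y_j$ has all faces degenerate at $v$ except $d_j y_j =: \rho_j$ and $d_{j+1}y_j =: \rho_{j+1}$, that $\rho_0 = v$ and $\rho_{n+1} = \alpha$, that every $\rho_j$ is an $n$-simplex all of whose faces are $v$, and that consecutive simplices agree on their shared internal face, $d_{j+1}y_j = d_{j+1}y_{j+1}$. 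In this way the $y_j$ assemble into a chain of elementary homotopies $v = \rho_0 \sim \rho_1 \sim \dots \sim \rho_{n+1} = \alpha$.

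It then remains to compose this chain into a single witnessing simplex, and this is where I would invoke fibrancy: repeatedly filling horns $\Lambda^{k}[n+2] \hookrightarrow \Delta[n+2]$ whose prescribed faces are two consecutive $y_j$ together with degeneracies yields an $(n+2)$-simplex whose remaining face is an elementary homotopy for $\rho_j \sim \rho_{j+2}$, and iterating telescopes the chain down to a single $x$ with $d_{n+1}x = \alpha$ and $d_i x = v$ for $i \le n$. I expect this last step to be the main obstacle: the nondegenerate faces of the $y_j$ sit in the varying positions $(j,j+1)$ and must be renormalised to the positions $(n,n+1)$ before they can be composed, which is precisely the bookkeeping that forces the use of the filling condition. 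It is worth stressing why fibrancy cannot be avoided: one is tempted to factor $H$ through the collapse $q$ of the first part, but that collapse identifies the principal diagonal of the prism with a side edge, so $H$ fails to factor unless the diagonal is already degenerate at $v$; repairing exactly this failure is the role played by the horn fillings.
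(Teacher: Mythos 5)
Your backward implication is correct and complete: the collapse $q\colon [1]\times[n]\to[n+1]$ is monotone, and your three boundary checks are exactly what the paper's definition of the homotopy relation requires. This is a genuine contribution, since the paper proves neither direction itself but defers everything to Hovey's Lemma 3.4.5. Your set-up for the forward implication is also correct: the $\tau_j$ are precisely the nondegenerate $(n+1)$-simplices of the prism, and every face identity you assert ($d_k y_j = v$ for $k\neq j,j+1$, the matching $d_{j+1}y_j = d_{j+1}y_{j+1}$, $\rho_0 = v$, $\rho_{n+1}=\alpha$, and that each $\rho_j$ has all faces $v$) does hold.

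The gap is the final step, which you describe as bookkeeping; it is in fact the whole content of the lemma, and the horn you propose does not exist. For $y_j$ and $y_{j+1}$ to occur as faces of an $(n+2)$-simplex $Z$ sharing the face $\rho_{j+1}$, the matching conditions $d_a d_b Z = d_{b-1} d_a Z$ force $d_{j+1}Z = y_j$ and $d_{j+2}Z = y_{j+1}$. But then $d_{j+2}(d_{j+3}Z)=d_{j+2}(d_{j+2}Z)=\rho_{j+2}$, so the face in position $j+3$ cannot be degenerate at $v$; taking it to be $s_{j+2}\rho_{j+2}$ forces $d_{j+3}(d_{j+4}Z)=\rho_{j+2}$ in turn, and the cascade runs all the way up: the last face $d_{n+2}Z$ would have to be an $(n+1)$-simplex with $\rho_{j+2}$ in position $n+1$ and $v$ in every other position except possibly $j$. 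No degenerate simplex has this shape ($s_i\rho_{j+2}$ puts $\rho_{j+2}$ in the two adjacent positions $i$ and $i+1$, and $s_{n+1}\rho_{j+2}$ does not exist, $\rho_{j+2}$ being only $n$-dimensional); such a face is precisely the renormalised witness you are trying to manufacture, so the prescription presupposes its own output. The smallest case makes this vivid: for $n=1$, $j=0$ you would need a $3$-simplex $Z$ with $d_1Z=y_0$, $d_2Z=y_1$, whose face $d_3Z$ must then satisfy $d_1(d_3Z)=d_2y_0=v$ and $d_2(d_3Z)=d_2y_1=\alpha$; up to its unconstrained $0$-th face, $d_3Z$ is already the simplex $x$ being sought.

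Here is a way to finish that uses only tools already in the paper and replaces all of the telescoping by a single lifting. The inclusion $\bigl(\Delta[1]\times\partial\Delta[n+1]\bigr)\cup\bigl(\{1\}\times\Delta[n+1]\bigr)\hookrightarrow \Delta[1]\times\Delta[n+1]$ belongs to the paper's class $B_3$, hence is anodyne. Define a map on this subcomplex by: the constant map $v$ on $\{1\}\times\Delta[n+1]$ and on $\Delta[1]\times\Lambda^{n+1}[n+1]$, and your homotopy $H$ on $\Delta[1]\times\Delta[n]$, where $\Delta[n]$ sits inside $\partial\Delta[n+1]$ as the $(n+1)$-st face. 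These prescriptions agree on overlaps precisely because $H$ is stationary at $v$ on $\Delta[1]\times\partial\Delta[n]$ and equals $v$ on $\{1\}\times\Delta[n]$. Since $X$ is fibrant, the map extends to $G\colon \Delta[1]\times\Delta[n+1]\to X$, and $x:=G|_{\{0\}\times\Delta[n+1]}$ is the required simplex: for $i\le n$ the face $d_i x$ lies over the horn, where $G$ is constant $v$, while $d_{n+1}x = H|_{\{0\}\times\Delta[n]}=\alpha$. In other words, fibrancy must be applied to the whole cylinder $\Delta[1]\times\Delta[n+1]$ at once (or, as in Hovey's own proof of the cited lemma, through a more delicate induction), rather than to pairwise compositions of the $y_j$.
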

\begin{proof}
see lemma 3.4.5 in \cite{Hov}.
\end{proof}

\begin{rem}
We want to develop a simplicial analogue of Serre long exact sequence, as a first step we construct the connecting map $\partial$.\\
Let $p:X \to Y$ be a fibration of simplicial sets and $v \in X_0$ a vertex. Let $F$ denote the fibre of $p$ at $p(v)$. We now construct a map $\partial :\pi_n(Y , p(v)) \to \pi_{n-1}(F , v)$ as follows: given a class $[\alpha] \in \pi_n(Y , p(v)) $, let $\gamma$ be the lift in the following diagram:
$$\xymatrix@1{
& \Lambda^k[n]   \ar[r]^{v} \ar[d]
& X  \ar[d]^{p} \\
& \Delta[n] \ar[r]^{\alpha} \ar@{-->}[ru]^{\gamma}
& Y
}$$
and define $\partial[\alpha] := d_n(\gamma)$ where $d_n$ is the $n$-th face map. The commutativity of the diagram implies that $d_n(\gamma)$ lies in the fibre, and it is easy to see that $d_i d_n \gamma = v$ so that by the previous lemma $[d_n \gamma ] \in \pi_{n-1}(F , v)$.\\
We should check that $\partial$ is well defined, but it is a standard exercise in lifting pushout products of anodyne extensions. For the details see lemma 3.4.8 in \cite{Hov}.
\end{rem}

\begin{thm}[the long exact sequence]
Let $p: X \to Y$ be a fibration between fibrant simplicial sets, ad $v \in X_0$ a vertex. Let $F$ denote the fibre of $p$ over $p(v)$.\\
Then we have an exact sequence of pointed sets:
$$\dots \to \pi_n(X , v) \to \pi_n(Y , p(v)) \to \pi_{n-1}(F , v) \to \pi_{n-1}(X , v) \to \dots$$
\end{thm}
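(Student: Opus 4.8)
The plan is to establish exactness at each of the three kinds of vertices occurring in the sequence, recalling that for a map of pointed sets $f\colon (A,a_0)\to (B,b_0)$ the kernel is $f^{-1}(b_0)$, so that exactness means $\ker=\operatorname{im}$ with the image of the basepoint as distinguished element. The three maps are $p_*=\pi_n(p)$ and $i_*=\pi_{n-1}(i)$ induced functorially by $p$ and by the inclusion $i\colon F\hookrightarrow X$, and the connecting map $\partial$ constructed in the preceding remark. Two of the six required inclusions are the ``the composite vanishes'' statements and are immediate: $p\circ i$ is the constant map at $p(v)$, whence $p_*i_*[\beta]=[p(v)]$; and if $[\alpha]=p_*[\xi]$ one may choose the lift defining $\partial$ to be $\xi$ itself, since $\xi$ already restricts to $v$ on the horn, giving $\partial[\alpha]=[d_n\xi]=[v]$. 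Thus $\operatorname{im} i_*\subseteq\ker p_*$ and $\operatorname{im} p_*\subseteq\ker\partial$, the latter using that $\partial$ is independent of the chosen lift.

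The essential tool for the reverse inclusions is the right lifting property of $p$ against the anodyne inclusions $(\{e\}\times\Delta[n])\cup(\Delta[1]\times\partial\Delta[n])\hookrightarrow\Delta[1]\times\Delta[n]$ of class $B_2$, together with the characterisation of the trivial class proved just above (a simplex with all faces $v$ is null iff it bounds a simplex $x$ with $d_{n+1}x=\alpha$ and the remaining faces $v$). For exactness at $\pi_n(X)$, suppose $p_*[\xi]=[p(v)]$; then $p\xi$ is null-homotopic rel $\partial\Delta[n]$ by some $H\colon\Delta[1]\times\Delta[n]\to Y$. The initial value $\xi$ on $\{0\}\times\Delta[n]$ and the constant value $v$ on $\Delta[1]\times\partial\Delta[n]$ agree and define a lift of $H$ on the anodyne subcomplex; filling against $p$ yields a homotopy $\tilde H$ in $X$ whose terminal face $\xi'$ is homotopic to $\xi$, lands in $F$, and sends $\partial\Delta[n]$ to $v$, so that $[\xi]=i_*[\xi']$.

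For exactness at $\pi_n(Y)$, if $\partial[\alpha]=[v]$ choose a lift $\gamma$ as in the remark and set $\beta:=d_n\gamma$; since $[\beta]=[v]$ in $F$, the characterisation theorem supplies a bounding simplex $w\in F$, and gluing $w$ to $\gamma$ along their common $n$-th face inside a suitable $(n{+}1)$-horn and filling (using that $X$ is fibrant) produces a new lift $\gamma'$ of a representative of $[\alpha]$ with all faces equal to $v$; then $[\gamma']\in\pi_n(X)$ and $p_*[\gamma']=[\alpha]$. For exactness at $\pi_{n-1}(F)$, the hypothesis $i_*[\beta]=[v]$ means that $\beta$, viewed in $X$, bounds a simplex $x\in X$ with $d_n x=\beta$ and the remaining faces $v$; applying $p$ gives an $n$-simplex $\alpha:=p(x)$ of $Y$ with all faces $p(v)$, hence a class in $\pi_n(Y)$, and $x$ is precisely a lift of $\alpha$ of the kind used to define $\partial$, so that $\partial[\alpha]=[d_n x]=[\beta]$.

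The arguments throughout reuse the very constructions employed to define $\partial$ and to verify its well-definedness, so the main obstacle is not conceptual but combinatorial bookkeeping: one must repeatedly check, through the simplicial identities, that the manufactured simplices have all prescribed faces equal to the basepoint $v$, that each horn is filled along the correct missing face, and that every construction descends to homotopy classes independently of representatives, exactly as for $\partial$. Some care is also needed at the low-dimensional end of the sequence, where certain $\pi_n$ are only pointed sets, so that exactness must be read purely as preimage-of-basepoint equals image rather than in terms of subgroups.
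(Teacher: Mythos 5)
Your proposal is the standard simplicial argument, which is exactly what the paper points to: the paper's own ``proof'' is a one-line deferral to lemma 3.4.9 of Hovey's book plus the preceding remark constructing $\partial$, and what you have written out is that argument. The three hard inclusions are handled correctly in outline: the prism lift against the anodyne inclusion $(\{e\}\times\Delta[n])\cup(\Delta[1]\times\partial\Delta[n])\hookrightarrow\Delta[1]\times\Delta[n]$ gives $\ker p_*\subseteq\operatorname{im}i_*$, the horn-gluing of $w$ and $\gamma$ gives $\ker\partial\subseteq\operatorname{im}p_*$, and reading the characterisation theorem in $X$ gives $\ker i_*\subseteq\operatorname{im}\partial$, all modulo the face bookkeeping you explicitly acknowledge.

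There is, however, one genuine omission. You claim that ``two of the six required inclusions'' are the trivial composite-vanishes statements, but there are three such statements, one for each spot of the sequence, and the third --- $\operatorname{im}\partial\subseteq\ker i_*$, i.e.\ $i_*\partial[\alpha]=[v]$ in $\pi_{n-1}(X,v)$ --- is never addressed anywhere in your text. It is immediate with a tool you already use: the lift $\gamma$ defining $\partial[\alpha]=[d_n\gamma]$ is itself an $n$-simplex of $X$ with $d_n\gamma$ as its $n$-th face and all remaining faces equal to $v$, so the characterisation theorem, applied in the fibrant complex $X$ rather than in $F$, says precisely that $[d_n\gamma]=[v]$ in $\pi_{n-1}(X,v)$; this is the same theorem you invoke (in the converse direction) for exactness at $\pi_{n-1}(F)$. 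A second, smaller point hides inside your bookkeeping disclaimer: in the exactness-at-$\pi_n(Y)$ step, the filled $(n+1)$-simplex $z$ projects to a simplex $p(z)$ of $Y$ all of whose faces are $p(v)$ except the two occupied by $\alpha$ and $p(\gamma')$, and concluding $[p(\gamma')]=[\alpha]$ from this requires a further (standard, but nowhere stated in the paper) lemma about simplices with all but two faces constant. You can avoid that lemma entirely by mirroring your own $\pi_n(X)$ argument: lift the constant homotopy $\alpha\circ\pi_2$ against $p$, with initial face $\gamma$ and boundary prescribed by $v$ together with the null-homotopy of $d_n\gamma$ inside $F$; the terminal face of the lift is then a simplex $\gamma'$ with all faces $v$ and $p\gamma'=\alpha$ on the nose.
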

\begin{proof}
The proof is analogous to the standard one for topological spaces, for some details see lemma 3.4.9 in \cite{Hov}.
\end{proof}

%-------------------------------------------------------
\section*{Minimal Fibrations}

We know from theorem \ref{fibrefibration} that the fibres of a fibrations are homotopy equivalent. Now we would like to find some reasonable sufficient condition for a fibration to have isomorphic fibres. Note that by definition every locally trivial morphism has isomorphic fibres, and as we shall see our condition will force the fibration to be a locally trivial one.\\
Let start with some preliminary definitions on homotopy equivalences.

\begin{defn}
Two maps $f: X \to Z$ and $g:Y \to Z$ are \textbf{fibre homotopy equivalent}\index{fibre homotopy equivalence} iff there are maps $\theta : X \to Y$ and $\omega : Y \to X$ such that $g \circ \theta = f$ and $f \circ \omega = g$, and there are homotopies from $\theta \circ \omega$ to the identity $1_{Y}$ and from $\omega \circ \theta$ to the other identity $1_X$ that cover the constant homotopy of $Z$.
\end{defn}

\begin{thm}
Let $p: X \to Y$ be a fibration of simplicial sets, and suppose $f,g : K \to Y$ are maps such that there is a homotopy from $f$ to $g$. Then the pullback fibrations $f^*p$ and $g^*p$ are fibre homotopy equivalent.
\end{thm}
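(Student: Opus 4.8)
The plan is to reduce the statement to a single fibration over the cylinder $\Delta[1]\times K$ and then build the fibre homotopy equivalence by lifting deformation retractions of its two ends. First I would form the pullback of $p$ along the given homotopy $H:\Delta[1]\times K\to Y$ from $f$ to $g$, obtaining a fibration $q:E\to\Delta[1]\times K$. By the pasting law for pullbacks, the restriction of $q$ over $\{0\}\times K$ is exactly $f^*p:f^*X\to K$ and its restriction over $\{1\}\times K$ is $g^*p:g^*X\to K$; write $E_0,E_1$ for these two total spaces. Since fibrations are stable under pullback, $f^*p$ and $g^*p$ are themselves fibrations, and the theorem reduces to showing that the two ends $E_0\to K$ and $E_1\to K$ of $q$ are fibre homotopy equivalent over $K$.

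Next I would construct, over $K$, a fibrewise strong deformation retraction of $E$ onto each end. By Lemma \ref{retractlem} the interval $\Delta[1]$ deformation retracts onto either of its vertices; inducing this on the cylinder while keeping the $K$-coordinate stationary yields a deformation retraction $\bar\eta$ of $\Delta[1]\times K$ onto $\{0\}\times K$ whose $K$-component is constant. I would lift $\bar\eta$ through $q$, but arrange the lift to be the constant (projection) homotopy already on $\Delta[1]\times E_0$. The relevant extension problem is against the inclusion $(\{0\}\times E)\cup(\Delta[1]\times E_0)\hookrightarrow\Delta[1]\times E$, which is the pushout product of the horn inclusion $\{0\}=\Lambda^0[1]\hookrightarrow\Delta[1]$ with the monomorphism $E_0\hookrightarrow E$; by the pushout-product theorem proved above it is anodyne, so the fibration $q$ admits the required filler. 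This produces a retraction $r:E\to E_0$ together with a homotopy $P:\mathrm{id}_E\simeq \iota_{E_0}r$ over $K$, with the extra feature that $r$ restricts to the identity on $E_0$. Symmetrically, using the retraction of $\Delta[1]$ onto its other vertex, I obtain $r':E\to E_1$ with a homotopy $Q:\mathrm{id}_E\simeq\iota_{E_1}r'$ over $K$ and $r'|_{E_1}=\mathrm{id}_{E_1}$.

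Finally I would set $\theta:=r'\circ\iota_{E_0}:E_0\to E_1$ and $\omega:=r\circ\iota_{E_1}:E_1\to E_0$, which are maps over $K$ satisfying $g^*p\circ\theta=f^*p$ and $f^*p\circ\omega=g^*p$. Composing the deformation homotopies with the retractions gives the fibre homotopies on the nose: the assignment $(s,e)\mapsto r\bigl(Q(s,\iota_{E_0}e)\bigr)$ lands in $E_0$ (because $r$ does), covers the constant homotopy of $K$, and runs from $\mathrm{id}_{E_0}$ at $s=0$ (here $r|_{E_0}=\mathrm{id}$) to $\omega\theta$ at $s=1$; symmetrically $(s,e)\mapsto r'\bigl(P(s,\iota_{E_1}e)\bigr)$ is a homotopy over $K$ inside $E_1$ from $\mathrm{id}_{E_1}$ to $\theta\omega$. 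This exhibits the required fibre homotopy equivalence.

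The main obstacle is the construction of the second paragraph: the lift of the deformation retraction through $q$ must be simultaneously fibrewise over $K$, so that the resulting homotopies cover the constant homotopy of $K$ as the definition demands, and stationary on the target end, so that the composites $\omega\theta$ and $\theta\omega$ come out homotopic to the identities directly, without having to appeal to transitivity of fibrewise homotopy. Recognising that both constraints are packaged into a single anodyne pushout-product lifting problem is the crux; once it is set up, the remaining verifications are purely formal.
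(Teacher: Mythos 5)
Your strategy --- pull $p$ back along the homotopy to a single fibration $q:E\to\Delta[1]\times K$, lift fibrewise deformation retractions of the base cylinder onto its two ends, and assemble the equivalence from the resulting retractions $r,r'$ --- is sound, and it is essentially the argument of Hovey's Proposition 3.5.3, which is all the paper itself offers for this theorem (it is quoted with a citation, not a proof). Your first paragraph and the formal verification in your third paragraph are fine.

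The gap is in your second paragraph, at exactly the point the paper warns about in the remark following Lemma \ref{retractlem}: simplicial homotopy is directed, and $\Delta[1]$ does \emph{not} deformation retract onto ``either of its vertices'' in the same sense. A homotopy from the identity of $\Delta[1]$ to the constant map at $0$ would be an order-preserving map $[1]\times[1]\to[1]$ sending $(0,t)\mapsto t$ and $(1,t)\mapsto 0$; monotonicity fails, since $(0,1)\leq(1,1)$ would force $1\leq 0$. Only the homotopy from the constant map at $0$ to the identity exists (given by $\min$), whereas at the vertex $1$ the homotopy runs from the identity to the constant map (given by $\max$). Consequently, for the $E_0$ end your lifting square with corner $\Lambda^0[1]=\{0\}$ and initial condition $\mathrm{id}_E$ cannot even be formed: at time $0$ the bottom map is the retraction of the cylinder, not the identity, so the square does not commute. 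The repair is routine but changes the output: lift against the pushout product of the other horn $\Lambda^1[1]\hookrightarrow\Delta[1]$ with $E_0\hookrightarrow E$ (still anodyne), which yields a homotopy from $\iota_{E_0}r$ to $\mathrm{id}_E$ rather than the reverse. Your formula $(s,e)\mapsto r\bigl(Q(s,\iota_{E_0}e)\bigr)$ then produces a homotopy from $\mathrm{id}_{E_0}$ to $\omega\theta$, i.e.\ in the direction opposite to the one the paper's definition of fibre homotopy equivalence literally requires, so one further observation is needed: fibrewise homotopy of maps into the fibration $f^*p$ covering the constant homotopy of $K$ is a symmetric (indeed equivalence) relation, for instance because by Theorem \ref{homfib} such homotopies are paths between vertices in a fibre of a fibration of $\underline{\mathit{Hom}}$-complexes, and such a fibre is a Kan complex, where homotopy of vertices is an equivalence relation. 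With these two corrections your proof goes through.
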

\begin{proof}
See proposition 3.5.3 in \cite{Hov}.
\end{proof}

\begin{cor}\label{fibrecor}
Let $p:X \to Y$ be a fibration of simplicial sets and let $y: \Delta[n] \to Y$ be a singular simplex in $Y$. Then the pullback $y^*X \to \Delta[n]$ is fibre homotopy equivalent to the product fibration $\Delta[n] \times F_n \to \Delta[n]$, where $F_n$ is the fibre of $p$ over the vertex $y(n)$.
\end{cor}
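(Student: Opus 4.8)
The plan is to realise the product fibration as a pullback of $p$ along a constant map, and then to invoke the preceding theorem on the homotopy invariance of pullback fibrations. First I would observe that the product fibration $\Delta[n] \times F_n \to \Delta[n]$ is nothing but the pullback $c^*p$ of $p$ along the constant map $c : \Delta[n] \to Y$ at the vertex $y(n)$. Indeed, the fibre product $\Delta[n] \times_Y X$ formed along $c$ consists of the pairs $(s,x)$ with $p(x) = y(n)$, and this is exactly $\Delta[n] \times F_n$ since by definition $F_n$ is the fibre $p^{-1}(y(n))$ of $p$ over $y(n)$.

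Next I would produce a homotopy between the given simplex $y$ and this constant map $c$. By Lemma \ref{retractlem} there is a homotopy $H : \Delta[1] \times \Delta[n] \to \Delta[n]$ from the identity of $\Delta[n]$ to the constant map $c_n$ at the vertex $n$. Postcomposing with $y$ yields a simplicial map $y \circ H : \Delta[1] \times \Delta[n] \to Y$ satisfying $(y \circ H)(0,-) = y$ and $(y \circ H)(1,-) = y \circ c_n = c$, which is precisely a homotopy from $y$ to $c$.

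Finally I would apply the preceding theorem with $K = \Delta[n]$, $f = y$ and $g = c$: since $y$ and $c$ are homotopic, the pullback fibrations $y^*p$ and $c^*p$ are fibre homotopy equivalent. As $c^*p$ is the product fibration $\Delta[n] \times F_n \to \Delta[n]$ identified in the first step, this is exactly the assertion of the corollary.

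There is essentially no serious obstacle here, since this is a corollary: the genuine content already resides in the previous theorem and in Lemma \ref{retractlem}. The only points demanding a little care are the bookkeeping identification of the product fibration with the pullback along the constant map, and checking that the endpoints of the composed homotopy $y \circ H$ match the hypotheses of the theorem (using that fibre homotopy equivalence is symmetric, so the direction of the homotopy is immaterial). Both are routine.
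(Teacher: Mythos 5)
Your proposal is correct and follows the same route as the paper: Lemma \ref{retractlem} gives the homotopy from the identity of $\Delta[n]$ to the constant map at the vertex $n$, which after composing with $y$ puts $y^*p$ and the pullback along the constant map into the hypotheses of the preceding theorem. Your write-up simply makes explicit the two bookkeeping steps the paper leaves implicit (identifying $c^*p$ with the product fibration $\Delta[n] \times F_n \to \Delta[n]$, and checking the endpoints of the composed homotopy), so there is nothing to correct.
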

\begin{proof}
By lemma \ref{retractlem} the identity map of $\Delta[n]$ is homotopic to the constant map $n$, hence the thesis follows from the previous theorem.
\end{proof}

\begin{defn}
Given a fibration $p:X \to Y$, two $n$-simplices $x,y \in X_n$ are \textbf{$p$-related}\index{$p$-related simplices} written $x \sim_p y$ iff they represent vertices in the same path component of the same fibre of $\underline{\mathit{Hom}}(i,p): \underline{\mathit{Hom}}(\Delta[n],X) \to \underline{\mathit{Hom}}(\partial \Delta[n],X) \times_{\underline{\mathit{Hom}}(\partial \Delta[n],Y)} \underline{\mathit{Hom}}(\Delta[n],Y)$.
\end{defn}

\nind
Observe that thanks to theorem \ref{homfib} $\sim_p$ is an equivalence relation.\\
Moreover, we can rewrite the definition as follows: $x \sim_p y$ iff $p(x) = p(y)$, $d_i x = d_i y$ for all $0 \leq i \leq n$ and there is a fibrewise homotopy stationary on the boundary $H:\Delta[1] \times \Delta[n] \to X$ from $x$ to $y$, i.e. $pH$ is the constant homotopy and $H$ is constant on $\partial \Delta[n]$.

\begin{defn}
A fibration $p:X \to Y$ is \textbf{minimal}\index{fibration!minimal} iff $x \sim_p y$ entails $x=y$, i.e. iff every path component of every fibre of the fibration $\underline{\mathit{Hom}}(i,p)$ has only one vertex.
\end{defn}

\nind
We underline the similarity between the defining condition for minimal fibrations and the - for the moment prosaic and imprecise - statement of univalence, that equivalent objects are equal. It is not surprisingly that we shall use minimal fibrations to prove that univalence holds in simplicial sets.

\begin{lem}
Minimal fibrations are stable under pullbacks.
\end{lem}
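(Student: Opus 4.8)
The plan is to first recall that Kan fibrations are stable under pullbacks, as noted in the remark following the definition of fibration, so that for any map $f: Y' \to Y$ the pullback $f^*p: f^*X \to Y'$ is again a fibration; it then only remains to verify the minimality condition. I would work with the explicit description of the pullback $f^*X = Y' \times_Y X$, whose $n$-simplices are the pairs $(y', x) \in Y'_n \times X_n$ with $f(y') = p(x)$, and whose projection $f^*p$ sends $(y', x)$ to $y'$.

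Next I would take two $n$-simplices $(y', x)$ and $(y'', x')$ of $f^*X$ with $(y', x) \sim_{f^*p} (y'', x')$ and unwind the relation using the reformulation given just after the definition of $\sim_p$. The condition $f^*p(y',x) = f^*p(y'',x')$ forces $y' = y''$, and the agreement of faces gives $d_i x = d_i x'$ for all $i$; moreover there is a fibrewise homotopy stationary on the boundary $H: \Delta[1] \times \Delta[n] \to f^*X$ from $(y',x)$ to $(y'',x')$.

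The heart of the argument is to transfer this homotopy to $X$. By the universal property of the pullback, $H$ is determined by its two components $H_{Y'} = f^*p \circ H : \Delta[1] \times \Delta[n] \to Y'$ and $H_X: \Delta[1] \times \Delta[n] \to X$, satisfying $f \circ H_{Y'} = p \circ H_X$. The fibrewise hypothesis on $H$ says precisely that $H_{Y'}$ is the constant homotopy at $y'$, whence $p \circ H_X = f \circ H_{Y'}$ is also constant; thus $H_X$ is a fibrewise homotopy for $p$. Since the boundary-stationarity of $H$ passes to each component, $H_X$ is stationary on $\partial \Delta[n]$ as well, and it runs from $x$ to $x'$. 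Therefore $H_X$ witnesses $x \sim_p x'$.

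Finally, minimality of $p$ gives $x = x'$, and combined with $y' = y''$ this yields $(y',x) = (y'',x')$, so $f^*p$ is minimal. I expect the only delicate point to be the bookkeeping in the previous paragraph, namely checking that the universal property really does split $H$ into components carrying the fibrewise and boundary conditions separately, but this is routine once the pullback is described simplex-wise.
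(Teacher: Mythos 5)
Your proof is correct, but it takes a more elementary route than the paper. The paper argues at the level of function complexes: it observes that the pullback square for $X' \to X$ over $Y' \to Y$ induces a pullback square relating $\underline{\mathit{Hom}}(\Delta[n],X')$ and $\underline{\mathit{Hom}}(\Delta[n],X)$ over the corresponding fibred products $P'$ and $P$, so that every fibre of $\underline{\mathit{Hom}}(i,p')$ is isomorphic to a fibre of $\underline{\mathit{Hom}}(i,p)$; minimality of the pullback is then immediate from the definition (one vertex per path component of each fibre). Your argument is the simplex-level decoding of the same phenomenon: instead of producing an isomorphism of fibres of Hom-complexes, you transfer a single witnessing homotopy across the second projection $q : f^*X \to X$, using $p \circ q = f \circ (f^*p)$ to check that the fibrewise and boundary-stationarity conditions are preserved, and then invoke minimality of $p$. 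The paper's version buys a slightly stronger conclusion — the isomorphism of fibres identifies the whole relation $\sim_{f^*p}$ with a restriction of $\sim_p$ — and it is shorter because it delegates the work to the fact that $\underline{\mathit{Hom}}(K,-)$ preserves limits; your version buys independence from that machinery (nothing beyond the universal property of the pullback and the explicit reformulation of $\sim_p$ is used), at the cost of the bookkeeping you flag, which is indeed routine. Both arguments are complete, and you rightly note at the outset that the pullback is again a fibration, which the paper leaves to its earlier remark on closure properties of Kan fibrations.
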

\begin{proof}
Suppose $p: X \to Y$ be a fibration, every pullback square:
$$\xymatrix@1{
& X'   \ar[r] \ar[d]^{p'}
& X  \ar[d]^{p} \\
& Y' \ar[r]
& Y
}$$
induces a pullback square:
$$\xymatrix@1{
& \underline{\mathit{Hom}}(\Delta[n],X')   \ar[r] \ar[d]^{\underline{\mathit{Hom}}(i,p')}
& \underline{\mathit{Hom}}(\Delta[n],X)  \ar[d]^{\underline{\mathit{Hom}}(i,p)} \\
& P' \ar[r]
& P
}$$
where
$$P' := \underline{\mathit{Hom}}(\Delta[n],Y') \times_{\underline{\mathit{Hom}}(\partial \Delta[n],Y')} \underline{\mathit{Hom}}(\partial \Delta[n],X')$$
and
$$P := \underline{\mathit{Hom}}(\Delta[n],Y) \times_{\underline{\mathit{Hom}}(\partial \Delta[n],Y)} \underline{\mathit{Hom}}(\partial \Delta[n],X)$$
Hence every fibre of $\underline{\mathit{Hom}}(i,p')$ is isomorphic to a fibre of $\underline{\mathit{Hom}}(i,p)$.
\end{proof}

\begin{lem}\label{minimalEqIso}
Let $p: X \to Y$ and $q: Z \to Y$ be fibrations of simplicial sets, and that $q$ is a minimal fibration. Suppose $f,g : X \to Z$ are two maps of simplicial sets over $Y$ and suppose $H: X \times \Delta[1] \to Z$ be a homotopy from $f$ to $g$ such that $qH = p \pi_1$.\\
If $g$ is an isomorphism so is $f$.
\end{lem}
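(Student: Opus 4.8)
The plan is to reduce the statement to a claim about a single fibrewise endomorphism of the minimal fibration $q$, and then to show that such an endomorphism, being fibrewise homotopic to the identity, is forced to be an isomorphism by minimality. First I would set $\psi := f \circ g^{-1} : Z \to Z$. Since $f$ and $g$ are maps over $Y$ and $g$ is an isomorphism, $g^{-1}$ is again over $Y$ and $q\psi = q$, so $\psi$ is an endomorphism of $Z$ over $Y$; moreover $H\circ(g^{-1}\times \mathrm{id}_{\Delta[1]}) : Z\times\Delta[1]\to Z$ is a homotopy from $\psi$ to $1_Z$ still satisfying $q\circ(H(g^{-1}\times\mathrm{id}))=q\pi_1$, i.e. it covers the constant homotopy on $Y$. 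Since $f=\psi\circ g$ and $g$ is already an isomorphism, it suffices to prove that $\psi$ is an isomorphism. Recalling that an isomorphism of simplicial sets is exactly a map that is bijective on $n$-simplices for every $n$, I would prove by induction on $n$ that $\psi$ is both injective and surjective on $Z_n$. Write $K:Z\times\Delta[1]\to Z$ for the fibrewise homotopy above; the two tools are minimality of $q$ (which says precisely that $x\sim_q y$ implies $x=y$), Theorem \ref{homfib} (so that $\underline{\mathit{Hom}}(\partial\Delta[n]\hookrightarrow\Delta[n],q)$ is a fibration), and the fact that each inclusion $(\{0\}\times\Delta[n])\cup(\Delta[1]\times\partial\Delta[n])\hookrightarrow\Delta[1]\times\Delta[n]$ is anodyne, against which $q$ lifts.

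For injectivity in degree $n$, suppose $\psi x=\psi y$. Applying face maps and the inductive hypothesis gives $d_ix=d_iy$ for all $i$, so $x$ and $y$ share a common boundary $b:\partial\Delta[n]\to Z$, and $qx=q\psi x=q\psi y=qy$. Restricting $K$ to $x$ and to $y$ produces fibrewise homotopies from $\psi x$ to $x$ and from $\psi y$ to $y$; splicing the first (reversed) to the second yields a fibrewise homotopy $G$ from $x$ to $y$ whose restriction to $\partial\Delta[n]$ is the loop obtained by running $K(b,-)$ backwards and then forwards, which is null-homotopic rel endpoints. Viewing $x,y$ as vertices and $G$ as a path in the fibration of Theorem \ref{homfib}, I would lift a contraction of this boundary loop to replace $G$ by a homotopy stationary on $\partial\Delta[n]$; this exhibits $x\sim_q y$, whence $x=y$ by minimality.

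For surjectivity in degree $n$, take $z\in Z_n$. By the inductive hypothesis $\psi$ is bijective in lower degrees, so there is a unique $\beta:\partial\Delta[n]\to Z$ with $\psi\circ\beta=z|_{\partial\Delta[n]}$. I would then solve the lifting problem whose left-hand map is the anodyne inclusion $(\{0\}\times\Delta[n])\cup(\Delta[1]\times\partial\Delta[n])\hookrightarrow\Delta[1]\times\Delta[n]$, prescribing $z$ on $\{0\}\times\Delta[n]$ and $K\circ(\mathrm{id}\times\beta)$ on $\Delta[1]\times\partial\Delta[n]$ (these agree on the overlap since $K$ restricts to $\psi$ at the $0$-end and $\psi\beta=z|_{\partial\Delta[n]}$), and covering the constant homotopy of $qz$. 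The filler restricts at the other end to a simplex $w$ with $w|_{\partial\Delta[n]}=\beta$ and $qw=qz$. Running $\psi$ over this filler and splicing with $K(z,-)$, then rectifying rel boundary exactly as in the injective step, shows $\psi w\sim_q z$, so $\psi w=z$ by minimality.

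The main obstacle is precisely this rel-boundary rectification, which appears in both steps: the homotopies supplied by $K$ only have boundary restriction a contractible loop rather than the constant homotopy, and they must be upgraded to genuine fibrewise homotopies stationary on $\partial\Delta[n]$ before the defining property $\sim_q\,\Rightarrow\,=$ of a minimal fibration can be applied. This is where Theorem \ref{homfib} and the lifting against the anodyne inclusions above are essential. The base case $n=0$ is easy, since then the boundary is empty and no rectification is needed, so a plain splicing of paths already produces the required $\sim_q$-relation.
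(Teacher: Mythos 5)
The paper itself does not prove this lemma (it only cites Lemma 3.5.6 of \cite{Hov}), and your plan is essentially that standard argument: reduce to the fibrewise endomorphism $\psi = f\circ g^{-1}$ of the minimal fibration, fibrewise homotopic to $1_Z$ via $K$, and prove bijectivity on $n$-simplices by induction, upgrading the homotopies supplied by $K$ to homotopies stationary on $\partial\Delta[n]$ so that minimality can be applied. Your reduction and your injectivity step are correct; one point you leave implicit is that the splicings must be performed inside the fibre of $\underline{\mathit{Hom}}(\Delta[n],Z)\to\underline{\mathit{Hom}}(\Delta[n],Y)$ over the vertex $qx$, which is a Kan complex (the total complex $\underline{\mathit{Hom}}(\Delta[n],Z)$ need not be fibrant); all your paths do lie in that fibre precisely because $qK=q\pi_1$.

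The surjectivity step, however, has a genuine gap as written. You splice $\psi\circ G$ (a homotopy from $\psi z$ to $\psi w$) with $K(z,-)$ (a homotopy from $\psi z$ to $z$) to connect $z$ with $\psi w$. But the boundary restriction of $\psi\circ G$ is $\psi\circ K\circ(\beta\times 1)$, whereas the boundary restriction of $K(z,-)$ is $K\circ(\psi\beta\times 1)$, since $z|_{\partial\Delta[n]}=\psi\beta$. These are two genuinely different paths from $\psi^2\beta$ to $\psi\beta$ --- nothing forces $K$ to commute with $\psi$ --- so the boundary loop of your spliced path is $\bigl(K\circ(\psi\beta\times 1)\bigr)^{-1}$ followed by $\psi\circ K\circ(\beta\times 1)$, which is \emph{not} of the form ``one path run backwards and then forwards''. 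Its contractibility therefore does not follow ``exactly as in the injective step''; it needs a further argument, e.g.\ the square $K\circ\bigl(K\circ(\beta\times 1)\times 1\bigr):\partial\Delta[n]\times\Delta[1]\times\Delta[1]\to Z$, which witnesses that $\psi\circ K\circ(\beta\times 1)$ and $K\circ(\psi\beta\times 1)$ are homotopic rel endpoints. The cleaner repair is to change the splicing: use $K(w,-)$, the homotopy from $\psi w$ to $w$, in place of $\psi\circ G$ and $K(z,-)$. Since $w|_{\partial\Delta[n]}=\beta$, the boundary restriction of $K(w,-)$ is $K\circ(\beta\times 1)$, literally equal to that of $G$; hence $G$ followed by the reverse of $K(w,-)$ is a fibrewise path from $z$ to $\psi w$ whose boundary loop runs $K\circ(\beta\times 1)$ forwards and then backwards. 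The rectification is then word for word the one from your injectivity step, giving $z\sim_q\psi w$ and so $z=\psi w$ by minimality. With this modification your proof is complete and agrees with the standard one.
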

\begin{proof}
See lemma 3.5.6 in \cite{Hov}.
\end{proof}

\begin{thm}\label{minimalbundle}
Let $p:X \to Y$ be a minimal fibration of simplicial sets, then $p$ is locally trivial.
\end{thm}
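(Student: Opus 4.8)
The plan is to verify local triviality one simplex at a time. First I would fix a simplex $y:\Delta[n]\to Y$ and form the pullback fibration $y^*p: y^*X\to\Delta[n]$. Since minimal fibrations are stable under pullbacks, $y^*p$ is again minimal. Writing $F$ for the fibre of $p$ over the last vertex $y(n)$, the goal becomes to produce an isomorphism $y^*X\cong\Delta[n]\times F$ over $\Delta[n]$, which is exactly what is required to exhibit $y^*p$ as a product fibration.

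By Corollary \ref{fibrecor}, $y^*p$ is fibre homotopy equivalent to the product fibration $\Delta[n]\times F\to\Delta[n]$; this already hands me maps $\theta: y^*X\to\Delta[n]\times F$ and $\omega:\Delta[n]\times F\to y^*X$ over $\Delta[n]$ together with fibrewise homotopies $\omega\theta\sim 1$ and $\theta\omega\sim 1$ covering the constant homotopy of $\Delta[n]$. The heart of the argument is then to upgrade this homotopy equivalence to an honest isomorphism using minimality, through Lemma \ref{minimalEqIso}.

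To invoke that lemma on both sides I need both ends to be minimal fibrations over $\Delta[n]$. That $y^*p$ is minimal is already noted. For the product I would first observe that $F$ is itself minimal over the point: pulling $p$ back along the vertex $y(n):\Delta[0]\to Y$ and again using pullback-stability shows $F\to 1$ is a minimal fibration. I then claim $\pi:\Delta[n]\times F\to\Delta[n]$ is minimal as well. Unwinding the description of $\sim_\pi$ given after the definition, two simplices $(a,u)$ and $(a,w)$ are $\pi$-related precisely when they agree on the boundary and are joined by a homotopy stationary on the boundary whose $\Delta[n]$-component is constant, i.e. by a rel-boundary homotopy inside $F$; this is exactly the relation trivialised by minimality of $F$, forcing $u=w$ and hence $(a,u)=(a,w)$.

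With both $y^*p$ and $\pi$ minimal, I apply Lemma \ref{minimalEqIso} twice: with target $y^*X$ to the pair $f=\omega\theta$, $g=1_{y^*X}$ to conclude that $\omega\theta$ is an isomorphism, and with target $\Delta[n]\times F$ to the pair $f=\theta\omega$, $g=1$ to conclude that $\theta\omega$ is an isomorphism. Since both composites are invertible, $\theta$ is itself an isomorphism over $\Delta[n]$ — its left inverse $(\omega\theta)^{-1}\omega$ and right inverse $\omega(\theta\omega)^{-1}$ necessarily coincide — giving the desired trivialisation $y^*X\cong\Delta[n]\times F$. As $y$ was arbitrary, $p$ is locally trivial. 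The main obstacle is the middle step, checking that $\Delta[n]\times F\to\Delta[n]$ is minimal, since it is what licenses the second application of Lemma \ref{minimalEqIso}; a secondary point requiring care is confirming that the homotopies produced by Corollary \ref{fibrecor} genuinely cover the constant homotopy of the base, so that the hypothesis $qH=p\pi_1$ of Lemma \ref{minimalEqIso} is met.
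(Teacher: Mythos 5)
Your proof is correct and takes essentially the same route as the paper's: pullback-stability of minimal fibrations, the fibre homotopy equivalence of Corollary \ref{fibrecor}, and Lemma \ref{minimalEqIso} applied to both composites to upgrade the equivalence to an isomorphism. The paper's own proof is only a sketch, and your middle step verifying that $\Delta[n]\times F\to\Delta[n]$ is minimal correctly supplies the detail it glosses over; an even quicker way to get it is to note that this map is the pullback of the minimal fibration $F\to 1$ along $\Delta[n]\to 1$, so pullback-stability covers it as well.
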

\begin{proof}
Since minimal fibrations are stable under pullbacks we have that the pullback of $p$ along any singular simplex $y: \Delta[n] \to Y$ is a minimal fibration $y^* X \to \Delta[n]$. We know from corollary \ref{fibrecor} that this map is fibre homotopy equivalent to the product fibration $\Delta[n] \times F_n \to \Delta[n]$. By the previous theorem we obtain an isomorphism.
\end{proof}

\begin{thm}\label{minimalfibration}
Let $p: X \to Y$ be a fibration of simplicial sets. Then we can factor $p$ as $\displaystyle X \mathop{\to}^{r} X' \mathop{\to}^{p'} Y$, where $p'$ is a minimal fibration and $r$ is a retraction onto a subsimplicial set $X' \subseteq X$.
%Moreover, $r$ has the RLP with respect to the canonical inclusions $\partial \Delta[n] \to \Delta[n]$ for all $n$.
\end{thm}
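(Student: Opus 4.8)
The plan is to produce the minimal subcomplex $X'$ by a skeletal choice of representatives for the relation $\sim_p$, and then to realise the inclusion $X' \hookrightarrow X$ as a fibrewise deformation retract over $Y$, the retraction being the desired $r$. Everything rests on the axiom of choice, as announced for this chapter. The starting observation is that, by the remark following the definition of $\sim_p$, two $p$-related simplices $x \sim_p y$ satisfy $p(x)=p(y)$ and $d_i x = d_i y$ for all $i$, and that $\sim_p$ is an equivalence relation by theorem \ref{homfib}. The equality of boundaries is exactly what allows a coherent choice of representatives.

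First I would build $X'$ by induction on dimension. Having chosen a subcomplex $X'^{(n-1)} \subseteq \mathrm{sk}_{n-1}X$, I adjoin in dimension $n$ all degeneracies of simplices already in $X'^{(n-1)}$ (forced, if $X'$ is to be a subcomplex), and then consider those nondegenerate $n$-simplices $x$ of $X$ all of whose faces already lie in $X'^{(n-1)}$. Since $\sim_p$ preserves boundaries, it partitions this set of candidates into classes whose members all share one and the same, already-selected boundary; using the axiom of choice I pick one representative per class. The resulting $X'$ is then a subcomplex in which no two distinct simplices are $p$-related, and, crucially, every $n$-simplex whose boundary lies in $X'$ is $\sim_p$-related to a \emph{unique} simplex of $X'$. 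The one point to verify is that adjoining degeneracies creates no new $p$-related pairs, which follows from the Eilenberg--Zilber uniqueness of the nondegenerate part.

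Next I would construct $r:X\to X'$ over $Y$, together with an auxiliary fibrewise homotopy $H:\Delta[1]\times X \to X$ rel $X'$ with $H(1,-)=\mathrm{id}_X$ and $H(0,-)$ landing in $X'$, by induction on dimension. At the inductive step, for an $n$-simplex $x$ with $r$ and $H$ already defined on $\partial\Delta[n]$, the data assembled on $\{1\}\times\Delta[n]\cup \Delta[1]\times\partial\Delta[n]$ give a map out of an anodyne inclusion of type $B_2$; lifting it against the fibration $p$ (legitimate since Kan fibrations have the right lifting property against anodyne extensions) produces an $n$-simplex $z$ with $\partial z \subseteq X'$ and $p(z)=p(x)$. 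By Step 1, $z$ is $\sim_p$-related to a unique $\rho(z)\in X'$; I set $r(x):=\rho(z)$ and replace the tentative homotopy by the concatenation (filled by a further horn/anodyne lift) of the rel-boundary homotopy $z\sim_p\rho(z)$ with the lift joining $z$ to $x$. Minimality of $X'$ pins down the choice $\rho(z)$ uniquely, which is what guarantees that $r$ is well defined and commutes with all faces and degeneracies; it also forces $r(x)=x$ for $x\in X'$, so $r$ is a genuine retraction, and $p\circ r=p$ since $p$ is constant along $H$.

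Finally, setting $p':=p|_{X'}$, the retraction $r$ exhibits $p'$ as a retract of the fibration $p$ in the arrow category, so $p'$ is itself a fibration by the closure properties recorded in the remark on Kan fibrations; and $p'$ is minimal because distinct simplices of $X'$ are never $p$-related while a $\sim_{p'}$-homotopy inside $X'$ is in particular a $\sim_p$-homotopy inside $X$. This gives the factorisation $X \to X' \to Y$. The main obstacle is the third step: converting the pointwise choice of representatives into an honest simplicial retraction, i.e.\ arranging that $r$ commute with every face and degeneracy operator while each value is corrected back into $X'$ by minimality and the connecting homotopies are concatenated coherently. This bookkeeping, resting on the anodyne lifting of the pushout--product inclusions $\Delta[1]\times\partial\Delta[n]+\{e\}\times\Delta[n]\hookrightarrow\Delta[1]\times\Delta[n]$ against $p$, is the delicate heart of the argument.
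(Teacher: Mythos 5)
Your overall route is the same as the paper's: use the axiom of choice to select one simplex from each $\sim_p$-class, assemble the selected simplices into a subcomplex $X'$, build a fibrewise deformation retraction of $X$ onto $X'$ by lifting against $p$ along the anodyne inclusions of type $B_2$, and conclude that $p'=p|_{X'}$ is a fibration because it is a retract of $p$, minimality being automatic. The paper organises both constructions with Zorn's lemma where you use induction on skeleta, but that difference is cosmetic. However, your Step 1 contains a genuine gap, and it sits exactly at the point you flag and then dismiss: the interaction between the chosen nondegenerate representatives and the degenerate simplices that are forced into $X'$.

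The claim that ``adjoining degeneracies creates no new $p$-related pairs'' is false for the construction as you set it up, and the Eilenberg--Zilber argument does not prove it. That combinatorial argument shows only that two \emph{degenerate} simplices with the same faces are equal; it is silent about a nondegenerate simplex being $p$-related to a degenerate one, and this happens routinely, because $\sim_p$ is a homotopy relation rather than a combinatorial one. For instance, over $Y=\Delta[0]$, in the Kan complex $X=S(\mathbb{R})$ the nondegenerate singular $1$-simplex running from $0$ out to $1$ and back has both faces equal to the vertex $0$ and is $p$-related to the degenerate loop $s_0(0)$. In your induction the degenerate loop lies in $X'$ as a forced degeneracy of a chosen vertex, and your recipe \emph{also} adjoins one nondegenerate representative of that very $\sim_p$-class; then $X'$ contains two distinct $p$-related simplices, so $p'$ is not minimal, and the uniqueness of $\rho(z)$ on which the well-definedness of $r$ in your Step 2 rests collapses. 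The repair is exactly what the paper achieves when it arranges (citing lemma 3.5.8 in \cite{Hov}) that the set $T$ of representatives contains every degenerate simplex: at stage $n$ let the $\sim_p$-classes range over \emph{all} $n$-simplices, degenerate or not, whose boundary lies in the part of $X'$ already constructed; any degenerate member of such a class is automatically a degeneracy of a simplex of $X'$ and is the \emph{unique} degenerate member of its class (this is where Eilenberg--Zilber is really used), so it must be taken as the representative, and only those classes containing no degenerate simplex receive a freely chosen nondegenerate representative. With that correction your Steps 2 and 3 go through as you describe.
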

\begin{proof}
We give a sketch of the proof: by the axiom of choice let $T$ be a set of simplices of $X$ containing one simplex from each $p$-equivalence class. It is easy to check that it contains every degenerate simplex (see lemma 3.5.8 in \cite{Hov}). Let $S$ denote the set of all subsimplicial sets of $X$ whose simplices lie in $T$. We put a partial order on $S$ induced by the one already present on subsimplicial sets of $X$ and by Zorn's lemma we get a maximal subsimplicial set $X'$.\\
If the restriction $p':X' \to Y$ is a fibration it will be automatically minimal. We will show that $p'$ is a retract of $p$ hence it will be a fibration.\\
We apply again Zorn's lemma, now to pairs $(Z,H)$ where $Z$ is a subsimplicial set of $X$ containing $X'$, and $H : \Delta[1] \times Z \to X$ is a homotopy such that its restriction to $\{0\} \times Z$ is the inclusion, it maps $\{1\} \times Z$ into $X'$, it is constant on $\Delta[1] \times X'$, and $pH$ is the constant homotopy of $p$ restricted to $Z$. Let $(Z,H)$ be a maximal pair, we must show that $Z = X$. If not consider a simplex $x : \Delta[n] \to X$ of minimal dimension that does not belongs to $Z$. We then construct the following pushout square:
$$\xymatrix@1{
& \partial \Delta[n] \ar[r] \ar[d]
& Z  \ar[d] \\
& \Delta[n] \ar[r]^{x}
& Z'
}$$
where $Z'$ is the subsimplicial set of $X$ generated by $Z$ and $x$. Finally, using the previous construction we can extend $H$ contradicting its maximality. For these remaining details see theorem 3.5.9 in \cite{Hov}.
\end{proof}

\begin{defn}
A map of topological spaces is a \textbf{Serre fibration}\index{fibration!topological}\index{fibration!Serre} iff it has the right lifting property with respect to all inclusions $D^n \to D^n \times I$ of the $n$-disk at ground zero of the cylinder over the $n$-disk.
\end{defn}

\begin{thm}\label{realisationSerre}
Let $p: X \to Y$ be a fibration of simplicial sets. Then $|p|$ is a Serre fibration of Kelley spaces.
\end{thm}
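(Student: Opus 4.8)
The plan is to reduce to the case of a minimal fibration and there exploit local triviality together with the exactness properties of $|\cdot|$. First I would invoke Theorem~\ref{minimalfibration} to factor $p$ as $X \mathop{\to}^{r} X' \mathop{\to}^{p'} Y$ with $p'$ minimal and $r$ a retraction onto the subcomplex $X' \subseteq X$; writing $\iota : X' \hookrightarrow X$ for the inclusion we have $r\iota = 1_{X'}$ and $p = p'r$. The homotopy $H$ produced in that proof is a fibrewise strong deformation retraction of $X$ onto $X'$ over $Y$, so $\iota r \simeq 1_X$ over $Y$, stationary on $X'$. Applying $|\cdot|$ turns this into a fibrewise strong deformation retraction of $|X|$ onto $|X'|$ over $|Y|$, with $|p| = |p'|\,|r|$ and $|p|\,|\iota| = |p'|$. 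Thus $|p|$ and $|p'|$ are fibre homotopy equivalent over $|Y|$.

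Second, I would prove the statement for the minimal fibration $p'$. By Theorem~\ref{minimalbundle} $p'$ is locally trivial: for each simplex $y : \Delta[n] \to Y$ there is an isomorphism $y^*X' \cong \Delta[n]\times F_n$ over $\Delta[n]$. Since the geometric realisation preserves finite limits, the pullback of $|p'|$ along the characteristic map $|y| : |\Delta[n]| \to |Y|$ is exactly $|y^*X'|$, and since it also preserves finite products this is the trivial projection $|\Delta[n]|\times|F_n| \to |\Delta[n]|$. Hence $|p'|$ becomes trivial after pulling back along the characteristic map of every cell of the CW complex $|Y|$. From this I would deduce that $|p'|$ trivialises over an open cover of $|Y|$ (using the open stars of the cell structure, each of which deformation retracts onto its cell), so that $|p'|$ is a fibre bundle, and therefore a Serre fibration because $|Y|$ is paracompact. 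The passage from triviality over the closed cells to an honest open-cover bundle structure is the point at which I would lean on \cite{GZ}.

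Third, I would transfer the conclusion from $|p'|$ back to $|p|$ by means of the fibre homotopy equivalence. Given a lifting problem for $|p|$ against $D^k\times\{0\} \hookrightarrow D^k\times I$, with bottom map $\beta$ and top datum $\alpha$, I would push $\alpha$ into $|X'|$ by $|r|$, solve the resulting problem against the Serre fibration $|p'|$, reinsert the solution by $|\iota|$ to obtain a map covering $\beta$, and then correct it at $D^k\times\{0\}$ — where it agrees with $|\iota r|\alpha$ rather than the prescribed $\alpha$ — using the realised deformation retraction and a second application of the homotopy lifting property of $|p'|$. This is the standard Dold-type argument that a map fibrewise homotopy equivalent, over a CW base, to a Serre fibration is itself a Serre fibration. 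I expect this correction step, together with the extraction of the open-cover bundle structure in the second part, to be the main obstacle: both are exactly where the purely combinatorial input (preservation of finite limits and products by $|\cdot|$) must be converted into genuinely topological covering-homotopy statements, and both make essential use of the fact that $|Y|$ is a CW complex.
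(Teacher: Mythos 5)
Your overall skeleton---factor $p$ through its minimal model via Theorem \ref{minimalfibration}, realise the minimal fibration as a fibre bundle via Theorem \ref{minimalbundle} together with the Gabriel--Zisman gluing argument, then transfer back to $|p|$---is the same as the paper's, which outsources both halves to Corollary 3.6.2 of \cite{Hov}. Your first two steps are essentially sound (one small remark: fibre bundles are Serre fibrations over \emph{any} base, since the Serre property is local; paracompactness of $|Y|$ is only needed for the Hurewicz property). The genuine gap is in your third step.

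The principle you invoke there---``a map fibrewise homotopy equivalent, over a CW base, to a Serre fibration is itself a Serre fibration''---is false, so no correction argument using only the data you have at that point (the realised retraction $|r|$, the inclusion $|\iota|$, the fibrewise deformation retraction $|H|$, and the lifting property of $|p'|$) can close the square. Concretely, let $B=[0,1]$, let $E=(\{0\}\times[0,1])\cup([0,1]\times\{1\})\subseteq\mathbb{R}^{2}$, and let $q\colon E\to B$ be the first projection. Then $E'=[0,1]\times\{1\}$ is a fibrewise \emph{strong} deformation retract of $E$ over $B$ (slide the segment $\{0\}\times[0,1]$ up to the point $(0,1)$, fibrewise since the first coordinate never moves), and $q|_{E'}$ is a homeomorphism onto $B$, hence a Serre fibration; all spaces are compact polyhedra and all maps are PL. Yet $q$ is not a Serre fibration: a lift of the path $t\mapsto t$ starting at $(0,0)$ would have to equal $(t,1)$ for every $t>0$, contradicting continuity at $t=0$. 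Every ingredient of your step three is present in this example, while the conclusion fails; so the step cannot be repaired by rearranging the same tools. What the Dold-type argument genuinely proves is the \emph{weak} covering homotopy property (strict lifts exist only for homotopies that are stationary on an initial interval, equivalently the initial condition is achieved only up to a vertical homotopy); this is the notion of Dold fibration, which the example shows to be strictly weaker than that of Serre fibration even for PL maps of compact polyhedra. The reparametrisation error you create when you concatenate the realised deformation retraction with the lift through $|p'|$ is exactly this defect, and it is not removable.

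This is precisely where Quillen's theorem is hard, and where the reference the paper leans on must do work that goes beyond fibre homotopy equivalence: the simplicial lifting data must be fed into the topological lifting problem itself---for instance the fact, recorded in the paper's discussion of Theorem \ref{minimalfibration}, that the retraction $r$ can be chosen to have the right lifting property with respect to \emph{all} cofibrations, not merely to be a fibrewise homotopy inverse to the inclusion. The counterexample above makes this vivid on the simplicial side as well: it is the realisation of the simplicial map collapsing one edge of a wedge of two $1$-simplices onto a vertex of $\Delta[1]$, and that map admits a simplicial fibrewise deformation retraction onto a minimal sub-fibration (an isomorphism, in fact); what it lacks is exactly the Kan condition. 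Any proof which, like yours, never re-uses the Kan/trivial-fibration condition after passing to topology therefore cannot be correct.
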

\begin{proof}
The statement is obtained adding at first the hypothesis that $p$ is locally trivial and then the hypothesis is dropped. See corollary 3.6.2 in \cite{Hov}.
\end{proof}

%%%%%%%%%%%%%%%%%%%%%%%%%%%%%%%%%%%%%%%%%%%%%%%%%%%%%%%%%%%%%%
%%%%%%%%%%%%%%%%%%%%%%%%%%%%%%%%%%%%%%%%%%%%%%%%%%%%%%%%%%%%%%
%-----------------------------------------------------------
\chapter{Model Categories}

\lettrine{T}{here} are many different possible settings for an abstract theory of homotopy, perhaps the easiest way to define a homotopical structure on a category is to distinguish a suitable class of "weak equivalences" that contains the isomorphisms and is closed under some basic constructions, is then possible to form its \textit{homotopy category}\index{homotopy!category} localising with respect to this family of arrows.\\
But in a such general framework is difficult to manage the homotopy category and the weak equivalences; a way to face this problem is to add more structure taking topological spaces and simplicial sets as guiding examples. The definition of model category adds two other classes of morphisms to the structure, namely a class of fibrations and a class of cofibrations. %Every couple of the three distinguished classes determine the other in terms of lifting properties.
Moreover, the presence of these maps allows to define well-behaved objects and the interaction of these two classes of maps gives a finer control on the homotopy category that turns out to be a quotient of the initial model category under a homotopy equivalence relation.\\
The aim of this chapter is to give some basics in the theory of model categories and an outline of the proof that the category of simplicial sets admits a model structure. The proof uses the axiom of choice because it relies on the theory of minimal fibrations in particular on theorem \ref{minimalfibration}.

%------------------------------------------------------
\section*{Definition of Model Category}

We start with some introductory definitions, motivated by the theory of simplicial sets that we have seen so far.

\begin{defn}
\begin{enumerate}[label=(\alph*)]
\item[]

\item Recall that given a category $\mathcal C$ its \textbf{category of arrows}\index{category!of arrows}, written $Ar(\mathcal C)$, has arrows of $\mathcal C$ as objects and commutative squares of $\mathcal C$ as morphisms.

\item Given a category $\mathcal C$, a \textbf{functorial factorisation}\index{functorial factorisation} is a pair $(\alpha , \beta)$ of functors $Ar(\mathcal C) \to Ar(\mathcal C)$ such that for any arrow $f$ of $\mathcal C$ we have a factorisation as $f = \beta(f) \circ \alpha(f)$.
\end{enumerate}
\end{defn}

\begin{defn}
A \textbf{subcategory of weak equivalences}\index{subcategory of weak equivalences} of a given category is a subcategory that contains all the isomorphisms and satisfies the \textit{2-out-of-3 property} i.e. given any pair of composable maps $f$ and $g$ if two elements of the set $\{ f , g , f \circ g \}$ are weak equivalences so is the third.
\end{defn}

\nind
Following Hovey's book we require that model structures has fixed functorial factorisations, instead of simply asking for the existence of a factorisation. This is needed in order to perform some constructions in a canonical way, which otherwise would depend on the choice of the factorisation.

\begin{defn}
A \textbf{weak factorisation system}\index{weak factorisation system} is a quadruple $(\mathit A , \mathit B , \alpha , \beta)$ where the first two are classes of maps and $(\alpha , \beta)$ is a functorial factorisation such that $\alpha(f) \in \mathit A$ and $\beta(f) \in \mathit B$. Moreover, $\mathit A$ is the class of maps which have the left lifting property with respect to $\mathit B$ and conversely $\mathit B$ is the class of maps which have the right lifting property with respect to $\mathit A$.
\end{defn}

\nind
Notice that each of the two classes of a weak factorisation system determines the other.\\
Also, $\mathit A \cap \mathit B$ is exactly the class of isomorphisms since any map $f$ such that $f$ has the left lifting property with respect to to itself is an isomorphism.

\begin{defn}
A \textbf{model structure}\index{model structure} on a category consists in three subcategories $\mathit{Fib}$, $\mathit{Cof}$ and $\mathit W$, and two functorial factorisations $(\alpha , \beta)$ and $(\delta , \gamma)$ such that $\mathit W$ is a subcategory of weak equivalences, $(\mathit{Cof} \cap \mathit W , \mathit{Fib} , \alpha , \beta)$ and $(\mathit{Cof} , \mathit W \cap \mathit{Fib} , \delta , \gamma)$ are two weak factorisation systems.\\
Maps in $\mathit W$ are called \textbf{acyclic}\index{acyclic map} of \textbf{weak equivalences}\index{weak equivalence!in a model category}. Maps in $\mathit{Cof}$ are called \textbf{cofibrations}\index{cofibration!in a model category} and maps in $\mathit{Fib}$ are called \textbf{fibrations}\index{fibration!in a model category}.\\
A \textbf{model category}\index{model category} is a complete and cocomplete category together with a model structure.
\end{defn}

\nind
Every complete and cocomplete category admits three trivial examples of model structure, the ones where one of the three classes $\mathit{Cof}$ , $\mathit{Fib}$ and $\mathit W$ is the class of isomorphisms and the other two are all maps.\\
Since Quillen axioms for model categories are self-dual, if a category $\mathcal C$ has a model structure then also the opposite category admits one. The cofibration of $\mathcal C^{op}$ are the fibration of $\mathcal C$ and vice-versa.\\
If $\mathcal C$ is a model category, then every slice $\mathcal C / X$ inherits a model structure, where a map is a fibration, cofibration or weak equivalence iff it is in $\mathcal C$.\\
Given two model categories $\mathcal C$ and $\mathcal D$ we put on the product $\mathcal C \times \mathcal D$ the model structure where a map $(f,g)$ is a fibration [cofibration, weak equivalence] iff both $f$ and $g$ are fibrations [cofibrations, weak equivalences].\\
The definitions of fibrant and cofibrant objects are the same as in the case of simplicial sets. Note that if we factor the map $0 \to X$ into a cofibration followed by a trivial fibration we get a functor $X \mapsto QX$ and a map $q_X : QX \to X$ such that $QX$ is cofibrant and $q_X$ is a trivial fibration.\\
Dually, when we factor $X \to RX \to 1$ as a trivial cofibration followed by a fibration.

\begin{defn}
The two functors defined above are called respectively \textbf{cofibrant replacement functor}\index{cofibrant replacement functor} and \textbf{fibrant replacement functor}\index{fibrant replacement functor}.
\end{defn}

\begin{lem}[the retract argument]
If we have a factorization $f=pi$ in a category such that $f$ has the left lifting property with respect to $p$, then $f$ is a retract of $i$.\\
Dually, if $f$ has the right lifting property with respect to $i$, then it is a retract of $p$.
\end{lem}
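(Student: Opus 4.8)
The plan is to read off the entire retract diagram from a single application of the lifting hypothesis. Write the factorisation as $f = p \circ i$ with $i \colon A \to C$ and $p \colon C \to B$, so $f \colon A \to B$. First I would present to the hypothesis the commutative square
$$\xymatrix@1{
A \ar[r]^{i} \ar[d]_{f} & C \ar[d]^{p} \\
B \ar@{=}[r] & B
}$$
whose commutativity is just the equation $p \circ i = f$. Because $f$ has the left lifting property with respect to $p$, this square admits a diagonal filler $h \colon B \to C$ with $h \circ f = i$ and $p \circ h = 1_B$.

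With $h$ produced, I would exhibit $f$ as a retract of $i$ in the sense of the earlier definition of retract by choosing the two horizontal rows to be identities on top and $h, p$ on the bottom:
$$\xymatrix@1{
A \ar[r]^{1_A} \ar[d]_{f} & A \ar[d]^{i} \ar[r]^{1_A} & A \ar[d]^{f} \\
B \ar[r]_{h} & C \ar[r]_{p} & B
}$$
Here the left square commutes by the lifting identity $i = h \circ f$, the right square commutes by the factorisation $f = p \circ i$, and the horizontal composites are $1_A$ and $p \circ h = 1_B$. These are exactly the three conditions demanded by the definition, so $f$ is a retract of $i$.

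For the dual statement I would run the symmetric argument: assuming the factored map $f = p \circ i$ has the right lifting property with respect to $i$, I would feed it the square with $1_A$ on top, with $i$ and $f$ as the vertical sides and $p$ along the bottom, obtaining a filler $k \colon C \to A$ satisfying $k \circ i = 1_A$ and $f \circ k = p$. Assembling the retract diagram with $i$ and $k$ as the nontrivial horizontal maps and identities of $B$ along the bottom then displays $f$ as a retract of $p$.

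Since every requirement of the definition of retract reduces to one of the three equations $i = h \circ f$, $f = p \circ i$, $p \circ h = 1_B$ (and their duals), the argument involves no real computation. Consequently I do not anticipate a substantive obstacle; the only care needed is to orient the retract diagram so that its vertical maps and its unit conditions line up with the earlier definition, and to choose the correct square to present to the lifting property in each of the two cases.
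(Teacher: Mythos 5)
Your proof is correct and follows exactly the same approach as the paper: apply the lifting hypothesis to the square with $i$ on top, $f$ and $p$ as the vertical maps, and the identity of $B$ on the bottom, then read off the retract diagram from the filler. In fact your write-up is more complete than the paper's, which only treats the first half and leaves the verification of the retract diagram implicit.
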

\begin{proof}
First suppose $f$ has the left lifting property with respect to $p$. Let consider the diagram with the lifting $r$:
$$\xymatrix@1{
& A   \ar[r]^{i} \ar[d]_{f}
& B  \ar[d]^{p} \\
& C \ar@{=}[r] \ar@{-->}[ru]^{r}
& C
}$$
Hence $f$ is a retract of $i$ with retraction $r$.
\end{proof}

\begin{lem}
In a model category the three classes $\mathit{Cof}$, $\mathit{Fib}$ and $\mathit W$ are closed under retracts.
\end{lem}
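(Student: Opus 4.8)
The plan is to treat the three classes separately, disposing of $\mathit{Cof}$ and $\mathit{Fib}$ by a single lifting-property argument and reserving the real work for $\mathit W$.

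First I would record the auxiliary fact that any class of maps presented by a lifting property is closed under retracts. Concretely, if $\mathit B$ is the class of maps having the right lifting property with respect to some class $\mathit A$, and $q$ is a retract of a map $p \in \mathit B$, then any lifting problem of a map in $\mathit A$ against $q$ can be transported along the retract squares into a lifting problem against $p$, solved there, and pushed back down; the retraction identities $v\circ u = 1$ and $v'\circ u' = 1$ from the definition of retract guarantee that the resulting diagonal solves the original problem. The statement for classes presented by a left lifting property is dual. This is a routine diagram chase which I would only sketch.

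With this in hand, $\mathit{Cof}$ and $\mathit{Fib}$ are immediate: in the weak factorisation system $(\mathit{Cof}, \mathit W \cap \mathit{Fib}, \delta, \gamma)$ the class $\mathit{Cof}$ is exactly the maps with the left lifting property with respect to $\mathit W \cap \mathit{Fib}$, while in $(\mathit{Cof} \cap \mathit W, \mathit{Fib}, \alpha, \beta)$ the class $\mathit{Fib}$ is exactly the maps with the right lifting property with respect to $\mathit{Cof} \cap \mathit W$. Hence both are closed under retracts. The very same argument applied to the opposite class of each system shows that the trivial cofibrations $\mathit{Cof} \cap \mathit W$ and the trivial fibrations $\mathit{Fib} \cap \mathit W$ are also closed under retracts, a fact I will need below.

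The main obstacle is $\mathit W$, which is not directly presented by a lifting property; here I would exploit the functoriality of the factorisations. Suppose $g$ is a retract of $f$ with $f \in \mathit W$. The retract diagram exhibits $g$ as a retract of $f$ in the arrow category $Ar(\mathcal C)$, and since $(\alpha, \beta)$ is a functorial factorisation, applying these functors transports the retract to the factors: $\alpha(g)$ becomes a retract of $\alpha(f)$ and $\beta(g)$ a retract of $\beta(f)$, because a functor preserves the composite identity $r\circ s = 1$ defining a retract. Now $\alpha(f) \in \mathit{Cof} \cap \mathit W$ and $\beta(f) \in \mathit{Fib}$ by that factorisation system; since $f$ and $\alpha(f)$ both lie in $\mathit W$, the 2-out-of-3 property forces $\beta(f) \in \mathit W$, so $\beta(f) \in \mathit{Fib} \cap \mathit W$. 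Using the closure facts just established, $\alpha(g) \in \mathit{Cof} \cap \mathit W$ and $\beta(g) \in \mathit{Fib} \cap \mathit W$, so in particular both lie in $\mathit W$. Finally $g = \beta(g) \circ \alpha(g)$ is a composite of two weak equivalences, whence $g \in \mathit W$ since $\mathit W$ is a subcategory. The delicate point is precisely this transfer of the retract structure to the factors via functoriality, which is what makes the 2-out-of-3 property applicable.
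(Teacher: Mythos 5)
Your proof is correct, and it is genuinely more than what the paper itself provides, since the paper gives no self-contained argument: for $\mathit{Cof}$ and $\mathit{Fib}$ it appeals (somewhat loosely) to the preceding retract-argument lemma, and for $\mathit{W}$ it simply cites Proposition A.3.1 of Joyal--Tierney. Your auxiliary fact --- that any class characterised by a left or right lifting property is closed under retracts --- is the correct justification for the $\mathit{Cof}$ and $\mathit{Fib}$ cases; the lemma the paper points to is really about a map being a retract of one of its own factors, so your route is the cleaner one. For $\mathit{W}$, your argument exploits the functorial factorisations that are built into the paper's definition of a model structure: transporting the retract through the functors $(\alpha,\beta)$, applying 2-out-of-3 to conclude $\beta(f)\in\mathit{Fib}\cap\mathit{W}$, and then invoking retract-closure of the two weak factorisation system classes to get $\alpha(g)\in\mathit{Cof}\cap\mathit{W}$ and $\beta(g)\in\mathit{Fib}\cap\mathit{W}$, whence $g=\beta(g)\circ\alpha(g)\in\mathit{W}$. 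Every step checks out, including the observation that functors preserve retracts in the arrow category. The trade-off against the cited Joyal--Tierney proof is that theirs is stated for a Quillen-style definition in which factorisations are merely assumed to exist rather than chosen functorially, so it must proceed by a more intricate lifting argument (reducing to the case of a fibration and exhibiting it as a retract of a trivial fibration); yours is shorter and entirely adequate here precisely because the paper's Hovey-style definition bakes functoriality into the structure.
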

\begin{proof}
Fibrations and cofibrations are closed under retracts because of the previous lemma.\\
For weak equivalences see proposition A.3.1 in \cite{JT}.
\end{proof}

\nind
Note that thanks to their characterisation in terms of lifting properties cofibrations are closed under pushouts and fibrations are closed under pullbacks.

\begin{lem}[Ken Brown's lemma]
Let $\mathcal C$ a model category and $\mathcal D$ a category with a distinguished class of "weak equivalences" satisfying the 2-out-of-3 property. If $F: \mathcal C \to \mathcal D$ is a functor which takes trivial cofibrations between cofibrant objects to weak equivalences. Then $F$ takes all weak equivalences between cofibrant objects to weak equivalences.
\end{lem}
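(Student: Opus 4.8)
The plan is to reduce an arbitrary weak equivalence between cofibrant objects to a pair of trivial cofibrations between cofibrant objects, so that the hypothesis on $F$ can be applied directly. Given a weak equivalence $f: A \to B$ with $A$ and $B$ cofibrant, the first step is to form the coproduct $A \coprod B$ together with its two structural inclusions. Since $0 \to A$ and $0 \to B$ are cofibrations, and cofibrations are stable under pushout, each inclusion $A \to A \coprod B$ and $B \to A \coprod B$ is a pushout of one of these maps, hence a cofibration; in particular $A \coprod B$ is cofibrant.

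Next I would consider the map $(f, 1_B): A \coprod B \to B$ and use the functorial factorisation $(\delta , \gamma)$ to write it as a cofibration $q: A \coprod B \to C$ followed by a trivial fibration $p: C \to B$. Composing $q$ with the two inclusions yields maps $i: A \to C$ and $j: B \to C$ satisfying $p \circ i = f$ and $p \circ j = 1_B$. Since $C$ is the target of a cofibration out of a cofibrant object it is again cofibrant, and $i$, $j$ are both cofibrations, being composites of cofibrations.

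Then I apply 2-out-of-3 in $\mathcal C$ twice. Because $p$ is a weak equivalence and $p \circ i = f$ is a weak equivalence, $i$ is a weak equivalence, hence a trivial cofibration between cofibrant objects; because $p$ is a weak equivalence and $p \circ j = 1_B$ is a weak equivalence, $j$ is likewise a trivial cofibration between cofibrant objects. The hypothesis on $F$ now gives that $F(i)$ and $F(j)$ are weak equivalences in $\mathcal D$. From $F(p) \circ F(j) = F(1_B) = 1_{F(B)}$ and the 2-out-of-3 property in $\mathcal D$, I conclude $F(p)$ is a weak equivalence. Finally $F(f) = F(p) \circ F(i)$ is a composite of weak equivalences, and 2-out-of-3 in $\mathcal D$ (which forces closure under composition) delivers the result.

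The only genuinely clever step is the first one: recognising that factoring the fold-type map $(f, 1_B)$ out of the coproduct produces exactly two trivial cofibrations between cofibrant objects, one encoding $f$ and one encoding the identity. Everything afterwards is a mechanical bookkeeping of the 2-out-of-3 property. The point to be careful about is confirming that the identity $1_B$ qualifies as a trivial cofibration between cofibrant objects—it is an isomorphism, hence lies in $\mathit{Cof} \cap \mathit W$—so that $1_{F(B)} = F(1_B)$ is known to be a weak equivalence in $\mathcal D$; this is precisely what licenses the step deducing that $F(p)$ is a weak equivalence.
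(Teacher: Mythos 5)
Your proof is correct and follows essentially the same route as the paper's: factor the fold map $(f,1_B) : A + B \to B$ into a cofibration followed by a trivial fibration, use 2-out-of-3 in $\mathcal C$ to recognise the two composites out of $A$ and $B$ as trivial cofibrations between cofibrant objects, then apply $F$ and 2-out-of-3 in $\mathcal D$. Your closing observation that $F(1_B)$ is a weak equivalence because $1_B$ is itself a trivial cofibration between cofibrant objects is a point the paper uses implicitly without justification, so it is a welcome addition rather than a deviation.
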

\begin{proof}
We suppose that $f:A \to B$ is a weak equivalence of cofibrant objects and factor $(f , 1_B) : A + B \to B$ into a cofibration followed by a trivial fibration $\displaystyle A + B  \mathop{\longrightarrow}^{q} C \mathop{\longrightarrow}^{p} B$. The pushout diagram:
$$\xymatrix@1{
& 0  \ar[r] \ar[d]
& A  \ar[d] \\
& B  \ar[r] 
& A  + B
}$$
shows that the inclusions $i_1 : A \to A + B$ and $i_2 : B \to A + B$ are cofibrations. By the 2-out-of-3 axiom we have that $q \circ i_1$ and $q \circ i_2$ are weak equivalences hence trivial cofibrations. By hypothesis we have that both $F(q \circ i_1)$ and $F(q \circ i_2)$ are weak equivalences. Since $F(p \circ q \circ i_2) = F(1_B)$ is also a weak equivalence we conclude from the 2-out-of-3 property that $F(p)$ is a weak equivalence, and hence that $F(f) = F(p \circ q \circ i_1)$ is a weak equivalence as required. 
\end{proof}

\begin{defn}
A model category is \textbf{right proper}\index{model category!right proper} iff the pullback of a weak equivalence along a fibration is again a weak equivalence.\\
It is \textbf{left proper}\index{model category!left proper} iff the pushout of a weak equivalence along a cofibration is again a weak equivalence.\\
Finally, a model category is \textbf{proper}\index{model category! proper} iff it is both left and right proper.
\end{defn}

\nind
We now define morphisms and equivalences of model categories.

\begin{defn}
Let $\mathcal C$ and $\mathcal D$ be model categories.
\begin{enumerate}
\item A functor $F: \mathcal C \to \mathcal D$ is a \textbf{left Quillen functor}\index{left Quillen functor} iff $F$ is a left adjoint and preserves cofibrations and trivial cofibrations.

\item A functor $U: \mathcal D \to \mathcal C$ is a \textbf{right Quillen functor}\index{right Quillen functor} iff it is a right adjoint and preserves fibrations and trivial fibrations.

\item An adjunction $(F,U, \phi) : \mathcal C \to \mathcal D$ is a \textbf{Quillen adjunction}\index{Quillen!adjunction} iff $F$ is a left Quillen functor iff $U$ is a right Quillen functor.

\item A Quillen adjunction $(F,U, \phi) : \mathcal C \to \mathcal D$ is a \textbf{Quillen equivalence}\index{Quillen!equivalence} iff for all cofibrant $X \in Ob(\mathcal C)$ and all fibrant $Y \in Ob(\mathcal D)$ we have that $f \in \mathit{Hom}_{\mathcal D}(FX , Y)$ is a weak equivalence iff $\phi(f) \in \mathit{Hom}_{\mathcal D}(X , UY)$ is a weak equivalence.
\end{enumerate}
\end{defn}

%----------------------------------------------------------
\section*{Homotopy in Model Categories}

\begin{defn}
Given a category $\mathcal C$ with a subcategory of weak equivalences $\mathit W$ we define its \textbf{homotopy category}\index{homotopy category}, written $Ho(\mathcal C)$\index{$Ho(\mathcal C)$} as the localisation $\mathcal C [\mathit W ^{-1}]$, obtained inverting formally all the arrows in $\mathit W$.
\end{defn}

\nind
Observe that for arbitrary categories with a subcategory of weak equivalences a size issue emerges. Indeed, due to the size of $\mathit W$ the homotopy category may have proper classes as $\mathit{Hom}$ even if the category $\mathcal C$ has $\mathit{Hom}$-sets.\\
We will state a theorem which guarantees that for model categories this phenomenon disappears, in fact the homotopy category turns out to be a quotient of the model category.

Now we focus on homotopies. Observe that for simplicial sets and topological spaces, homotopies admit two dual descriptions: as maps from a cylinder $H: X \times I \to Y$ or as paths in a path space $X \to Y^I$. Moreover, any cylinder retracts onto its base so that in particular we have a weak equivalence $I \times X \to X$, similarly for the path space because paths can be contracted to their starting point. Hence we split the notion of homotopy into left and right homotopies expressing some key properties in terms of fibrations, cofibrations and weak equivalences, using these two dual descriptions as guidance.

\begin{defn}
Let $\mathcal C$ be a model category and $f,g : B \to X$ two maps.
\begin{enumerate}[label=(\alph*)]

\item A \textbf{cylinder object}\index{cylinder object} for $B$ is a factorisation of the codiagonal $\nabla : B + B \to B$ into a cofibration $i_0 + i_1 : B + B \to IB$ followed by a weak equivalence $s:IB \to B$.

\item A \textbf{path object}\index{path object} for $X$ is a factorisation of the diagonal $\Delta : X \to X \times X$ into a weak equivalence $r:X \to PX$ followed by a fibration $(p_0 , p_1) : PX \to X \times X$.\\
A \textbf{very good path object}\index{path object!very good} is a factorisation of the diagonal as a trivial cofibration followed by a fibration.

\item A \textbf{right homotopy}\index{homotopy!right} from $f$ to $g$ is a map $K : B \to PX$ into some path object for $X$ such that $p_0K=f$ and $p_1K=g$.
We say that $f$ is \textbf{right homotopic} to $g$, written $\displaystyle f \mathop{\sim}^{r} g$\index{$\displaystyle \mathop{\sim}^{r}$} iff there exists a right homotopy from $f$ to $g$.

\item A \textbf{left homotopy}\index{homotopy!left} from $f$ to $g$ is a map $H : IB \to X$ from some cylinder object for $B$ such that $Hi_0=f$ and $Hi_1=g$.
We say that $f$ is \textbf{left homotopic} to $g$, written $\displaystyle f \mathop{\sim}^{l} g$\index{$\displaystyle \mathop{\sim}^{l}$} iff there exists a left homotopy from $f$ to $g$.

\item $f$ and $g$ are \textbf{homotopic}\index{homotopy!abstract}, written $f \sim g$ iff $f$ is both left and right homotopic to $g$.

\item $f$ is a \textbf{homotopy equivalence}\index{homotopy!equivalence} iff there is a map $h:X \to B$ such that $fh \sim 1_X$ and $hf \sim 1_B$.
\end{enumerate}
\end{defn}

\nind
Notice that in any model category both cylinder and path objects exist, simply applying the factorisation to the codiagonal and the diagonal respectively.\\
Recalling the case of simplicial sets, we expect that some requirement on objects is needed in order to make homotopy relations into equivalence relations.

\begin{lem}
Let $f, g: A \to X$ be two maps in a model category $\mathcal C$. We have the following:
\begin{enumerate}[label=(\alph*)]
\item If $A$ is cofibrant, then left homotopy is an equivalence relation on $\mathit{Hom}(A , X)$. Dually, if $X$ is fibrant, then right homotopy is an equivalence relation on $\mathit{Hom}(A , X).$

\item If $A$ is cofibrant, $\displaystyle f \mathop{\sim}^{l} g $ implies $\displaystyle f \mathop{\sim}^{r} g$. Dually, if $X$ is fibrant $\displaystyle f \mathop{\sim}^{r} g$ implies $\displaystyle f \mathop{\sim}^{l} g$.

\item Let $\mathcal C_{cf}$\index{$\mathcal C_{cf}$} be the full subcategory of fibrant and cofibrant objects of $\mathcal C$. The homotopy relation on the maps of $\mathcal C_{cf}$ is an equivalence relation compatible with composition.
\end{enumerate}
\end{lem}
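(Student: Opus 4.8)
The plan is to prove the three parts in order, using the cylinder/path object machinery and the lifting properties built into the weak factorisation systems. Throughout I would exploit the self-duality of the model category axioms: every statement about left homotopy and cofibrant objects has a dual for right homotopy and fibrant objects, so it suffices to prove one half of each claim and invoke duality for the other.

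\textbf{Part (a).} For left homotopy on $\mathit{Hom}(A,X)$ with $A$ cofibrant, I would check the three relation properties. Reflexivity is immediate: any cylinder object $IA$ gives $s$ with $si_0 = si_1 = 1_A$, so $fs$ is a left homotopy from $f$ to itself. Symmetry is formal, swapping the two inclusions $i_0, i_1$ of the chosen cylinder. Transitivity is the substantive point: given left homotopies through cylinders $IA$ and $I'A$, I would glue them along the common copy of $A$ by forming the pushout $IA +_A I'A$ and verifying this is again a cylinder object. The key input is that $A$ cofibrant forces each $i_e : A \to IA$ to be a trivial cofibration, so the pushout inclusions remain trivial cofibrations and the glued map to $A$ remains a weak equivalence by the 2-out-of-3 property. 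The dual statement follows by reversing all arrows.

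\textbf{Part (b).} I would show that if $A$ is cofibrant and $f \mathop{\sim}^{l} g$, then $f \mathop{\sim}^{r} g$. Starting from a left homotopy $H : IA \to X$ and a chosen (very good) path object $PX$ for $X$, the idea is to build the required right homotopy by a lifting argument. I would set up a square whose left edge is the trivial cofibration $i_0 : A \to IA$ and whose right edge is the fibration $(p_0,p_1) : PX \to X \times X$; the top map is $r \circ f$ composed appropriately and the bottom encodes $(f \circ s, H)$. Because $i_0$ is a trivial cofibration (here again cofibrancy of $A$ is essential) and $(p_0,p_1)$ is a fibration, a diagonal filler exists, and composing it with $i_1$ produces a right homotopy from $f$ to $g$. \textbf{The hard part will be} arranging the corners of this lifting square so that the two triangles commute and the filler genuinely restricts to the data defining a right homotopy.

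\textbf{Part (c).} On $\mathcal C_{cf}$ every object is both fibrant and cofibrant, so by (a) both left and right homotopy are equivalence relations, and by (b) they coincide; hence $\sim$ is a single equivalence relation on each hom-set. It remains to prove compatibility with composition, i.e. that $f \sim f'$ and $g \sim g'$ imply $gf \sim g'f'$. I would handle this in two steps using the two descriptions: post-composing a left homotopy $H : IA \to X$ with $g$ yields a left homotopy $gf \sim gf'$ directly, while pre-composition is treated dually via a right homotopy and the path-object description, after which the coincidence of $\mathop{\sim}^{l}$ and $\mathop{\sim}^{r}$ on $\mathcal C_{cf}$ lets me combine the two and conclude by transitivity.
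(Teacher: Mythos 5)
Your proposal is correct, and it is essentially the argument the paper itself points to: the paper gives no proof of this lemma, deferring to Proposition 1.2.5 of \cite{Hov}, and your gluing-of-cylinders argument for transitivity, the lifting square against the fibration $(p_0,p_1):PX \to X\times X$ for (b) (whose corners do commute exactly as you set them up, so the "hard part" is in fact routine), and the pre-/post-composition argument combined with transitivity for (c) reproduce that proof. The only step worth making explicit when "verifying this is again a cylinder object" is that $A+A \to IA +_A I'A$ is a cofibration: it factors as $A+A \xrightarrow{i_0 + 1_A} IA + A \to IA +_A I'A$, where the second map is a pushout of the cofibration $A+A \to I'A$, so cofibrancy of $A$ (making $i_0$ a cofibration) enters here a second time beyond the 2-out-of-3 argument you gave.
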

\begin{proof}
See proposition 1.2.5 in \cite{Hov}.
\end{proof}

\begin{lem}
Let $\mathcal C$ be a model category and $A$ a cofibrant object. If $f:X \to Y$ is either a trivial fibration or a weak equivalence between fibrant objects, then the map $\displaystyle \mathit{Hom}(A , X)/ \mathop{\sim}^{l} \to \mathit{Hom}(A , Y) / \mathop{\sim}^{l}$ induced by $f$ is a bijection.\\
Dually, if $X$ is a fibrant object and $f:A \to B$ is either a trivial cofibration or a weak equivalence between cofibrant objects, then the map $\displaystyle \mathit{Hom}(B , X)/ \mathop{\sim}^{r} \to \mathit{Hom}(A , X) / \mathop{\sim}^{r}$ induced by $f$ is a bijection.
\end{lem}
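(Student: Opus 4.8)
The plan is to prove the first statement (the second follows by formal duality, interchanging cofibrations with fibrations, cylinder objects with path objects, and $\mathop{\sim}^{l}$ with $\mathop{\sim}^{r}$), treating the two hypotheses on $f$ separately and reducing the weak-equivalence case to the trivial-fibration case.

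First I would dispatch the case where $f$ is a trivial fibration by pure lifting. For surjectivity, given $g: A \to Y$, the square whose left edge is the cofibration $0 \to A$ (here cofibrancy of $A$ is essential) and whose right edge is $f$ admits a diagonal filler $\tilde g : A \to X$ with $f\tilde g = g$; this even shows $f_*$ is surjective before quotienting. For injectivity, suppose $fg_0 \mathop{\sim}^{l} fg_1$ via a homotopy $H : IB \to Y$ out of a cylinder object $IA$ for $A$. Since the structure map $i_0 + i_1 : A + A \to IA$ is a cofibration and $f$ is a trivial fibration, lifting the square with top edge $(g_0,g_1)$ and bottom edge $H$ yields a homotopy $K : IA \to X$ from $g_0$ to $g_1$, so $g_0 \mathop{\sim}^{l} g_1$.

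Next I would reduce the case of a weak equivalence $f$ between fibrant objects to the previous one. Factor $f$ as a trivial cofibration $i : X \to Z$ followed by a fibration $p : Z \to Y$; by 2-out-of-3, $p$ is a weak equivalence, hence a trivial fibration, and since $Y$ is fibrant so is $Z$. The first case makes $p_*$ a bijection on $\mathop{\sim}^{l}$-classes, so it suffices to treat $i_*$. Because $X$ is fibrant, lifting $1_X$ against $X \to 1$ along the trivial cofibration $i$ produces a retraction $r : Z \to X$ with $ri = 1_X$; post-composing homotopies with $r$ immediately gives injectivity of $i_*$.

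The main obstacle is surjectivity of $i_*$, that is, showing every $h : A \to Z$ is left-homotopic to a map factoring through $i$. The key step is to manufacture a right homotopy from $1_Z$ to $ir$. Choosing a path object $PZ$ for $Z$ with $(p_0,p_1) : PZ \to Z \times Z$ a fibration and weak equivalence $r_Z : Z \to PZ$, the square with top edge $r_Z \circ i : X \to PZ$ and bottom edge $(1_Z, ir) : Z \to Z \times Z$ commutes (both composites equal $(i,i)$, using $ri = 1_X$), so lifting the trivial cofibration $i$ against $(p_0,p_1)$ gives $H : Z \to PZ$ with $p_0 H = 1_Z$ and $p_1 H = ir$, a right homotopy $1_Z \mathop{\sim}^{r} ir$. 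Precomposing with $h$ yields $h \mathop{\sim}^{r} i(rh)$; since $A$ is cofibrant and $Z$ is fibrant, part (b) of the preceding lemma converts this into $h \mathop{\sim}^{l} i(rh)$, so $[h] = i_*[rh]$. Composing the bijections $i_*$ and $p_*$ then finishes the proof.
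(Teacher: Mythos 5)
Your proof is correct, and it matches the paper exactly on the trivial-fibration case: surjectivity by lifting the cofibration $0 \to A$ against $f$, injectivity by lifting the cylinder inclusion $A + A \to IA$ against $f$ (note the stray ``$IB$'' in your write-up should read $IA$). Where you genuinely diverge is the weak-equivalence case. The paper dispatches it in one line by applying (the dual form of) Ken Brown's lemma to the functor $\mathit{Hom}(A,-)/\mathop{\sim}^{l}$, taking bijections of sets as the ``weak equivalences'': since the first case shows this functor sends trivial fibrations to bijections, Ken Brown's lemma immediately gives that it sends every weak equivalence between fibrant objects to a bijection. You instead factor $f$ itself as a trivial cofibration $i : X \to Z$ followed by a (necessarily trivial) fibration $p$, and then handle $i_*$ by hand, building the retraction $r$ from fibrancy of $X$, producing the right homotopy $1_Z \mathop{\sim}^{r} ir$ by lifting into a path object, and converting it to a left homotopy using fibrancy of $Z$. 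This is a genuinely different decomposition: the proof of Ken Brown's lemma factors $(1_X,f) : X \to X \times Y$ precisely so that the functor is only ever applied to trivial fibrations, with 2-out-of-3 for bijections doing the rest, and it therefore never needs the interplay between left and right homotopies. Your route must confront a trivial cofibration directly, which is exactly why you need part (b) of the preceding lemma (the $\mathop{\sim}^{r} \Rightarrow \mathop{\sim}^{l}$ conversion for fibrant targets). The trade-off: the paper's argument is shorter given that Ken Brown's lemma is already on the table, while yours is self-contained apart from the homotopy-comparison lemma and makes the underlying geometry (retraction plus a fibrewise contraction of $Z$ onto $X$) completely explicit.
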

\begin{proof}
Let $f:X \to Y$ be a trivial fibration and consider the map induced by $f$. Since $A$ is cofibrant, any map $A \to Y$ has a lifting to $X$, so the map is surjective even before passing to left homotopy. On the other hand, let $\displaystyle A + A  \mathop{\longrightarrow}^{(i_0 , i_1)} IA \mathop{\longrightarrow}^{s} A$ be a cylinder for $A$, and $H:IA \to Y$ a left homotopy between $fg , fh: A \to Y$. Then the diagram:
$$\xymatrix@1{
& A + A  \ar[r]^{(g,h)} \ar[d]_{(i_0 , i_1)}
& X  \ar[d]^{f} \\
& IA  \ar[r] ^{H} \ar@{-->}[ru]^{K}
& Y 
}$$
has a diagonal filler $K:IA \to X$, which is a left homotopy between $g,h:A \to X$. Thus the map induced by $f$ is also injective. The case when $f:X \to Y$ is a weak equivalence between fibrant objects follows from the first part of Ken Brown's lemma using the functor $\mathit{Hom}(A , -) / \mathop{\sim}^{l}$ and bijections of sets as "weak equivalences".
\end{proof}

\begin{thm}[Whitehead's theorem]\label{White}
If $\mathcal C$ is a model category, then a map of $\mathcal C_{cf}$ is a weak equivalence iff it is a homotopy equivalence.
\end{thm}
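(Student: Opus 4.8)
The plan is to prove the two implications separately, exploiting that on the full subcategory $\mathcal C_{cf}$ the homotopy relation $\sim$ is a congruence (by the preceding lemma), so that the quotient $\mathcal C_{cf}/\!\sim$ is a genuine category in which, tautologically, the homotopy equivalences are precisely the isomorphisms. In particular, homotopy equivalences between fibrant--cofibrant objects are closed under composition and satisfy the $2$-out-of-$3$ property, a fact I will use twice below. Note also that on $\mathcal C_{cf}$ the three relations $\sim^{l}$, $\sim^{r}$ and $\sim$ coincide, since each object is both fibrant and cofibrant.

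For the implication that a weak equivalence $f\colon X\to Y$ in $\mathcal C_{cf}$ is a homotopy equivalence, I would feed $f$ into the preceding lemma on homotopy classes. Since $X$ and $Y$ are cofibrant and $f$ is a weak equivalence between cofibrant objects, the dual part of that lemma gives that $f^{*}\colon \mathit{Hom}(Y,T)/\!\sim\ \to\ \mathit{Hom}(X,T)/\!\sim$ is a bijection for every fibrant--cofibrant $T$. Taking $T=X$ and using surjectivity produces $g\colon Y\to X$ with $gf\sim 1_{X}$; taking $T=Y$ and using injectivity, together with the congruence property (so that $gf\sim 1_{X}$ yields $f\circ(gf)\sim f$, i.e.\ $[\,fgf\,]=[f]=f^{*}[1_{Y}]$), forces $fg\sim 1_{Y}$. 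Hence $g$ is a two-sided homotopy inverse of $f$.

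For the converse, let $f\colon X\to Y$ be a homotopy equivalence in $\mathcal C_{cf}$. Using the functorial factorisations of the model structure I would factor $f=p\circ i$ with $i\colon X\to Z$ a trivial cofibration and $p\colon Z\to Y$ a fibration; here $Z$ is again fibrant--cofibrant. As a weak equivalence between fibrant--cofibrant objects, $i$ is a homotopy equivalence by the first implication, so from $[f]=[p]\circ[i]$ in $\mathcal C_{cf}/\!\sim$ and $2$-out-of-$3$ for isomorphisms I conclude that $p$ too is a homotopy equivalence. Everything then reduces to the key claim that \emph{a fibration $p\colon Z\to Y$ between fibrant--cofibrant objects which is a homotopy equivalence is a weak equivalence}; granting this, $f=p\circ i$ is a weak equivalence by $2$-out-of-$3$.

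To prove the key claim I would first upgrade a homotopy inverse to an honest section. Choosing $g$ with $pg\sim 1_{Y}$, a cylinder $Y+Y\to IY\xrightarrow{\,} Y$ whose inclusion $i_{0}\colon Y\to IY$ is a trivial cofibration, and a homotopy $H\colon IY\to Y$ from $pg$ to $1_{Y}$, I would lift the square $(g,H)$ against the fibration $p$ to get $\tilde g\colon IY\to Z$ and set $s:=\tilde g\, i_{1}$; then $ps=1_{Y}$ and $s\sim g$, whence $sp\sim 1_{Z}$. Since $(\mathit{Cof},\,\mathit W\cap\mathit{Fib})$ is a weak factorisation system and $p$ is already a fibration, it now suffices to show that $p$ has the right lifting property with respect to every cofibration. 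Given a lifting problem against a cofibration $a\colon A\to B$ with data $(u\colon A\to Z,\,v\colon B\to Y)$, the composite $sv$ is a lift over the base but agrees with $u$ on $A$ only up to the homotopy $sp\sim 1_{Z}$. The main obstacle is precisely to straighten this homotopy into an honest diagonal filler: this relative homotopy-lifting step is the technical heart, and I expect to carry it out by pushout-product/covering-homotopy reasoning against $p$ using a very good path object for $Z$, exactly the mechanism behind Theorem~\ref{homfib} in the simplicial case. I would either reproduce that argument or invoke the corresponding lemma in \cite{Hov}; once the filler is produced, $p\in\mathit W$ follows, completing the theorem.
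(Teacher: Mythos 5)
Your first implication and the reduction in the converse (factor $f=pi$, use the forward direction on $i$, then 2-out-of-3 for homotopy equivalences to isolate the fibration $p$) match the paper's proof exactly, up to using the dual half of the preceding lemma. The construction of the strict section $s$ with $ps=1_Y$ and $sp\sim 1_Z$ by lifting the homotopy $H$ against the fibration $p$ is also exactly the paper's step. The problem is what you do next: your key claim is finished by asserting that $p$ has the right lifting property with respect to all cofibrations, and the entire content of that assertion — straightening the homotopy $sp\sim 1_Z$ into an honest diagonal filler — is deferred to "the mechanism behind Theorem~\ref{homfib}" or "the corresponding lemma in \cite{Hov}". Neither reference delivers it. Theorem~\ref{homfib} is a statement about simplicial sets (pushout-products of monomorphisms against Kan fibrations via the internal $\underline{\mathit{Hom}}$) and has no counterpart in a bare model category, where there is no internal hom and no cylinder functor with such properties. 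And Hovey does not prove such a lemma either: in \cite{Hov} the RLP of $p$ against cofibrations is a \emph{consequence} of $p$ being a trivial fibration, i.e.\ of the very fact you are trying to establish, so invoking it here is circular. Worse, the step is genuinely delicate: the homotopy $sp\sim 1_Z$ you produced is not vertical (nothing forces $pK$ to be the constant homotopy), and upgrading a homotopy equivalence of fibrations to a \emph{fibrewise} one — which is what the straightening needs — is itself a Dold-type theorem, not a routine covering-homotopy manipulation.

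The fix is to abandon the RLP strategy altogether, as the paper does: you do not need $p$ to be a trivial fibration, only a weak equivalence. From $sp\sim 1_Z$ choose a left homotopy $K:IZ\to Z$ with $Ki_0=1_Z$ and $Ki_1=sp$. Since $Z$ is cofibrant, $i_0$ and $i_1$ are trivial cofibrations, so $Ki_0=1_Z$ and 2-out-of-3 make $K$, and hence $sp=Ki_1$, weak equivalences. Then the commutative diagram with rows $Z=Z=Z$ and $Y\xrightarrow{s}Z\xrightarrow{p}Y$ and vertical maps $p$, $sp$, $p$ (which commutes precisely because $ps=1_Y$) exhibits $p$ as a retract of $sp$, and weak equivalences are closed under retracts by the lemma proved earlier in the chapter. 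This closes your key claim in three lines and requires nothing beyond what you had already constructed.
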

\begin{proof}
Let $X$, $Y$ and $A$ be objects in $\mathcal C_{cf}$ and $f: X \to Y$ be a weak equivalence. Then by the previous lemma the map $\mathit{Hom}_{\mathcal C_{cf}}(A , X) / \hspace{-1pt} \sim \, \to \mathit{Hom}_{\mathcal C_{cf}}(A , Y) / \hspace{-1pt} \sim$ induced by $f$ is a bijection. Taking $A=Y$ we find a map $f':Y \to X$ such that $ff' \sim 1_Y$. Since $ff'f \sim f$ it follows that $f'f \sim 1_X$.\\
Conversely, given $X$ and $Y$ objects in $\mathcal C_{cf}$ and $p:X \to Y$ which is a homotopy equivalence, we consider at first the case when it is a fibration. We will show that $p$ is a weak equivalence. So let $p':Y \to X$ be a homotopy inverse for $p$ and $H:IY \to Y$ be a left homotopy $pp' \sim 1_Y$. Since $p$ is a fibration there is a diagonal filler $H': IY \to X$ in the diagram:
$$\xymatrix@1{
& Y  \ar[r]^{p'} \ar[d]_{i_0}
& X  \ar[d]^{p} \\
& IY  \ar[r] ^{H} \ar@{-->}[ru]^{H'}
& Y 
}$$
We define $q := H'i_1$, then $pq = 1_Y$ and $H' : p' \sim q$. We then have $1_X \sim p'p \sim qp$, so let $K:IX \to X$ be a left homotopy such that $Ki_0 = 1_X$ and $Ki_1 = qp$. By the 2-out-of-3 property, $K$ is a weak equivalence, making $qp$ a weak equivalence as well. From the diagram:
$$\xymatrix@1{
& X  \ar@{=}[r] \ar[d]_{p}
& X  \ar@{=}[r] \ar[d]^{qp}
& X  \ar[d]^{p} \ar[d]^{p} \\
& Y  \ar[r]^{q}
& X  \ar[r]^{p}
& Y 
}$$
we see that $p$ is a retract of $pq$ hence it is a weak equivalence.\\
Now we remove the additional hypothesis so let $f:X \to Y$ be an arbitrary homotopy equivalence. Factor $f$ as $f = pi: X \to Z \to Y$ with $i$ a trivial cofibration and $p$ a fibration. Then $Z$ is cofibrant and fibrant, so by the first part $i$ is a homotopy equivalence. It follows from the 2-out-of-3 property for the homotopy equivalences of $\mathcal C_{cf}$ that $p$ is a homotopy equivalence. Then by the argument above $p$ is a weak equivalence, hence $f$ is a weak equivalence.
\end{proof}

\begin{thm}
Let $\pi \mathcal C_{cf}$ be the quotient category modulo homotopy. There is an isomorphism of categories $\pi \mathcal C_{cf} \to Ho(\mathcal C_{cf})$.
\end{thm}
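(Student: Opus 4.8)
The plan is to build the isomorphism as a pair of mutually inverse, identity-on-objects functors, using the universal property of the localisation together with Whitehead's theorem \ref{White}. Write $q : \mathcal{C}_{cf} \to \pi\mathcal{C}_{cf}$ for the quotient functor, which is the identity on objects and sends a map $f$ to its homotopy class $[f]$ (this is well defined because, on $\mathcal{C}_{cf}$, homotopy is an equivalence relation compatible with composition), and write $\ell : \mathcal{C}_{cf} \to Ho(\mathcal{C}_{cf})$ for the canonical localisation functor. First I would produce a functor $\Phi : \pi\mathcal{C}_{cf} \to Ho(\mathcal{C}_{cf})$ by showing that $\ell$ factors through $q$; then a functor $\Psi : Ho(\mathcal{C}_{cf}) \to \pi\mathcal{C}_{cf}$ by the universal property of $\ell$; and finally check that $\Phi$ and $\Psi$ are inverse to one another.

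The crucial step is that $\ell$ sends homotopic maps to the same morphism. Suppose $\displaystyle f \mathop{\sim}^{l} g : B \to X$ via a left homotopy $H : IB \to X$ built on a cylinder object consisting of a cofibration $i_0 + i_1 : B + B \to IB$ and a weak equivalence $s : IB \to B$, so that $H i_0 = f$, $H i_1 = g$ and $s i_0 = s i_1 = 1_B$. Since $s$ and $1_B$ are weak equivalences, the 2-out-of-3 property forces $i_0$ and $i_1$ to be weak equivalences as well. Hence $\ell(s)$, $\ell(i_0)$ and $\ell(i_1)$ are all isomorphisms in $Ho(\mathcal{C}_{cf})$, and from $\ell(s)\ell(i_0) = \ell(s)\ell(i_1) = \mathrm{id}$ we deduce $\ell(i_0) = \ell(i_1)$. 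Therefore $\ell(f) = \ell(H)\ell(i_0) = \ell(H)\ell(i_1) = \ell(g)$. Thus $\ell$ identifies left-homotopic maps, and since on $\mathcal{C}_{cf}$ the relations $\sim$, $\mathop{\sim}^{l}$ and $\mathop{\sim}^{r}$ coincide, it identifies homotopic maps. Consequently there is a unique functor $\Phi$ with $\Phi \circ q = \ell$, which is the identity on objects and sends $[f]$ to $\ell(f)$.

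For the reverse direction I would invoke Whitehead's theorem \ref{White}: every weak equivalence in $\mathcal{C}_{cf}$ is a homotopy equivalence, and a homotopy equivalence $f$ with homotopy inverse $h$ is carried by $q$ to an isomorphism, since $[f][h] = [fh] = [1]$ and $[h][f] = [1]$ in $\pi\mathcal{C}_{cf}$. Hence $q$ inverts every weak equivalence, and the universal property of $\ell$ yields a unique functor $\Psi : Ho(\mathcal{C}_{cf}) \to \pi\mathcal{C}_{cf}$ with $\Psi \circ \ell = q$. It remains to check inverseness. For $\Psi\Phi$: on a morphism $[f]$ we have $\Psi\Phi([f]) = \Psi(\ell(f)) = q(f) = [f]$, and since every morphism of $\pi\mathcal{C}_{cf}$ has this form, $\Psi\Phi = \mathrm{id}$. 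For $\Phi\Psi$: the functors $\Phi\Psi$ and $\mathrm{id}_{Ho(\mathcal{C}_{cf})}$ both satisfy $(\Phi\Psi)\circ\ell = \Phi\circ q = \ell$ and $\mathrm{id}\circ\ell = \ell$, so by the uniqueness clause of the universal property of the localisation they agree. Hence $\Phi$ is an isomorphism of categories with inverse $\Psi$.

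The main obstacle is the factorisation lemma of the second paragraph — that $\ell$ collapses the homotopy relation — which is the only place where genuine model-categorical input (recognising the cylinder inclusions as weak equivalences through 2-out-of-3) is used; everything afterwards is formal manipulation of universal properties. A subsidiary point to keep in mind is that two homotopic maps between fibrant–cofibrant objects can indeed be joined by a \emph{left} homotopy, so that the cylinder argument applies, which is exactly what the earlier lemma on the coincidence of the three homotopy relations on $\mathcal{C}_{cf}$ guarantees.
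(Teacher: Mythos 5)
Your overall strategy --- factor $\ell$ through $q$ by showing the localisation functor collapses the homotopy relation, then use Whitehead's theorem \ref{White} and the universal property of the localisation to build an inverse --- is sound, and it is essentially the argument behind the reference the paper gives for this statement (corollary 1.2.9 in \cite{Hov}); your third paragraph is correct as written. But there is a genuine gap in the second paragraph, exactly at the step you yourself call crucial. The paper's definition of a cylinder object only requires $i_0 + i_1 : B+B \to IB$ to be a cofibration and $s : IB \to B$ to be a weak equivalence; nothing makes $IB$ fibrant. So when $f \mathop{\sim}^{l} g$ in $\mathcal{C}_{cf}$, the witnessing cylinder $IB$ is cofibrant (being a cofibration under $B+B$, which is cofibrant since $B$ is) but need not be an object of $\mathcal{C}_{cf}$, and hence $H$, $i_0$, $i_1$, $s$ need not be morphisms of $\mathcal{C}_{cf}$ at all. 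Since $\ell$ is the localisation of $\mathcal{C}_{cf}$, not of $\mathcal{C}$, the expressions $\ell(H)$, $\ell(i_0)$, $\ell(s)$ are not defined, and the chain $\ell(f) = \ell(H)\ell(i_0) = \ell(H)\ell(i_1) = \ell(g)$ is not meaningful as it stands. This is precisely where the restriction to fibrant--cofibrant objects has to do real work, and it is the point the write-up glosses over.

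The repair is standard and uses the fibrancy of $X$. Factor $s$ as a trivial cofibration $j : IB \to I'B$ followed by a fibration $s' : I'B \to B$. Then $I'B$ is cofibrant (the composite $0 \to B+B \to IB \to I'B$ is a composite of cofibrations) and fibrant ($s'$ is a fibration with fibrant codomain), so $I'B$ is an object of $\mathcal{C}_{cf}$; moreover $s'$ is a weak equivalence by 2-out-of-3. Because $X$ is fibrant, the lifting problem formed by $H : IB \to X$, the trivial cofibration $j$, and the fibration $X \to 1$ has a solution $H' : I'B \to X$ with $H'j = H$. Setting $i_0' := j i_0$ and $i_1' := j i_1$, one gets $H' i_0' = f$, $H' i_1' = g$ and $s' i_0' = s' i_1' = 1_B$, with every object and arrow now living in $\mathcal{C}_{cf}$. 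Your cancellation argument then goes through verbatim (note that only the invertibility of $\ell(s')$ is needed; the 2-out-of-3 observation that $i_0, i_1$ are weak equivalences is superfluous). With this lemma in place, the rest of your proof is complete.
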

\begin{proof}
See corollary 1.2.9 in \cite{Hov}.
\end{proof}

\begin{thm}\label{lemmaRP}
In a model category, the pullback of a weak equivalence between fibrant objects along a fibration is a weak equivalence. Therefore, a model category in which all objects are fibrant is right proper.
Dually for left properness.
\end{thm}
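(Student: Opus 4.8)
The plan is to reduce the statement to the single essential case of a trivial cofibration and then to treat that case by a fibrewise deformation-retraction argument. Write $w : A \to B$ for the given weak equivalence between fibrant objects and $p : E \to B$ for the fibration, and let $\bar{w} : A \times_B E \to E$ denote the pulled-back map; the goal is to show $\bar{w}$ is a weak equivalence. First I would factor $w$ as a trivial cofibration $j : A \to Z$ followed by a fibration $p' : Z \to B$; since $w$ and $j$ are weak equivalences, the 2-out-of-3 property forces $p'$ to be a trivial fibration, and $Z$ is fibrant because it lies over the fibrant object $B$. Pulling the whole factorisation back along $p$ exhibits $\bar{w}$ as a composite of the pullback of $p'$ with the pullback of $j$. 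The former is a trivial fibration: trivial fibrations are characterised by the right lifting property against cofibrations, hence are stable under pullback, and so it is a weak equivalence. By 2-out-of-3 it therefore suffices to prove the claim for the pullback of the trivial cofibration $j$ along a fibration.

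So I am reduced to the following core case: $j : A \to Z$ is a trivial cofibration between fibrant objects, $q : E' \to Z$ is a fibration, and I must show that the pullback $\bar{j} : \bar{A} := A \times_Z E' \to E'$ is a weak equivalence. Because $A$ is fibrant, the map $A \to 1$ is a fibration, so the lifting property of the trivial cofibration $j$ yields a retraction $r : Z \to A$ with $rj = 1_A$. Choosing a path object $Z \to PZ \to Z \times Z$ and lifting $j$ against the fibration $PZ \to Z \times Z$, one obtains a right homotopy $K$ from $1_Z$ to $jr$ that is constant on $A$; this exhibits $j$ as a strong deformation retract.

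The key step is to transport this deformation datum up through the fibration $q$. Using the homotopy lifting property of $q$, I would lift the contracting homotopy $K$ starting from $1_{E'}$, obtaining a map $\phi : E' \to E'$ lying over $jr$ together with a right homotopy $\bar{j}\bar{r} \sim 1_{E'}$; combined with the composite $rq : E' \to A$ this produces a fibrewise retraction $\bar{r} : E' \to \bar{A}$ satisfying $\bar{r}\bar{j} = 1_{\bar{A}}$ and $\bar{j}\bar{r} = \phi$. A path-object computation then shows that $\bar{j}\bar{r}$ is a weak equivalence, since both endpoint projections of the path object for $E'$ are weak equivalences by 2-out-of-3, whence the lifted homotopy is one too and carries $1_{E'}$ to $\bar{j}\bar{r}$. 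Finally the two identities $\bar{r}\bar{j} = 1_{\bar{A}}$ and $\bar{j}\bar{r} = (\text{weak equivalence})$ realise $\bar{j}$ as a retract, in the arrow category, of the weak equivalence $\bar{j}\bar{r}$: take the domain row $\bar{A} \xrightarrow{\bar{j}} E' \xrightarrow{\bar{r}} \bar{A}$ and the codomain row $E' \xrightarrow{1} E' \xrightarrow{1} E'$. Since weak equivalences are closed under retracts, $\bar{j}$ is a weak equivalence, completing the core case and hence the theorem. The corollary is then immediate: if every object is fibrant, the fibrancy hypothesis on $w$ is automatic, so every pullback of a weak equivalence along a fibration is a weak equivalence, i.e. the model category is right proper; left properness is the formal dual, obtained by passing to $\mathcal{C}^{op}$.

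The main obstacle I expect is precisely the transport step: the homotopy-lifting bookkeeping needed to turn the downstairs strong deformation retract into a genuine fibrewise retraction $\bar{r}$ with $\bar{r}\bar{j} = 1_{\bar{A}}$, for which the refinement that $K$ be constant on $A$ is essential. Once the fibrewise retraction and the right homotopy $\bar{j}\bar{r} \sim 1_{E'}$ are in hand, the passage to a weak equivalence is purely formal via closure of weak equivalences under retracts.
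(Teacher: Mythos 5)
Your reduction to the core case is sound, and your closing steps are correct: the observation that a map admitting a right homotopy to the identity is a weak equivalence, and the realisation of $\bar j$ as a retract of $\bar j \bar r$ once $\bar r \bar j = 1_{\bar A}$ and $\bar j \bar r \sim 1_{E'}$ are in hand. The genuine gap is exactly the step you dismiss as bookkeeping: the homotopy lifting. In a general model category a fibration $q \colon E' \to Z$ does \emph{not} lift homotopies defined on its total space. Every way of phrasing your transport step as a lifting problem places on the left-hand side a map that would need to be a (trivial) cofibration: the map $i_0 \colon E' \to IE'$ into a cylinder (for a left homotopy), or $\emptyset \to E'$ (to find a section of the object of lifts, which one can show is a trivial fibration over $E'$), or, for the rel-$\bar A$ refinement you correctly insist is essential, the map $\bar j \colon \bar A \to E'$ itself. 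None of these is available: $E'$ and $\bar A$ are arbitrary objects, hence not cofibrant in general, and $\bar j$ is a \emph{pullback} of the cofibration $j$ --- cofibrations are stable under pushout, not pullback, so $\bar j$ need not be a cofibration. This is not an artefact of excessive generality: in topological spaces with the Serre model structure, a model category in which every object is fibrant and hence squarely within the scope of the theorem, Serre fibrations lift homotopies only from cofibrant (CW-type) sources, and a Serre fibration failing the lifting property for some non-cofibrant $E'$ exists precisely because not every Serre fibration is a Hurewicz fibration. Your argument does go through in $\textbf{SSet}$, where every object is cofibrant, but not at the level of generality of the statement.

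This trap is exactly what the paper's proof is built to avoid, and the contrast is instructive. Your factorisation (trivial cofibration followed by trivial fibration) produces a middle term whose retraction $r$ is merely a weak equivalence, which forces you into homotopy-lifting territory. The paper instead factors $(1_X, w) \colon X \to X \times Y$ through the mapping path object $X \to Pw \to X \times Y$; because \emph{both} $X$ and $Y$ are fibrant, \emph{both} projections $Pw \to X$ and $Pw \to Y$ are trivial fibrations, so after disposing of the second by pullback-stability of trivial fibrations one is reduced to a weak equivalence whose retraction is itself a trivial fibration. That stronger property is what permits a purely formal conclusion: the pulled-back map is exhibited as the image, under the pullback functor $w^* \colon \mathcal C / Y \to \mathcal C / X$, of a weak equivalence between fibrant objects of $\mathcal C / Y$, and since $w^*$ preserves fibrations and trivial fibrations, Ken Brown's lemma finishes --- no homotopy lifting and no cofibrancy hypotheses anywhere. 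To salvage your outline you would have to either assume the relevant objects cofibrant (losing the theorem) or replace the lifting step by a formal argument of this kind.
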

\begin{proof}
Let $w:X \to Y$ be a weak equivalence between fibrant objects, in the following pullback square:
$$\xymatrix@1{
& X'  \ar[r]^{f'} \ar[d]^{w'}
& X  \ar[d]^{w} \\
& Y'  \ar[r]^{f} 
& Y 
}$$
we want to show that $w'$ is a weak equivalence if $f$ is a fibration. First, consider a factorisation of $\displaystyle (1_X , f): X \mathop{\longrightarrow}^{i_X} Pf \mathop{\longrightarrow}^{(p_X , p_Y)} X \times Y$ as a weak equivalence followed by a fibration. $p_X i_X = 1_X$ and $w=p_Y i_X$. Since $X$ and $Y$ are fibrant, $p_X$ and $p_Y$ are trivial fibrations. The pullback of a trivial fibration is a weak equivalence, so we may suppose $w:X \to Y$ is a weak equivalence that has a retraction $r:Y \to X$ such that $rw = 1_X$ and $r$ is a trivial fibration.
Now consider the diagram:
$$\xymatrix@1{
& X'  \ar[r]^{w'} \ar[d]^{w'}
& Y'  \ar[d]^{u} \ar[dr]^{1_{Y'}} \\
& Y'  \ar[r]^{v} \ar[d]^{rf}
& Y  \times_X Y' \ar[r]^{p_2} \ar[d]^{p_1}
& Y' \ar[d]^{rf} \\
& X  \ar[r]^{w}
& Y  \ar[r]^{r}
& X
}$$
where $p_1u=f$, $p_1v = wrf$ and $p_2v = 1_{Y'}$. Now we have that $p_1uw' = fw'$ and $p_1vw' = wrfw' = wrwf' = wf'$. Also, $p_2uw' = w'$ and $p_2vw' = w'$ so the top left-hand square commutes. The lower left-hand square is a pullback, since the lower right-hand is and the two together are. similarly, the top left-hand square is a pullback since the lower left-hand square is and the two together are. It follows that $w' = w^*(u)$, where $w^* : \mathcal C / Y \to \mathcal C / X$ is the pullback functor. $r$ is a trivial fibration, hence so is $p_2$. Since $p_2u = 1_{Y'}$, $u$ is a weak equivalence in $\mathcal C / Y$ between fibrant objects. Since the pullback functor preserves fibrations and trivial fibrations $w'$ is a weak equivalence by Ken Brown's lemma.
\end{proof}

%-------------------------------------------------------------

\section*{A Model Structure on Simplicial Sets}

As we have seen in the previous section the axioms of a model category are powerful. Hence we expect that in exchange it is difficult to build a model structure. This is indeed the case, in particular for simplicial sets.

\begin{lem}[recognition lemma]
Let $\mathcal C$ be a complete and cocomplete category provided with a subcategory $\mathit W$ which has the 2-out-of-3 property. Moreover, given other subcategories $\mathit{Fib}$ and $\mathit{Cof}$ and two other classes of maps $\mathit{C_{W}}$ and $\mathit{F_W}$, such that $(\mathit{C_W} , \mathit{Fib})$ and $(\mathit{Cof} , \mathit{F_W})$ are weak factorisation systems. If the following hold:
\begin{enumerate}[label=(\alph*)]
\item $\mathit{C_W} \subseteq \mathit{Cof} \cap \mathit W$ and $\mathit{F_W} \subseteq \mathit{Fib} \cap \mathit W$;

\item either $\mathit{Cof} \cap \mathit W \subseteq \mathit{C_W}$ or $ \mathit{Fib} \cap \mathit W \subseteq \mathit{F_W}$.

\end{enumerate}
Then $ \mathit{Fib}$, $\mathit{Cof}$ and $\mathit W$ determine a model structure on $\mathcal C$.
\end{lem}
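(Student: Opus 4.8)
The plan is to reduce everything to the two weak factorisation systems we are already handed, by showing that the distinguished classes match up, namely that $\mathit{C_W} = \mathit{Cof} \cap \mathit{W}$ and $\mathit{F_W} = \mathit{Fib} \cap \mathit{W}$. Once these two identities are established, the hypotheses say exactly that $(\mathit{Cof} \cap \mathit{W}, \mathit{Fib})$ and $(\mathit{Cof}, \mathit{Fib} \cap \mathit{W})$ are weak factorisation systems, each already equipped with its functorial factorisation; together with the fact that $\mathit{W}$ is a subcategory of weak equivalences, this is precisely the definition of a model structure.

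First I would dispose of the easy points. Both inclusions $\mathit{C_W} \subseteq \mathit{Cof} \cap \mathit{W}$ and $\mathit{F_W} \subseteq \mathit{Fib} \cap \mathit{W}$ are given by hypothesis (a). Since in any weak factorisation system every isomorphism lies in the left class (isomorphisms have the left lifting property against everything), the isomorphisms lie in $\mathit{C_W} \subseteq \mathit{W}$; combined with the assumed 2-out-of-3 property this shows $\mathit{W}$ is a subcategory of weak equivalences. Likewise $\mathit{Fib}$ and $\mathit{Cof}$, being respectively the right and left class of a weak factorisation system, are defined by lifting properties and hence are subcategories closed under retracts.

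By the symmetry of the hypotheses I may assume the first alternative in (b), that $\mathit{Cof} \cap \mathit{W} \subseteq \mathit{C_W}$; the other case is dual. With (a) this already gives $\mathit{C_W} = \mathit{Cof} \cap \mathit{W}$, so it remains to prove the reverse inclusion $\mathit{Fib} \cap \mathit{W} \subseteq \mathit{F_W}$, which is the heart of the argument. Given $f \in \mathit{Fib} \cap \mathit{W}$, I would factor it through the $(\mathit{Cof}, \mathit{F_W})$ system as $f = p i$ with $i \in \mathit{Cof}$ and $p \in \mathit{F_W}$. Since $p \in \mathit{F_W} \subseteq \mathit{W}$ by (a) and $f \in \mathit{W}$, the 2-out-of-3 property forces $i \in \mathit{W}$, whence $i \in \mathit{Cof} \cap \mathit{W} = \mathit{C_W}$. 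Therefore $i$ has the left lifting property against $\mathit{Fib}$, and in particular against $f$; equivalently, $f$ has the right lifting property against $i$. The retract argument then exhibits $f$ as a retract of $p$, and since $\mathit{F_W}$ is closed under retracts we conclude $f \in \mathit{F_W}$.

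This yields $\mathit{F_W} = \mathit{Fib} \cap \mathit{W}$, completing the identification of the two classes and hence the verification of the model structure axioms. The main obstacle is precisely the inclusion $\mathit{Fib} \cap \mathit{W} \subseteq \mathit{F_W}$: the trick is to produce the factorisation, upgrade the left factor to a trivial cofibration via 2-out-of-3, and then close the loop with the retract argument so that $f$ inherits membership in $\mathit{F_W}$ from $p$. The dual hypothesis in (b) is handled by the mirror-image argument, factoring through the $(\mathit{C_W}, \mathit{Fib})$ system instead.
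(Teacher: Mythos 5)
Your proof is correct and follows essentially the same argument as the paper's: factor the given trivial map through the appropriate weak factorisation system, use 2-out-of-3 to upgrade the other factor, produce a lift, and conclude by the retract argument together with closure of the lifting classes under retracts. The only difference is that you treat the alternative $\mathit{Cof} \cap \mathit{W} \subseteq \mathit{C_W}$ of hypothesis (b) while the paper treats $\mathit{Fib} \cap \mathit{W} \subseteq \mathit{F_W}$, but these are exact duals of one another, so the content is the same.
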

\begin{proof}
Consider the case $\mathit{Fib} \cap \mathit{W} \subseteq \mathit{F_W}$. We want to show $\mathit{Cof} \cap \mathit{W} \subseteq \mathit{C_W}$, so let $i:A \to B$ be a trivial cofibration. We factor $i$ as $i=pj : A \to E \to B$, with $j \in \mathit{C_W}$ and $p \in \mathit{Fib}$. Since $i$ is a weak equivalence and $j \in \mathit{C_W} \subseteq \mathit{Cof} \cap \mathit{W}$ it follows that $p$ is a trivial fibration. Since $\mathit{Fib} \cap \mathit{W} \subseteq \mathit{F_W}$ and $i$ is a cofibration there is a diagonal filler $d$ in the diagram:
$$\xymatrix@1{
& A  \ar[r]^{j} \ar[d]^{i}
& E  \ar[d]^{p} \\
& B  \ar@{=}[r] \ar@{-->}[ru]^{d} 
& B 
}$$
It follows that $i$ is a retract of $j$ so $i \in \mathit{C_W}$. Thus $\mathit{C_W} = \mathit{Cof} \cap \mathit{W}$.
\end{proof}

\nind
In order to prove that the category of simplicial sets admits a model structure we choose $\mathit{Fib}$, $\mathit{Cof}$ and $\mathit{W}$ to be the classes of Kan fibrations, cofibrations and weak equivalences respectively. Moreover, $\mathit{C_W}$ is the class of anodyne extensions and $\mathit{F_W}$ the one of maps with the right lifting property with respect to cofibrations.\\
The following lemmas are quite easy to obtain.

\begin{lem}
Any simplicial map can be factored as a cofibration followed by a map which has the right lifting property with respect to cofibrations. Moreover, this factorisation is functorial.
\end{lem}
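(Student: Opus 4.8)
The plan is to reproduce the small object construction used in the proof of Theorem \ref{factor}, replacing throughout the horn inclusions $\sih \hookrightarrow \sis$ by the boundary inclusions $\sib \hookrightarrow \sis$, which are precisely the generators of the class of cofibrations. Given a map $f : X \to Y$, I would form the set $L$ of all commutative squares having a boundary inclusion $\sib \hookrightarrow \sis$ on the left and $f$ on the right, sum over $L$, and take the pushout of $\coprod_L \sib \to X$ along $\coprod_L \sib \to \coprod_L \sis$. This produces a map $i^0 : X \to X^1$ together with an induced $f^1 : X^1 \to Y$ with $f = f^1 i^0$. Iterating this step to $f^1$, then $f^2$, and so on, and setting $E := \operatorname{colim} X^n$ with $p : E \to Y$ the induced map, yields a factorisation $f = p\, i$, where $i : X \to E$ is the countable composite of the $i^n$.

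First I would check that $i$ is a cofibration. By construction each $i^n$ is a pushout of a coproduct of boundary inclusions, and $i$ is the countable composite of the $i^n$; since the cofibrations form a saturated set containing the boundary inclusions (and saturated sets are stable under pushouts, coproducts, and countable compositions), $i$ is a cofibration. Next I would verify that $p$ has the right lifting property with respect to every boundary inclusion. Given a square with $\sib \hookrightarrow \sis$ on the left and $p$ on the right, the domain $\sib$ is a finite simplicial set, so the top map $\sib \to E$ factors through some finite stage $X^n$; the resulting square then belongs to the set $L$ used at that stage, and the pushout defining $X^{n+1}$ supplies a lift $\sis \to X^{n+1} \to E$, exactly as in Theorem \ref{factor}.

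It remains to upgrade the lifting property from the generators to all cofibrations, and this is the only ingredient genuinely new compared with Theorem \ref{factor}. The class of maps having the right lifting property with respect to a fixed set $B$ is automatically closed under the conditions dual to saturation (pullbacks, retracts, sequential limits, arbitrary products), so such a map lifts against $B$ if and only if it lifts against the whole saturated set generated by $B$. Applying this with $B$ the set of boundary inclusions, whose saturated set is by definition the cofibrations, shows that $p$ has the right lifting property with respect to all cofibrations. Finally, functoriality is immediate from the construction: the indexing set $L$ of all squares, the coproducts, the pushouts and the colimit are all canonical and natural in $f$, so $(\,i,p\,)$ depends functorially on $f$, just as recorded at the end of the proof of Theorem \ref{factor}.
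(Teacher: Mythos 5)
Your proof is correct and follows exactly the route the paper intends: the paper's own proof is the single line ``it is enough to repeat the argument of theorem \ref{factor},'' which is precisely what you carried out, with boundary inclusions replacing horn inclusions. One small remark: the upgrade from lifting against the generators to lifting against all cofibrations is cleanest to justify by observing that the class of maps having the \emph{left} lifting property against $p$ is saturated and contains the boundary inclusions (hence contains all cofibrations), rather than via closure properties of the right-lifting class, but this is the same standard fact and your conclusion is unaffected.
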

\begin{proof}
It is enough to repeat the argument of theorem \ref{factor}.
\end{proof}

\begin{lem}
Every anodyne extension is a trivial cofibration of simplicial sets
\end{lem}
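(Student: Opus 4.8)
The statement has two parts: every anodyne extension is a monomorphism (hence a cofibration, by the characterisation of cofibrations as monomorphisms) and a weak equivalence. The plan is to treat each part by exhibiting a saturated class that contains all the horn inclusions. For the cofibration part I would simply observe that the class of monomorphisms is itself saturated: it contains all isomorphisms and, as already noted when characterising cofibrations, is stable under pushouts, retracts, countable compositions and arbitrary coproducts. Since every horn inclusion $\sih \hookrightarrow \sis$ is a monomorphism and the anodyne extensions form the \emph{smallest} saturated class containing these inclusions, every anodyne extension is a monomorphism and therefore a cofibration.

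For the weak-equivalence part I would let $\mathcal A$ be the class of simplicial maps $i$ such that $|i|$ is a trivial cofibration of topological spaces, i.e.\ a closed cofibration which is also a homotopy equivalence. Because the geometric realisation is a left adjoint it preserves all colimits, so pushouts, coproducts and countable composites of maps in $\mathcal A$ realise to the corresponding constructions in $\textbf{Ke}$; since topological trivial cofibrations are in turn stable under pushout, retract, coproduct and countable composition, the class $\mathcal A$ is saturated. It therefore suffices to check that each horn inclusion lies in $\mathcal A$.

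To verify this, note first that $|\sih| \hookrightarrow |\sis|$ is the inclusion of a subcomplex into a CW-complex, hence a closed cofibration. It remains to see that it is a homotopy equivalence, for which it is enough that $|\sih|$ be contractible. This contraction is the realisation of a simplicial deformation retraction of $\sih$ onto one of its vertices, exactly as in Lemma \ref{retractlem} for the case $k=n$, the general $k$ being handled by the analogous order-preserving homotopy; since the realisation preserves finite products it carries the simplicial homotopy $\Delta[1]\times\sih \to \sih$ to a topological homotopy. Thus every horn inclusion belongs to $\mathcal A$, so every anodyne extension does, meaning its realisation is a homotopy equivalence and in particular a weak homotopy equivalence. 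Hence every anodyne extension is a weak equivalence, and combined with the first part it is a trivial cofibration.

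The step I expect to be the main obstacle is the closure of topological trivial cofibrations under pushout: plain weak equivalences are \emph{not} stable under pushout, and it is precisely the extra cofibration hypothesis that rescues the saturation argument. Invoking it cleanly at this stage requires the classical gluing lemma for closed cofibrations rather than any formal model-categorical machinery, since the model structure on spaces is not yet available here.
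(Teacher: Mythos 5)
Your proof is correct and follows essentially the same route as the proof the paper defers to (it cites Hovey, Proposition 3.2.3, instead of giving an argument): the monomorphisms form a saturated class containing the horn inclusions, and the maps whose realisation is a trivial closed (Hurewicz) cofibration form a second saturated class, saturation being checked from the colimit-preservation of $|\cdot|$ together with the classical closure properties of trivial closed cofibrations of spaces. Your closing remark also identifies the right pressure point: pushout stability holds for trivial \emph{closed cofibrations}, not for bare weak equivalences, and this is classical gluing-lemma topology rather than model-category formalism.

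One small imprecision: for $0 < k < n$ there is no single order-preserving map $[1] \times [n] \to [n]$ giving a one-step simplicial deformation retraction of $\Delta[n]$ onto the vertex $k$ (the construction of Lemma \ref{retractlem} works only for $k = n$, and dually for $k = 0$), so "the analogous order-preserving homotopy" does not literally exist for intermediate horns; one needs a zig-zag of two simplicial homotopies, e.g.\ through $j \mapsto \max(j,k)$. Easier still, drop the simplicial homotopy entirely: $|\Lambda^k[n]|$ and $|\Delta[n]|$ are both contractible spaces, and any map between contractible spaces is a homotopy equivalence, so the CW inclusion $|\Lambda^k[n]| \hookrightarrow |\Delta[n]|$ is a trivial closed cofibration. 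This repairs the step without changing anything else in your argument.
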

\begin{proof}
See proposition 3.2.3 in \cite{Hov}.
\end{proof}

\begin{lem}
If $f$ is a map of simplicial sets which has the right lifting property with respect to cofibrations, then it is a trivial fibration.
\end{lem}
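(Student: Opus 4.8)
The plan is to split the claim into its two halves: that $f$ is a Kan fibration, and that $f$ is a weak equivalence. The first is immediate. By the preceding lemma every anodyne extension is a trivial cofibration, hence in particular a cofibration (a monomorphism). Therefore any map with the right lifting property with respect to all cofibrations automatically has the right lifting property with respect to all anodyne extensions, which is exactly the defining condition for a Kan fibration.

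For the second half I would show that $f : X \to Y$ is in fact a simplicial homotopy equivalence, and then transport this through geometric realisation. First I would extract a section: since $0 \to Y$ is a monomorphism, hence a cofibration, the lifting problem with top edge the unique map $0 \to X$, bottom edge $1_Y$, and right edge $f$ has a filler $s : Y \to X$ with $fs = 1_Y$. Next I would produce a homotopy $sf \sim 1_X$. The inclusion $\partial \Delta[1] \times X \hookrightarrow \Delta[1] \times X$ is a monomorphism (products are computed pointwise and preserve injections), so it is a cofibration; consider the square whose left edge is this inclusion, whose top edge $\partial \Delta[1] \times X = X + X \to X$ is $sf$ on the $0$-copy and $1_X$ on the $1$-copy, whose right edge is $f$, and whose bottom edge is $f \circ \pi : \Delta[1] \times X \to X \to Y$ for $\pi$ the projection. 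Both composites around the square restrict to $f$ on each copy (using $fs = 1_Y$), so it commutes, and a diagonal filler $H : \Delta[1] \times X \to X$ is precisely a homotopy from $sf$ to $1_X$.

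At this point $f$ is a simplicial homotopy equivalence with inverse $s$. Since the geometric realisation functor is functorial and preserves finite products, $H$ realises, via $|\Delta[1] \times X| \cong |\Delta[1]| \times |X|$, to a topological homotopy from $|s||f|$ to $1_{|X|}$, while $|f||s| = 1_{|Y|}$ holds strictly. Hence $|f|$ is a homotopy equivalence of spaces, in particular a weak equivalence, so $f$ is a weak equivalence by definition; together with the fibration part this shows $f$ is a trivial fibration. The only points needing care are verifying that the maps I lift against are genuinely cofibrations --- especially that $\partial \Delta[1] \times X \hookrightarrow \Delta[1] \times X$ is a monomorphism --- and arranging the homotopy square so that it commutes; after that the passage through realisation is immediate from the product-preservation theorem.
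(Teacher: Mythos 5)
Your proof is correct and is essentially the argument the paper defers to (Hovey, Proposition 3.2.6): obtain a section $s$ by lifting against the cofibration $0 \to Y$, build a fibrewise homotopy $sf \sim 1_X$ by lifting against $\partial\Delta[1]\times X \hookrightarrow \Delta[1]\times X$, and conclude via geometric realisation, which preserves the product $\Delta[1]\times X$ and hence carries the simplicial homotopy equivalence to a topological one. No gaps; the points you flag (monomorphisms are cofibrations, commutativity of the homotopy square) are exactly the ones that need checking, and you check them correctly.
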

\begin{proof}
See proposition 3.2.6 in \cite{Hov}.
\end{proof}

\nind
The hard part is condition $(b)$ in the recognition lemma.

\begin{thm}
If $p$ is a trivial Kan fibration, then it has the right lifting property with respect to all cofibrations.
\end{thm}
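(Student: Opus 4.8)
The plan is to reduce the statement to the case of \emph{minimal} fibrations by means of Theorem~\ref{minimalfibration}, which factors $p$ as $X \xrightarrow{r} X' \xrightarrow{p'} Y$ with $p'$ minimal and $r$ a retraction onto the subcomplex $X' \subseteq X$ equipped with a homotopy $H:\Delta[1]\times X\to X$ connecting $1_X$ and $jr$ (where $j:X'\hookrightarrow X$ is the inclusion, $rj=1_{X'}$) which is vertical over $Y$, i.e. $pH=p\pi$ with $\pi$ the projection. Since such an $H$ exhibits $r$ as a (fibrewise) deformation retraction, $r$ is a weak equivalence; as $p$ is assumed to be a weak equivalence, the $2$-out-of-$3$ property forces $p'$ to be a weak equivalence as well.

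The heart of the argument — and the step I expect to be the main obstacle — is to show that a minimal fibration which is a weak equivalence is an isomorphism. Here I would use that $p'$ is locally trivial by Theorem~\ref{minimalbundle}, so that its fibre $F$ over any vertex $v$ is a minimal Kan complex: $F$ is the pullback of $p'$ along $v:\Delta[0]\to Y$, and both the Kan condition and minimality are stable under pullback. Passing to geometric realisations, $|p'|$ is a Serre fibration by Theorem~\ref{realisationSerre} and a weak homotopy equivalence; since realisation preserves finite limits it carries the fibre $F$ to the fibre of $|p'|$, so $|F|$ is weakly contractible by the long exact sequence of the Serre fibration $|p'|$. A weakly contractible minimal Kan complex is isomorphic to $\Delta[0]$ (a standard rigidity property of minimal complexes, cf.\ \cite{Hov}), whence $F\cong\Delta[0]$; a locally trivial fibration with one-point fibre is a bijection in each degree, and therefore $p'$ is an isomorphism.

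Once $p'$ is known to be an isomorphism, $p$ is isomorphic over $Y$ to $r=(p')^{-1}\circ p$, so it suffices to prove that $r$ has the right lifting property with respect to every monomorphism $m:A\to B$. Note that $r$, being $(p')^{-1}\circ p$, is a Kan fibration, it has the section $j$, and the homotopy $H$ is vertical over $X'$ (applying $(p')^{-1}$ to $pH=p\pi$ gives $rH=r\pi$). Given a lifting problem with $ra=bm$, I would take the candidate $jb:B\to X$, which satisfies $r(jb)=b$, and correct it on $A$: composing $H$ with $a$ produces a vertical homotopy connecting $a$ and $(jb)\circ m=jra$. The inclusion $\{\epsilon\}\times B\cup\Delta[1]\times A\hookrightarrow\Delta[1]\times B$ (with $\epsilon$ the endpoint at which $H\circ(\mathrm{id}\times a)$ equals $jra$) is the pushout product of the horn inclusion $\{\epsilon\}\hookrightarrow\Delta[1]$ with $m$, hence anodyne by the pushout-product theorem.

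Lifting the combined data $\bigl(jb,\,H\circ(\mathrm{id}\times a)\bigr)$ against the Kan fibration $r$, over the map $b\pi:\Delta[1]\times B\to X'$, yields a homotopy $\Delta[1]\times B\to X$ whose value $w$ at the opposite endpoint satisfies $rw=b$ and $wm=a$. This $w$ is the desired diagonal filler, so $r$ — and hence $p$ — has the right lifting property with respect to all cofibrations, completing the argument.
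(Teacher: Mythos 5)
Your proof is correct, and although it shares the paper's skeleton---factor $p$ through a minimal fibration via Theorem~\ref{minimalfibration}, invoke local triviality (Theorem~\ref{minimalbundle}), and obtain weak contractibility of the fibres from Theorem~\ref{realisationSerre} plus the long exact sequence---it replaces both of the steps that the paper delegates to citations. The paper reduces everything to the statement that a non-empty Kan complex with vanishing homotopy groups has the right lifting property against cofibrations (Hovey, Prop.~3.4.7) and quotes Hovey, Thm.~3.5.9, for the lifting property of the retraction $r$; you instead push minimality further: a weakly contractible minimal Kan complex is a point, so the locally trivial fibration $p'$ with point fibres is an isomorphism, and the whole theorem collapses to your explicit anodyne-extension argument for $r$ (which is correct: the inclusion $\{\epsilon\}\times B\cup\Delta[1]\times A\hookrightarrow\Delta[1]\times B$ is the pushout product of the anodyne end-point inclusion with $m$, the identity $rH=r\pi$ makes the relevant square commute over $b\pi$, and the lift restricted to the opposite end is the desired filler). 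What your route buys is self-containedness: it never needs the obstruction-theoretic extension result 3.4.7, and it proves rather than cites the lifting property of $r$; what it costs is the rigidity input for minimal complexes, which you cite loosely but which is in fact available inside the paper's toolkit---either elementarily (by induction on dimension, minimality together with $\pi_n(F,v)=0$ forces every $n$-simplex of $F$ to equal the iterated degeneracy of its unique vertex) or from Lemma~\ref{minimalEqIso} applied over a point. Two further small points to tighten: the vertical homotopy $H$ from $1_X$ to $jr$, constant on $X'$, is produced by the Zorn construction in the \emph{proof} of Theorem~\ref{minimalfibration} rather than asserted in its statement, so you should say explicitly that you are invoking that construction; and before the long exact sequence yields contractibility you need the fibres of $|p'|$ to be non-empty, which holds because a Serre fibration inducing a bijection on $\pi_0$ is surjective.
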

\begin{proof}
Instead of breaking this proof into a chapter of lemmas we give here a sketch.\\
The idea of the proof is to start with simple cases and generalise step by step: first of all we observe that if $X$ is a non-empty Kan complex with no non-trivial homotopy groups, then the map $X \to 1$ has the RLP with respect to cofibrations. For the proof see proposition 3.4.7 in \cite{Hov}.\\
Next, if $p:X \to Y$ is a locally trivial fibration such that every fibre is non-empty and has no non-trivial homotopy groups, then $p$ has the RLP with respect to cofibrations. Indeed, since $p$ is locally trivial, a lift in the diagram:
$$\xymatrix@1{
& \partial \Delta[n]  \ar[r] \ar[d]
& X  \ar[d]^{p} \\
& \Delta[n]  \ar[r]^{v} 
& Y 
}$$
is equivalent to a lift in the diagram:
$$\xymatrix@1{
& \partial \Delta[n]  \ar[r]^{f} \ar[d]
& \Delta[n] \times F  \ar[d]^{\pi_1} \\
& \Delta[n]  \ar@{=}[r] 
& \Delta[n] 
}$$
A lift in this square is equivalent to an extension of the map $\pi_2f : \partial \Delta[n] \to F$ to $\Delta[n]$. By the hypotheses $F$ is a Kan complex with no non-trivial homotopy groups so that we conclude by the first step.\\
Now we drop the hypothesis that $p$ is locally trivial: by theorem \ref{minimalfibration} we factor $p = p'r$ as a minimal fibration followed by a retraction. $r$ can be chosen to have the RLP with respect to cofibrations, see theorem 3.5.9 in \cite{Hov}. Since $p'$ is a retract of $p$, its fibres are retracts of the fibres of $p$. Every minimal fibration is locally trivial by theorem \ref{minimalbundle}, hence we can apply the previous step.\\
Finally, we show that every trivial Kan fibration has non-empty fibres which have no non-trivial homotopy groups. Let $F$ be the fibre of $p$ over the vertex $v$. Thanks to theorem \ref{realisationSerre} and the exactness properties of the geometric realisation functor, we have that $|F|$ is the fibre of $|p|$ over the vertex $|v|$. Since $|p|$ is a weak equivalence, $|F|$ is non-empty and has no non-trivial homotopy groups. We conclude recalling that $\pi_n (X,v) \cong \pi_n(|X|, |v|)$.
\end{proof}

\nind
Hence we have obtained the following:

\begin{cor}
The category of simplicial sets admits a model structure where fibrations, cofibrations and weak equivalences defined as in the first chapter.
\end{cor}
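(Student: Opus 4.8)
The plan is to apply the recognition lemma to the three classes just singled out: take $\mathit{Fib}$, $\mathit{Cof}$ and $\mathit{W}$ to be the Kan fibrations, the cofibrations (equivalently the monomorphisms) and the weak equivalences, and set $\mathit{C_W}$ equal to the anodyne extensions and $\mathit{F_W}$ equal to the class of maps with the right lifting property with respect to cofibrations. Since $\textbf{SSet}$ is a presheaf category it is complete and cocomplete, and $\mathit{W}$ has the 2-out-of-3 property because it is defined through the isomorphisms of homotopy groups induced by geometric realisation, which plainly satisfy 2-out-of-3.

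Next I would check that $(\mathit{C_W}, \mathit{Fib})$ and $(\mathit{Cof}, \mathit{F_W})$ are weak factorisation systems. For the first pair, Kan fibrations are by definition the maps with the right lifting property with respect to anodyne extensions, while the theorem that a map is anodyne iff it has the left lifting property with respect to all fibrations supplies the reverse characterisation; the required functorial factorisation is the one produced in theorem \ref{factor}. For the second pair, $\mathit{F_W}$ is by definition the class of maps with the right lifting property with respect to cofibrations, cofibrations are generated by the boundary inclusions and are characterised by the dual lifting property, and the functorial factorisation of an arbitrary map as a cofibration followed by a map in $\mathit{F_W}$ was obtained above by repeating the construction of theorem \ref{factor}.

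It then remains to verify the two numbered hypotheses of the recognition lemma. Condition $(a)$ is exactly the content of the two auxiliary lemmas already recorded: every anodyne extension is a trivial cofibration, giving $\mathit{C_W} \subseteq \mathit{Cof} \cap \mathit{W}$, and every map with the right lifting property with respect to cofibrations is a trivial fibration, giving $\mathit{F_W} \subseteq \mathit{Fib} \cap \mathit{W}$. For condition $(b)$ I would use the second alternative, $\mathit{Fib} \cap \mathit{W} \subseteq \mathit{F_W}$, which is precisely the theorem just proved that a trivial Kan fibration has the right lifting property with respect to all cofibrations. With all hypotheses in place the recognition lemma yields the desired model structure.

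The main obstacle is therefore already behind us: it was condition $(b)$, whose proof rested on the theory of minimal fibrations (theorems \ref{minimalfibration} and \ref{minimalbundle}) and on the exactness of geometric realisation, reducing the lifting problem fibrewise to a contractible Kan complex. Once that input is granted, the corollary is a purely formal assembly of the preceding results through the recognition lemma, so I would expect to state it without any further computation.
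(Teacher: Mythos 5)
Your proposal is correct and follows essentially the same route as the paper: the corollary is stated there without further proof precisely because it is the formal assembly, via the recognition lemma, of the preceding factorisation lemmas, the two inclusions giving condition $(a)$, and the hard theorem that trivial Kan fibrations have the right lifting property with respect to cofibrations, which supplies the alternative $\mathit{Fib} \cap \mathit{W} \subseteq \mathit{F_W}$ in condition $(b)$. Your identification of where the real work lies (minimal fibrations and the exactness of geometric realisation) matches the paper's account exactly.
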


\nind
We cannot end this section without a mention to the standard model structure on topological spaces. As we have anticipated in the first chapter simplicial sets are combinatorial models for nice topological spaces. Here we make this statement precise.

\begin{thm}
The category of topological spaces admits a model structure whose fibrations are Serre fibrations and weak equivalences are the usual weak equivalences.\\
Every object is trivially fibrant; moreover, the cofibrant objects are the retracts CW-complexes.
\end{thm}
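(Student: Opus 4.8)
The plan is to apply the recognition lemma, using the small object argument to produce the two weak factorisation systems it requires. I would fix two generating sets of maps: the sphere inclusions $\mathcal I = \{ S^{n-1} \hookrightarrow D^n : n \geq 0 \}$ and the bottom-inclusions $\mathcal J = \{ D^n \hookrightarrow D^n \times I : n \geq 0 \}$ appearing in the definition of Serre fibration. I set $\mathit{Fib}$ to be the maps with the RLP with respect to $\mathcal J$ (the Serre fibrations), $\mathit{F_W}$ to be the maps with the RLP with respect to $\mathcal I$, $\mathit{Cof}$ to be the maps with the LLP with respect to $\mathit{F_W}$, and $\mathit{C_W}$ the maps with the LLP with respect to $\mathit{Fib}$. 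The domains and codomains of the maps in $\mathcal I$ and $\mathcal J$ are compact, hence small relative to the inclusions of cell complexes, so the small object argument applies and furnishes the functorial factorisations making $(\mathit{Cof} , \mathit{F_W})$ and $(\mathit{C_W} , \mathit{Fib})$ weak factorisation systems. That $\mathit W$ is a subcategory of weak equivalences with the 2-out-of-3 property is immediate from the definition via homotopy groups.

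It then remains to verify the two conditions of the recognition lemma. For condition (a) I would observe that each generating trivial cofibration $D^n \hookrightarrow D^n \times I$ is simultaneously a relative cell complex, hence a cofibration, and a deformation retract inclusion, hence a weak equivalence; since $\mathit{C_W}$ consists of retracts of transfinite compositions of pushouts of such maps, the closure properties of $\mathit{Cof}$ and $\mathit W$ give $\mathit{C_W} \subseteq \mathit{Cof} \cap \mathit W$. Dually, a map in $\mathit{F_W}$ lies in $\mathit{Fib}$ because each member of $\mathcal J$ is itself a relative cell complex, so $\mathcal J \subseteq \mathit{Cof}$ and $\mathit{F_W} \subseteq \mathit{Fib}$; and it lies in $\mathit W$ because having the RLP against every sphere inclusion forces it to induce isomorphisms on all homotopy groups.

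The crux of the argument — and the step I expect to be the main obstacle — is condition (b), which here takes the form $\mathit{Fib} \cap \mathit W \subseteq \mathit{F_W}$: every acyclic Serre fibration must have the RLP with respect to all cofibrations, equivalently with respect to the generating sphere inclusions. For this I would first use the long exact sequence of homotopy groups of a Serre fibration to show that the fibres of an acyclic Serre fibration are weakly contractible, and then construct the desired lift $D^n \to X$ by an obstruction-theoretic induction over a relative cell structure: a lift already defined on $S^{n-1}$ extends over $D^n$ precisely because the obstruction to extending lives in a homotopy group of a weakly contractible fibre and therefore vanishes. This genuinely homotopy-theoretic lifting result is the technical heart on which the whole model structure rests.

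For the two supplementary assertions: every space $X$ is fibrant because the ground-level inclusion $D^n \hookrightarrow D^n \times I$ admits a retraction, so any map $D^n \to X$ extends over the cylinder by precomposition with the projection and the terminal map $X \to 1$ is a Serre fibration; and, by the description of $\mathit{Cof}$ extracted from the small object argument, the cofibrant objects are exactly the retracts of cell complexes built from $\emptyset$, which are precisely the retracts of CW-complexes.
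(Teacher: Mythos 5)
Your proposal is correct and follows essentially the same route as the paper, which gives no argument of its own but simply cites chapter 2, section 4 of \cite{Hov}: your sketch (generating sets $\mathcal I$ and $\mathcal J$, the small object argument with compactness giving smallness relative to cell-complex inclusions, the recognition lemma, and the lifting property of acyclic Serre fibrations against sphere inclusions as the technical heart) is precisely the structure of that proof. The one point to tighten is condition (a): weak equivalences of spaces are \emph{not} closed under pushouts or transfinite compositions in general, so rather than invoking "closure properties of $\mathit W$" you should argue that pushouts and transfinite compositions of the maps in $\mathcal J$ remain deformation retract inclusions (they are trivial Hurewicz cofibrations, a class that is so closed), hence weak equivalences --- which is exactly how the cited reference handles it.
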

\begin{proof}
See chapter 2 section 4 in \cite{Hov}.
\end{proof}

\begin{thm}[Milnor]
The geometric realisation $|\cdot |  : \textbf{SSet} \to \textbf{Ke}$ and the inclusion functor $i : \textbf{Ke} \to \textbf{Top}$ are Quillen equivalences.\\
Their composition gives a Quillen equivalence from the model category of simplicial sets to the one of topological spaces, with their standard model structures.
\end{thm}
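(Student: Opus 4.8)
The plan is to analyse the adjunction $(|\cdot|, S, \varphi): \textbf{SSet} \to \textbf{Ke}$ already constructed, first promoting it to a Quillen adjunction and then to a Quillen equivalence, and to treat the comparison $i: \textbf{Ke} \to \textbf{Top}$ separately at the end. Throughout I write $\eta$ and $\epsilon$ for the unit and counit of $|\cdot| \dashv S$.

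First I would check that $|\cdot|$ is a left Quillen functor. As a left adjoint it preserves colimits, and both the cofibrations and the trivial cofibrations of $\textbf{SSet}$ are the saturated classes generated, respectively, by the boundary inclusions $\partial \Delta[n] \hookrightarrow \Delta[n]$ and the horn inclusions $\Lambda^k[n] \hookrightarrow \Delta[n]$ (for the latter, recall that by the recognition lemma the anodyne extensions are exactly the trivial cofibrations). Since the cofibrations and trivial cofibrations of $\textbf{Ke}$ are each closed under pushout, transfinite composition, coproduct and retract, it suffices to see that $|\cdot|$ carries these two families of generators into the corresponding classes. Now $|\partial \Delta[n] \hookrightarrow \Delta[n]|$ is the inclusion $S^{n-1} \hookrightarrow D^n$, a cofibration, while $|\Lambda^k[n] \hookrightarrow \Delta[n]|$ is a trivial cofibration: its source and target both realise to contractible spaces, by Lemma \ref{retractlem} together with the preservation of finite products by $|\cdot|$, and the inclusion is a cofibration. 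This establishes the Quillen adjunction.

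Next I would produce the two weak equivalences that drive the comparison. Every simplicial set is cofibrant and every space is fibrant, so the Quillen equivalence condition reduces to showing that, for all $X$ and $Y$, a map $f: |X| \to Y$ is a weak equivalence iff its adjunct $\tilde f = S(f) \circ \eta_X : X \to SY$ is one. The cornerstone is that the counit $\epsilon_Y: |SY| \to Y$ is a weak equivalence for every $Y$: the singular complex $SY$ is a Kan complex, since horns lift because their realisations deformation retract onto the disk, so the comparison isomorphism $\pi_n(SY,v) \cong \pi_n(|SY|,|v|)$ applies, and these groups are by construction the singular homotopy groups of $Y$, with $\epsilon_Y$ inducing the classical isomorphism. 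The unit then follows formally: the triangle identity $\epsilon_{|X|} \circ |\eta_X| = 1_{|X|}$ together with $\epsilon_{|X|}$ being a weak equivalence forces $|\eta_X|$, hence $\eta_X$, to be a weak equivalence.

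With these in hand the equivalence is a two-step application of $2$-out-of-$3$. Naturality of the counit gives $\epsilon_Y \circ |Sf| = f \circ \epsilon_{|X|}$, and since both counits are weak equivalences, $f$ is a weak equivalence iff $|Sf|$ is, that is, iff $Sf$ is a weak equivalence of simplicial sets. Separately, $\tilde f = S(f) \circ \eta_X$ with $\eta_X$ a weak equivalence, so $\tilde f$ is a weak equivalence iff $Sf$ is; combining the two gives exactly that $f$ is a weak equivalence iff $\tilde f$ is. The hard part is not this formal manoeuvre but the assertion that $\epsilon_Y$ is a weak equivalence: this is the genuinely topological input, essentially Milnor's theorem, resting on theorem \ref{realisationSerre}, the fibrancy of $SY$, and the agreement of simplicial with singular homotopy groups. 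Finally, for $i: \textbf{Ke} \to \textbf{Top}$ I would use that its right adjoint is Kelleyfication, which agrees with the given space on compact subsets and does not alter weak homotopy type, since every space is weakly equivalent to a CW-complex, itself already a Kelley space; thus $i$ and $(-)_{Ke}$ form a Quillen equivalence, and the composite of Quillen equivalences is a Quillen equivalence.
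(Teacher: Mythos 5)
Your proposal is correct in outline, but it is doing something genuinely different from the paper: the paper offers no argument at all for this theorem, deferring entirely to Hovey (corollary 2.4.24 and theorem 3.6.7). What you have written is essentially a reconstruction of the proof in that cited source: check the Quillen adjunction on the generating (trivial) cofibrations, whose realisations are the cell inclusions $S^{n-1} \hookrightarrow D^n$ and the contractible-into-contractible horn inclusions; isolate the counit $\epsilon_Y : |SY| \to Y$ as the genuine topological content, proved via fibrancy of $SY$, the comparison $\pi_n(SY,v) \cong \pi_n(|SY|,|v|)$, and the adjunction identification $\pi_n(SY,v) \cong \pi_n(Y,v)$; deduce the unit from the triangle identity; and finish with the formal 2-out-of-3 manoeuvre, which is legitimate here precisely because every simplicial set is cofibrant and every space is fibrant. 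This decomposition is sound, and it correctly identifies where the real work lies. Two small points deserve attention: lemma \ref{retractlem} as stated only covers $\Lambda^n[n]$, so for general $k$ you need the analogous deformation retraction; and the assertion that $\epsilon_Y$ "induces the classical isomorphism" is exactly the compatibility square one must draw between the comparison map and the adjunction bijection -- it is true, but it is the crux and should not be waved through.

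The one genuine soft spot is the Kelley space step. Your justification that Kelleyfication "does not alter weak homotopy type, since every space is weakly equivalent to a CW-complex" does not prove what you need: CW approximation gives you a weak equivalence $Z \to Y$ from a Kelley space, which then factors through $Y_{Ke}$, but without already knowing that the factored map $Z \to Y_{Ke}$ is a weak equivalence you cannot conclude by 2-out-of-3 that $Y_{Ke} \to Y$ is one -- the argument is circular. The correct and elementary argument is the one your parenthetical gestures at: $S^n$, $D^{n+1}$ and $S^n \times [0,1]$ are compact Hausdorff, hence Kelley, so continuous maps and homotopies from them into $Y$ coincide with those into $Y_{Ke}$, giving directly that $\pi_n(Y_{Ke}) \to \pi_n(Y)$ is bijective for all $n$ and all basepoints. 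Separately, note that the paper defines Kelleyfication only for Hausdorff spaces, so the adjunction $i \dashv (-)_{Ke}$ between $\textbf{Ke}$ and all of $\textbf{Top}$ that your argument (and the paper's statement) presupposes requires extending the $k$-ification to arbitrary spaces, as is done in Hovey's setting of $k$-spaces without the Hausdorff hypothesis; this mismatch is inherited from the paper itself, but a complete proof must address it.
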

\begin{proof}
See corollary 2.4.24 and theorem 3.6.7 in \cite{Hov}.
\end{proof}

We conclude the section with this result that will play an important role in the interpretation of type theory in simplicial sets.

\begin{thm}
The model structure on simplicial sets is proper.
\end{thm}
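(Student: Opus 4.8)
The model structure is proper exactly when it is both left and right proper, and I would treat the two halves by quite different means. Left properness is immediate: the dual form of Theorem~\ref{lemmaRP} states that in any model category the pushout of a weak equivalence between cofibrant objects along a cofibration is again a weak equivalence, so that a model category all of whose objects are cofibrant is automatically left proper. Since we have already proved that the cofibrations of $\textbf{SSet}$ are exactly the monomorphisms, every simplicial set is cofibrant, and left properness follows at once.

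Right properness is the genuine content, and it cannot be obtained the same way: $\textbf{SSet}$ does not have all objects fibrant, since $\Delta[n]$ fails to be a Kan complex, so Theorem~\ref{lemmaRP} does not apply directly. My plan is to transport the statement along the geometric realisation to $\textbf{Top}$, where every object is fibrant. Given a fibration $p : X \to Y$ and a weak equivalence $w : Y' \to Y$, I would form the pullback
$$\xymatrix@1{
& X' \ar[r]^{w'} \ar[d]_{p'}
& X \ar[d]^{p} \\
& Y' \ar[r]_{w}
& Y
}$$
and show that the pulled-back map $w'$ is a weak equivalence.

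Applying $|\cdot|$ I would combine three ingredients. First, geometric realisation preserves finite limits, so the realised square is still a pullback. Second, $|p|$ is a Serre fibration by Theorem~\ref{realisationSerre}, and $|w|$ is a weak equivalence of spaces directly by the definition of weak equivalence in $\textbf{SSet}$. Third, since every topological space is fibrant, the standard model structure on spaces is right proper by Theorem~\ref{lemmaRP}. Hence $|w'|$, being the pullback of the weak equivalence $|w|$ along the Serre fibration $|p|$, is a weak equivalence of spaces; by definition $w'$ is then a weak equivalence in $\textbf{SSet}$, which is exactly what we want.

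The step I expect to require the most care is the comparison of pullbacks formed in $\textbf{Ke}$ with those formed in $\textbf{Top}$: the realisation takes values in Kelley spaces, whereas right properness is cleanest in $\textbf{Top}$, and limits in the two categories differ a priori by a Kelleyfication. I would dispose of this by noting that the Kelleyfication map is a continuous bijection inducing isomorphisms on all homotopy groups, hence a weak equivalence, so the two candidate pullbacks share the same weak homotopy type and the realisation argument is insensitive to the choice. Combining the two halves yields that the model structure on simplicial sets is proper.
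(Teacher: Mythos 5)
Your proposal is correct and follows essentially the same route as the paper's own proof: left properness from the fact that every simplicial set is cofibrant via Theorem \ref{lemmaRP}, and right properness by applying geometric realisation to the pullback square and invoking Theorem \ref{realisationSerre}, the definition of weak equivalence in $\textbf{SSet}$, and right properness of topological spaces (where every object is fibrant). The paper's version is only a sketch; your explicit treatment of the comparison between pullbacks in $\textbf{Ke}$ and in $\textbf{Top}$ fills in a detail the paper glosses over.
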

\begin{proof}
Since every object is cofibrant it is left proper thanks to theorem \ref{lemmaRP}.\\
For right properness just consider the diagram:
$$\xymatrix@1{
& X'  \ar[r]^{f'} \ar[d]^{w'}
& X  \ar[d]^{w} \\
& Y'  \ar[r]^{f} 
& Y 
}$$
where $w$ is a weak equivalence and $f$ a fibration. We apply the geometric realisation functor, thanks to theorem \ref{realisationSerre} and the definition of weak equivalence in simplicial sets it is enough to show that the model structure on topological spaces is right proper, but this is immediate, since every topological space is fibrant.
\end{proof}

%-----------------------------------------------------------
\section*{Monoidal Model Categories}

When dealing with model categories we are mainly interested in their homotopy category, therefore is important to give definitions which descends reasonably in the homotopy category. For this reason we need to impose more than the usual structure on a model category to make a "good notion" of monoidal model category; the key ingredient turns out to be closedness of the monoidal category.

\begin{defn}
Given $\mathcal C$, $\mathcal D$ and $\mathcal E$ categories, an \textbf{adjunction of two variables}\index{adjunction of two variables} from $\mathcal C \times \mathcal D$ to $\mathcal E$ is a quintuple $(\otimes , \underline{\mathit{Hom}}_r, \underline{\mathit{Hom}}_l , \phi_r , \phi_l)$ where $\otimes : \mathcal C \times \mathcal D \to \mathcal E$, $\underline{\mathit{Hom}}_r : \mathcal D^{op} \times \mathcal E \to \mathcal C$ and $\underline{\mathit{Hom}}_l : \mathcal C^{op} \times \mathcal E \to \mathcal D$ are functors and $\phi_r$ and $\phi_l$ are natural isomorphisms:

$$\mathcal C( C , \underline{\mathit{Hom}}_r(D,E)) \mathop{\longrightarrow}^{\phi_r^{-1}} \mathcal E(C \otimes D , E) \mathop{\longrightarrow}^{\phi_l} \mathcal D (D , \underline{\mathit{Hom}}_l(C , E)) $$

\end{defn}

\begin{defn}
A \textbf{closed monoidal structure}\index{category!closed monoidal} on a category is an octuple $(\otimes , a , l , r, \underline{\mathit{Hom}}_r, \underline{\mathit{Hom}}_l , \phi_r , \phi_l)$ where $(\otimes , a , l , r)$ is a monoidal structure on $\mathcal C$ and $(\otimes , \underline{\mathit{Hom}}_r, \underline{\mathit{Hom}}_l , \phi_r , \phi_l) : \mathcal C \times \mathcal C \to \mathcal C$ an adjunction of two variables.
\end{defn}

\nind
We have built the definition of closed monoidal category from an adjunction of two variables; by analogy we want to define a convenient notion of Quillen adjunction of two variables. We motivate the following definition using the examples provided in the chapter on simplicial sets by the importance of the pushout product.

\begin{defn}
Given $\mathcal C$, $\mathcal D$ and $\mathcal E$ model categories, an adjunction of two variables $(\otimes , \underline{\mathit{Hom}}_r, \underline{\mathit{Hom}}_l , \phi_r , \phi_l) : \mathcal C \times \mathcal D \to \mathcal E$ is a \textbf{Quillen adjunction of two variables}\index{Quillen!adjunction of two variables} iff given a cofibration $f:U \to V$ in $\mathcal C$ and a cofibration $g: W \to X$ in $\mathcal D$, the induced map $f \Box g : P(f,g)= (V \otimes W) \coprod_{U \otimes W} (U \otimes X) \to V \otimes X$ is a cofibration in $\mathcal E$ which is trivial if either $f$ or $g$ is.\\
The left adjoint $\otimes$ of a Quillen adjunction of two variables is simply called \textbf{Quillen bifunctor}\index{Quillen!bifunctor}.\\
By extension of the already established terminology we call $f \Box g$\index{$\Box$} the \textbf{pushout product}\index{pushout product} of $f$ and $g$.
\end{defn}

\nind
The following lemma characterize the property of being a Quillen bifunctor for a given adjunction of two variables.

\begin{lem}\label{Quillenbif}
Let $\mathcal C$, $\mathcal D$ and $\mathcal E$ be model categories and $(\otimes , \mathit{Hom}_r, \mathit{Hom}_l , \phi_r , \phi_l) : \mathcal C \times \mathcal D \to \mathcal E$  an adjunction of two variables. Then the following are equivalent:
\begin{enumerate}[label=(\roman*)]
\item $\otimes$ is a Quillen bifunctor;

\item given a cofibration $g : W \to X$ in $\mathcal D$ and a fibration $p: Y \to Z$ in $\mathcal E$, the induced map:
$$\underline{\mathit{Hom}}_r(g,p) : \underline{\mathit{Hom}}_r(X , Y) \to \underline{\mathit{Hom}}_r(X , Z) \times_{\underline{\mathit{Hom}}_r(W, Z)} \underline{\mathit{Hom}}_r(W , Y) $$
\nind
is a fibration in $\mathcal C$ which is trivial if either $g$ or $p$ is;

\item given a cofibration $f:U \to V$ in $\mathcal C$ and a fibration $p: Y \to Z$ in $\mathcal E$, the induced map:
$$\underline{\mathit{Hom}}_l(f,p) : \underline{\mathit{Hom}}_l(V , Y) \to \underline{\mathit{Hom}}_l(V , Z) \times_{\underline{\mathit{Hom}}_l(U, Z)} \underline{\mathit{Hom}}_l(U , Y) $$
\nind
is a fibration in $\mathcal D$ which is trivial if either $f$ or $p$ is.
\end{enumerate}
\end{lem}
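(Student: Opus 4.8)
The plan is to derive all three equivalences from one auxiliary observation about lifting problems and then translate it through the standard characterisations of (trivial) cofibrations and fibrations by lifting properties. Since conditions (ii) and (iii) are obtained from one another by interchanging the roles of $\mathcal C$ and $\mathcal D$ (and $\phi_r$ with $\phi_l$), it suffices to prove the equivalence of (i) and (ii); the equivalence of (i) and (iii) is then literally the same argument with $\underline{\mathit{Hom}}_l$ in place of $\underline{\mathit{Hom}}_r$.

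First I would prove the key adjunction lemma: for fixed maps $f:U\to V$ in $\mathcal C$, $g:W\to X$ in $\mathcal D$ and $p:Y\to Z$ in $\mathcal E$, there is a natural bijection between commutative squares from $f\Box g$ to $p$ and commutative squares from $f$ to $\underline{\mathit{Hom}}_r(g,p)$, under which diagonal fillers correspond to diagonal fillers. To set this up I would unwind the universal property of the pushout defining $P(f,g)$ together with that of the pullback serving as the codomain of $\underline{\mathit{Hom}}_r(g,p)$, and then apply the natural isomorphism $\phi_r$ to transpose every occurrence of $-\otimes W$ and $-\otimes X$ out of $\mathcal E$ into $\underline{\mathit{Hom}}_r(W,-)$ and $\underline{\mathit{Hom}}_r(X,-)$ in $\mathcal C$. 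A square out of $f\Box g$ amounts to a pair of maps $V\otimes W\to Y$ and $U\otimes X\to Y$ agreeing on $U\otimes W$, together with a map $V\otimes X\to Z$ compatible with $p$; applying $\phi_r$ termwise rewrites precisely this data as a square from $f$ into $\underline{\mathit{Hom}}_r(g,p)$, and a lift on one side transposes to a lift on the other. This is the one genuinely technical point, being a careful chase through the universal properties; everything afterwards is formal.

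Granting the lemma, I would argue (i) $\Rightarrow$ (ii) as follows. Assume $\otimes$ is a Quillen bifunctor, let $g$ be a cofibration in $\mathcal D$ and $p$ a fibration in $\mathcal E$. A map is a fibration exactly when it has the right lifting property with respect to all trivial cofibrations, so to see that $\underline{\mathit{Hom}}_r(g,p)$ is a fibration it suffices to check that every trivial cofibration $f$ in $\mathcal C$ has the left lifting property against it. By the lemma this is equivalent to $f\Box g$ having the left lifting property against $p$; but $f$ trivial and $g$ a cofibration force $f\Box g$ to be a trivial cofibration, whence it lifts against the fibration $p$. The two triviality clauses are handled in the same way: if $g$ is a trivial cofibration, then $f\Box g$ is a trivial cofibration for every cofibration $f$, so $\underline{\mathit{Hom}}_r(g,p)$ acquires the right lifting property against all cofibrations and is thus a trivial fibration, and the case where $p$ is a trivial fibration is symmetric.

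For the converse (ii) $\Rightarrow$ (i), I would run the same translation backwards. Given cofibrations $f$ in $\mathcal C$ and $g$ in $\mathcal D$, the map $f\Box g$ is a cofibration iff it has the left lifting property against every trivial fibration $p$, which by the lemma is equivalent to $f$ lifting against $\underline{\mathit{Hom}}_r(g,p)$; by (ii) this latter map is a trivial fibration, so the cofibration $f$ does lift. The triviality clauses follow from the corresponding halves of (ii) in the same fashion. This closes the loop (i) $\iff$ (ii), and the symmetric argument gives (i) $\iff$ (iii). The main obstacle throughout is really just the bookkeeping in the auxiliary lemma; once the correspondence of lifting problems is secured, the model-categorical content is a mechanical application of the weak factorisation system axioms.
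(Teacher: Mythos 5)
Your proposal is correct and is precisely the ``standard argument using the definition of adjunction'' that the paper invokes without writing out: transposing lifting problems across the two-variable adjunction (your key lemma) and then applying the lifting-property characterisations of cofibrations, trivial cofibrations, fibrations and trivial fibrations coming from the two weak factorisation systems. Since the paper's proof is only that one-line appeal to the standard argument, your write-up supplies exactly the intended details and takes the same approach.
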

\begin{proof}
It is a standard argument using the definition of adjunction.
\end{proof}

Now we give the promised definition of monoidal model category, as before we impose a technical condition which is needed in order to obtain a well behaved homotopy category. We anticipate that this additional condition is automatic if the unit of the monoidal category is cofibrant, as it is in the main examples of simplicial sets and topological spaces.

\begin{defn}
A \textbf{monoidal model category}\index{model category!monoidal} is a closed category $\mathcal C$ which is also a model category such that the two following conditions hold:
\begin{enumerate}[label=(\roman*)]
\item the monoidal structure $\otimes : \mathcal C \times \mathcal C \to \mathcal C$ is a Quillen bifunctor;

\item let $q: QS \to S$ be the cofibrant replacement for the unit $S$ of the monoidal structure, then the standard map $q \otimes 1 : QS \otimes X \to S \otimes X$ is a weak equivalence for every cofibrant $X$. Similarly the standard map $1 \otimes q: X \otimes QS \to X \otimes S$ is a weak equivalence for every cofibrant $X$.
\end{enumerate}
\end{defn}

\begin{defn}
A \textbf{symmetric monoidal model category}\index{model category!symmetric monoidal} is just a symmetric monoidal category and a model category whose monoidal and model structures satisfy the compatibility conditions of the above definition.
\end{defn}

\begin{thm}
The standard model structure on simplicial sets forms a symmetric monoidal model category.
\end{thm}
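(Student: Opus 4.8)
The plan is to unwind the definition of symmetric monoidal model category and check its two compatibility conditions against results already established, since almost all of the substance has been proved earlier. The monoidal structure on $\textbf{SSet}$ is the cartesian product $\times$, with unit the terminal object $\Delta[0]$; this product is symmetric, and it is closed with internal hom the $\underline{\mathit{Hom}}$-complex introduced earlier, whose $n$-simplices are the simplicial maps $K \times \Delta[n] \to L$. The model structure is the standard one obtained in the preceding corollary. It therefore remains only to verify conditions (i) and (ii).

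For condition (i), I must show that $\times$ is a Quillen bifunctor: for cofibrations $f: U \to V$ and $g: W \to X$ the pushout product $f \Box g : P(f,g) \to V \times X$ is a cofibration which is trivial whenever either $f$ or $g$ is. Recall that in $\textbf{SSet}$ the cofibrations are exactly the monomorphisms and that, from the construction of the model structure, the trivial cofibrations coincide with the anodyne extensions. The first claim, that $f \Box g$ is again a monomorphism, I would verify levelwise: since limits and colimits in $\textbf{SSet}$ are computed pointwise in $\textbf{Set}$, at each degree $n$ the map $P(f,g)_n \to V_n \times X_n$ is the inclusion of the union $(V_n \times W_n) \cup (U_n \times X_n)$ into $V_n \times X_n$, which is injective precisely because these two subsets meet in $U_n \times W_n$. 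For the trivial case I invoke directly the earlier theorem asserting that the pushout product of an anodyne extension with an arbitrary monomorphism is again an anodyne extension: if $f$ is a trivial cofibration this yields the result at once, and if instead $g$ is the trivial one I reduce to the same statement via the symmetry of the product.

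Condition (ii) is automatic in this setting. Its entire content concerns the cofibrant replacement of the monoidal unit, but the unit $\Delta[0]$ is itself cofibrant, since \emph{every} simplicial set is cofibrant as shown earlier. Hence I may take the replacement $q : Q\Delta[0] \to \Delta[0]$ to be the identity, so that $q \times 1$ is an identity and trivially a weak equivalence for every (cofibrant) $X$. This is exactly the situation foreseen in the remark preceding the definition, that the unit condition holds automatically whenever the unit is cofibrant.

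I expect no serious obstacle: the genuine mathematical work of condition (i), namely the interaction of the pushout product with anodyne extensions, has already been carried out in the earlier pushout product theorem, which I am entitled to assume. The only mildly delicate points are the bookkeeping identification of trivial cofibrations with anodyne extensions, so that that theorem applies verbatim, together with the levelwise verification that monomorphisms are stable under pushout product; both are routine.
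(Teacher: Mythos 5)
Your proof is correct, and in substance it rests on the same two pillars as the paper's: cofibrancy of every simplicial set (so the unit condition is automatic) and the theorem that the pushout product of an anodyne extension with a monomorphism is anodyne. The difference is which of the equivalent formulations of the Quillen bifunctor condition gets verified. The paper transfers condition (i) across the adjunction, citing lemma \ref{Quillenbif} together with theorem \ref{homfib} (that $\underline{\mathit{Hom}}(i,p)$ is a fibration when $i$ is a monomorphism and $p$ a Kan fibration); you instead check the pushout-product form directly, supplying the levelwise argument that a pushout product of monomorphisms is a monomorphism and handling the acyclic case by the anodyne pushout-product theorem plus the symmetry of the product. Your route is in fact slightly more self-contained: theorem \ref{homfib} as stated covers only the fibration clause of lemma \ref{Quillenbif}(ii), so the paper leaves the acyclicity clause implicit, whereas you address it explicitly --- modulo the identification of trivial cofibrations with anodyne extensions, which you correctly flag and which does follow from the construction of the model structure via the recognition lemma. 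Both arguments ultimately reduce to the same combinatorial input, since theorem \ref{homfib} is itself deduced from the anodyne pushout-product theorem.
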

\begin{proof}
The symmetric monoidal structure on $\textbf{SSet}$ is the one given by the cartesian product, the adjoint being the function complex $\underline{\mathit{Hom}}(X , Y)$. All objects are cofibrant, hence we just need to check that the product is a Quillen bifunctor. Thanks to the previous lemma and to theorem \ref{homfib} we can conclude.
\end{proof}

%%%%%%%%%%%%%%%%%%%%%%%%%%%%%%%%%%%%%%%%%%%%%%%%%%%%%%%%%%%%%%
%%%%%%%%%%%%%%%%%%%%%%%%%%%%%%%%%%%%%%%%%%%%%%%%%%%%%%%%%%%%%%
%-----------------------------------------------------------
\chapter{Type Theory}
\lettrine{I}{n} mathematics there are some questions that we don't want to answer, for example: "is $1 \in 3$?" or "can a finite simple group be a zero of the Riemann zeta function?". These questions arise from the nature of material set theory used as implicit foundation for everyday mathematics like ZF where all mathematical entities are sets and a global membership predicate is available. It is not important to know if $1 \in 3$, because the answer depends on the particular set-theoretic construction used to build the model of $\mathbb N$, whereas we are interested in the structural properties of natural numbers, i.e. the ones expressible from the constant $0$ and the successor function. Similarly, is common in mathematical papers and books to write "let $G$ be a finite group and $z$ a complex number", meaning that the two variables range over fixed domains which cannot conflict. Moreover, this is exactly what is done in computer programming when variables are declared.\\
Types are an answer for the need to a formalism adherent to informal reasoning. They can be conceived as intensional sets, meaning that they are characterised by a global, holistic, essence or nature instead of being reduced to their elements.
Type theory has three main actors: terms, types and contexts. Every term \textit{inhabits}\index{inhabitation of a type} types, written $a:A$, whereas \textit{contexts}\index{context} are finite ordered sets of inhabitation judgements like $\Gamma = (  x_1 : A_1 , \dots , x_n : A_n)$ such that for each $k = 1, \dots , n$ the expression $x_k : A_k$ can be inferred from the previous ones. \\
Type theories differ mainly for the type constructors which are admitted and by the rules, more or less powerful, that are used to manage these constructors. Examples of common type constructors are function and product types: given two types $A$ and $B$ we can form the type of functions $A \to B$ and the type of paris $A \times B$. We shall discuss in detail the constructors allowed in Martin-L\"of type theory.\\
As we have mentioned in the introduction a key feature of many type theories is the \textit{Curry-Howard isomorphism} also called \textit{propositions-as-types paradigm}, its starting point is an analogy between the syntactical rules used to define type-theoretic constructors and the ones used in natural deduction for intuitionistic logic. This yields to the interpretation of a proposition as the type of its proofs. 

We cannot omit to mention the relationship between type theory and computer science. The first and most basic example is the one given by the \textit{lambda calculus}\index{lambda-calculus} used by Church and his school to formalise the notion of computable function. Moreover, the Curry-Howard correspondence can be further extended to programs and games. Indeed, program specifications can be interpreted as types and programs verifying the specification as terms inside the type. Similarly, games can be interpreted as the types of their winning strategies.\\
As we shall see in the next section the Curry-Howard isomorphism can be seen as a type-theoretic implementation of the Brouwer-Heyting-Kolmogorov interpretation of intuitionistic logic.\\
For an example of more general type theories, without the proposition-as-types correspondence see \cite{AG}.\\
In the following table we sketch the most significant articulations of the correspondence:

\begin{center}
\begin{tabular}{|c|c|}
\hline
logic & type theory \\
\hline
\hline
formula & type \\
\hline
proof & term\\
\hline
type constructors & connectives\\
\hline
implication & function type \\
\hline
conjunction & product type \\
\hline
disjunction & sum type\\
\hline
absurdity & empty type\\
\hline
existential quantifier & dependent sum\\
\hline
universal quantifier & dependent product\\
\hline
equality & identity type\\
\hline
normal term & normal proof\\
\hline
\end{tabular}
\end{center}

\nind
Negation is defined as usual in intuitionistic logic as $\phi \Rightarrow \bot$.

The Curry-Howard isomorphism allows a subtle and important distinction: the one between propositions and judgements. In natural language, propositions are syntactical constructions whereas judgements are acts of speech. $\phi$ is a proposition, "$\phi$ is true" is a judgement. Observe that the statement that a proposition is well-formed like "$\phi$ is a proposition" is itself  a kind of judgement, this is one of the main sources of confusion regarding this distinction.\\
Under the Curry-Howard correspondence propositions are types, whereas judgements are treated formally by a system of rules. There are six kinds of judgements, two for each of the following: contexts, types and term.
\begin{itemize}
\item $\vdash \Gamma \; \mbox{context}$

\item $\vdash \Gamma = \Delta \; \mbox{context}$

\item $\Gamma \vdash A \; \mbox{type}$

\item $\Gamma \vdash A=B \; \mbox{type}$

\item $\Gamma  \vdash a:A$

\item $\Gamma \vdash a=b : A$
\end{itemize}

\nind
Judgements are expressed by the symbol $\vdash$, meaning that the expression on the right can be inferred from the context on the left. Sometimes the symbol $\vdash$ is dropped when the empty context appears.\\
We give now as an example the rules for contexts, postponing the discussion of the rules governing syntactic operations (the so-called \textit{structural rules}\index{structural rules}) to the sequel, when the main ideas of Martin-L\"of type theory will be already fixed. Contexts are given by two rules (where the first is a rule with empty premiss i.e. an axiom):

\begin{prooftree}
\AxiomC{}
\UnaryInfC{$\vdash () \;  \mbox{context}$}
\end{prooftree}

\begin{prooftree}
\AxiomC{$\Gamma \vdash A \; \mbox{type}$}
\UnaryInfC{$\vdash (\Gamma , x:A) \; \mbox{context}$}
\end{prooftree}

\nind
Which express the fact that contexts are inductively constructed finite list of judgements of the form $x:A$. Notice again the similarity with computer programming.

In Martin-L\"of theory the distinction between propositions and judgements induces an analogous distinction between \textit{propositional equality}\index{equality!propositional} and \textit{definitional}\index{equality!definitional} or \textit{judgemental equality}\index{equality!judgemental}. In fact the former is the one present in the basic judgements, whereas the latter is a type constructor written $Id_A(a,b)$ for $a,b : A$, motivated by the Curry-Howard isomorphism.\\
We cannot ask if a judgemental equality is true or not because it is not a proposition, although we may derive it from other judgements. On the other hand, we can derive the judgement $p:Id_A(a,b)$ asserting that there is a proof of the proposition "$a$ and $b$ are equal in $A$" so that we know that the proposition $Id_A(a,b)$ is true.\\
In fact Martin-L\"of type theory can be seen as a formalisation of the meta-theory of judgements of an object theory which deals with propositions (i.e. types).\\
We will use the symbol $=$ for generic definitional equalities and the specific $:=$ when we will introduce new symbols inside type theory, thus we should distinguish between them, the latter would deserve the name definitional equality and the former judgemental equality, but we will not insist on this point.\\
We emphasize that the two symbols: $\vdash$ and the long horizontal line used for syntactical rules have very different meanings, in spite of the apparent similarity, in fact they both express some kind of deduction. But the first is a formal symbol of the theory used to express judgements, whereas the second is a meta-theoretical rule used to define the meaning of the former.

Finally, we introduce \textit{dependent types}\index{dependent!types}, that are families of types parametrized by another type. Type dependencies are expressed formally as particular judgements, like: $x:A \vdash b(x) : C(x)$ and they are interpreted as hypothetical judgements i.e. judgements made under assumptions. Type dependencies are familiar in computer programming, for example the type of strings depends on the type of natural numbers; indeed, for every natural number $n$ we have the well-formed type of strings of length $n$. As we shall see in the next chapter type dependencies are interpreted homotopically as fibrations.

The sources for this chapter are \cite{ML1}, \cite{ML2} and \cite{War} for Martin-L\"of type theory and \cite{KLV},\cite{UFP} for homotopy type theory.\\
A good introductory book on type theory and the Curry-Howard isomorphism is \cite{SU}.

%---------------------------------------------------------
\section*{Martin-L\"of Type Theory}

Martin-L\"of type theory can be synthetically described as a dependent type theory with products, sums, types of well-founded trees, identity types, a type of natural numbers, for the standard finite sets and for one universe. Martin-L\"of proposed two versions of his theory, one with extensional identity types and another with intensional ones. In the sequel Martin-L\"of type theory will always mean the version with intensional identity types.\\
Thanks to the Curry-Howard correspondence, these constructors are defined by syntactical rules following a pattern similar to the one used for natural deduction, every type constructor has four rules defining it; namely: formation, introduction, elimination and computation.
Let start with \textbf{dependent products}\index{dependent!product}. The formation rule tells us when the type constructor is well defined.

\begin{prooftree}
\AxiomC{$A \; \mbox{type}$}
\AxiomC{$x:A \vdash B(x) \; \mbox{type}$}
\BinaryInfC{$(\Pi x:A )B(x) \; \mbox{type}$}
\end{prooftree}

\nind
The introduction rules specifies the so called \textit{canonical terms}\index{canonical term} of the type constructor:

\begin{prooftree}
\AxiomC{$x:A \vdash b(x):B(x)$}
\UnaryInfC{$\lambda x.b(x) : (\Pi x:A)B(x)$}
\end{prooftree}
\index{$\lambda x.b(x)$}

\nind
Notice that we should add in the premiss also judgements for the well definition of the types involved as $A \; \mathit{type}$, but for a better readability we will leave these judgements implicit.\\
The introduction and formation rules can be seen as rules for the well definition of the canonical terms. Their meaning is explained by the elimination and the computations rules.

\begin{prooftree}
\AxiomC{$a:A$}
\AxiomC{$c : (\Pi x:A)B(x)$}
\BinaryInfC{$\textsf{ap}(c,a) : B(a)$}
\end{prooftree}
\index{$\textsf{ap}$}

\nind
And the computation rule which tells us what happens if we introduce and then eliminate a canonical term, they are analogous to the metarules in natural deduction that express how we can convert detours given by an introduction followed by an elimination. Note that the "computation" is truly a syntactical computation thanks to the definitional equality that appears in the conclusion.

\begin{prooftree}
\AxiomC{$a:A$}
\AxiomC{$x:A \vdash b(x):B(x)$}
\BinaryInfC{$\textsf{ap}(\lambda x.b(x),a)=b(a) : B(a)$}
\end{prooftree}

\nind
Observe that thanks to the correspondence with natural deduction we would like to read the defintional equality above from the left to the right, otherwise we would introduce a detour instead of elimnating it.\\
We can now say, after having inspected these rules, that the canonical terms of the dependent product deserve the name of functions, which are obtained "abstracting" the variable $x$ from the dependent expression $b(x)$. Furthermore, the eliminator $\textsf{ap}$ has the meaning of the application of a function to an element.\\
\textbf{Function types}\index{function type} are defined to be products of a family of types not depending over the base type i.e. $A \to B := (\Pi x:A)B$.

Now we move to \textbf{dependent sums}\index{dependent!sum}, sums are types of pairs. As usual we start with the formation and the introduction rules:

\begin{prooftree}
\AxiomC{$A \; \mbox{type}$}
\AxiomC{$x:A \vdash B(x) \; \mbox{type}$}
\BinaryInfC{$(\Sigma x:A )B(x) \; \mbox{type}$}
\end{prooftree}

\begin{prooftree}
\AxiomC{$a:A$}
\AxiomC{$b:B(a)$}
\BinaryInfC{$(a,b) : (\Sigma x:A )B(x)$}
\end{prooftree}

\nind
The elimination rule says that if we know how to prove a proposition $C$ for pairs, then we can prove it for any term in the sum type:

\begin{prooftree}
\AxiomC{$x:A , y:B(x) \vdash d(x,y) : C((x,y))$}
\AxiomC{$c:(\Sigma x:A )B(x)$}
\BinaryInfC{$\textsf{E}(c , d(x,y)) : C(c)$}
\end{prooftree}

\nind
Sometimes in the sequel we will write $\textsf{E}(c , (x,y)d(x,y))$ if the term $d$ will have more free variables. And finally the computation rule:

\begin{prooftree}
\AxiomC{$a:A$}
\AxiomC{$b:B(a)$}
\AxiomC{$x:A , y:B(x) \vdash d(x,y) : C((x,y))$}
\TrinaryInfC{$\textsf{E}((a,b) , d(x,y)) = d(a,b) : C(c)$}
\end{prooftree}

\nind
In some cases - like for the dependent sum - the computation rules can be seen as recursion principles, saying that in order to define the eliminator it suffices to know how to define it on canonical terms. Then, morally speaking, a type contains only canonical terms. Intuitively, we can interpret this feature recalling the constructive and computational content of type theory; indeed, we can conceive generic terms as the result of iterated constructions based on canonical terms, we know in principle that every term is built on canonical ones, but we don't know what are the constituents and how to effectively build it.\\
Note that a normalisation theorem i.e. the metatheorem asserting that every term is definitionally equal to a canonical one and that these canonical forms are well defined, is highly nontrivial and has consistency as one of its consequences.\\
Again, consider the constant case: if the family of types is not dependent over the base, we define the \textbf{product type}\index{product type} as $A \times B := (\Sigma x : A)B$.\\
Ordinary products are equipped with projections; this is the case, in fact we can define the \textbf{left projection}\index{left projection} as $\textsf{p}(c) := \textsf{E}(c, (x,y)x)$\index{$\textsf{p}$}. It is straightforward to obtain the derived rules:

\begin{prooftree}
\AxiomC{$c: (\Sigma x:A)B(x)$}
\UnaryInfC{$\textsf{p}(c) : A$}
\end{prooftree}

\begin{prooftree}
\AxiomC{$a:A$}
\AxiomC{$b:B(a)$}
\BinaryInfC{$\textsf{p}((a,b)) = a : A$}
\end{prooftree}

\nind
Furthermore, we can define a \textbf{right projection}\index{right projection} as $\textsf{q}(c) := \textsf{E}(c, (x,y)y)$\index{$\textsf{q}$}. Similarly, we obtain the two derived rules for the right projection:

\begin{prooftree}
\AxiomC{$c: (\Sigma x:A)B(x)$}
\UnaryInfC{$\textsf{q}(c) : B(\textsf{p}(c))$}
\end{prooftree}

\begin{prooftree}
\AxiomC{$a:A$}
\AxiomC{$b:B(a)$}
\BinaryInfC{$\textsf{q}((a,b)) = b : B(a)$}
\end{prooftree}

\nind
In addition to the logical interpretation given by the Curry-Howard isomorphism, sums allow to interpret the constructive notion of \textit{such that}\index{such that}: for example a construction of a Cauchy sequence of rational numbers is a pair, the first component being a function $s: \mathbb N \to \mathbb Q$, and the second a proof that $s$ is Cauchy. Thanks to this interpretation the dependent sum $(\Sigma x:A)B(x)$ can also be thought as the set of all $a$ in $A$ such that the proposition $B(a)$ holds, so that the rules for the sum play the role of the set-theoretic separation axiom.\\
Then we present the rules for the \textbf{identity types}\index{identity types}:

\begin{prooftree}
\AxiomC{$A \; \mbox{type}$}
\AxiomC{$a,b : A$}
\BinaryInfC{$Id_A(a,b) \; \mbox{type}$}
\end{prooftree}

\begin{prooftree}
\AxiomC{$A \; \mbox{type}$}
\AxiomC{$a : A$}
\BinaryInfC{$\textsf{refl}_A(a) : Id_A(a,a)$}
\end{prooftree}
\index{$\textsf{refl}$}

\nind
The introduction rule is stated with just one term, but allowing two definitionally equal term it can be seen as a "reflection rule" giving us a propositional equality every time we have a definitional one.\\
The elimination and the computation rules need a little explanation in order to become readable, we can explain them as follows: if we have a property $C$ depending on a couple of terms in $A$ and on a proof of the identity between these two, and we know how to prove this property for definitionally equal terms, then we know how to prove it for every couple of propositionally equal terms. So it can be thought as an instance of the principle of \textit{indiscernibility of identicals}\index{indiscernibility of identicals}:

\begin{prooftree}
\AxiomC{$x,y : A, u:Id_A(x,y) \vdash C(x,y,u) \; \mbox{type}$}
\AxiomC{$z: A \vdash d(z) : C(z,z, \textsf{refl}_A(z))$}
\BinaryInfC{$\textsf{J}_{z,d}(x,y,u) : C(x,y,u)$}
\end{prooftree}
\index{$\textsf{J}$}

\begin{prooftree}
\AxiomC{$x,y : A, u:Id_A(x,y) \vdash C(x,y,u) \; \mbox{type}$}
\AxiomC{$z: A \vdash d(z) : C(z,z, \textsf{refl}_A(z))$}
\BinaryInfC{$\textsf{J}_{z,d}(x,x,\textsf{refl}_A(x)) = d(x) : C(x,x,\textsf{refl}_A(x))$}
\end{prooftree}

\nind
It is common to find the expression \textit{Leibniz's law}\index{Leibniz's law} used sometimes for the principle of indiscernibility of identicals and other times for the principle of identity of indiscernibles, which is the converse. The latter is much more controversial and was one of the cornerstones of Leibniz's metaphysics, thus would be better to confine to it the use of this expression. Anyway, in order to be clear we will avoid the expression "Leibniz's law".\\
The \textbf{disjoint sum}\index{disjoint sum} of two types could be introduced as a dependent sum over the type of Booleans $\textbf N_2$, which we will introduce later. For a clearer understanding we present now the rules of this type:

\begin{prooftree}
\AxiomC{$A \; \mbox{type}$}
\AxiomC{$B \; \mbox{type}$}
\BinaryInfC{$A+B \; \mbox{type}$}
\end{prooftree}

\nind
For this type constructor we have two introduction rules:

\begin{prooftree}
\AxiomC{$a:A$}
\UnaryInfC{$\textsf{i}(a):A+B$}
\end{prooftree}
\index{$\textsf{i}$}

\begin{prooftree}
\AxiomC{$b:B$}
\UnaryInfC{$\textsf{j}(b):A+B$}
\end{prooftree}
\index{$\textsf{j}$}

\nind
And the usual elimination and computation rules, explaining that proofs about a disjoint sum can be performed by cases:

\begin{prooftree}
\AxiomC{$x:A \vdash d(x) : C(\textsf{i}(x))$}
\AxiomC{$y:B \vdash e(y) : C(\textsf{j}(y))$}
\AxiomC{$c:A+B$}
\TrinaryInfC{$\textsf{D}(c, d(x), e(y) ):C(c)$}
\end{prooftree}
\index{$\textsf{D}$}

\begin{prooftree}
\AxiomC{$a:A$}
\AxiomC{$x:A \vdash d(x) : C(\textsf{i}(x))$}
\AxiomC{$y:B \vdash e(y) : C(\textsf{j}(y))$}
\TrinaryInfC{$\textsf{D}(\textsf{i}(a), d(x), e(y) ) = d(a):C(c)$}
\end{prooftree}

\begin{prooftree}
\AxiomC{$b:B$}
\AxiomC{$x:A \vdash d(x) : C(\textsf{i}(x))$}
\AxiomC{$y:B \vdash e(y) : C(\textsf{j}(y))$}
\TrinaryInfC{$\textsl{D}(\textsf{j}(b), d(x), e(y) ) = e(b):C(c)$}
\end{prooftree}

\nind
The \textbf{types of finite sets}\index{type!of finite sets} are given by the following rules. Notice that we could take as primitive just $\textbf N_0$, $\textbf N_1$ and $\textbf N_2$, defining the others as iterated disjoint unions.

\begin{prooftree}
\AxiomC{}
\UnaryInfC{$\textbf N_k \; \mbox{type}$}
\end{prooftree}
\index{$\textbf N_k$}

\begin{prooftree}
\AxiomC{}
\UnaryInfC{$m_k : \textbf N_k \quad (m=0, \dots , k-1)$}
\end{prooftree}

\nind
The elimination and computation rules tell us that in order to prove a proposition for the terms of a finite set we can reason by cases checking separately each term.

\begin{prooftree}
\AxiomC{$c:\textbf N_k$}
\AxiomC{$c_m : C(m_k) \quad (m=0, \dots , k-1)$}
\BinaryInfC{$\textsf{R}_k(c,c_0, \dots, c_{k-1}) : C(c)$}
\end{prooftree}
\index{$\textsf{R}_k$}

\begin{prooftree}
\AxiomC{$c_m : C(m_k) \quad (m=0, \dots , k-1)$}
\UnaryInfC{$\textsf{R}_k(m_n,c_0, \dots, c_{k-1}) = c_m : C(m_k)$}
\end{prooftree}

\nind
Notice that $\textbf N_0$ has no introduction rule, therefore no terms.

Let move to the \textbf{type of natural numbers}\index{type!of natural numbers} again we could define them using $W$-types and the type of Booleans, but a direct treatment is clearer for an introduction and allows in turn to use natural numbers in order to understand $W$-types.

\begin{prooftree}
\AxiomC{}
\UnaryInfC{$\textbf N \; \mbox{type}$}
\end{prooftree}
\index{$\textbf N$}

\nind
We have two introduction rules:

\begin{prooftree}
\AxiomC{}
\UnaryInfC{$0:\textbf N$}
\end{prooftree}

\begin{prooftree}
\AxiomC{$n:\textbf N$}
\UnaryInfC{$s(n):\textbf N$}
\end{prooftree}

\nind
The other rules encapsulate proofs by mathematical induction defining proofs by recursion. The last two are the computational ones:

\begin{prooftree}
\AxiomC{$d:C(0)$}
\AxiomC{$x:\textbf N, y:C(x) \vdash e(x,y): C(s(x))$}
\AxiomC{$c:\textbf N$}
\TrinaryInfC{$\textsf{R}(c,d,e(x,y)):C(c)$}
\end{prooftree}
\index{$\textsf{R}$}

\begin{prooftree}
\AxiomC{$d:C(0)$}
\AxiomC{$x: \textbf N, y:C(x) \vdash e(x,y): C(s(x))$}
\BinaryInfC{$\textsf{R}(0,d,e(x,y)) = d:C(0)$}
\end{prooftree}

\begin{prooftree}
\AxiomC{$a: \textbf N$}
\AxiomC{$d:C(0)$}
\AxiomC{$x:\textbf N, y:C(x) \vdash e(x,y): C(s(x))$}
\TrinaryInfC{$\textsf{R}(s(a),d,e(x,y)) = e(a, \textsf{R}(a,d,e(x,y))):C(s(a))$}
\end{prooftree}

\nind
Finally, we present \textbf{$W$-types}\index{type!of well-founded trees}, that are a generalisation of types of natural numbers, lists and binary trees. For every dependent type we can form the appropriate $W$-type, written $(W x:A)B(x)$, its terms can be thought as well-founded trees. The base type has the meaning of the type of labels for the nodes, and given a label for a node $a:A$, we have that $B(a)$ contains names for the branches of the tree at the node $a$. They can be represented by a function $b:B(a) \to (W x:A)B(x)$ so that $b(v)$ for $v:B(a)$ is the subtree obtained from the $v$-th branch of the node labelled by $a$.\\
Moreover, intuitionistic ordinals are no longer totally ordered, hence they can be represented as well-founded trees, and recursion over a $W$-type is the type-theoretic analogue of transfinite recursion over ordinals.\\
As usual the first two rules are simple:

\begin{prooftree}
\AxiomC{$A \; \mbox{type}$}
\AxiomC{$x:A \vdash B(x) \; \mbox{type}$}
\BinaryInfC{$(W x:A)B(x) \; \mbox{type}$}
\end{prooftree}

\begin{prooftree}
\AxiomC{$a:A$}
\AxiomC{$b:B(a) \to (W x:A)B(x)$}
\BinaryInfC{$\textsf{sup}(a,b) : (W x:A)B(x)$}
\end{prooftree}
\index{$\textsf{sup}$}

\nind
We may think of $\textsf{sup}(a,b)$ as the supremum i.e. the least ordinal greater than all the ordinals $b(v)$ for $v$ in $B(a)$. \\
We would like to have also a bottom term $0$ as starting points for recursion, but we can obtain it by taking one of the types $B(x)$ to be $\textbf N_0$. Indeed, if $a_0 : A$ is such that $B(a_0) = \textbf N_0$, then $\textsf{R}_0(y) : (W x:A)B(x)$ given any $y:B(a_0)$, so that by $\lambda$-abstraction we can form a function in $B(a_0) \to (W x:A)B(x)$ and by the above introduction rule $\textsf{sup}(a_0 ,\lambda y. \textsf{R}_0(y)) : (W x:A)B(x)$.\\
The elimination rule has the meaning of transfinite recursion or induction. Recall that $\textsf{ap}(b,v)$ is the eliminator term for dependent products, hence it can be conceived as $b(v)$ i.e. the application of the function $b$ to the term $v$.\\
Notice that the first long premiss in the elimination rule says that $C$ holds for $\textsf{sup}(a,b)$ if it holds for all predecessors $\textsf{ap}(b,v)$, i.e. $C$ is an inductive property:

\begin{prooftree}
\alwaysNoLine
\AxiomC{$c: (W x:A)B(x)$}
\UnaryInfC{$x:A, y:B(x) \to (W x:A)B(x) , z: (\Pi v:B(x))C(\textsf{ap}(y,v)) \vdash d(x,y,z) : C(\textsf{sup}(x,y))$}
\alwaysSingleLine
\UnaryInfC{$\textsf{T}(c , d(x,y,z)) : C(c)$}
\end{prooftree}
\index{$\textsf{T}$}

\nind
The conclusion of the computation rule, as in the case of the type of natural numbers, contains the expression for defining functions by recursion.

\begin{prooftree}
\AxiomC{$a:A$}
\AxiomC{$b:B(a) \to (W x:A)B(x) \quad $}
\alwaysNoLine
\BinaryInfC{$x:A, y:B(x) \to (W x:A)B(x) , z: (\Pi v:B(x))C(\textsf{ap}(y,v)) \vdash d(x,y,z) : C(\textsf{sup}(x,y))$}
\alwaysSingleLine
\UnaryInfC{$\textsf{T}(\textsf{sup}(a,b) , d(x,y,z)) = d(a,b, \lambda v. \textsf{T}(\textsf{ap}(b,v) , d(x,y,z))): C(\textsf{sup}(a,b))$}
\end{prooftree}

\nind
In order to fix the ideas, we sketch the construction of the type of natural numbers as a $W$-type. The type of labels is $A := \textbf N_2$ because natural numbers are zero or the successor of another number. The labels are $B(0_{\textbf 2}) := \textbf N_0$ because zero is a constant so that it has arity $0$, and $B(1_{\textbf 2}) := \textbf N_1$ because the successor function is a function with arity $1$. Hence we can define $\textbf N := (W x:\textbf N_2) B(x)$, it is straightforward to check that the rules for the type of natural numbers are the rules of this $W$-type.

Universes and their rules are analogous to the replacement axiom in classical ZF; moreover, we expect to be able to form the type of all propositions and by the Curry-Howard isomorphism we are induced to consider a "type of types". The first formulation of Martin-L\"of type theory included a literal formulation of this need, i.e. a type of all types that is a type $U$ such that for any type $A$ we have $A:U$, included $U:U$. This formulation was recognized to be inconsistent with the discovery of \textit{Girard's paradox}\index{Girard's paradox} which is a type-theoretic version of the standard set-theoretic paradoxes.\\
Therefore, other rules are used and a universe is conceived as a type of small types, with reflection rules that allow to perform the type constructions inside it. We discuss now the two most important kinds of universes: à la Russell and à la Tarski. 
The first takes its name because of their similarity with the ones used in the Principia Mathematica, the second for its resemblance with Tarski's definition of truth in first order logic.\\
\textbf{Russell style universes}\index{universe!Russell style} are types of small types, with the following formation rules:

\begin{prooftree}
\AxiomC{}
\UnaryInfC{$U \; \mbox{type}$}
\end{prooftree}

\begin{prooftree}
\AxiomC{$A:U$}
\UnaryInfC{$A \; \mbox{type}$}
\end{prooftree}

\nind
With introduction rules, asserting that the universe is closed under the type constructors:

\begin{prooftree}
\AxiomC{$A:U$}
\AxiomC{$x:A \vdash B(x) : U$}
\BinaryInfC{$(\Pi x:A)B(x) : U$}
\end{prooftree}

\nind
And similarly for the other type constructors.\\
One remark is now needed: usually the universe has no elimination rules, the main reason for leaving the universe without elimination rules is that we want universes to be \textit{open}, in the sense that we may want to add other type constructors in the future without changing the universe. Moreover, the constructive content of the universe would be lost adding elimination rules for it; indeed, a universe is better conceived as the part of the ideal mathematical world that we have explored, constructed, observed, conceived with our intuition and so on depending on the flavour of intuitionism that we want to adopt, and therefore is necessarily partial and open to further constructions or explorations.\\
Russell style universes have the defect of breaking the clear distinction between terms and types, which was one of the motivations we used to introduce type theory. Instead we prefer the slightly more involved, but conceptually much deeper \textbf{Tarski style}\index{universe!Tarski style} ones. They are types of names for small types and they are given with a constructor function called $El$, suggesting that the constructed type is the one of elements with the given name. The formation rules are:

\begin{prooftree}
\AxiomC{}
\UnaryInfC{$U \; \mbox{type}$}
\end{prooftree}

\begin{prooftree}
\AxiomC{$a:U$}
\UnaryInfC{$El(a) \, \mbox{type}$}
\end{prooftree}
\index{$El$}

\nind
The introduction rules come with coherence rules asserting that the names for the type constructors which live inside the universe, correspond under the function $El$, to the actual external type constructors:

\begin{prooftree}
\AxiomC{$a:U$}
\AxiomC{$x:El(a) \vdash b(x):U$}
\BinaryInfC{$\pi(a,b(x)) : U$}
\end{prooftree}

\begin{prooftree}
\AxiomC{$a:U$}
\AxiomC{$x:El(a) \vdash b(x):U$}
\BinaryInfC{$\pi(a,b(x)) = (\Pi x:El(a))El(b(x))$}
\end{prooftree}

\nind
The rules for the reflection of identity types:

\begin{prooftree}
\AxiomC{$a:U$}
\AxiomC{$b:El(a)$}
\AxiomC{$c:El(a)$}
\TrinaryInfC{$i_a(b,c) : U$}
\end{prooftree}

\begin{prooftree}
\AxiomC{$a:U$}
\AxiomC{$b:El(a)$}
\AxiomC{$c:El(a)$}
\TrinaryInfC{$El(i_a(b,c)) = Id_{El(a)}(b,c)$}
\end{prooftree}

\nind
The rules for the reflection of the type of natural numbers:

\begin{prooftree}
\AxiomC{}
\UnaryInfC{$n : U$}
\end{prooftree}

\begin{prooftree}
\AxiomC{}
\UnaryInfC{$El(n) = \textbf N$}
\end{prooftree}

\nind
And similarly for the other type constructors.

Finally, now that the presentation of the type constructor is sufficient to grasp the idea of Martin-L\"of type theory, we present the remaining rules i.e. the structural ones.\\
Recall that the first two structural rules for contexts were already presented at the beginning of the section. 
The following is the \textbf{variable declaration}\index{variable declaration} rule:

\begin{prooftree}
\AxiomC{$x:A$}
\AxiomC{$\Gamma \; \mbox{context}$}
\BinaryInfC{$\Gamma , x:A \vdash x:A$}
\end{prooftree}

\nind
Let $\mbox{J}$ be a basic judgement, then we have the \textbf{weakening rule}\index{weakening rule}, which can be thought as a form of monotonicity for the judgement relation $\vdash$, it means that, if a judgement $\mbox{J}$ can be inferred in a certain context, it can be inferred as well adding more information to the context:

\begin{prooftree}
\AxiomC{$A \; \mbox{type}$}
\AxiomC{$\Gamma \vdash \mbox{J}$}
\BinaryInfC{$\Gamma , x:A \vdash \mbox{J}$}
\end{prooftree}

\nind
We cannot omit to say that the variable declaration and the weakening rule are deeper than their appearance, in fact they are linked to an idealised conception of information, so that thanks to these rules informations can be freely duplicated and deleted. Logic and type theory can be generalised without it and this yields to the world of linear logic.

Now we present the \textbf{substitution rule}\index{substitution rule}:

\begin{prooftree}
\AxiomC{$a:A$}
\AxiomC{$\Gamma , x:A \vdash \mbox{J}$}
\BinaryInfC{$\Gamma[a/x] \vdash \mbox{J}[a/x]$}
\end{prooftree}

\nind
We finish with the \textbf{rules for the definitional equality}\index{rules for the definitional equality}, we have reflexivity, symmetry and transitivity for definitionally equal terms and types respectively:

\begin{prooftree}
\AxiomC{$A \; \mbox{type}$}
\UnaryInfC{$A=A \; \mbox{type}$}
\end{prooftree}

\begin{prooftree}
\AxiomC{$A=B \; \mbox{type}$}
\UnaryInfC{$B=A \; \mbox{type}$}
\end{prooftree}

\begin{prooftree}
\AxiomC{$A=B \; \mbox{type}$}
\AxiomC{$B=C \; \mbox{type}$}
\BinaryInfC{$A=C \; \mbox{type}$}
\end{prooftree}

\begin{prooftree}
\AxiomC{$a:A$}
\UnaryInfC{$a=a:A$}
\end{prooftree}

\begin{prooftree}
\AxiomC{$a=b:A$}
\UnaryInfC{$b=a:A$}
\end{prooftree}

\begin{prooftree}
\AxiomC{$a=b:A$}
\AxiomC{$b=c:A$}
\BinaryInfC{$a=c:A$}
\end{prooftree}

\nind
And the last two rules expressing the stability of judgements $a:A$ or $a=b:A$ under under the change of definitionally equal types:

\begin{prooftree}
\AxiomC{$a:A$}
\AxiomC{$A=B \; \mbox{type}$}
\BinaryInfC{$a:B$}
\end{prooftree}

\begin{prooftree}
\AxiomC{$a=b:A$}
\AxiomC{$A=B \; \mbox{type}$}
\BinaryInfC{$a=b:B$}
\end{prooftree}

Perhaps surprisingly, Martin-L\"of theory validates a type-theoretic version of the axiom of choice.\\
In standard set-theoretic foundations the axiom of choice has been criticized as a prototypical example of nonconstructive reasoning. Nevertheless, some kind of choice principle is already present in the Brouwer-Heyting-Kolmogorov interpretation of intuitionistic logic where a construction of a universal quantifier is interpreted as a function and one of the existential quantifier as a couple of constructions.

\begin{thm}[type-theoretic AC]\label{choice}
The following type is inhabited:
$$(\forall x:A)(\exists y:B(x))C(x,y) \to \sum_{ f: (\Pi x:A)B(x))}(\forall x:A)C(x, \textsf{ap}(f,x)) $$
where the quantifiers are interpreted with the appropriate type constructor following the Curry-Howard correspondence.
\end{thm}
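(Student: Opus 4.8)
The plan is to construct an inhabitant explicitly, using nothing but the introduction rule ($\lambda$-abstraction) for dependent products, the pairing and projection operations for dependent sums, and the computation rule for $\Pi$. Reading the antecedent and consequent under the Curry--Howard dictionary, the type to be inhabited is $(\Pi x:A)(\Sigma y:B(x))C(x,y) \to \big(\Sigma f:(\Pi x:A)B(x)\big)(\Pi x:A)C(x,\textsf{ap}(f,x))$. So I would assume a term $g:(\Pi x:A)(\Sigma y:B(x))C(x,y)$ and, for each $x:A$, form $\textsf{ap}(g,x):(\Sigma y:B(x))C(x,y)$. Applying the two projections defined earlier gives $\textsf{p}(\textsf{ap}(g,x)):B(x)$ and $\textsf{q}(\textsf{ap}(g,x)):C\big(x,\textsf{p}(\textsf{ap}(g,x))\big)$, the witness and its certificate travelling together.

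First I would define the choice function $f:=\lambda x.\,\textsf{p}(\textsf{ap}(g,x))$, which by the introduction rule for dependent products has type $(\Pi x:A)B(x)$, giving the first component of the output pair. Next I must produce the second component, a term of $(\Pi x:A)C(x,\textsf{ap}(f,x))$. The candidate is $\lambda x.\,\textsf{q}(\textsf{ap}(g,x))$, but to $\lambda$-abstract it I first need that, for each $x:A$, the term $\textsf{q}(\textsf{ap}(g,x))$ really does inhabit $C(x,\textsf{ap}(f,x))$ rather than merely $C\big(x,\textsf{p}(\textsf{ap}(g,x))\big)$.

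This is the crux. By the computation rule for dependent products we have the definitional equality $\textsf{ap}(f,x)=\textsf{p}(\textsf{ap}(g,x)):B(x)$, and hence, by congruence of the family $C(x,-)$, the definitional equality of types $C(x,\textsf{ap}(f,x))=C\big(x,\textsf{p}(\textsf{ap}(g,x))\big)$. Invoking the last structural rule, which allows a term to be retyped along a definitional equality of types, I may regard $\textsf{q}(\textsf{ap}(g,x))$ as a term of $C(x,\textsf{ap}(f,x))$. Abstracting then yields $\lambda x.\,\textsf{q}(\textsf{ap}(g,x)):(\Pi x:A)C(x,\textsf{ap}(f,x))$.

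Finally I would pair the two components, obtaining $\big(f,\lambda x.\,\textsf{q}(\textsf{ap}(g,x))\big):\big(\Sigma f:(\Pi x:A)B(x)\big)(\Pi x:A)C(x,\textsf{ap}(f,x))$ by the introduction rule for $\Sigma$, and then $\lambda$-abstract over the hypothesis $g$ to inhabit the full implication. I expect the only genuinely delicate step to be the type-matching in the previous paragraph: everything else is a mechanical application of projections and abstraction. Conceptually this is exactly where the constructive content lies --- because a proof of $(\Sigma y:B(x))C(x,y)$ is literally a pair carrying both the witness and its justification, splitting it with $\textsf{p}$ and $\textsf{q}$ automatically preserves the dependency, so no extralogical choice is needed and the whole principle reduces to a definitional-equality bookkeeping.
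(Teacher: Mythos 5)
Your proposal is correct and follows essentially the same route as the paper's own proof: assume the antecedent $g$, split $\textsf{ap}(g,x)$ with the projections $\textsf{p}$ and $\textsf{q}$, form the choice function by $\lambda$-abstraction, and use the computation rule for $\Pi$ (together with substitution into the family $C$ and the structural rule for definitionally equal types) to retype the second projection before pairing. The "crux" you isolate --- the definitional equality $\textsf{ap}(f,x)=\textsf{p}(\textsf{ap}(g,x))$ and the resulting equality of types --- is exactly the step the paper handles with the same argument.
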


\begin{proof}
We first give an informal argument which shall be translated in formal derivations in type theory. Suppose that the antecedent holds, hence we have a method to transform each x in proof of $(\exists y)C(x,y)$ i.e. a pair of a term $y$ and a proof of the proposition $C(x,y)$. Let $f$ be a method to get from every $x$ the first component of the pair, hence $C(x,f(x))$ holds.\\
Now the formal translation: assume $z:(\Pi x:A)(\Sigma y:B(x))C(x,y) $, if $x$ is an arbitrary term of type $A$, then by the elimination rule for products we get $\textsf{ap}(z,x) : (\Sigma y:B(x))C(x,y)$, we write $z(x)$ for the application.\\
Now apply the two projections to obtain $\textsf{p}(z(x)) : B(x)$ and $\textsf{q}(z(x)) : C(x, \textsf{p}(z(x)))$. Then we discharge the assumption $x:A$ by a $\lambda$-abstraction on $x$ so that $\lambda x. \textsf{p}(z(x)) : (\Pi x:A)B(x)$.\\
By the computation rule for products we have $\textsf{ap}(\lambda x. \textsf{p}(z(x)) , x) = \textsf{p}(z(x))$, hence by substitution $C(x , \textsf{ap}(\lambda x. \textsf{p}(z(x)) , x) )  = C \big( \, x, \, \textsf{p}(z(x)) \, \big)$. By this equality we have that the term $\textsf{q}(z(x))$ inhabits the type $C(x , \textsf{ap}(\lambda x. \textsf{p}(z(x)) , x) )$.\\
Finally, we use another $\lambda$-abstraction on $x$ to get $\lambda x. \textsf{q}(z(x))$ in the appropriate type and by the introduction rule for sum we have the desired term $( \lambda x. \textsf{p}(z(x) ) ,  \lambda x. \textsf{q}(z(x)) ): \sum_{ f: (\Pi x:A)B(x)}(\Pi x:A)C(x, f(x))$.
\end{proof}

%---------------------------------------------------------
\section*{Homotopy Type Theory}
In this section we present and discuss the three further rules that we need to add to Martin-L\"of type theory in order to justify a honest homotopical interpretation. We also prove some lemmas for the last chapter.
 
First of all we give a notion of "proof-irrelevant types", i.e. types for which all proofs are identical.

\begin{defn}
A type $A$ is a \textbf{mere proposition}\index{mere proposition} iff the type $(\Pi x,y:A) Id_A(x,y)$ is inhabited.
\end{defn}
 
\nind
We apologize for the clash of terminology between this definition and the one induced by the Curry-Howard correspondence.\\
Next we study briefly some basic aspects of identity types in Martin-L\"of type theory which can already suggest that terms inside an identity type can be interpreted as paths. For this reason sometimes we will abuse the terminology and use the term "path" in a type-theoretic context, meaning a term in an identity type. For this reason, the elimination rule for identity types will be sometimes called \textit{path induction}\index{path induction}.

\begin{lem}
For every type $A$ and every $x,y:A$ there is a function $Id_A(x,y) \to Id_A(y,x)$, denoted as $p \mapsto p^{-1}$ such that $\textsf{refl}^{-1} = \textsf{refl}$. We call $p^{-1}$ the \textbf{inverse}\index{inverse path} of $p$.
\end{lem}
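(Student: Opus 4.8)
The plan is to construct $p^{-1}$ by a single application of the elimination rule $\textsf{J}$ for identity types, i.e.\ by \emph{path induction}. The guiding intuition is the one emphasised earlier in the chapter: morally a type contains only its canonical terms, so to define an operation on an arbitrary proof $p : Id_A(x,y)$ it suffices to specify its value on the canonical identity proofs $\textsf{refl}$, and in that degenerate case the only natural output is again a reflexivity term.

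Concretely, first I would fix the motive of the induction. For $x,y:A$ and $u : Id_A(x,y)$ set
$$C(x,y,u) := Id_A(y,x),$$
which is a well-formed type by the formation rule for identity types (it happens not to depend on $u$, but the elimination rule certainly permits such families). Next I would supply the base case: for $z:A$ we must give a term $d(z) : C(z,z,\textsf{refl}_A(z))$, that is, a term of $Id_A(z,z)$; the introduction rule for identity types provides exactly $\textsf{refl}_A(z) : Id_A(z,z)$, so I take $d(z) := \textsf{refl}_A(z)$.

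With these data in place, the elimination rule yields, for every $u : Id_A(x,y)$, a term $\textsf{J}_{z,d}(x,y,u) : C(x,y,u) = Id_A(y,x)$. I then define
$$p^{-1} := \textsf{J}_{z,d}(x,y,p) : Id_A(y,x),$$
and abstracting over $p$ gives the desired function $Id_A(x,y) \to Id_A(y,x)$. Finally, the computation (definitional) rule for $\textsf{J}$ gives $\textsf{J}_{z,d}(x,x,\textsf{refl}_A(x)) = d(x) = \textsf{refl}_A(x)$, which is precisely the required equation $\textsf{refl}^{-1} = \textsf{refl}$.

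There is no genuine obstacle here: the whole content of the argument is the choice of the motive $C(x,y,u) := Id_A(y,x)$, after which the typing of $p^{-1}$ and the equation $\textsf{refl}^{-1} = \textsf{refl}$ fall out of the elimination and computation rules respectively. The only subtlety worth flagging is conceptual, namely that path induction only ever reduces us to the case where the two endpoints coincide and the path is $\textsf{refl}_A(z)$; but that is exactly the case in which reversing a path is trivial, which is why the construction succeeds.
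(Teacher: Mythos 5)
Your proposal is correct and follows exactly the paper's own argument: a single application of the elimination rule $\textsf{J}$ with motive $C(x,y,u) := Id_A(y,x)$, base case $d(z) := \textsf{refl}_A(z)$, and the computation rule yielding $\textsf{refl}^{-1} = \textsf{refl}$. There is nothing to add or correct.
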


\begin{proof}
Under the Curry-Howard correspondence, prove a theorem means finding a term in an appropriate type, thus we search a term in the type $(\Pi x,y:A)(Id_A(x,y) \to Id_A(y,x))$.\\
The idea is to use the elimination and computation rules for identity types in order to find a term in $Id_A(x,y) \to Id_A(y,x)$ and to use $\lambda$-abstraction to get the desired function. Recall the elimination rule:

\begin{prooftree}
\AxiomC{$x,y : A, p:Id_A(x,y) \vdash C(x,y,p) \; \mbox{type}$}
\AxiomC{$z: A \vdash d(z) : C(z,z, \textsf{refl}_A(z))$}
\BinaryInfC{$\textsf{J}_{z,d}(x,y,p) : C(x,y,p)$}
\end{prooftree}

\nind
we choose the family of types to be $C(x,y,p) := Id_A(y,x)$, so that the conclusion of the rule will give us the desired "inverse path" if we know how to prove (i.e. find a term inside) $C(x,x, \textsf{refl}_A(x)) = Id_A(x,x)$. We choose the term to be simply the canonical one $\textsf{refl}_A(x)$, so that $p^{-1} := \textsf{J}_{x, \textsf{refl}(x)}(x,y,p) : Id_A(y,x)$.\\
The computation rule gives the desired definitional equality $\textsf{refl}^{-1}(x) = \textsf{refl}(x)$.
\end{proof}

\begin{lem}
For every type $A$ and every $x,y,z:A$, there is a function $Id_A(x,y) \to Id_A(y,z) \to Id_A(x,z)$, written $p \mapsto q \mapsto p \cdot q$ called the \textbf{concatenation} or \textbf{composite}\index{composite path}, such that $\textsf{refl}(x) \cdot \textsf{refl}(x)= \textsf{refl}(x)$.
\end{lem}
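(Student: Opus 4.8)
The plan is to mimic the construction of the inverse path from the previous lemma, replacing a single use of path induction with a carefully chosen motive that keeps the second path as a parameter. Under the Curry--Howard correspondence it suffices to exhibit a term of the type $(\Pi x,y,z:A)\big(Id_A(x,y) \to Id_A(y,z) \to Id_A(x,z)\big)$, and I would obtain it by path induction (the $\textsf{J}$ eliminator) on the first argument $p:Id_A(x,y)$, abstracting over $x,y,z$ at the end.

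Concretely, I would apply $\textsf{J}$ with the family
$$C(x,y,p) := (\Pi z:A)\big(Id_A(y,z) \to Id_A(x,z)\big),$$
which is a legitimate dependent type over $x,y:A$ and $p:Id_A(x,y)$ (constant in $p$). The base case requires a term $d(w) : C(w,w,\textsf{refl}_A(w)) = (\Pi z:A)(Id_A(w,z) \to Id_A(w,z))$, and here I would simply take the identity function $d(w) := \lambda z.\lambda q.q$. One could instead run a second, nested path induction on $q$ and return $\textsf{refl}_A(w)$; both choices work, the double-induction variant being the symmetric one usually preferred. The eliminator then yields $\textsf{J}_{w,d}(x,y,p) : (\Pi z:A)(Id_A(y,z) \to Id_A(x,z))$, and I would set $p \cdot q := \textsf{ap}(\textsf{ap}(\textsf{J}_{w,d}(x,y,p),z),q)$.

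The required definitional equality then falls out of the computation rule for $\textsf{J}$: when $p$ is $\textsf{refl}_A(x)$, so that $y$ is $x$, the eliminator reduces to $d(x) = \lambda z.\lambda q.q$, whence $\textsf{refl}_A(x) \cdot q = q$ for every $q:Id_A(x,z)$, and in particular $\textsf{refl}_A(x) \cdot \textsf{refl}_A(x) = \textsf{refl}_A(x)$, as demanded.

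The only genuinely delicate point is the choice of motive: the family fed to $\textsf{J}$ must be indexed exactly by the triple $(x,y,p)$ on which we induct, so the second path $q$ cannot appear in $C$ and must instead be packaged inside a $\Pi$-type (or, in the double-induction approach, introduced only after the first induction has been discharged). Getting this dependency bookkeeping right, rather than any computation, is where care is needed; once the motive is fixed, both the construction of the term and the verification of $\textsf{refl} \cdot \textsf{refl} = \textsf{refl}$ are immediate from the elimination and computation rules.
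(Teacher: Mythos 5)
Your proof is correct, but it takes a genuinely different route from the one in the paper. You perform a single path induction on $p$ with motive $C(x,y,p) := (\Pi z:A)(Id_A(y,z) \to Id_A(x,z))$ and take the identity function $\lambda z.\lambda q.q$ as the base case; the computation rule for $\textsf{J}$ together with the computation rule for dependent products then gives the \emph{stronger} judgemental equality $\textsf{refl}(x) \cdot q = q$ for every $q:Id_A(x,z)$, of which $\textsf{refl}(x)\cdot\textsf{refl}(x) = \textsf{refl}(x)$ is the instance $q := \textsf{refl}(x)$. The paper instead insists on a double induction: it builds the base case $C(x,x,\textsf{refl}(x))$ not with the identity function but by a second induction on $q$ (with motive $D(x,z,q) := Id_A(x,z)$ and base case $\textsf{refl}(x)$), and it justifies this by claiming that the identity-function base case ``would not'' yield the required definitional equality. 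That justification is mistaken, as your proof shows: $\textsf{refl}(x)\cdot\textsf{refl}(x)=\textsf{refl}(x)$ follows at once from $\textsf{refl}(x)\cdot q = q$. The real trade-off is different: your single-induction definition privileges the left argument, so the left unit law holds judgementally while the right one holds only propositionally, whereas the paper's double-induction definition treats $p$ and $q$ symmetrically at the cost of having \emph{only} $\textsf{refl}\cdot\textsf{refl}=\textsf{refl}$ as a judgemental equality. Both definitions satisfy the lemma as stated; you correctly flag the choice of motive (keeping $z$ and $q$ under a $\Pi$ rather than in the indexing family) as the one genuinely delicate point.
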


\begin{proof}
The type corresponding to the statement is $(\Pi x,y,z:A)(Id_A(x,y) \to Id_A(y,z) \to Id_A(x,z))$. Again we will perform the construction of a term in $Id_A(x,y) \to Id_A(y,z) \to Id_A(x,z)$ and finally use $\lambda$-abstraction.\\
The family of types needed for the elimination rule is $C(x,y,p) := (\Pi z:A)(Id_A(y,z) \to Id_A(x,z))$. Thus, in order to apply the induction principle we need for every $x:A$ a term of type $C(x,x,\textsf{refl}(x)) = (\Pi z:A)(Id_A(x,z) \to Id_A(x,z))$.\\
At this point we may wish to conclude using the identity function $Id_A(x,z) \to Id_A(x,z)$, but we want that our computation rule gives the desired definitional equality $\textsf{refl}(x) \cdot \textsf{refl}(x)= \textsf{refl}(x)$ which would not be the case. If we stopped here after an induction on $p$ over the identity function we would have $\textsf{refl}(y) \cdot q= q$, for $q:Id_A(x,z)$.\\
Hence an induction on $q$ is needed. Now let $D(x,z,q) := Id_A(x,z)$. Note that $D(x,x, \textsf{refl}(x)) = Id_A(x,x)$, then we can use $\textsf{refl}(x) : Id_A(x,x)$ to get by elimination a term in $D(x,z,q) = Id_A(x,z)$ and by $\lambda$-abstraction one in $Id_A(x,z) \to Id_A(x,z)$. Finally, applying the induction principle for identity type to $C$ we get the desired term.

\end{proof}

\nind
The need of a double induction in the previous proof is a consequence of \textit{proof-relevance}\index{proof-relevance} of type theory i.e. the fact that is not only important to know that a type is inhabited (or a proposition is provable) but to know what is the proof. It can matter the kind of specific term we have found in a type in order to perform other constructions.\\
For this reason we cannot stop after the proofs of reflexivity and transitivity but we need to analyse these operations $(-)^{-1}$ and $(-\cdot -)$. The following properties correspond to the usual behaviour of paths.

\begin{lem}
Let $x,y,z,w:A$ and $p:Id_A(x,y)$, $q:Id_A(y,z)$ and $r:Id_A(z,w)$. The following types are inhabited:
\begin{enumerate}[label=(\roman*)]
\item $Id(p, p \cdot \textsf{refl}(y))$ and $Id(p, \textsf{refl}(x) \cdot p)$;

\item $Id(p^{-1} \cdot p , \textsf{refl}(y))$ and $Id(p \cdot p^{-1} , \textsf{refl}(x))$;

\item $Id((p^{-1})^{-1} , p)$;

\item $Id(p \cdot (q \cdot r) , (p \cdot q) \cdot r)$.

\end{enumerate}
All these identity types are of the form $Id_{Id_A}(- , -)$.
\end{lem}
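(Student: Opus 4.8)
The plan is to prove all four statements by \emph{path induction}, i.e. by repeated application of the elimination rule $\textsf{J}$ for identity types, exploiting the two definitional computation rules already at our disposal: $\textsf{refl}(x)^{-1} = \textsf{refl}(x)$ from the inverse lemma, and $\textsf{refl}(x) \cdot \textsf{refl}(x) = \textsf{refl}(x)$ from the concatenation lemma. The uniform strategy is this: each statement asks us to inhabit a type of the form $Id_{Id_A(-,-)}(s,t)$, where $s$ and $t$ are built from the given paths by $(-)^{-1}$ and $(-\cdot-)$. To produce such a term I would take the family of these types as the motive $C$ for $\textsf{J}$ and induct on one or more of the paths involved. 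After every path has been replaced by a reflexivity term, both $s$ and $t$ reduce \emph{definitionally} to the same canonical path (typically $\textsf{refl}(x)$), so the goal collapses to an identity of the form $Id(\textsf{refl}(x), \textsf{refl}(x))$, inhabited by $\textsf{refl}(\textsf{refl}(x))$ at the next level.

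Concretely, for (i), to inhabit $Id(p, p \cdot \textsf{refl}(y))$ I would take the motive $C(x,y,p) := Id_{Id_A(x,y)}(p, p \cdot \textsf{refl}(y))$; the base case requires, for each $z:A$, a term of $C(z,z,\textsf{refl}(z)) = Id_{Id_A(z,z)}(\textsf{refl}(z), \textsf{refl}(z) \cdot \textsf{refl}(z))$, which by the concatenation computation rule equals $Id_{Id_A(z,z)}(\textsf{refl}(z), \textsf{refl}(z))$ and is inhabited by $\textsf{refl}(\textsf{refl}(z))$. The companion statement $Id(p, \textsf{refl}(x) \cdot p)$ is handled by the same single induction on $p$. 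For (ii), induction on $p$ reduces $p^{-1} \cdot p$ to $\textsf{refl}(x)^{-1} \cdot \textsf{refl}(x) = \textsf{refl}(x) \cdot \textsf{refl}(x) = \textsf{refl}(x)$ (using both computation rules), while $\textsf{refl}(y)$ becomes $\textsf{refl}(x)$ as well, and symmetrically for $p \cdot p^{-1}$; each goal reduces to $Id(\textsf{refl}(x),\textsf{refl}(x))$. For (iii), induction on $p$ turns $(p^{-1})^{-1}$ into $(\textsf{refl}(x)^{-1})^{-1} = \textsf{refl}(x)$, again giving $Id(\textsf{refl}(x),\textsf{refl}(x))$.

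For (iv) the natural approach is a triple path induction, on $p$, then $q$, then $r$. The reason all three are needed is that, with the concatenation operation defined by recursion on its first argument, $\textsf{refl}(x) \cdot s$ does not reduce definitionally to $s$; only when both arguments are reflexivity terms does the computation rule fire. Hence after reducing $p$, $q$, and $r$ all to $\textsf{refl}(x)$, both sides $p \cdot (q \cdot r)$ and $(p \cdot q) \cdot r$ reduce, by iterated use of $\textsf{refl}(x)\cdot\textsf{refl}(x) = \textsf{refl}(x)$, to $\textsf{refl}(x)$, and the goal is once more $Id(\textsf{refl}(x),\textsf{refl}(x))$, inhabited by $\textsf{refl}(\textsf{refl}(x))$. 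Formally this is three nested applications of $\textsf{J}$, with the innermost base case discharged by reflexivity.

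The main obstacle is not conceptual but bookkeeping: one must keep track of which argument the concatenation was defined to recurse on, since this determines which definitional equalities are available and therefore which paths must actually be inducted upon. In particular the two unit laws in (i) are \emph{not} definitional equalities — $\textsf{refl}(x) \cdot p$ and $p \cdot \textsf{refl}(y)$ are only propositionally equal to $p$ — which is precisely why path induction is necessary rather than a mere appeal to the computation rule. Once the endpoints are collapsed to a single point, every identity in sight is a reflexivity-to-reflexivity statement and closes immediately; the only care required is to check at each stage that the proposed motive is a well-formed type family in the path being eliminated.
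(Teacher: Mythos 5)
Your proposal is correct and follows essentially the same route as the paper: path induction with the evident motives, base cases discharged by the definitional equalities $\textsf{refl}(x)^{-1} = \textsf{refl}(x)$ and $\textsf{refl}(x)\cdot\textsf{refl}(x) = \textsf{refl}(x)$, and a triple induction for the associativity in (iv). Your remark that the unit laws are not definitional (since, with the paper's double-induction definition of concatenation, the computation rule fires only when both arguments are reflexivity terms) matches the paper's setup exactly, so nothing further is needed.
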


\begin{proof}
As usual we use the induction principle for identity.
\begin{enumerate}[label=(\roman*)]

\item Let $C$ be the family of types $C(x,y,p) := Id_{Id_A}(p, p \cdot \textsf{refl}(y))$. Then $C(x,x,\textsf{refl}(x)) = Id_{Id_A}(\textsf{refl}(x) \cdot \textsf{refl}(x) , \textsf{refl}(x))$. We know that this equality holds definitionally, hence propositionally and we conclude by induction. Similarly for the other identity.

\item Let $C$ be the family of types $C(x,y,p) := Id(p^{-1} \cdot p , \textsf{refl}(y))$. Then $C(x,x, \textsf{refl}(x)) = Id(\textsf{refl}(x)^{-1} \cdot \textsf{refl}(x) , \textsf{refl}(x))$.

\item Let $C$ be the family of types $C(x,y,p) := Id((p^{-1})^{-1} , p)$. Then $C(x,x, \textsf{refl}(x)) = Id((\textsf{refl}(x)^{-1})^{-1} , \textsf{refl}(x))$. Since $\textsf{refl}(x)^{-1} = \textsf{refl}(x)$ we have $C(x,x, \textsf{refl}(x)) = Id((\textsf{refl}(x) , \textsf{refl}(x))$ so that we obtain a term in the desired type by the elimination rule.

\item For this last point and in the sequel we will use a less formal style, leaving the application of the appropriate induction principle (i.e. elimination rule) to the reader. By induction it suffices to assume $p$, $q$ and $r$ are all $\textsf{refl}(x)$. In this case we have $p \cdot ( q \cdot r) = \textsf{refl}(x) \cdot (\textsf{refl}(x) \cdot \textsf{refl}(x) ) = \textsf{refl}(x) = (\textsf{refl}(x) \cdot \textsf{refl}(x)) \cdot \textsf{refl}(x) = (p \cdot q) \cdot r$. Thus we have $\textsf{refl}( \textsf{refl}(x) )$ inhabiting the desired type.
\end{enumerate}
\end{proof}

\begin{lem}
Let $f:A \to B$ be a function, then for any $x,y:A$ there is an operation $\textsf{ap}_f : Id_A(x,y) \to Id_B(f(x),f(y))$\index{$\textsf{ap}_f$}. Moreover, for each $x:A$ we have $\textsf{ap}_f(\textsf{refl}(x)) = \textsf{refl}(f(x))$. We will write $f(p) := \textsf{ap}_f(p)$.
\end{lem}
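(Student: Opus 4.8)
The plan is to follow the same template used in the preceding two lemmas, namely a single application of the elimination rule (path induction) for identity types. By the Curry-Howard correspondence, proving the statement amounts to exhibiting a term of type $(\Pi x,y:A)(Id_A(x,y) \to Id_B(f(x),f(y)))$; I would first construct a term in $Id_A(x,y) \to Id_B(f(x),f(y))$ and then close off with a $\lambda$-abstraction over $x$ and $y$ to obtain the operation $\textsf{ap}_f$ in the required generality.

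To apply the elimination rule I would choose the family of types $C(x,y,p) := Id_B(f(x),f(y))$. The induction principle then requires, for every $z:A$, a term inhabiting $C(z,z,\textsf{refl}(z)) = Id_B(f(z),f(z))$. Here the canonical term $\textsf{refl}(f(z))$ does the job, so I would set $\textsf{ap}_f(p) := \textsf{J}_{z,\textsf{refl}(f(z))}(x,y,p) : Id_B(f(x),f(y))$, exactly as $p^{-1}$ was built in the inversion lemma. The stated computation rule for $\textsf{J}$ then immediately yields the definitional equality $\textsf{ap}_f(\textsf{refl}(x)) = \textsf{refl}(f(x))$, which is the \emph{moreover} part of the statement, precisely because the base term was chosen to be $\textsf{refl}(f(z))$.

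I do not expect any genuine obstacle here: unlike the concatenation lemma, no double induction is needed, since the target $Id_B(f(x),f(y))$ does not itself carry a path variable that must be trivialised. The only point requiring a little care is checking that the chosen base term has exactly the right type, so that the computation rule delivers the asserted equality on the nose rather than merely up to a propositional identity; this is guaranteed by taking the base case to be $\textsf{refl}(f(z))$ rather than any other inhabitant of $Id_B(f(z),f(z))$.
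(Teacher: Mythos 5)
Your proposal is correct and is essentially the paper's proof: the paper also performs a single path induction, reducing to the case $p = \textsf{refl}(x)$ and taking $\textsf{refl}(f(x))$ as the base term, with the computation rule giving the stated definitional equality. Your version merely spells out the $\textsf{J}$-term and the family $C(x,y,p) := Id_B(f(x),f(y))$ explicitly, which the paper leaves informal.
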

\begin{proof}
By induction it suffices to assume $p$ is $\textsf{refl}(x)$. In this case we may define $\textsf{ap}_f (p ) := \textsf{refl}(f(x)) : Id_B(f(x) , f(x))$.
\end{proof}

\nind
The following lemma guarantees that we can transport a proof of a dependent type along a path in the base type. In the homotopical interpretation it is analogous to the path-lifting property of fibrations.

\begin{lem}[transport lemma]
Let $B(x)$ be a family of types over $A$, then for every $p:Id_A(x,y)$ there is a function $p_* : B(x) \to B(y)$\index{$p_*$}.
\end{lem}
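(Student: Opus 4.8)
The plan is to apply path induction exactly as in the proofs of the previous lemmas on inversion and concatenation of paths. Under the Curry-Howard correspondence the statement amounts to exhibiting a term in the type $(\Pi x,y:A)\big(Id_A(x,y) \to (B(x) \to B(y))\big)$, so it suffices to construct, for arbitrary $x,y:A$ and $p:Id_A(x,y)$, a function $B(x) \to B(y)$ and then close off the argument with a triple $\lambda$-abstraction over $x$, $y$ and $p$.

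To set up the elimination rule for identity types I would choose the family of types $C(x,y,p) := B(x) \to B(y)$, which is a legitimate type-family indexed by $x,y:A$ and $p:Id_A(x,y)$, built from the function-type constructor applied to the given family $B$. Path induction then reduces the problem to providing, for each $z:A$, a term inhabiting $C(z,z,\textsf{refl}_A(z)) = B(z) \to B(z)$.

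The base case is immediate: I take the identity function $\lambda w.w : B(z) \to B(z)$, where $w:B(z)$. Feeding this into the eliminator yields $p_* := \textsf{J}_{z,\lambda w.w}(x,y,p) : B(x) \to B(y)$, and the final abstraction produces the required inhabitant of the full dependent product. By the computation rule we moreover obtain the expected definitional equality $(\textsf{refl}_A(x))_* = \lambda w.w$, i.e. transport along the trivial path is the identity function.

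There is no genuine obstacle here; the only point requiring a little care is the correct choice of the motive $C$, so that its instance at $\textsf{refl}$ is exactly an endofunction type whose canonical inhabitant is the identity map. This matches the homotopical reading announced just before the statement: transport is the type-theoretic shadow of the path-lifting property of fibrations, and along a constant path it does nothing.
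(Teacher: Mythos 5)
Your proof is correct and follows exactly the same route as the paper: path induction with the identity function $1_{B(x)}$ as the base case, with your version merely spelling out the motive $C(x,y,p) := B(x) \to B(y)$ and the resulting computation rule explicitly. No issues.
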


\begin{proof}
By induction it suffices to suppose that $p = \textsf{refl}(x)$, in this case we take the transport to be the identity function $1_{B(x)}: B(x) \to B(x)$ and we conclude by identity elimination.
\end{proof}

\begin{defn}
A \textbf{type-theoretic homotopy}\index{homotopy!type-theoretic} between the maps $f,g: ( \Pi x:A )B(x)$ is a term in the type $(f \sim g)=( \Pi x:A ) Id_{B(x)}(f(x),g(x))$\index{$(f \sim g)$}.
\end{defn}

\begin{defn}
Given $f:A \to B$ a \textbf{quasi-inverse}\index{quasi-inverse} is a term in the type $\textsf{qinv}(f) := (\Sigma g:B \to A)( (f \circ g \sim 1_B) \times (g \circ f \sim 1_A) )$\index{$\textsf{qinv}(f)$}.
\end{defn}

\nind
The following theorem states that the concept of quasi-inverse, regarded as a type, can have a nontrivial structure.

\begin{thm}
There exist types $A$ and $B$ and a function $f:A \to B$ such that $\textsf{qinv}(f)$ is not a mere proposition.
\end{thm}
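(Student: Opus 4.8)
The plan is to follow the standard two-step strategy: first compute the type $\textsf{qinv}(f)$ up to equivalence under the assumption that $f$ is itself an equivalence, and then exhibit a single type for which the resulting expression fails to be a mere proposition. First I would establish the auxiliary fact that whenever $f:A \to B$ admits a quasi-inverse there is an equivalence
$$\textsf{qinv}(f) \simeq (\Pi x:A)\,Id_A(x,x).$$
Unfolding, $\textsf{qinv}(f) = (\Sigma g:B\to A)\big((f\circ g \sim 1_B)\times(g\circ f \sim 1_A)\big)$. Using function extensionality I would replace each homotopy $\sim$ by the corresponding identity of functions, rewriting the type as $(\Sigma g:B\to A)\big((f\circ g = 1_B)\times(g\circ f = 1_A)\big)$. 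Since $f$ is an equivalence, post-composition $g\mapsto f\circ g$ is an equivalence $(B\to A)\to(B\to B)$, so the partial sum $(\Sigma g:B\to A)(f\circ g=1_B)$ is the fibre of an equivalence over $1_B$ and hence contractible. Contracting this factor leaves only the second condition evaluated at the canonical quasi-inverse $g_0$, namely $g_0\circ f = 1_A$; by function extensionality and reindexing along the witnessing homotopy $g_0\circ f \sim 1_A$ this is equivalent to $(\Pi x:A)\,Id_A(x,x)$, as claimed.

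Next I would produce a type $A$ for which $(\Pi x:A)\,Id_A(x,x)$ has at least two distinct inhabitants, and here the univalence axiom enters. Taking a name $c:U$ with $El(c)\simeq\textbf N_2$, I would set $A := (\Sigma Z:U)\,\lVert Id_U(c,Z)\rVert$ with basepoint $a := (c,\lvert\textsf{refl}_U(c)\rvert)$, the propositional truncation $\lVert-\rVert$ serving to make $A$ connected. Because the truncation is a mere proposition, identities of pairs reduce to $Id_A(a,a)\simeq Id_U(c,c)$, and by univalence this is equivalent to the type of self-equivalences of $\textbf N_2$, which contains the nontrivial "swap" equivalence $\sigma$ besides the identity. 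I would then define a term $q:(\Pi x:A)\,Id_A(x,x)$ by conjugating the loop $\ell_\sigma:a=a$ that corresponds to $\sigma$: for $x$ together with a path $p:a=x$ set $q_x := p^{-1}\cdot\ell_\sigma\cdot p$. Finally, taking $A=B$ and $f:=1_A$ (which is an equivalence), the first step gives $\textsf{qinv}(1_A)\simeq(\Pi x:A)\,Id_A(x,x)$, and since $q$ and the constant family $\lambda x.\textsf{refl}_A(x)$ already differ at $a$, where $q_a = \ell_\sigma \neq \textsf{refl}_A(a)$, this type — and therefore $\textsf{qinv}(1_A)$ — is not a mere proposition.

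The hard part will be the definition of $q$ and the verification that it is well defined. Since $A$ is connected only in the propositional-truncation sense, producing $q_x$ from a chosen path $p:a=x$ requires the assignment to be independent of $p$, so that it descends along the elimination principle for the truncation into a goal that is a mere proposition. The key point is that two such paths differ by a loop $\ell:a=a$, and $p^{-1}\cdot\ell_\sigma\cdot p$ is unchanged under $p\mapsto p\cdot\ell$ precisely because $\ell_\sigma$ is \emph{central}, i.e. commutes with every loop at $a$ under concatenation; this in turn follows from the commutativity of the automorphism group of $\textbf N_2$, which is $\mathbb Z/2$. Establishing this centrality coherently, and packaging the $p$-independence so that truncation elimination applies, is the only genuinely delicate step; everything else is a routine manipulation of the operations $(-)^{-1}$ and $(-\cdot-)$ and of their coherences proved in the preceding lemmas.
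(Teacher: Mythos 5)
Your proposal is correct and is essentially the proof this paper points to rather than writes out (the paper's ``proof'' is a citation of Theorem 4.1.3 in \cite{UFP}, whose argument is exactly yours): reduce $\textsf{qinv}(f)$ for an equivalence $f$ to $(\Pi x:A)\,Id_A(x,x)$, then take the connected type $A = (\Sigma Z:U)\,\lVert Id_U(c,Z)\rVert$ of ``two-element types'' and use centrality of the swap loop to produce an inhabitant distinct from $\lambda x.\textsf{refl}(x)$. Two small caveats: your descent through truncation elimination also needs $Id_A(a,a)$ to be a set (which holds, since by univalence it is equivalent to $\textbf N_2$), and the construction uses propositional truncation, a higher inductive type that this paper's presentation of the theory officially omits --- though the cited proof has the same dependency, so this is not a divergence from the paper.
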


\begin{proof}
See theorem 4.1.3 in \cite{UFP}, page 127.
\end{proof}

\nind
For the proof of the generalised type-theoretic interpretation it will not be important if equivalences are mere propositions or not, but the search for alternative notions of equivalence seems essential in order to have a well-behaved notion of univalence, explicitly we look for a type $\textsf{isequiv}(f)$ such that the type $\textsf{isequiv}(f) \leftrightarrow \textsf{qinv}(f)$ is inhabited, and that $\textsf{isequiv}(f)$ is a mere proposition.\\
There are several ways to define equivalences corresponding to these desiderata. Following the article \cite{KLV} we choose:

\begin{defn}
A function $f:A \to B$ is an \textbf{equivalence}\index{equivalence!} or a \textbf{bi-invertible}\index{bi-invertible map} map iff the type 
$$\textsf{isequiv}(f) := (\Sigma g:B \to A) (g \circ f \sim 1_A) \times (\Sigma h:B \to A) (f \circ h \sim 1_B) $$\index{$\textsf{isequiv}(f)$} is inhabited.\\
The \textbf{equivalence type}\index{equivalence!type} is $\mathit{Equiv}(A,B)= (\Sigma f:A \to B) \, \textsf{isequiv}(f) $\index{$\mathit{Equiv}$}.
\end{defn}

\nind
We shall see in the sequel that in homotopy type theory this definition satisfies the requirements to be a mere proposition with maps from and to the type $\textsf{qinv}(f)$.\\

Now we study the structure of the identity types of a sum type. Observe that for every term $i: Id_{(\Sigma x:A) B(x)}(x,y)$ we get $\textsf{p}(i) : Id_A(\textsf{p}(x), \textsf{p}(y))$ where $\textsf{p}$ and $\textsf{q}$ are the two canonical projections. Moreover, we get by path induction a term in the type $Id_B(\textsf{p}(i)_*\textsf{q}(x), \textsf{q}(y))$.\\
In this way we can give a homotopical interpretation: paths in a sum type as pairs where one is a path in the base type and the second a path in the fibre of $\textsf{p}(y)$. The following theorem asserts that this data determine completely paths in a sum type.

\begin{thm}\label{idsum}
Let $B(x)$ be a family of types over $A$ and let $w,w' : (\Sigma x:A)B(x)$. Then there is an equivalence between $Id_{(\Sigma x:A)B(x)}(w,w')$ and $\displaystyle \sum_{i:Id(\textsf{p}(w) , \textsf{p}(w') )} Id_{B(\textsf{p}(w'))} (i_*(\textsf{q}(w)) , \textsf{q}(w'))$.
\end{thm}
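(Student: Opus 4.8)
The plan is to construct maps in both directions and prove they are mutually inverse, all by repeated use of path induction (the $\textsf{J}$ rule) together with the elimination rule $\textsf{E}$ for dependent sums; notably, no appeal to function extensionality will be needed, since the required identities are all pointwise and supplied directly by induction.

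First I would define the forward map $f : Id_{(\Sigma x:A)B(x)}(w,w') \to \sum_{i} Id_{B(\textsf{p}(w'))}(i_*(\textsf{q}(w)), \textsf{q}(w'))$. Its first component is $\textsf{ap}_{\textsf{p}}$, sending a path $r$ to $\textsf{p}(r) : Id_A(\textsf{p}(w),\textsf{p}(w'))$, exactly as noted just before the statement. For the second component I apply path induction to $r$: it suffices to treat $r = \textsf{refl}(w)$, in which case $\textsf{p}(r) = \textsf{refl}(\textsf{p}(w))$, its transport is the identity by the transport lemma, and the required term of $Id_{B(\textsf{p}(w))}(\textsf{q}(w),\textsf{q}(w))$ is simply $\textsf{refl}(\textsf{q}(w))$.

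For the backward map $g$ I would first use $\textsf{E}$ twice to reduce $w$ and $w'$ to canonical pairs $(a,b)$ and $(a',b')$; by the computation rule for the projections this makes $\textsf{p}(w)=a$, $\textsf{q}(w)=b$, and likewise for $w'$, definitionally. A pair in the codomain then consists of $i : Id_A(a,a')$ and $j : Id_{B(a')}(i_*(b), b')$. Now I induct on $i$: assuming $i = \textsf{refl}(a)$ collapses the transport so that $j : Id_{B(a)}(b,b')$, and a second induction on $j$ reduces to $b = b'$ with $j = \textsf{refl}(b)$, where I return $\textsf{refl}((a,b)) : Id_{\Sigma}((a,b),(a,b))$.

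Finally, to see that $f$ and $g$ are quasi-inverse I would run the same inductions: the composite $g \circ f$ applied to $\textsf{refl}(w)$ reduces to $\textsf{refl}(w)$ after unfolding the computation rules, so path induction gives $g \circ f \sim 1$; dually, reducing $w,w'$ to pairs and inducting successively on $i$ and then $j$ shows $f \circ g$ acts as the identity on $(\textsf{refl}(a),\textsf{refl}(b))$, whence $f \circ g \sim 1$, and converting this quasi-inverse into a proof of $\textsf{isequiv}(f)$ is immediate. The one genuine subtlety — and the step I expect to be the main obstacle — is the obligatory ordering of the two path inductions: the type of the fibre component $j$ depends on $i$ through the transport $i_*$, so $i$ must be reduced to $\textsf{refl}$ before $j$ can be generalised, and the care in the proof lies precisely in making the dependent bookkeeping of the transports and the $\textsf{E}$-reductions line up.
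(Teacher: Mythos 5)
Your proposal is correct and follows essentially the same route as the paper's proof: both maps are built by path induction together with $\Sigma$-elimination (with the backward map reducing $w,w'$ to pairs and then inducting on $i$ before $j$), the two composites are shown homotopic to identities by the same inductions, and the equivalence is concluded from the quasi-inverse, with no use of function extensionality. Your closing remark about the forced ordering of the inductions is exactly the dependency the paper's proof also respects implicitly.
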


\begin{proof}
We define for any $w,w' : (\Sigma x:A)B(x)$ a function 
$$f: Id_{(\Sigma x:A)B(x)}(w,w') \to \displaystyle \sum_{i:Id(\textsf{p}(w) , \textsf{p}(w') )} Id_{B(\textsf{p}(w'))} (i_*(\textsf{q}(w)) , \textsf{q}(w'))$$
by the elimination rule for identity types, with $f(w,w,\textsf{w}) := ( \textsf{refl}(\textsf{p}(w)) , \textsf{refl}(\textsf{q}(w)) )$.\\
For the sake of readability we will write simply $Id(x,y)$ for an identity type when the ambient type will be clear from the context.\\
We want to show that $f$ is an equivalence. In the opposite direction we define:

$$\displaystyle g:  \prod_{w,w' : \Sigma_{x:A}B(x)} \left( \sum_{i: Id_A(\textsf{p}(w) , \textsf{p}(w'))} Id(i_*(\textsf{q}(w)) , \textsf{q}(w'))   \to Id (w,w') \right)$$

\nind
by a first induction on $w$ and $w'$ we split them into pairs $(w_1, w_2)$ and $(w_1' , w_2')$, so it suffices to show that the type is inhabited:
$$\sum_{i: Id_A(w_1 , w_1')} Id_{B(w_1')} (i_*(w_2) , w_2') )  \to Id((w_1 , w_2), (w_1' , w_2'))$$

\nind
Next, by induction on the sum type we take a pair $\displaystyle (i_1 , i_2) : \sum_{i: Id_A(w_1 , w_1')} Id_{B(w_1')} (i_*(w_2) , w_2')  $, then by induction on $i_1 : Id_A(w_1 , w_1')$ we have $i_2 : Id (\textsf{refl}_* (w_2) , w_2')$, and it suffices to show $Id((w_1 , w_2) , (w_1 , w_2'))$, so an induction on $i_2$ reduces to $Id((w_1 , w_2) , (w_1 , w_2))$.\\
Next we show that $f \circ g$ is homotopic to the identity map i.e. that its application is identical to the identity map for every $w$, $w'$ and for every $\displaystyle r : \sum_{i:Id(\textsf{p}(w) , \textsf{p}(w') )} Id (i_*(\textsf{q}(w)) , \textsf{q}(w'))$. First we break these three terms as pairs, and then use two path inductions to reduce both components of $r$ to $\textsf{refl}$. Then it suffices to show that $fg( (\textsf{refl} , \textsf{refl}) ) = \textsf{refl}$, which is true by definition.
Similarly in the opposite direction.\\
We have proved that $f$ has a quasi-inverse, which is a sufficient condition for being an equivalence.
\end{proof}

\begin{cor}
For $z: (\Sigma x:A)B(x)$ we have $Id (z , (\, \textsf{p}(z) , \, \textsf{q}(z)) \,)$, which is a kind of propositional canonicity for terms in a sum type.
\end{cor}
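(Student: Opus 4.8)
The plan is to read this off directly from Theorem \ref{idsum}, since that is exactly what the labelling as a corollary suggests. I would instantiate the theorem at $w := z$ and $w' := (\,\textsf{p}(z), \textsf{q}(z)\,)$, so that it supplies an equivalence
$$Id_{(\Sigma x:A)B(x)}\big(z, (\textsf{p}(z), \textsf{q}(z))\big) \;\simeq\; \sum_{i:Id(\textsf{p}(z)\, ,\, \textsf{p}((\textsf{p}(z),\textsf{q}(z))))} Id_{B(\textsf{p}((\textsf{p}(z),\textsf{q}(z))))}\big(i_*(\textsf{q}(z)), \textsf{q}((\textsf{p}(z),\textsf{q}(z)))\big).$$
Since an equivalence gives in particular a function in each direction, it is enough to exhibit a term of the right-hand type and then push it through the inverse map $g$ constructed in the proof of the theorem.

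Next I would simplify the right-hand side using the computation rules for the projections, which give the definitional equalities $\textsf{p}((\textsf{p}(z),\textsf{q}(z))) = \textsf{p}(z)$ and $\textsf{q}((\textsf{p}(z),\textsf{q}(z))) = \textsf{q}(z)$. The summand type thus reduces to $\sum_{i:Id(\textsf{p}(z),\textsf{p}(z))} Id_{B(\textsf{p}(z))}(i_*(\textsf{q}(z)), \textsf{q}(z))$. Here I would choose $i := \textsf{refl}(\textsf{p}(z))$; by the computation clause of the transport lemma, transport along $\textsf{refl}$ is the identity, so $i_*(\textsf{q}(z)) = \textsf{q}(z)$ and the second component is inhabited by $\textsf{refl}(\textsf{q}(z))$. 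Hence the pair $(\textsf{refl}(\textsf{p}(z)), \textsf{refl}(\textsf{q}(z)))$ inhabits the right-hand side, and applying $g$ yields the desired term of $Id(z, (\textsf{p}(z), \textsf{q}(z)))$.

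There is essentially no hard step here: the whole content is already in Theorem \ref{idsum}, and the only thing to watch is that the definitional equalities coming from the projection computation rules line up the instantiated type correctly, together with the fact that transport along a reflexivity path computes to the identity. I would note in passing that one could instead give a fully self-contained proof by induction on $z$ via the elimination rule for the sum type: in the canonical case $z = (a,b)$ the projections compute to $\textsf{p}(z)=a$ and $\textsf{q}(z)=b$, so $(\textsf{p}(z),\textsf{q}(z)) = (a,b) = z$ definitionally and the goal is inhabited by $\textsf{refl}((a,b))$. Presenting it as a corollary of Theorem \ref{idsum} is, however, the natural choice given its placement.
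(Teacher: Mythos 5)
Your proposal is correct and follows essentially the same route as the paper: instantiate Theorem \ref{idsum} at $w:=z$, $w':=(\textsf{p}(z),\textsf{q}(z))$, use the judgemental equalities $\textsf{p}((\textsf{p}(z),\textsf{q}(z)))=\textsf{p}(z)$ and $\textsf{q}((\textsf{p}(z),\textsf{q}(z)))=\textsf{q}(z)$, inhabit the right-hand side by $(\textsf{refl}(\textsf{p}(z)),\textsf{refl}(\textsf{q}(z)))$, and pull back through the equivalence. Your parenthetical alternative by direct $\Sigma$-induction is also valid, but the main argument matches the paper's proof step for step.
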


\begin{proof}
We have $\textsf{refl}(\textsf{p}(z)) : Id( \; \textsf{p}(z) , \; \textsf{p} ( \, (  \textsf{p}(z) ,\, \textsf{q}(z) ) \, ) \; )$, so that by the previous theorem it suffices to exhibit a term in $Id(\; \textsf{refl}(\textsf{p}(z))_*(\textsf{q}(z)) , \; \textsf{p}( \, (\textsf{p}(z) , \textsf{q}(z)) \, ) \;)$. But both sides are judgementally equal to $\textsf{q}(z)$.
\end{proof}

\nind
Notice that from the beginning of this section we have worked entirely in Martin-L\"of type theory.\\
It is natural to ask for a similar result for product types, but the rules of Martin-L\"of theory leave the question open. This is a motivation for the introduction of further rules that lead to homotopy type theory.\\

In the previous definitions (as in the whole previous section) we have interpreted terms in a product type as functions, but now is the time for a more careful analysis, in fact without further rules we can only say that terms in a product type can be interpreted as \textit{algorithms} because we are unable to prove that two terms are equal in a product type if they are equal pointwise. Indeed, two algorithms can be different even if they produce the same outputs for the same inputs, for example they can have extremely different computation time.\\
We want to develop some sort of homotopy type theory, therefore we are interested in actual mathematical functions and not in algorithms. For this reason we introduce and discuss two extensional principles that seem needed for a homotopical interpretation of type theory. As a firs extensional principle we may ask that every term in a product type is equal to a $\lambda$-abstraction, so that it is determined by its values. It is called \textbf{$\eta$-rule}\index{$\eta$-rule}. It can come into two guises, propositional or definitional, depending on the kind of equality imposed on terms. We consider the definitional version, which is stronger:\\

\begin{prooftree}
\AxiomC{$f: (\Pi x:A)B(x)$}
\UnaryInfC{$\lambda x. \textsf{ap}(f,x) = f : (\Pi x:A)B(x)$}
\end{prooftree}

\nind
Notice that under the Curry-Howard correspondence this rule says that we can perform conversions of proof and avoid every detour produced by an elimination followed by an introduction. To be honest we have to underline that the interpretation of this rule as an extensional principle require to read the definitional equality from the right to the left, i.e. as an introduction of a proof detour. This ambiguity is present in the literature where the rule is called \textit{$\eta$-reduction}\index{$\eta$-reduction} or \textit{$\eta$-expansion}\index{$\eta$-expansion}, respectively if it read from the left to the right or in the opposite direction.\\
In addition, we recall the interpretation of terms in a type as iterated constructions based on canonical terms which we do not know \textit{a priori} how to calculate. The $\eta$-rule intuitively guarantees that we can perform an inspection and write every term in a product type as a $\lambda$-abstraction.\\
However, this is a quite weak extensional principle and still does not give a statement for product types similar to the one for sum types \ref{idsum}. For this reason we add a suitable axiom.\\
First of all, observe that there is a function $\textsf{happly} : Id_{(\Pi x:A)B(x)} (f,g) \to (\Pi x:A) Id_{B(x)}(f(x) , g(x))$ easily defined by the elimination rule for identity types.\\
Now we state the \textbf{function extensionality axiom}\index{axiom!function extensionality}, in the sequel we shall reformulate it as a rule.

\begin{ax}[function extensionality]
The map $\textsf{happly}$ is an equivalence, i.e. there is an inhabitant $$\textsf{funext} : \mathit{Equiv}(Id_{(\Pi x:A)B(x)} (f,g) , \; (\Pi x:A) Id_{B(x)}(f(x) , g(x)))$$\index{$\textsf{funext}$}
\end{ax}

\nind
Observe that function extensionality and the $\eta$-rule refer to different kinds of extensionality: the former is interpreted internally inside type theory using the Curry-Howard correspondence, whereas the latter refers to the behaviour of application at the level of rules and judgements.\\
Notice that if we interpret types as spaces and terms inside an identity type as paths, we want that a homotopy between functions is the same as a path in a function space, which is exactly the content of function extensionality.\\

The last but not the least is the \textbf{univalence axiom}\index{axiom!univalence}, we give three versions of the axiom: the first one is modelled on Russell style universes and is maybe the easiest way to introduce univalence. Given two small types $A,B :U$ we can form the identity type as well as the equivalence type, $Id_U(A, B)$ and $\mathit{Equiv}(A,B)$. Moreover, we have a map from the former to the latter:

\begin{lem}
Given types in a Russell style universe $A, B : U$, there is a term in the type $Id_U(A,B) \to \mathit{Equiv}(A,B)$.
\end{lem}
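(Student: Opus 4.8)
The plan is to construct the desired term by path induction on $Id_U(A,B)$. By the Curry--Howard correspondence, proving the statement amounts to exhibiting an inhabitant of $Id_U(A,B) \to \mathit{Equiv}(A,B)$, which we will obtain by applying the elimination rule for identity types (with base "type" the universe $U$) and then $\lambda$-abstracting.

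First I would invoke identity elimination with the family of types $C(A,B,p) := \mathit{Equiv}(A,B)$ over $A,B:U$ and $p:Id_U(A,B)$. By path induction it then suffices to supply, for every $A:U$, a term $d(A) : C(A,A,\textsf{refl}_U(A)) = \mathit{Equiv}(A,A)$; the eliminator $\textsf{J}_{A,d}$ then produces, after $\lambda$-abstraction over the path $p$, the required map $Id_U(A,B) \to \mathit{Equiv}(A,B)$.

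The remaining step is to exhibit the "identity equivalence" inhabiting $\mathit{Equiv}(A,A)$. Recalling that $\mathit{Equiv}(A,A) = (\Sigma f:A \to A)\,\textsf{isequiv}(f)$, I take $f := 1_A$ to be the identity function $\lambda x. x$. To inhabit $\textsf{isequiv}(1_A) = (\Sigma g:A \to A)(g \circ 1_A \sim 1_A) \times (\Sigma h:A \to A)(1_A \circ h \sim 1_A)$, I choose both $g$ and $h$ to be $1_A$. Since $1_A \circ 1_A$ is definitionally equal to $1_A$, a type-theoretic homotopy $1_A \circ 1_A \sim 1_A$ is just a term in $(\Pi x:A) Id_A(x,x)$, which is given by $\lambda x. \textsf{refl}_A(x)$. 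Assembling these data with the introduction rules for the dependent sum and the product yields the witness $d(A)$, completing the induction.

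There is no substantial obstacle here: the entire construction is a single path induction whose only content is that the identity map is canonically bi-invertible. The only mild care required is in correctly packaging the $\textsf{isequiv}$ data, since the chosen notion of equivalence is bi-invertibility --- with two separate sections/retractions --- rather than a quasi-inverse, so two copies of the identity homotopy $\lambda x.\textsf{refl}_A(x)$ must be supplied.
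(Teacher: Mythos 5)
Your proof is correct, but it follows a slightly different decomposition than the paper's. You perform path induction with the whole equivalence type as the motive, $C(A,B,p) := \mathit{Equiv}(A,B)$, and supply the identity equivalence at the base case, writing out the bi-invertibility data (two copies of $1_A$ with reflexivity homotopies) directly. The paper instead first produces the underlying function explicitly: regarding the identity $1_U$ as a family of types over $U$, each $p : Id_U(A,B)$ yields the transport map $p_* : A \to B$ by the transport lemma, and path induction is then used only to prove that $p_*$ is an equivalence --- at $p = \textsf{refl}(A)$ one has $p_* = 1_A$, which has a quasi-inverse (itself), and any map with a quasi-inverse is bi-invertible. The two constructions agree judgementally at $\textsf{refl}$ and are therefore homotopic, so your term is entirely adequate for this lemma. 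The difference matters mostly as bookkeeping for what follows: the univalence axiom is stated as ``the function defined in the previous lemma is an equivalence,'' and the standard formulation takes that function to be the transport-based map; with your construction the underlying function is specified only through the eliminator $\textsf{J}$, which is fine but less canonical. Your direct packaging of bi-invertibility, rather than passing through $\textsf{qinv}$ as the paper does, is a minor and equally valid variation.
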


\begin{proof}
Note that the identity function of the universe $1_U : U \to U$ can be regarded as a family of types. Thus for every path $p:Id_U(A,B)$ we have that the transport function $p_* : A \to B$. We prove that it is an equivalence.\\
By induction suppose $p = \textsf{refl}(A)$ in which case $p_* = 1_A$ which is an equivalence. Indeed, a quasi-inverse is given by $1_A$ itself together with the trivial homotopies $\textsf{refl}(x)$ and $\textsf{refl}(y)$. Every map with a quasi inverse is in particular an equivalence in our definition.
\end{proof}

\nind
In mathematical practice, an isomorphism between two structured objects allow to transfer the "structural properties" from one object to another. In the categorical context, equivalences have the same behaviour. \\
This can motivate the following:

\begin{ax}[univalence axiom for Russell universes\index{axiom!univalence for Russell universes}]
The function in $Id_U(A,B) \to \mathit{Equiv}(A,B)$, defined in the previous lemma is an equivalence.
\end{ax}

\nind
In particular the axiom says that the canonical map has left and right inverses, hence from a proof of equivalence we can get a (noncanonical) proof of identity. The reader may suspect that we have imposed some sort of "skeletality" so that equivalent objects turn out to be equal; but this is not the case, in fact we have changed material equality into a type-theoretic intensional equality expanding its meaning to fit better the notion of equivalence.\\
We are interested in Tarski style universes, therefore we formulate univalence for them.\\
As we have done in the previous lemma, given a family of types $x:A \vdash B(x) \; \mathit{type}$ we can derive by the elimination rule for identity types the judgement $x,y:A \vdash w_{x,y} : Id_A(x,y) \to \mathit{Equiv}(B(x) , B(y))$.

\begin{defn}
We say that a family of types is \textbf{univalent}\index{univalent family of types} iff for each $x,y : A$ the map $w_{x,y}$ is itself an equivalence, i.e. iff we have a term inhabiting the type $(\Pi x,y:A) \textsf{isequiv}(w_{x,y})$.
\end{defn}

\begin{ax}[univalence axiom for Tarski universes\index{axiom!univalence for Tarski universes}]
The family of types $El$ over the universe $U$ is univalent.
\end{ax}

\nind
Univalence is a statement about a universe, hence we say that a \textbf{univalent universe}\index{universe!univalent} is a universe in which the univalence axiom holds.\\
%Peter L. Lumsdaine remarked that probably some kind of polymorphism can be used to express univalence without any reference to a universe \cite{Lum}.

Equivalence types are mere propositions, as we have anticipated before. Note that this theorem uses function extensionality.

\begin{thm}
For any $f:A \to B$ the equivalence type $\mathit{Equiv}(f)$ is a mere proposition.
\end{thm}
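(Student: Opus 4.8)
The plan is to show that $\textsf{isequiv}(f)$ is \emph{contractible whenever it is inhabited}; since a type with this property is automatically a mere proposition, this suffices. Recall that
$$\textsf{isequiv}(f) = \Big( (\Sigma g:B \to A)(g \circ f \sim 1_A) \Big) \times \Big( (\Sigma h:B \to A)(f \circ h \sim 1_B) \Big),$$
so it is a product of the two ``half-inverse'' types $\mathrm{linv}(f) := (\Sigma g:B\to A)(g\circ f \sim 1_A)$ and $\mathrm{rinv}(f) := (\Sigma h:B\to A)(f \circ h \sim 1_B)$. Because a product of contractible types is again contractible, it is enough to prove that, under the assumption that $f$ is an equivalence, each of $\mathrm{linv}(f)$ and $\mathrm{rinv}(f)$ is contractible. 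The hypothesis that $\textsf{isequiv}(f)$ is inhabited gives, via the map $\textsf{isequiv}(f) \to \textsf{qinv}(f)$, a genuine quasi-inverse $g_0$ of $f$, which is the only input the rest of the argument needs.

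First I would reduce the homotopies to identities using function extensionality. By the funext axiom, $(f \circ h \sim 1_B)$ is equivalent to $Id_{B\to B}(f \circ h, 1_B)$, so $\mathrm{rinv}(f)$ is equivalent to $(\Sigma h:B \to A)\,Id(f \circ h, 1_B)$, which is exactly the fibre over $1_B$ of the post-composition map $\phi_f : (B \to A) \to (B \to B)$, $h \mapsto f \circ h$. Dually, $\mathrm{linv}(f)$ is equivalent to the fibre over $1_A$ of the pre-composition map $\psi_f : (B \to A) \to (A \to A)$, $g \mapsto g \circ f$. Since an equivalence has contractible fibres and contractibility is preserved under equivalence, it remains only to check that $\phi_f$ and $\psi_f$ are themselves equivalences.

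For this I would use the quasi-inverse $g_0$ to exhibit explicit quasi-inverses of $\phi_f$ and $\psi_f$: post-composition by $g_0$ is a quasi-inverse of $\phi_f$, and pre-composition by $g_0$ is a quasi-inverse of $\psi_f$, the required homotopies being obtained by whiskering the homotopies $f \circ g_0 \sim 1_B$ and $g_0 \circ f \sim 1_A$ on the appropriate side and then converting them to identities by funext. Concluding that $\phi_f$ and $\psi_f$ are bi-invertible, their fibres are contractible, hence so are $\mathrm{linv}(f)$ and $\mathrm{rinv}(f)$, and therefore so is their product $\textsf{isequiv}(f)$.

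The main obstacle is the supporting infrastructure rather than any single clever step: I must establish (and the paper would need to have available) the facts that a type is a mere proposition precisely when inhabitation implies contractibility, that the fibre of an equivalence over any point is contractible, and that pre- and post-composition by an equivalence are equivalences. Each of these is routine in homotopy type theory but all three lean essentially on function extensionality, which is why the theorem is stated only after the funext axiom has been introduced; assembling the whiskering homotopies so that the funext conversions cohere is the most delicate bookkeeping.
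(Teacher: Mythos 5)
Your proof is correct and is essentially the argument the paper relies on: the paper gives no proof of its own but defers to Theorem 4.3.2 of \cite{UFP}, whose proof proceeds exactly as you do, by reducing $\mathrm{linv}(f)$ and $\mathrm{rinv}(f)$ to fibres of the pre- and post-composition maps via function extensionality and showing those maps are equivalences from a quasi-inverse of $f$, so that $\textsf{isequiv}(f)$ is contractible when inhabited and hence a mere proposition. The supporting facts you flag (inhabited-implies-contractible characterises mere propositions, equivalences have contractible fibres, composition with an equivalence is an equivalence) are precisely the lemmas the cited proof assembles, with no circularity since contractibility of fibres is obtained through quasi-inverses rather than through the theorem itself.
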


\begin{proof}
See theorem 4.3.2 in \cite{UFP}.
\end{proof}

Now we reformulate the univalence and function extensionality axioms as rules. We remark that it is still an open problem to give a consistency proof for homotopy type theory by means of a normalization theorem, on the other hand it is well known that axioms destroy the normalization procedure (see \cite{Gir} page 125 for a simple counterexample), a first step in this direction is to express axioms as rules.\\
We can interpret this phenomenon observing that a normalization proof for a system of rules is a first step in the study of the conditions of possibility of the new rules, in fact it gives a better explanation of the meaning analysing their mutual syntactical interactions, whereas axioms are apodictic statements, which need to be justified. Usually they are justified semantically by means of objects and considerations external to the theory.\\
We give just the Russell style version of the rules the other being similar, and refer to \cite{UFP}, corollary 5.8.5 and 5.8.6 for the proofs. We have an elimination and a computation rule, their structure correspond to the one of identity types, the identity map takes the place of the canonical reflexivity term $\textsf{refl}$. These rules say that equivalences between small types behaves like paths and are called \textbf{equivalence induction}\index{equivalence!induction}. Extending the interpretation of identity types we may say that these rules express a principle of \textit{indiscernibility of equivalents}\index{indiscernibility of equivalents}:

\begin{prooftree}
\AxiomC{$A,B:U , e:\mathit{Equiv}(A,B) \vdash D(A,B,e) \; \mbox{type}$}
\AxiomC{$A,B:U , e:\mathit{Equiv}(A,B) \vdash d(A):D(A,A,1_A)$}
\BinaryInfC{$A,B:U , e:\mathit{Equiv}(A,B) \vdash f(A,B,e):D(A,B,e)$}
\end{prooftree}

\begin{prooftree}
\AxiomC{$A,B:U , e:\mathit{Equiv}(A,B) \vdash D(A,B,e) \; \mbox{type}$}
\AxiomC{$A,B:U , e:\mathit{Equiv}(A,B) \vdash d(A):D(A,A,1_A)$}
\BinaryInfC{$A,B:U , e:\mathit{Equiv}(A,B) \vdash f(A,A,1_A) = d(A):D(A,A,1_A)$}
\end{prooftree}

\nind
Similarly we reformulate as rules the function extensionality axiom, which is named \textbf{homotopy induction}\index{homotopy!induction} in this form. Again the structure is the same as the one for the elimination and computation of the identity types, thus they say that homotopies behaves like paths:

\begin{prooftree}
\AxiomC{$s,t: (\Pi x:A)B(x) , h:(s \sim t) \vdash D(s,t,h) \; \mbox{type}$}
\AxiomC{$f:(\Pi x:A)B(x) \vdash d(f):D(f,f, \lambda x.\textsf{refl}(f(x)))$}
\BinaryInfC{$s,t: (\Pi x:A)B(x) , h:(s \sim t) \vdash k(s,t,h) : D(s,t,h)$}
\end{prooftree}

\begin{prooftree}
\AxiomC{$s,t: (\Pi x:A)B(x) , h:(s \sim t) \vdash D(s,t,h) \; \mbox{type}$}
\AxiomC{$f:(\Pi x:A)B(x) \vdash d(f):D(f,f, \lambda x.\textsf{refl}(f(x)))$}
\BinaryInfC{$s,t: (\Pi x:A)B(x) , h:(s \sim t) \vdash k(f,f, \lambda x.\textsf{refl}(f(x))) = d(f) : D(f,f,\lambda x.\textsf{refl}(f(x)))$}
\end{prooftree}

We end with some remarks: we may ask if some of the two axioms and the $\eta$-rule can be derived from he others, in fact it can be shown that univalence and the $\eta$-rule together imply function extensionality. See section 4.9 in \cite{UFP}.\\
Anyway we have decided to add and discuss separately all these additional principles in order to give a better exposition and also because someone may wish to change the $\eta$-rule in future, maintaining function extensionality.

Now in the end of this section we can look back to the extensional principles introduced and observe that the search of an equivalence that is a mere proposition, and the two axioms of univalence and function extensionality can be seen as the need for a balance between intensional and extensional concepts in type theory in order to make it adherent to homotopy theory.

We present briefly the idea behind \textit{higher inductive types}\index{higher inductive types}. Type constructors in Martin-L\"of theory can be conceived as "inductive types" because of the inductive structure of elimination rules. Under a homotopical interpretation we may add types defined inductively using not only terms, but also paths and higher paths. For example we can introduce rules to define and manage synthetically the spheres, and calculate synthetically some homotopy groups (see chapter 6 of \cite{UFP}).\\

Now we can finally give the definition of \textbf{weak Tarski universe}\index{universe!weak Tarski}. The idea is to weaken the definitional equalities present in the computation rules for the universe substituting them with propositional equalities. However, we do not have identity types for arbitrary types outside the universe if we do not have a second bigger universe containing that types.\\
Then we ask for an even weaker kind of universes: we replace the definitional equalities with equivalences asking for a canonical term in the appropriate equivalence type.\\
Let start as usual with the formation rule:

\begin{prooftree}
\AxiomC{}
\UnaryInfC{$U \; \mbox{type}$}
\end{prooftree}

\begin{prooftree}
\AxiomC{$a:U$}
\UnaryInfC{$El(a) \; \mbox{type}$}
\end{prooftree}

\nind
Then we have the introduction rules:

\begin{prooftree}
\AxiomC{$a:A$}
\AxiomC{$x:El(a) \vdash b(x)$}
\BinaryInfC{$\pi(a,b(x)):U$}
\end{prooftree}

\nind
The computation rule for the product types expresses the presence of a canonical term in the equivalence type between $El( \pi(a,b(x)) )$ and $(\Pi x:El(a)) El(b(x))$, as follows:

\begin{prooftree}
\AxiomC{$a:A$}
\AxiomC{$x:El(a) \vdash B(x)$}
\BinaryInfC{$\textsf{ceq}_{\pi} : \mathit{Equiv}( \, El( \pi(a,b(x)) ) , \, (\Pi x:El(a)) El(b(x)) \, )$}
\end{prooftree}

\nind
The rules for the reflection of identity types are:

\begin{prooftree}
\AxiomC{$a:U$}
\AxiomC{$b:El(a)$}
\AxiomC{$c:El(a)$}
\TrinaryInfC{$i_a(b,c) : U$}
\end{prooftree}

\begin{prooftree}
\AxiomC{$a:U$}
\AxiomC{$b:El(a)$}
\AxiomC{$c:El(a)$}
\TrinaryInfC{$\textsf{ceq}_{id} : \mathit{Equiv}( El(i_a(b,c)) ,  Id_{El(a)}(b,c) )$}
\end{prooftree}

\nind
The rules for the reflection of the type of natural numbers:

\begin{prooftree}
\AxiomC{}
\UnaryInfC{$n : U$}
\end{prooftree}

\begin{prooftree}
\AxiomC{}
\UnaryInfC{$\textsf{ceq}_{n} : \mathit{Equiv}(El(n), \textbf N )$}
\end{prooftree}

\nind
And similarly for the other type constructors.\\

%%%%%%%%%%%%%%%%%%%%%%%%%%%%%%%%%%%%%%%%%%%%%%%%%%%%%%%%%%%%%%
%%%%%%%%%%%%%%%%%%%%%%%%%%%%%%%%%%%%%%%%%%%%%%%%%%%%%%%%%%%%%%
%-----------------------------------------------------------
\chapter{Homotopy Type Theory in Simplicial Sets}

\lettrine{S}{implicial} sets form a locally cartesian closed category, and a model category. We shall see that these structures allow to interpret respectively dependent types and intensional identity types.\\
We emphasize that in order to give a clear and simple exposition at first we will ignore important coherence issues that we shall discuss briefly in the sequel. In fact the na\"ive interpretation of type theory give rise to problems with substitution which is interpreted as pullback, so that it is in general stricter than its semantical counterpart, for example substitution is strictly associative, whereas pullbacks associate only up to isomorphism, hence the need of coherence theorems asserting that pullbacks can be chosen in a way to fit the type-theoretic substitution. Other coherence issues arise from the computation rules for the universe.\\
Following \cite{KLV} we give a sketch of Voevodsky's answer to the coherence issues for simplicial sets, the source for $W$-types in simplicial sets is \cite{MvdB1} and in the end we give a proof of univalence using \cite{Moe}.\\
In this chapter we give sketches and outlines without going much into the details in order to maintain the size of this thesis under control because proofs are quite long and technical.\\
In this chapter we shall use heavily the axiom of choice, both for univalence and the coherence conditions, in the end we will use two inaccessible cardinals.

We give now a sketch of the interpretation of dependent sums and products in a locally cartesian closed category, the idea is that the slices allow to express type dependencies and that the right and left adjoint to the pullback functor give respectively dependent products and sums.  
The interpretation of dependent type theory in locally cartesian closed categories (lccc for short) was studied firstly in Seely's seminal paper \cite{See} with inaccuracies in the treatment of substitution, corrections in this sense can be found in \cite{Hofb}.\\
For a review on locally cartesian closed categories see appendix \ref{lccc}.

Given a locally cartesian closed category $\mathcal C$ the basic idea is to interpret a context $\Gamma$ as an object $\llbracket \Gamma \rrbracket$, in particular every type is interpreted as an object of the category. The empty context is interpreted as the terminal object of $\mathcal C$. Types in context like  $\Gamma \vdash A \; \mathit{type}$ are interpreted as maps with the interpretation of $\Gamma$ as codomain $p_A : \llbracket \Gamma , A \rrbracket \to \llbracket \Gamma \rrbracket$, so that they are objects in the slice category $\mathcal C / \llbracket \Gamma \rrbracket$.\\
A substitution which yields the context $\Gamma$ from the context $\Delta$, is interpreted as a map $\llbracket \Delta \rrbracket \to \llbracket \Gamma \rrbracket$, and when this substitution is applied to a dependent type of the form $\Delta \vdash A \; \mathit{type}$ the resulting judgement is interpreted as the pullback of $\llbracket \Delta , A \rrbracket \to \llbracket \Delta \rrbracket$ along the substitution map $\llbracket \Gamma \rrbracket \to \llbracket \Delta \rrbracket$.\\
Each term in context like $\Gamma \vdash x:A$ is interpreted as a section of $p_A$. The unit type (in the empty context) is interpreted as a terminal object. For a dependent type $\Gamma , x:A \vdash B(x) \; \mathit{type}$
 we interpret the dependent sum $\Gamma \vdash (\Sigma x:A)B(x)$ as the composite $\llbracket \Gamma , A , B \rrbracket \to \llbracket \Gamma , A \rrbracket \to \llbracket \Gamma \rrbracket$ i.e. as the application of the left-adjoint to the pullback $\Sigma_{p_A} (\llbracket \Gamma , A , B \rrbracket  \to \llbracket \Gamma , A \rrbracket) : \mathcal C / \llbracket \Gamma , A \rrbracket \to \mathcal C / \llbracket \Gamma \rrbracket$.\\
Similarly, for the dependent products we use the right adjoint $\Pi_{p_A} (\llbracket \Gamma , A , B \rrbracket \to \llbracket \Gamma , A \rrbracket) : \mathcal C / \llbracket \Gamma , A \rrbracket \to \mathcal C / \llbracket \Gamma \rrbracket$.

Let us now consider the problem with this na\"ive interpretation: for example the standard choice of pullbacks in the category of sets does not work, indeed if $f:A \to B$, $g:B \to C$ and $h:D \to C$ are three maps, then the pullback of $h$ along $g \circ f$ is the set $\{ ( a,d ) \, | \,  a \in A , d \in D , g(f(a)) = h(d) \}$ whereas the iterated pullback of $h$ along $g$ and then along $f$ is $\{ ( a , ( b , d ) ) \, | \, a \in A , b \in B , d \in D , f(a) = b \;  \mbox{and} \; g(b) = h(d) \}$ which is isomorphic but not equal to the former.\\
The are several ways to solve this issue, but they share the common idea to perform constructions in the lccc in order to get more structure and then use it to build a semantic substitution operation which commutes with composition and all semantic type and term formers. Then the interpretation is changed in order to make coherent choices of the interpretations based on the semantical substitution.\\
Following \cite{KLV} we will use \textit{contextual categories}\index{category!contextual} (the standard sources are \cite{Car} and \cite{Str}). Other possibilities are \textit{categories with attributes}\index{category!with attributes} again in \cite{Car}, \textit{categories with families}\index{category!with families} \cite{Hof} or \textit{comprehension categories}\index{category!comprehension} \cite{Jac}.

Next we consider intensional identity types, which need a subtler treatment, in fact they bring homotopical content into type theory. For the interpretation of identity type we will use the model structure on simplicial sets.\\
The identity type $\Gamma , x:A , y:A \vdash Id_A(x,y) \; \mathit{type}$ is interpreted as a very good path object, i.e. one obtained as an acyclic cofibration followed by a fibration  $P_{\llbracket \Gamma \rrbracket} \llbracket A \rrbracket \to \llbracket A \rrbracket \times_{\llbracket \Gamma \rrbracket} \llbracket A \rrbracket$, with the reflexivity term interpreted by the acyclic cofibration $\llbracket A \rrbracket \to P_{\llbracket \Gamma \rrbracket} \llbracket A \rrbracket $. Moreover, the previously considered interpretation of dependent types need to be restricted to fibrations and fibrant objects. Hence some conditions on the lccc category in consideration should be imposed in order to guarantee that fibrations are stable under dependent products, the case of dependent sum is trivial because the composition of fibrations is again a fibration. Additionally, choices of all data including liftings need to be given such that they commute with pullbacks.\\
In the following lemmas we take care of some details for the category of simplicial sets:

\begin{lem}
Suppose $q : Z \to Y$ and $p:Y \to X$ are fibrations, then the dependent product $\Pi_p q$ is a fibration over $X$.
\end{lem}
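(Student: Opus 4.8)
The plan is to verify directly that the structure map $\Pi_p q \to X$ has the right lifting property against every anodyne extension, which by the definition of Kan fibration is exactly what must be shown. So I would start with an arbitrary anodyne extension $j : A \to B$ and a commutative square
\[
\xymatrix@1{
& A  \ar[r]^{u} \ar[d]_{j}
& \Pi_p q  \ar[d] \\
& B \ar[r]_{v}
& X
}
\]
and seek a diagonal filler $B \to \Pi_p q$. Here $v$ makes $B$ into an object of $\textbf{SSet}/X$ and $v\circ j = (\text{structure map})\circ u$ makes $A$ into one as well, with $j$ and $u$ morphisms over $X$. The first move is to transpose this problem across the adjunction $p^* \dashv \Pi_p$ coming from the locally cartesian closed structure on $\textbf{SSet}$ (recalled in the appendix): the datum $u : A \to \Pi_p q$ over $X$ corresponds to a map $\tilde u : p^* A \to Z$ over $Y$, and a filler $B \to \Pi_p q$ over $X$ corresponds to a map $\tilde w : p^* B \to Z$ over $Y$ satisfying $\tilde w \circ p^* j = \tilde u$.

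Thus the original lifting problem is equivalent to a lifting problem over $Y$ against the fibration $q$:
\[
\xymatrix@1{
& p^* A  \ar[r]^{\tilde u} \ar[d]_{p^* j}
& Z  \ar[d]^{q} \\
& p^* B \ar[r]
& Y
}
\]
where the bottom map is the canonical projection. Since $q$ is a Kan fibration, it has the right lifting property against all anodyne extensions, so it suffices to show that the pulled-back map $p^* j : A\times_X Y \to B\times_X Y$ is again an anodyne extension; transposing the resulting filler back across the adjunction then solves the original square.

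It remains to prove that $p^* j$ is a trivial cofibration (which, in the model structure on $\textbf{SSet}$, coincides with an anodyne extension). First, $j$ is a cofibration, hence a monomorphism, and monomorphisms are stable under pullback, so $p^* j$ is again a monomorphism, i.e. a cofibration. Second, I would exhibit $p^* j$ as a pullback of the weak equivalence $j$ along a fibration: writing $\bar p : B\times_X Y \to B$ for the pullback of $p$ along $v : B \to X$ (a fibration, being a pullback of the fibration $p$), one checks that $A\times_X Y \cong A\times_B (B\times_X Y)$ and that the square
\[
\xymatrix@1{
& A\times_X Y  \ar[r]^{p^* j} \ar[d]
& B\times_X Y  \ar[d]^{\bar p} \\
& A \ar[r]_{j}
& B
}
\]
is cartesian. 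Hence $p^* j$ is the pullback of the weak equivalence $j$ along the fibration $\bar p$, and by right properness of the model structure on simplicial sets it is a weak equivalence. Being both a cofibration and a weak equivalence, $p^* j$ is a trivial cofibration, completing the argument.

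The main obstacle is precisely this last step: pullbacks of cofibrations are automatically cofibrations, but pullbacks of weak equivalences are not weak equivalences in general, so the conclusion that $p^* j$ is a trivial cofibration rests entirely on right properness of $\textbf{SSet}$ (and on the fact that $p$, and hence $\bar p$, is a fibration). This is the conceptual heart of why the hypothesis that $p$ is a fibration — rather than an arbitrary map — is needed for $\Pi_p$ to preserve fibrancy.
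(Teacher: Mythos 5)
Your proof is correct and is essentially the paper's own argument, merely unwound: the paper states in two lines that right properness together with cofibrations being exactly the monomorphisms makes $p^*$ preserve trivial cofibrations, whence $\Pi_p$ preserves fibrations by adjointness, and you have spelled out precisely this adjoint transposition at the level of lifting squares. The explicit verification that $p^*j$ is the pullback of $j$ along the fibration $\bar p$ is a welcome expansion of the detail the paper leaves implicit, but it is the same route, not a different one.
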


\begin{proof}
Recall that $\textbf{SSet}$ is a right proper model category whose cofibrations are exactly the monomorphisms, hence the pullback functor $p_* : \textbf{SSet}/X \to \textbf{SSet}/Y$ preserves trivial cofibrations. Therefore by adjointness $\Pi_p$ preserves fibrations.
\end{proof}

\nind
Some notions of \textit{type-theoretic model category} (i.e. categories with minimal requirements for the structure needed to interpret Martin-L\"of type theory with identity types) have been proposed, they rely on some generalisation of these two key conditions: right properness, and the property that cofibrations are the monomorphisms (see for example \cite{Shu1}).

Now we focus on identity types showing how the monoidal model structure on simplicial sets can help with one of the many coherence issues involved.

\begin{defn}
Given a fibration $p:E \to B$ in $\textbf{SSet}$ we define the \textbf{fibred path object}\index{fibred path object} $P_B(E)$ as the pullback:

$$\xymatrix@1{
& P_B(E) \ar[r] \ar[d] 
& E^{\Delta[1]} \ar[d]^{p^{\Delta[1]}} \\
& B \ar[r]^{c}
& B^{\Delta[1]}
}$$

\nind
where $c$ is the map "constant path". Consider now the constant path map for $E$ that is $c:E \to E^{\Delta[1]}$, which factors through $P_B(E)$ so that we get $r:E \to P_B(E)$. Moreover, there are source and target maps $s,t:P_B(E) \to E$.

\end{defn}

\begin{lem}
For any fibration $p:E \to B$ in $\textbf{SSet}$ the maps $\displaystyle E \mathop{\longrightarrow}^{r} P_B(E) \mathop{\longrightarrow}^{( s,t )} E \times_B E$ give a factorisation of the diagonal $E \to E \times_B E$ over $B$ as a trivial cofibration followed by a fibration. Moreover, this construction is \textit{stable over $B$}, i.e. the pullback along any $B' \to B$ is again such a factorisation.
\end{lem}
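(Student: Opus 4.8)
The plan is to read off the three maps concretely and then treat the two halves of the factorisation separately. Unwinding the defining pullback, $P_B(E)$ is the simplicial set of paths $\gamma : \Delta[1] \to E$ whose projection $p\gamma$ is constant; the map $r$ sends a point to its constant path, while $s$ and $t$ evaluate a path at its two endpoints, so that $s \circ r = t \circ r = 1_E$ and $(s,t)\circ r$ is the fibred diagonal $E \to E\times_B E$. Thus it remains to show that $(s,t)$ is a fibration and that $r$ is a trivial cofibration.

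First I would handle $(s,t)$. Applying Theorem \ref{homfib} to the monomorphism $\partial\Delta[1]\hookrightarrow\Delta[1]$ and the fibration $p$, the induced map $E^{\Delta[1]} \to (E\times E)\times_{B\times B} B^{\Delta[1]}$ is a fibration. I claim that $(s,t)$ is obtained from it by pullback along the map $E\times_B E \to (E\times E)\times_{B\times B} B^{\Delta[1]}$ which records a pair of points together with the constant path at their common image; checking the universal property identifies this pullback with $P_B(E)$ and its projection with $(s,t)$. Since fibrations are stable under pullback, $(s,t)$ is a fibration.

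The heart of the matter is that $r$ is a trivial cofibration, and here I would use the monoidal model structure. The inclusion $\{0\}\hookrightarrow\Delta[1]$ is exactly the horn inclusion $\Lambda^0[1]\hookrightarrow\Delta[1]$, hence anodyne, hence a trivial cofibration. Because the cartesian product makes $\textbf{SSet}$ a symmetric monoidal model category, Lemma \ref{Quillenbif} applied to this trivial cofibration and to $p$ shows that $E^{\Delta[1]} \to B^{\Delta[1]}\times_B E$ is a \emph{trivial} fibration. Pulling this back along the map $E \to B^{\Delta[1]}\times_B E$ sending a point to the pair consisting of the constant path at its image and the point itself, a pullback-pasting computation identifies the result with $s : P_B(E)\to E$; as trivial fibrations are stable under pullback, $s$ is a trivial fibration, in particular a weak equivalence. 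Now $r$ is a monomorphism, hence a cofibration since cofibrations are precisely the monomorphisms, and from $s\circ r = 1_E$ together with the $2$-out-of-$3$ property $r$ is a weak equivalence. Hence $r$ is a trivial cofibration, completing the factorisation.

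Finally, for stability over $B$ I would observe that the path-space functor $\underline{\mathit{Hom}}(\Delta[1],-)$ is a right adjoint and therefore preserves pullbacks, and that the constant-path map $c$ is natural. Consequently, pulling the defining square of $P_B(E)$ back along any $B'\to B$ yields the defining square of $P_{B'}(E')$ for the pulled-back fibration $p':E'=E\times_B B'\to B'$, so that $P_{B'}(E')\cong P_B(E)\times_B B'$ with $r',s',t'$ the evident pullbacks of $r,s,t$. Fibrations and trivial fibrations are stable under pullback, so $(s',t')$ is again a fibration and $s'$ again a trivial fibration, whence $r'$ is a trivial cofibration by the same monomorphism-plus-$2$-out-of-$3$ argument; thus the whole factorisation is preserved. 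The main obstacle is the third paragraph: the only non-formal input is that $s$ is a weak equivalence, which is precisely where the monoidal (pushout-product) structure and the anodyne character of $\{0\}\hookrightarrow\Delta[1]$ are needed, everything else being a formal consequence of pullback-stability and of the preservation of limits by $\underline{\mathit{Hom}}(\Delta[1],-)$.
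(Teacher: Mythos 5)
Your proof is correct and follows essentially the same route as the paper's: both apply the pushout-product machinery (Lemma \ref{Quillenbif}, of which Theorem \ref{homfib} is the $\textbf{SSet}$ instance) to $\partial\Delta[1]\hookrightarrow\Delta[1]$ to exhibit $(s,t)$ as a pullback of a fibration, and to the endpoint inclusion $\{0\}\hookrightarrow\Delta[1]$ to make $s$ a pullback of a trivial fibration, after which $r$ is handled by the monomorphism-plus-$2$-out-of-$3$ argument and stability by naturality of the construction. The only difference is presentational: you package the pullback identifications into single squares where the paper stacks three pullback squares in a tower.
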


\begin{proof}
It is clear that these maps give a factorisation of the diagonal. To see that they are a trivial cofibration and a fibration respectively, split the construction of $P_B(E)$ in two intermediate stages:

$$\xymatrix@1{
& P_B(E) \ar[r] \ar[d]_{( s , t )} 
& E^{\Delta[1]} \ar[d]^{( s , p^{\Delta[1]} , t )} \\
& E \times_B E \ar[r] \ar[d]_{\pi_1}
& E \times_B B^{\Delta[1]} \times_B E \ar[d]^{ ( \pi_1 , \pi_2 )} \\
& E \ar[r] \ar[d]
& E \times_B B^{\Delta[1]} \ar[d] \\
& B \ar[r]^{c}
& B^{\Delta[1]}
}$$

\nind
These three square are all pullbacks. Notice that the map $( s , p^{\Delta[1]} , t )$ is a fibration because of lemma \ref{Quillenbif} applied to the cofibration $1 + 1 \to \Delta[1]$ and to the fibration $p$. Hence $( s , t )$ is a fibration because it is pullback of a fibration.\\
Similarly, the source map $s$ is a trivial fibration since it is a pullback of $E^{\Delta[1]} \to E \times_B B^{\Delta[1]}$, which is one again by lemma \ref{Quillenbif}. Observe that $s$ is a retraction of $r$ so that $r$ is a weak equivalence by the 2-out-of-3 property and a monomorphism, therefore it is a trivial cofibration as desired.\\
Finally, the stability under pullback follows from the stability of the construction itself: for any $f: B \to B'$ there is a canonical isomorphism $P_{B'}(f^* E) \cong f^* P_B(E)$, commuting with the maps $r$, $s$ and $t$.
\end{proof}

\nind
Next we study the categorical analogues to $W$-types. This notion can be defined in every locally cartesian closed category.

\begin{defn}
Let $\mathcal C$ be a locally cartesian closed category, and $f : A \to B$ be any map, the \textbf{polynomial functor}\index{polynomial functor} $P_f$ associated to $f$ is the composite:

$$ \displaystyle  \mathcal C \mathop{\longrightarrow}^{- \times B} \mathcal C / B \mathop{\longrightarrow}^{\Pi_f} \mathcal C / A \mathop{\longrightarrow}^{\Sigma_A} \mathcal C$$ 

\nind
If exists, the initial algebra for a polynomial endofunctor is called the \textbf{$W$-type associated to $f$}\index{$W$-type associated to $f$} and denoted $W(f)$\index{$W(f)$}.
\end{defn}

\nind
The category of sets has all $W$-types. Indeed, for a map $f: B \to A$ in set theoretic notation we have $P_f(X) = \sum_{a \in A} X^{B_a}$ where $B_a = f^{-1}(a)$. Then $W(f)$ is the set of labelled well-founded trees with nodes labelled by the elements of $A$ and the edges labelled by the elements of $B$. The labelling is such that for a given node $a \in A$ the edges coming into it are labelled by elements in the fibre $b \in B_a$.\\
The algebra structure is given by the map $\mathit{sup}: P_f(W(f)) \to W(f)$ defined in the following way: given $a \in A$ and $t:B_a \to W(f)$ we can form a new tree with as root the node of the starting tree labelled by $a$, whose edges are labelled by the elements of $B_a$ and "subtrees" linked to the root the ones determined by $t(b)$ for $b \in B_a$.\\
For these trees we can give the following:

\begin{defn}
Define by recursion the notion of \textbf{rank}\index{rank of a well-founded tree} $rk:W(f) \to \mathit{Ord}$ as $rk(\mbox{sup}(a,t)) := \mbox{sup}\{ rk(t(b))+1 \, | \, b \in B_a \}$. In addition we define $W(f)_{< \alpha} := \{ w \in W(f) \, | \, rk(w) < \alpha \}$.
\end{defn}

\nind
Observe that $W(f)_{< 0} = \void$ and $W(f)_{< \alpha + 1} = P(f)(W(f)_{< \alpha})$. It is easy to check that for a regular cardinal $\kappa$ strictly bigger than all the $B_a$, then $W(f) = W(f)_{< \kappa}$.

Categories of presheaves have all $W$-types as well. Given a category $\mathcal C$ and a morphism of presheaves $f: B \to A$ we write $\hat{A} := \{ (C, a) \, | \, C \in Ob(\mathcal C), a \in A(C) \} $. For $(C , a) \in \hat A$ define $\hat B_{(C , a)} := \{ (\alpha , b) \, | \, \alpha : D \to C , b \in B(D) \; \mbox{and} \; f_D(b) = a \cdot \alpha \}$; finally, let $\hat f$ be the projection $\hat f : \sum_{(C , a) \in \hat A} \hat B_{(C , a)} \to \hat A$.\\
As a first step we take the corresponding $W$-type in sets: $W(\hat f)$. Now we give it a presheaf structure, to do so we declare that an element $\mbox{sup}((C , a) , t)$  lives in the fibre over $C \in Ob(\mathcal C)$ and for any $\alpha : D \to C$ its restriction is given by the formula: $\mbox{sup}((C , a) , t) \cdot \alpha = \mbox{sup}((D , a \cdot \alpha) , (t \cdot \alpha))$ where $(t \cdot \alpha)(\beta , b) := t(\alpha \beta , b)$.\\
As before we can assign a rank by transfinite recursion: $$rk(\mbox{sup}((C , a) , t)) := \mbox{sup}\{ rk(t(\beta , b)) + 1 \, | \, (\beta , b) \in \hat B_{(C , a)} \}$$

\begin{defn}
\begin{enumerate}[label=(\alph*)]
\item[]

\item A tree $\mbox{sup}((C , a) , t)$ is \textbf{composable}\index{tree!composable} iff for any $(\alpha , b) \in \hat B_{(C , a)}$ the tree $t(\alpha , b)$ lives in the fibre over $\mbox{dom}(\alpha)$.

\item A tree $\mbox{sup}((C , a) , t)$ is called \textbf{natural}\index{tree!natural} iff the map $t$ is a natural transformation i.e. for any $(\alpha , b) \in \hat B_{(C , a)}$ and $\beta : E \to D$ we have that $t(\alpha  \beta , b \cdot \beta) = t(\alpha , b) \cdot \beta$.

\item The collection of \textbf{subtrees}\index{tree!subtree} of $\mbox{sup}((C , a) , t)$ is defined recursively as the collection consisting of $\mbox{sup}((C , a) , t)$ itself and all subtrees of the $t(\alpha , b)$.

\item A tree $\mbox{sup}((C , a) , t)$ is \textbf{hereditarily natural}\index{tree!hereditarily natural} iff all its subtrees are natural.

\item The $W$-type in presheaves associated to $f$, written $W(f)$, is the subpresheaf of $W(\hat f)$ consisting of hereditarily natural trees.
\end{enumerate}
\end{defn}

\nind
This definition give rise to the desired initial algebra for the polynomial endofunctor.\\
In addition we put $W(f)_{< \alpha} := \{ w \in W(f) \, | \, rk(w) < \alpha \} $.

\begin{thm}
If $p: Y \to X$ is a Kan fibration, then the canonical map $W(f)_{<\alpha} \to X$ is a Kan fibration.
\end{thm}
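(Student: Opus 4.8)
The plan is to argue by transfinite induction on the ordinal $\alpha$, exploiting the recursive description of the rank filtration: $W(f)_{<0} = \emptyset$, $W(f)_{<\alpha+1} = P_f(W(f)_{<\alpha})$, and $W(f)_{<\lambda} = \bigcup_{\beta < \lambda} W(f)_{<\beta}$ for limit $\lambda$. Throughout I identify the branching map of the $W$-type with the fibration of the hypothesis, so that $X$ is the presheaf $A$ of node-labels and the canonical map $W(f)_{<\alpha} \to X$ is the root-labelling map sending a tree $\mathrm{sup}((C,a),t)$ to its root label $a$; under the initial-algebra isomorphism this is precisely the structure map of $\Sigma_A \Pi_f(-)$ to $A$ coming from the definition of $P_f$.

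For the base case, $W(f)_{<0} = \emptyset$, and the map $\emptyset \to X$ is a Kan fibration: every horn $\Lambda^k[n]$ is non-empty, so there are no commutative squares to fill and the lifting condition holds vacuously. For a limit ordinal $\lambda$, $W(f)_{<\lambda}$ is the colimit of the chain of monomorphisms $W(f)_{<\beta} \hookrightarrow W(f)_{<\beta'}$ for $\beta < \beta' < \lambda$. Since each horn $\Lambda^k[n]$ is a finite simplicial set, hence compact, any map $\Lambda^k[n] \to W(f)_{<\lambda}$ factors through some stage $W(f)_{<\beta}$; a lift against $W(f)_{<\beta} \to X$, available by the inductive hypothesis, composed with the inclusion into $W(f)_{<\lambda}$, solves the original lifting problem. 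This is the same compactness argument used in theorem \ref{factor}.

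The successor step is the heart of the proof and the main obstacle. Here $W(f)_{<\alpha+1} = P_f(W(f)_{<\alpha}) = \Sigma_A \Pi_f(W(f)_{<\alpha} \times B \to B)$, and I must show its structure map to $X$ is a fibration assuming the same for $W(f)_{<\alpha}$. The strategy is to show that the polynomial functor preserves the relevant fibrancy. Writing $Z := W(f)_{<\alpha}$, the projection $Z \times B \to B$ is the pullback of $Z \to 1$ along $B \to 1$, so it is a Kan fibration once $Z$ is fibrant; since $X$ is fibrant (it interprets a context), the inductive hypothesis together with closure of fibrations under composition gives that $Z$ is fibrant. Then, because $f \colon B \to A$ is a Kan fibration, the lemma that dependent products along fibrations preserve fibrations yields that $\Pi_f(Z \times B \to B) \to A = X$ is a Kan fibration, and $\Sigma_A$ only records this structure map. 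The remaining subtlety, which must be verified with care in the presheaf setting rather than the set-theoretic one, is that the identification $W(f)_{<\alpha+1} \cong P_f(W(f)_{<\alpha})$ restricts correctly to hereditarily natural trees and is compatible with the root-labelling maps, so that the fibration obtained for $P_f(Z)$ is indeed the claimed map $W(f)_{<\alpha+1} \to X$.
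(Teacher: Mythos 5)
Your overall strategy --- transfinite induction on $\alpha$ using $W(f)_{<0}=\emptyset$, the successor identification $W(f)_{<\alpha+1}\cong P_f(W(f)_{<\alpha})$, and compactness of horns at limit stages --- is the natural one; note that the thesis itself gives no proof but defers to theorem 3.4 of Moerdijk--van den Berg, whose argument has essentially this shape. Your base and limit cases are correct as stated: the generating horn inclusions have non-empty domains, so $\emptyset \to X$ satisfies the lifting condition vacuously, and since horns are finite, any lifting problem against $W(f)_{<\lambda} \to X$ factors through some stage $W(f)_{<\beta}$ (compatibly over $X$, because the transition maps are inclusions over $X$), where it is solved by the inductive hypothesis.

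The genuine gap is in the successor step, at exactly the point you tried to dispatch with the parenthesis ``$X$ is fibrant (it interprets a context)''. The theorem as stated imposes no fibrancy hypothesis on $X$, and your argument really needs $W(f)_{<\alpha}$ to be fibrant as an absolute object: $W(f)_{<\alpha}\times Y \to Y$ is the pullback of $W(f)_{<\alpha}\to 1$, not of $W(f)_{<\alpha}\to X$, so knowing only that $W(f)_{<\alpha}\to X$ is a fibration gives nothing when $X$ is not fibrant. Moreover this is not a removable convenience, because without fibrancy of $X$ the statement is false. Take $X=\Delta[1]\sqcup \Delta[0]$ and $Y=\Delta[0]$ mapping isomorphically onto the isolated vertex $x_0$; this is a Kan fibration. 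Then $W(f)_{<1}$ is the locus of empty fibres, namely the summand $\Delta[1]$, and a Yoneda computation shows the fibre of $W(f)_{<2}\to X$ over $x_0$ is $\underline{\mathit{Hom}}(\Delta[0],W(f)_{<1})\cong \Delta[1]$, which is not a Kan complex; but every fibre of a Kan fibration over a vertex is a Kan complex, being a fibration over the terminal object. So $W(f)_{<2}\to X$ is not a fibration. The repair is to make fibrancy of $X$ an explicit hypothesis (harmless in the intended application, where $X$ interprets a type over a fibrant context); with it your successor step goes through, and the remaining point you flag --- that the recursion $W(f)_{<\alpha+1}\cong P_f(W(f)_{<\alpha})$, stated in the text only for sets, holds for the presheaf of hereditarily natural trees compatibly with the root maps --- is a routine verification that should nonetheless be written out, since the thesis never asserts it in the simplicial setting.
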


\begin{proof}
See theorem 3.4 in \cite{MvdB1}
\end{proof}

Now is the time for a concise exposition of the strategy used by Voevodsky to interpret universes solve the coherence issues for them and for substitution.\\
As we have said before the idea is to put more structure on the locally cartesian closed category. The construction splits in some intermediate steps:

$$\displaystyle \textbf{SSet} \to \mbox{lccc with a category-theoretic universe} \to \mbox{contextual category} \to \mbox{type theory}$$

\nind
The first step is to construct a strict universal fibration in simplicial set. This category-theoretic universe is used to build a literal translation of type theory into category theory (a contextual category), which finally yields the desired strict interpretation of type theory.\\
In the exposition we will walk these arrows in the opposite direction, starting from the definition of contextual category.\\
This definition is a little bit involved, although the basic idea is simple i.e. that objects behave like contexts, so that they have a grade corresponding to the length of the context, and for any object there is a map which forgets the last judgement of the context projecting it into a lesser grade. In addition, there are chosen pullbacks added to the structure which satisfy by definition the strict functoriality condition.

\begin{defn}
A \textbf{contextual category}\index{category!contextual} is a category with the following structure:
\begin{itemize}
\item a grading of objects $ Ob(\mathcal C) = \coprod_{n \in \mathbb N} Ob_n(\mathcal C)$;

\item an object $1 \in Ob_0(\mathcal C)$;

\item forgetting maps $ft: Ob_{n+1}(\mathcal C) \to Ob_n(\mathcal C)$;

\item for every object $X \in Ob_{n+1}(\mathcal C)$ the \textit{canonical projection} $p_X : X \to ft(X)$;

\item for each $X \in Ob_{n+1}(\mathcal C)$ and $f : Y \to ft(X)$, there are a distinguished object $f^*(X)$ and morphism $q(f, X) : f^*(X) \to X$.

\end{itemize}
Such that the following conditions hold:
\begin{itemize}
\item $1$ is the unique object of $Ob_0(\mathcal C)$;

\item $1$ is a terminal object in $\mathcal C$;

\item for each $n> 0$, each $X \in Ob_n(\mathcal C)$ and each $f: Y \to ft(X)$, we have $ft(f^*X) = Y$ and the following square is a pullback called the \textit{canonical pullback} of $X$ along $f$:

$$\xymatrix@1{
& f^*X \ar[r]^{q(f,X)} \ar[d]_{p_{f^*X}} 
& X \ar[d]^{p_X} \\
& Y \ar[r]^{f}
& ft(X)
}$$

\item these canonical pullbacks are strictly functorial, i.e. for $X \in Ob_{n+1}(\mathcal C)$, we have that $1^*_{ft(X)} X = X$ and $q(1_{ft(X)} , X) = 1_X$; moreover, for $X \in Ob_{n+1}(\mathcal C)$, $f: Y \to ft(X)$ and $g: Z \to Y$, we have $(fg)^* X = f^*(g^*(X))$ and $q(fg , X) = q(f, X)q(g , f^*X)$.

\end{itemize}
\end{defn}

\begin{rem}
The intuitive explanation given before the definition is justified by the observation that every system of dependent types give rise to a contextual category $\mathcal C$, described as follows:
\begin{itemize}
\item $Ob_n(\mathcal C)$ is formed by contexts of length $n$ up to definitional equality and renaming of free variables;

\item maps of $\mathcal C$ are \textit{substitutions} or \textit{context morphisms} up to definitional equality and renaming of free variables. That is, a map $f: (x_1 : A_1 , \dots , x_n : A_n) \to (y_1 : B_1 , \dots , y_m : B(y_1, \dots , y_{m-1}))$ is represented by a sequence of terms $(f_1 , \dots , f_m)$ such that $x_1 : A_1 , \dots , x_n : A_n \vdash f_1 : B_1 $ and so on until $ x_1 : A_1 , \dots , x_n : A_n \vdash f_m : B_m(f_1 , \dots , f_{m-1})$; two such maps $(f_i)_i , (g_i)_i$ are equal iff for each $i$ we have $x_1 : A_1 , \dots , x_n : A_n \vdash f_i = g_i : B_(f_1 , \dots , f_{i-1})$;

\item composition of maps is given by substitution of terms and the identity $\Gamma \to \Gamma$ by the variables of $\Gamma$ considered as terms;

\item $1$ is the empty context;

\item $ft((x_1 :A_1, \dots , x_{n+1} : A_{n+1})) := (x_1 :A_1 , \dots , x_n : A_n)$, and the map $p_{\Gamma} : \Gamma \to ft(\Gamma)$ is simply the map forgetting the last judgement;

\item for contexts $\Gamma = (x_1 :A_1, \dots , x_{n+1} : A_{n+1}(x_1 , \dots , x_n))$ and $\Gamma' = (y_1 : B_1 , \dots , y_m : B_m(y_1 , \dots , y_{m-1}))$ and a map $f=(f_1(\underline{y}) , \dots , f_n(\underline{y})) : \Gamma ' \to ft(\Gamma)$ the canonical pullback $f^*\Gamma$ is the context $(y_1 : B_1 , \dots , y_m : B_m(y_1 , \dots , y_{m-1}) , y_{m+1} : A_{n+1}(f_1(\underline{y}) , \dots , f_m( \underline{y})))$; finally, $q(\Gamma , f) : f^* \Gamma \to \Gamma$ is the map $(f_1 , \dots , f_n , y_{n+1})$. 
\end{itemize}
\end{rem}

\begin{defn}
Given a category $\mathcal C$, a \textbf{category-theoretic universe}\index{universe!category-theoretic} is an object $U$ together with a morphism $p: \tilde{U} \to U$ such that each map $f: X \to U$ has a choice of pullback:

$$\xymatrix@1{
& (X ; f) \ar[r]^{Q(f)} \ar[d]_{P(X; f)} 
& \tilde{U} \ar[d]^{p} \\
& X \ar[r]^{f}
& U
}$$

\nind
For a sequence of maps $f_1 : X \to U$, $f_2 : (X;f_1) \to U$ and so on, we will write $(X ; f_1 , \dots , f_n)$ for the iteration $((\dots (X ; f_1); \dots ); f_n)$.
\end{defn}

\nind
In the following definition we explain how to build a contextual category from a category-theoretic universe.

\begin{defn}
Given a category $\mathcal C$ with a universe $U$ and a terminal object $1$, we define a contextual category $\mathcal C_U$ as follows:
\begin{itemize}
\item $Ob_n(\mathcal C_U) := \{ (f_1 , \dots , f_n) \, | \, f_i : (1 ; f_1 , \dots , f_{i-1}) \to U \}$;

\item $\mathit{Hom}_{\mathcal C_U}((f_1 , \dots , f_n) , (g_1 , \dots , g_m)) := \mathit{Hom}_{\mathcal C}((1; f_1 , \dots , f_n) , (1; g_1 , \dots , g_m))$;

\item $1_{\mathcal C_U} := ()$ the empty sequence;

\item $ft((f_1 , \dots , f_{n+1})) := (f_1 , \dots , f_n)$;

\item the projection $p_{(f_1 , \dots , f_{n+1})}$ is the map $P(X ; f_{n+1})$ provided by the universe structure on $U$;

\item given $(f_1 , \dots , f_{n+1})$ and a map $\alpha : (g_1 , \dots , g_m) \to (f_1 , \dots , f_{n+1})$ in $\mathcal C_U$, the canonical pullback $\alpha ^* (f_1 , \dots , f_{n+1})$ is given by $(g_1 , \dots , g_m , f_{n+1} \alpha )$ with projection induced by $Q(f_{n+1}\alpha)$:

$$
\xymatrix@1{
& (1; g_1 , \dots , g_m , f_{n+1} \alpha ) \ar[r] \ar@/^1pc/[rr]^{Q(f_{n+1}\alpha)} \ar[d]
& (1; f_1 , \dots , f_{n+1}) \ar[r] \ar[d]
& \tilde{U} \ar[d]^{p} \\
& (1; g_1 , \dots , g_m ) \ar[r]^{\alpha}
& (1; f_1 , \dots , f_n) \ar[r]^-{f_{n+1}}
& U
}$$

\end{itemize}
\end{defn}

\nind
We have given definitions only for the structure needed to manage substitution. The next step should be to give suitable definition of $\Pi$, $\Sigma$, $Id$, $W$ and type-theoretic universe structure on a contextual category and on a category-theoretic universe, so that from the latter we can easily reconstruct the former. Moreover, we have not checked that this definition is well-posed. For these details we refer the interested reader to \cite{KLV}.\\

The last step is to build a category-theoretic universe with all these structures inside the category of simplicial sets. The construction uses a well-ordering trick in order to get a universal fibration that classifies fibrations uniquely (not only uniquely up to homotopy).\\
We need also to impose a smallness condition if we want that our universal map is a set instead of a proper class. Hence we briefly recall few notions from set theory.

\begin{defn}
\begin{enumerate}[label=(\alph*)]
\item[]

\item Given an ordinal $\alpha$ its \textbf{cofinality}\index{cofinality} is the least ordinal that can be injected cofinally in $\alpha$. In symbols: $\mbox{cof}(\alpha) := \mbox{min}\{ \beta \, | \, \exists f: \beta \to \alpha \; \mbox{cofinal in } \alpha  \}$;\index{$\mbox{cof}(\alpha)$}

\item A cardinal $\alpha$ is \textbf{regular}\index{regular cardinal} iff it is equal to its cofinality. 

\item A cardinal $\alpha$ is \textbf{strongly inaccessible}\index{strongly inaccessible cardinal} (or simply inaccessible) iff it is regular, uncountable and $2^{\lambda} < \alpha$ for every $\lambda < \alpha$.
\end{enumerate}
\end{defn}

\nind
If ZFC is consistent it cannot prove the existence of inaccessible cardinals.\\
Now we use the notion of regular cardinal to give a notion of smallness for a fibration of simplicial sets.

\begin{defn}
Fix a regular cardinal $\alpha$, we say that a map $X \to Y$ is \textbf{$\alpha$-small}\index{$\alpha$-small map} iff it has fibres with cardinality less than $\alpha$.
\end{defn}

\begin{defn}
\begin{enumerate}[label=(\alph*)]
\item[]

\item A \textbf{well-ordered morphism}\index{well-ordered morphism of simplicial sets} of simplicial sets is a morphism $f: Y \to X$ together with a function assigning to each simplex $x \in X_n$ a well-ordering on the fibre $f^{-1}(x) \subseteq Y_n$.

\item Given two well-ordered morphisms $f:Y \to X$ and $g:Z \to X$ into a common base a \textbf{morphism of well-ordered morphisms}\index{morphism of well-ordered morphisms} from $f$ to $g$ is a morphism $Y \to Z$ respecting the fibres and the well-ordering on each fibre.
\end{enumerate}
\end{defn}

\begin{defn}
Given a regular cardinal $\alpha$ and a simplicial set $X$ we define $\textbf W_{\alpha}(X)$ as the set of isomorphism classes of $\alpha$-small well-ordered morphisms into $X$. We can define $\textbf W_{\alpha}$ on morphisms as the pullback action on isomorphism classes making it into a  functor $\textbf W_{\alpha} : \textbf{SSet}^{op} \to \textbf{SSet}$.\\
Furthermore, we define a simplicial set by composing with the Yoneda embedding of $\Delta$ into $\textbf{SSet}$, so that $W_{\alpha} := \textbf W_{\alpha} \cdot y^{op} : \Delta^{op} \to \textbf{SSet} $\index{$W_{\alpha}$}.
\end{defn}

\begin{lem}
The functor $\textbf W_{\alpha}$ is representable, represented by $W_{\alpha}$.
\end{lem}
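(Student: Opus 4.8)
The plan is to establish a natural bijection $\textbf{SSet}(X, W_{\alpha}) \cong \textbf W_{\alpha}(X)$ by the standard density argument for presheaf categories: reduce everything to the representables $\Delta[n]$, where the two sides agree essentially by construction, and then glue using that every simplicial set is a colimit of representables. Throughout I read $\textbf W_{\alpha}$ as valued in sets of isomorphism classes, so that ``represented by $W_{\alpha}$'' means $\textbf W_{\alpha}(X) \cong \textbf{SSet}(X, W_{\alpha})$ naturally in $X$.

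First I would record the values on representables. Since $W_{\alpha} = \textbf W_{\alpha} \cdot y^{op}$, by definition $W_{\alpha}[n] = \textbf W_{\alpha}(\Delta[n])$, so the Yoneda lemma yields $\textbf{SSet}(\Delta[n], W_{\alpha}) \cong W_{\alpha}[n] = \textbf W_{\alpha}(\Delta[n])$, naturally in $[n]$. Thus the two functors $\textbf{SSet}(-, W_{\alpha})$ and $\textbf W_{\alpha}$ from $\textbf{SSet}^{op}$ to $\textbf{Set}$ become canonically isomorphic once restricted along the Yoneda embedding $y : \Delta \to \textbf{SSet}$. The representable functor $\textbf{SSet}(-, W_{\alpha})$ automatically carries colimits of $\textbf{SSet}$ to limits of $\textbf{Set}$, so it suffices to prove the same for $\textbf W_{\alpha}$ and then invoke the density (co-Yoneda) presentation of $X$.

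The hard part will be showing that $\textbf W_{\alpha}$ sends colimits in $\textbf{SSet}$ to limits in $\textbf{Set}$. Concretely, for a colimit $X \cong \mbox{colim}_i X_i$ I must check that an $\alpha$-small well-ordered morphism into $X$, taken up to isomorphism, is the same datum as a compatible family of $\alpha$-small well-ordered morphisms into the $X_i$. This rests on the fact that $\textbf{SSet}$ is a presheaf topos, so its colimits are universal and satisfy descent: colimits are computed degreewise in $\textbf{Set}$, and pulling a well-ordered bundle back along the colimit cocone --- together with the matching of the chosen well-orderings on the fibres under the degreewise identifications --- reconstructs the bundle uniquely up to isomorphism, while $\alpha$-smallness of fibres is clearly preserved by pullback. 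For a coproduct this gives $\textbf W_{\alpha}\bigl(\coprod X_i\bigr) \cong \prod \textbf W_{\alpha}(X_i)$ and for a pushout the corresponding fibre product, i.e. exactly the limit of isomorphism classes. The care needed is precisely in verifying that the well-orderings glue coherently, which is the one genuinely non-formal point.

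Finally I would combine these ingredients. By the earlier lemma every simplicial set is the colimit $X \cong \mbox{colim}_{\Delta X}\Delta[n]$ of the representables over it, the functor $\Delta X \to \textbf{SSet}$ sending $f : \Delta[n] \to X$ to $\Delta[n]$. Using continuity of both functors and their agreement on representables, I obtain
$$\textbf{SSet}(X, W_{\alpha}) \cong \lim_{\Delta X} \textbf{SSet}(\Delta[n], W_{\alpha}) \cong \lim_{\Delta X} \textbf W_{\alpha}(\Delta[n]) \cong \textbf W_{\alpha}(X),$$
all isomorphisms natural in $X$, since each arises from a universal property stable under the maps of $\Delta X$. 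Hence $\textbf W_{\alpha} \cong \textbf{SSet}(-, W_{\alpha})$, so $\textbf W_{\alpha}$ is representable, represented by $W_{\alpha}$, as claimed. The remaining work beyond the descent step is routine bookkeeping with Yoneda and the density theorem.
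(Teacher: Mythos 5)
Your proof is correct and follows essentially the same route as the paper's: both establish agreement with $\mathit{Hom}(-,W_{\alpha})$ on the standard simplices via the Yoneda lemma, verify that $\textbf W_{\alpha}$ sends colimits to limits, and conclude by writing an arbitrary simplicial set as a canonical colimit of representables. You simply spell out the descent step (the coherent gluing of well-orderings, which is what makes the isomorphism classes rigid enough to satisfy descent) that the paper dismisses as \virg easy to check''.
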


\begin{proof}
The functors $\textbf W_{\alpha}$ and $\mathit{Hom}(- , W_{\alpha})$ agree up to isomorphism on the standard simplices by the Yoneda lemma. It is easy to check that $\textbf W_{\alpha}$ sends colimits into limits, hence they always agree because every simplicial set is canonically a colimit of standard simplices.
\end{proof}

\nind
Applying the natural isomorphism above to the identity map $1_{W_{\alpha}}$ we get a map $\widetilde{W}_{\alpha} \to W_{\alpha}$. By construction we have that every $\alpha$-small morphism of simplicial set can be obtained as a pullback of this projection. Indeed, by the axiom of choice we can fix a well-ordering on the fibres and use the universal property of $W_{\alpha}$.

\begin{defn}
Let $U_{\alpha} \subseteq W_{\alpha}$\index{$U_{\alpha}$} be the subobject consisting of isomorphism classes of $\alpha$-small fibrations and $p_{\alpha} : \widetilde{U}_{\alpha} \to U_{\alpha}$ as the pullback:

$$\xymatrix@1{
& \widetilde{U}_{\alpha} \ar[r] \ar[d]_{p_{\alpha}} 
& \widetilde{W}_{\alpha} \ar[d] \\
& U_{\alpha} \ar[r]
& W_{\alpha}
}$$

\end{defn}

\begin{thm}
The map $p_{\alpha} : \widetilde{U}_{\alpha} \to U_{\alpha}$ is a fibration. Moreover, the simplicial set $U_{\alpha}$ is a Kan complex.
\end{thm}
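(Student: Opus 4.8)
The plan is to verify both claims by direct horn-lifting arguments, exploiting the universal property that makes $p_\alpha$ the pullback of the universal $\alpha$-small well-ordered morphism along the inclusion $U_\alpha \hookrightarrow W_\alpha$.

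First I would show that $p_\alpha$ is a fibration. Fix $1 \le n$ and $0 \le k \le n$ and a commutative square with top edge $u : \Lambda^k[n] \to \widetilde U_\alpha$, left edge the horn inclusion $i$, bottom edge $v : \Delta[n] \to U_\alpha$, and right edge $p_\alpha$. Composing $v$ with $U_\alpha \hookrightarrow W_\alpha$ and using representability (the previous lemma), $v$ classifies an $\alpha$-small, well-ordered fibration $\pi : Z \to \Delta[n]$, and by the defining pullback square for $\widetilde U_\alpha$ one has $v^{\ast}\widetilde U_\alpha \cong Z$ over $\Delta[n]$. Unwinding this, the datum $u$ is precisely a section of $\pi$ over the horn, i.e. a map $s : \Lambda^k[n] \to Z$ with $\pi s = i$, while a diagonal filler $w : \Delta[n] \to \widetilde U_\alpha$ over $U_\alpha$ is precisely an extension of $s$ to a section $\bar s : \Delta[n] \to Z$ of $\pi$. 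Since $\pi$ is a fibration and $i$ is anodyne, such an $\bar s$ exists; hence $p_\alpha$ has the right lifting property against every horn inclusion and is a fibration. No well-ordering data intervene here, as sections are plain simplicial maps.

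For fibrancy of $U_\alpha$ I would first record the standard locality principle: a map $E \to B$ is a Kan fibration iff all of its pullbacks to standard simplices are, which is immediate from the horn-lifting characterisation (a lift against $\Lambda^m[l]\hookrightarrow\Delta[l]$ factors through the simplex $\Delta[l]\to B$ on the base). Consequently a map $\Lambda^k[n] \to U_\alpha$ is the same datum as an $\alpha$-small well-ordered \emph{fibration} $p : Y \to \Lambda^k[n]$, and filling the horn amounts to producing an $\alpha$-small well-ordered fibration $Z \to \Delta[n]$ whose restriction along $i : \Lambda^k[n] \hookrightarrow \Delta[n]$ is isomorphic, compatibly with the well-orderings, to $Y$. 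The naive guess $Z := i_{\ast}Y$ fails, since detecting fibrancy of $i_{\ast}Y$ requires base-changing horn inclusions, and anodyne maps are not stable under pullback.

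Instead I would extend $Y$ only up to equivalence and then rigidify. Factoring the composite $ip : Y \to \Delta[n]$ as a trivial cofibration $Y \xrightarrow{\sim} Z_0$ followed by a fibration $\pi_0 : Z_0 \to \Delta[n]$ (with $Z_0$ chosen $\alpha$-small, which is where regularity, and ultimately inaccessibility, of $\alpha$ enters), the map $Y \to Z_0$ is a weak equivalence; and since $\mathbf{SSet}$ is right proper, the base change $i^{\ast}Z_0 \to Z_0$ of the weak equivalence $i$ along $\pi_0$ is again a weak equivalence, so by two-out-of-three the comparison $Y \to i^{\ast}Z_0$ over $\Lambda^k[n]$ is a fibrewise equivalence. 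The remaining, and genuinely hard, step is to transport this equivalence across the cofibration $i$: one must replace $Z_0$ by a fibration $Z \to \Delta[n]$ with $i^{\ast}Z \cong Y$ \emph{on the nose}. This is the equivalence extension property, the technical heart of Voevodsky's construction in \cite{KLV}; granting it, a choice of well-ordering on the fibres of $Z$ extending that of $Y$ (possible $\alpha$-smally) yields a classifying map $\bar c : \Delta[n] \to U_\alpha$ with $\bar c\, i$ equal to the original horn, completing the filling. The main obstacle is thus not the two horn-lifting reductions, which are formal, but precisely this on-the-nose equivalence extension, whose proof rests on the interplay of right properness, the coincidence of cofibrations with monomorphisms, and the rigidity of minimal fibrations (Theorems \ref{minimalfibration} and \ref{minimalbundle}).
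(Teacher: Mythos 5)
Your first half is correct, and it is exactly the argument behind Theorem 2.1.10 of \cite{KLV}, which is what the paper's proof points to: a lifting problem for $p_{\alpha}$ against a horn lives entirely over the simplex $v:\Delta[n]\to U_{\alpha}$, where it becomes the problem of extending a section of the classified fibration $Z\to\Delta[n]$ along the anodyne inclusion $\Lambda^k[n]\hookrightarrow\Delta[n]$. Your locality principle, the resulting reduction of fibrancy of $U_{\alpha}$ to the extension problem for $\alpha$-small well-ordered fibrations over horns, and your observation that the naive pushforward $i_{\ast}Y$ fails are all correct as well.

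The problem is the final step. You reduce the extension problem, via the (trivial cofibration, fibration) factorisation and right properness, to the equivalence extension property, which you then cite rather than prove. The reduction itself is sound, and it is indeed how some later treatments organise the argument; but it displaces rather than supplies the hard content, and it is not the route of the theorem the paper cites: in \cite{KLV} the equivalence extension property is the engine of the \emph{univalence} proof, while fibrancy of $U_{\alpha}$ (their Theorem 2.2.1) is proved by a lighter, self-contained argument using only the minimal-fibration machinery already developed in the first chapter of this thesis. Namely: given the $\alpha$-small fibration $Y\to\Lambda^k[n]$, extract a minimal model $M\subseteq Y$ whose retraction $r:Y\to M$ is a trivial fibration (Theorem \ref{minimalfibration}); since $\Lambda^k[n]$ is contractible, fibre homotopy triviality (Corollary \ref{fibrecor}) combined with the rigidity of minimal fibrations (Lemma \ref{minimalEqIso}, as in the proof of Theorem \ref{minimalbundle}) forces $M\cong\Lambda^k[n]\times F$; extend this product to $\Delta[n]\times F$; and extend the trivial fibration $r$ along the monomorphism $u:M\hookrightarrow\Delta[n]\times F$ by taking the pushforward $u_{\ast}Y$, which is again a trivial fibration because trivial fibrations are precisely the maps with the right lifting property against monomorphisms and monomorphisms are stable under pullback, while $u^{\ast}u_{\ast}Y\cong Y$ because $u$ is monic. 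The composite $u_{\ast}Y\to\Delta[n]\times F\to\Delta[n]$ is then the required $\alpha$-small extension, and the well-ordering is completed exactly as you describe. A small further correction: inaccessibility of $\alpha$ plays no role in this theorem (regularity keeps every construction above $\alpha$-small); it is needed only later, for closure of the universe under the type constructors.
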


\begin{proof}
See theorems 2.1.10 and 2.2.1 in \cite{KLV}.
\end{proof}

\begin{thm}
Let $\alpha$ be a strongly inaccessible cardinal, then $U_{\alpha}$ carries $\Pi$, $\Sigma$, $Id$, $W$, $0$, $1$ and $2$ structure.\\
Moreover, if $\beta < \alpha$ is another inaccessible then $U_{\beta}$ gives an internal universe structure closed under all the other type constructors.
\end{thm}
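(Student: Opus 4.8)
The plan is to exploit the fact, established by the construction of $W_{\alpha}$, that $p_{\alpha}\colon \widetilde{U}_{\alpha}\to U_{\alpha}$ classifies $\alpha$-small fibrations \emph{uniquely}: once a well-ordering on the fibres is fixed, each such fibration arises as the pullback of $p_{\alpha}$ along a single determined map into $U_{\alpha}$. Consequently, to equip $U_{\alpha}$ with a structure for a given type former it suffices to check two things: first, that the corresponding semantic operation, applied to ($\alpha$-small, well-ordered) fibrations, produces again an $\alpha$-small fibration; and second, that the operation is stable under pullback. Classifying the \emph{universal} instance of the operation then supplies the structure map into $U_{\alpha}$, with the introduction and elimination operators furnished by the adjoints $\Sigma$ and $\Pi$ to the pullback functor in the $\Pi$/$\Sigma$ cases and, in the $Id$-case, by the lifting of the reflexivity trivial cofibration against fibrations; uniqueness of classification then converts pullback-stability into the strict coherence conditions demanded by the definition of the relevant structure on a contextual category.

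I would then treat the formers one at a time, in each case combining a ``fibration'' input with a ``smallness'' input. For $\Sigma$ the composite of two fibrations is a fibration, and a fibre of the composite is a union of fewer than $\alpha$ fibres each of size $<\alpha$, hence of size $<\alpha$ by regularity. For $\Pi$ I would invoke the lemma that the dependent product of fibrations is again a fibration; here smallness is exactly where strong inaccessibility is needed, since a fibre of $\Pi_{p}q$ is a product of $<\alpha$-many sets of size $<\alpha$, which has size $<\alpha$ precisely because $2^{\lambda}<\alpha$ for all $\lambda<\alpha$ together with regularity. For $Id$ I would use the factorisation of the fibrewise diagonal through the fibred path object $P_{B}(E)$ as a trivial cofibration followed by a fibration, which was shown to be stable over the base; its fibres embed into $\Delta[1]$-indexed function spaces and so remain $\alpha$-small. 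For $W$ I would appeal to the theorem that $W(f)_{<\alpha}\to X$ is a Kan fibration, the rank truncation together with regularity giving the cardinality bound. Finally $0$, $1$ and $2$ are the fibrations $\emptyset\to 1$, $1\to 1$ and $(1+1)\to 1$, which are Kan fibrations with fibres of size $<\alpha$ and are therefore named by points of $U_{\alpha}$.

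For the coherence I would note that each of these semantic constructions carries a canonically induced well-ordering on its fibres (lexicographic orders built from the orderings of the inputs, and the evident orders on $\emptyset$, $1$, $1+1$), so that the classifying maps are genuinely canonical; combined with the pullback-stability already verified and the uniqueness of classification, this forces the strict naturality equations, the detailed bookkeeping of which I would leave to \cite{KLV}. I expect this coherence, rather than any single cardinality estimate, to be the subtle point, although the $\Pi$- and $W$-cases are where the hypothesis of strong inaccessibility is genuinely used.

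For the last sentence, suppose $\beta<\alpha$ is also inaccessible. Then $U_{\beta}$ is built out of $\beta$-small data and, since $\alpha$ is inaccessible, has cardinality $<\alpha$; as it is a Kan complex the map $U_{\beta}\to 1$ is an $\alpha$-small fibration, hence named by a point $1\to U_{\alpha}$, i.e.\ a name for $U_{\beta}$ as a small type. By the first part applied at level $\beta$, the universe $U_{\beta}$ is closed under all the above type formers, and each of its closure maps is again $\alpha$-small and so classified inside $U_{\alpha}$; this exhibits $U_{\beta}$ as an internal universe of $U_{\alpha}$ closed under all the type constructors.
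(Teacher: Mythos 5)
The paper does not prove this theorem itself: its entire proof is the citation ``See theorem 2.3.4 in \cite{KLV}.'' Your sketch correctly reconstructs essentially that cited argument --- unique classification of well-ordered $\alpha$-small fibrations, a per-former check that each semantic operation preserves $\alpha$-small fibrations (with strong inaccessibility doing real work only in the $\Pi$ and $W$ cases), and strict coherence obtained by performing each construction on the universal instance and pulling back --- so it follows the same approach as the proof the paper defers to.
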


\begin{proof}
See theorem 2.3.4 in \cite{KLV}.
\end{proof}

\nind
With the previous theorem we have finished our detour on coherence issues for Martin-L\"of type theory. Let now turn to homotopy type theory and univalence.

\begin{thm}
The $\eta$-rule and functional extensionality hold in the simplicial model.
\end{thm}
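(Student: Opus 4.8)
The plan is to treat the two assertions separately, since they rest on quite different features of the model.

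For the $\eta$-rule I would argue that it is forced by the very way dependent products are interpreted. In the locally cartesian closed structure of $\textbf{SSet}$ a dependent product $\Gamma \vdash (\Pi x:A)B(x)$ is interpreted by the genuine right adjoint $\Pi_{p_A}$ to the pullback functor, and under this adjunction $\lambda$-abstraction is the adjunction transpose while application $\textsf{ap}$ is evaluation, i.e. the counit. The $\beta$-rule is then one of the two triangle identities, and the $\eta$-rule $\lambda x.\textsf{ap}(f,x)=f$ is precisely the statement that transposing a section and then evaluating returns the original section, that is, that the hom-set bijection of the adjunction is a genuine bijection. Since this is an identity of morphisms in $\textbf{SSet}$, it holds on the nose, and I would only need to check that the strictification producing the contextual category preserves it, which it does because the coherence construction replaces objects and maps by isomorphic strict data without disturbing these equalities.

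For function extensionality I would not attack $\textsf{funext}$ directly but reduce it, inside the type theory, to \emph{weak function extensionality} --- the statement that a dependent product of contractible types is contractible --- using the standard derivation (see \cite{UFP}, section 4.9). The advantage is that weak function extensionality has a transparent semantic reading: a family $x:A \vdash P(x)$ whose fibres are all contractible is interpreted by a fibration $q$ over $\llbracket A\rrbracket$ which, having contractible fibres over a fibrant base, is a trivial fibration by the long exact sequence; and its product being contractible amounts to $\Pi_{p_A}q$ again being a trivial fibration. Thus weak function extensionality in the simplicial model is equivalent to the assertion that the functors $\Pi_p$ preserve trivial fibrations.

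It then remains to verify this preservation, which extends the lemma already proved for fibrations. Since the cofibrations of $\textbf{SSet}$ are exactly the monomorphisms, the pullback functor $p_*$ along a fibration preserves all cofibrations; by the adjunction $p_* \dashv \Pi_p$ this is equivalent to $\Pi_p$ preserving trivial fibrations, just as the earlier observation that $p_*$ preserves trivial cofibrations (from right properness) gave that $\Pi_p$ preserves fibrations. Hence $\Pi_p$ preserves trivial fibrations, weak function extensionality holds, and therefore so does $\textsf{funext}$. The main obstacle is not this formal adjunction juggling but the careful dictionary translating the type-theoretic notions --- contractibility, $\textsf{happly}$, and the reduction of full to weak function extensionality --- into the model-categorical language of trivial fibrations and their stability, together with the verification that the correspondence ``contractible fibres over a fibrant base $\Leftrightarrow$ trivial fibration'' and the preservation property are compatible with the strict substitution structure of the contextual category.
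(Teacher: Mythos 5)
Your proposal is correct and takes essentially the same approach as the paper: for the $\eta$-rule you argue, exactly as the text does, that the interpretations of $\lambda x.\textsf{ap}(f,x)$ and $f$ are forced to coincide by the uniqueness clause of the universal property of exponentials (the hom-set bijection of the adjunction) in the locally cartesian closed structure, plus the routine check that strictification preserves this. For function extensionality the paper merely cites theorem 2.3.6 of \cite{KLV}, and your argument --- reduction to weak function extensionality via section 4.9 of \cite{UFP}, the dictionary between contractible families and trivial fibrations, and the preservation of trivial fibrations by $\Pi_p$ because its left adjoint $p^*$ preserves monomorphisms, i.e.\ cofibrations --- is precisely the argument of that cited theorem, so you have supplied the details the paper outsources.
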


\begin{proof}
The proof for the $\eta$-rule is simply the observation that the two maps involved are forced to coincide thanks to the uniqueness in the universal property for exponentials in a locally cartesian closed category.\\
For function extensionality see theorem 2.3.6 in \cite{KLV}.
\end{proof}

\begin{defn}
Given a fibration $E \to B$ we can form the simplicial set $\mathit{Eq}(E)$\index{$\mathit{Eq}(E)$} whose $n$-simplices are $(x,y,p)$ where $x,y : \Delta[n] \to B$ and $p: x^*E \to y^* E$ is a weak equivalence.\\
Similarly we define $\mathit{Iso}(E)$\index{$\mathit{Iso}(E)$} whose $n$-simplices are $(x,y,i)$ where $x,y : \Delta[n] \to B$ and $i: x^*E \to y^* E$ is an isomorphism.
\end{defn}

\nind
Notice how the internal $\mathit{Hom}$-complex allows to express properties of maps and simplicial sets (like being an equivalence) as other simplicial sets (like $\mathit{Eq}(E)$) i.e. objects inside the theory. It is reasonable to expect some for of correspondence between the analogous constructions internal to type theory allowed by the Curry-Howard isomorphism.

\begin{defn}
A fibration $p:E \to B$ is \textbf{univalent}\index{fibration!univalent} iff the obvious map $B^{\Delta[1]} \to \mathit{Eq}(E)$ is a weak equivalence.\\
The \textbf{simplicial univalence axiom}\index{axiom!simplicial univalence} states that the universal fibration $\widetilde
U_{\alpha} \to U_{\alpha}$ is univalent.
\end{defn}

\nind
Observe that by the 2-out-of-3 property $p:E \to B$ is univalent iff the canonical diagonal map $\delta : B \to \mathit{Eq}(E)$ is a weak equivalence, which in turn is the same to say that $B \to \mathit{Eq}(E) \to B \times B$ is a (trivial cofibration, fibration) factorisation of the diagonal, because the first is a cofibration and the second a fibration in any case. Hence $p$ is univalent iff $\mathit{Eq}(E)$ is a very good path object for $B$.\\
Notice that this definition of simplicial univalence matches the basic na\"ive idea of the interpretation of type theory without any worry about coherence issues. For this reason we need the following:

\begin{lem}
Simplicial univalence and type-theoretic univalence in simplicial sets are equivalent.
\end{lem}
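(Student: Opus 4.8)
The plan is to set up a precise dictionary between the two formulations and then observe that both reduce to the \emph{same} statement about a single comparison map, the apparent gap between ``weak equivalence of total spaces'' and ``type-theoretic equivalence'' being absorbed by Whitehead's theorem in a slice category.

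First I would record how each ingredient of the type-theoretic univalence axiom is interpreted in the simplicial model. The universe $U$ is interpreted as $U_\alpha$, the family $El$ as the universal fibration $p_\alpha : \widetilde U_\alpha \to U_\alpha$, and the identity type $Id_U$, read as a type in the context $x,y:U$, as a very good path object for $U_\alpha$; since any two path objects for $U_\alpha$ are connected by weak equivalences compatible with the structure maps, we may as well take it to be $U_\alpha^{\Delta[1]}$ with the factorisation $U_\alpha \to U_\alpha^{\Delta[1]} \to U_\alpha \times U_\alpha$ of the diagonal. The equivalence type $\mathit{Equiv}(El(x),El(y))$ is interpreted as $\mathit{Eq}(\widetilde U_\alpha)$ over $U_\alpha \times U_\alpha$. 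By Theorem~\ref{homfib}, applied to the cofibration $\partial\Delta[1]\hookrightarrow\Delta[1]$ and the fibration $U_\alpha\to 1$, the endpoint map $U_\alpha^{\Delta[1]} \to U_\alpha\times U_\alpha$ is a fibration, so both interpretations are fibrations over the common base $U_\alpha\times U_\alpha$. The crucial bookkeeping step is to check that the interpretation of the canonical comparison $w_{x,y}\colon Id_U(x,y)\to\mathit{Equiv}(El(x),El(y))$, which is produced by transport through identity-elimination, coincides with the ``obvious map'' $U_\alpha^{\Delta[1]} \to \mathit{Eq}(\widetilde U_\alpha)$ appearing in the definition of a univalent fibration; this faithful identification of the two comparison maps is the technical heart of the proof and the step I expect to be the main obstacle.

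Next I would unwind what type-theoretic univalence asserts about this map. The axiom states that $w_{x,y}$ is an equivalence for all $x,y$, i.e. that $\textsf{isequiv}$ of the interpreted comparison is inhabited over the base $U_\alpha\times U_\alpha$. Since $\textsf{isequiv}(f)$ is assembled from $\Sigma$-types and the type-theoretic homotopies $(-\sim-)$, all of which are interpreted fibrewise via $\Sigma$ and path objects in the slice $\textbf{SSet}/(U_\alpha\times U_\alpha)$, an inhabitant of this type is exactly a two-sided homotopy inverse together with its homotopies in that slice; that is, the comparison map is a homotopy equivalence in the model category $\textbf{SSet}/(U_\alpha\times U_\alpha)$.

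Finally I would invoke Whitehead's theorem (Theorem~\ref{White}) in the slice. The slice $\textbf{SSet}/(U_\alpha\times U_\alpha)$ inherits a model structure; both $U_\alpha^{\Delta[1]}$ and $\mathit{Eq}(\widetilde U_\alpha)$ are fibrations over the base and hence fibrant there, while every object of the slice is cofibrant because $0\to X$ is always a monomorphism. Thus both lie in $\mathcal C_{cf}$, and Theorem~\ref{White} yields that the comparison map is a homotopy equivalence in the slice iff it is a weak equivalence in the slice. As a map of the slice is a weak equivalence precisely when its underlying map is, the latter condition says exactly that $U_\alpha^{\Delta[1]}\to\mathit{Eq}(\widetilde U_\alpha)$ is a weak equivalence of simplicial sets, which is the simplicial univalence axiom (equivalently, by the remark following its definition, that $\mathit{Eq}(\widetilde U_\alpha)$ is a very good path object for $U_\alpha$). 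Combining the two reductions gives the desired equivalence; once the two comparison maps of the first step are matched, Whitehead's theorem supplies everything else.
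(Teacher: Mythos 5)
Your reduction machinery is sound, and it is in fact the same strategy as the proof in \cite{KLV} that the paper cites in place of an argument: interpreting the inhabitation of $\textsf{isequiv}$ of the comparison map as the existence of a fibrewise homotopy inverse with fibrewise homotopies over $U_\alpha \times U_\alpha$, then using that both interpreted types are fibrant objects of the slice $\textbf{SSet}/(U_\alpha \times U_\alpha)$ while every object there is cofibrant, so that Theorem~\ref{White} converts fibrewise homotopy equivalence into weak equivalence, which in the slice is detected on underlying maps. That part of your proposal is correct and well executed.

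The genuine gap is that the first step of your ``dictionary'' is not a definition but a theorem, and it is precisely where the content of the lemma lives. The interpretation of $\mathit{Equiv}(El(x),El(y))$ is determined by the inductive clauses of the model, i.e.\ it is assembled from the strict $\Sigma$, $\Pi$ and $Id$ structures carried by the contextual category built from $U_\alpha$; it is emphatically \emph{not} $\mathit{Eq}(\widetilde U_\alpha)$ on the nose, and likewise the interpretation of $Id_U$ is a specific chosen very good path object, not $U_\alpha^{\Delta[1]}$. For the identity type your replacement argument (any two very good path objects of $U_\alpha$ are connected by a weak equivalence over $U_\alpha \times U_\alpha$ under $U_\alpha$) does work, but for the equivalence type there is no such generic argument: one must actually construct a map between the interpreted $\mathit{Equiv}$-type and $\mathit{Eq}(\widetilde U_\alpha)$ over $U_\alpha \times U_\alpha$, prove it is a weak equivalence, and verify that the square relating the interpreted $w_{x,y}$ to the ``obvious map'' $U_\alpha^{\Delta[1]} \to \mathit{Eq}(\widetilde U_\alpha)$ commutes up to fibrewise homotopy, so that 2-out-of-3 and homotopy invariance transfer ``weak equivalence'' from one comparison map to the other. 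You explicitly flag this identification as ``the technical heart'' and ``the main obstacle'' but offer no construction or strategy for it; since everything else in your argument is soft model-category bookkeeping, the proof never actually connects the type-theoretic statement to the simplicial one. Those missing comparisons are exactly the ``several technical lemmas'' of section 3.3 of \cite{KLV} to which the paper's citation refers.
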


\begin{proof}
The proof requires several technical lemmas. See theorem 3.3.7 in \cite{KLV}.
\end{proof}

\nind
In view of the proof of simplicial univalence, we present here some definitions about simplicial principal bundles.

\begin{defn}
\begin{enumerate}[label=(\alph*)]
\item[]

\item Given a simplicial group $G$ and a Kan complex $E$ we say that an action $\rho : G \times E \to E$ is \textbf{principal}\index{principal simplicial action} iff for every fixed degree it is principal, i.e. the only elements $g \in G_n$ that have any fixed point $e \in E_n$ are the neutral elements.

\item Given $G$ a simplicial group, a morphism $P \to X$ of Kan complexes equipped with a $G$-action on $P$, is called a \textbf{$G$-simplicial principal bundle}\index{simplicial!principal bundle} iff the action is principal and the base is isomorphic to the quotient $E / G$. 
\end{enumerate}
\end{defn}

\begin{rem}
The classical construction of the universal principal $G$-bundle $\textbf EG \to \textbf BG$ generalises to the simplicial case. Recall that $\textbf E G$ is weakly contractible (i.e. it has all homotopy groups trivial) and that $G$ acts freely on $\textbf E G$, hence $\textbf B G \cong \textbf E G / G$.\\
In addition, the correspondence between fibre bundles and principal $G$-bundles generalises as well. From any principal $G$-bundle $P \to X$ we form the quotient $P \times_G F := P \times F / \sim$ where $(p,gy) \sim (pg,y)$ when $g$ runs over $G$. Conversely, given any fibre bundle with fibre $F$ we put $G=Aut(F)$ and consider the associated \textit{frame bundle}\index{frame bundle} $\mathit{Fr}(F,Y) \to X$ whose fibres are $\mathit{Fr}(F,Y)_x = \mathit{Iso}(F,Y_x)$.\\
For the details we refer the reader to \cite{May2}.\\
\end{rem}

\begin{thm}
Simplicial univalence holds, i.e. for the universal fibration $\widetilde U_{\alpha} \to U_{\alpha}$, the canonical map $U_{\alpha}^{\Delta[1]} \to \mathit{Eq}(\widetilde U_{\alpha})$ is a weak equivalence.
\end{thm}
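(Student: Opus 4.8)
The plan is to reduce the statement to a claim about the diagonal map and then to split that claim into a ``rigidification'' part, handled by the theory of minimal fibrations, and a ``contractibility of frames'' part, handled by the simplicial principal bundles recalled just above.

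First I would argue exactly as in the remark following the definition of a univalent fibration: the canonical map lives over $U_{\alpha} \times U_{\alpha}$ and factors the composite $U_{\alpha}^{\Delta[1]} \to \mathit{Eq}(\widetilde U_{\alpha}) \to U_{\alpha} \times U_{\alpha}$ of the diagonal. Since $U_{\alpha}$ is a Kan complex, evaluation $U_{\alpha}^{\Delta[1]} \to U_{\alpha}$ at an endpoint is a weak equivalence, so by the 2-out-of-3 property it is enough to show that the diagonal $\delta : U_{\alpha} \to \mathit{Eq}(\widetilde U_{\alpha})$ is a weak equivalence. As every isomorphism of fibres is a weak equivalence, $\delta$ factors through the inclusion $j : \mathit{Iso}(\widetilde U_{\alpha}) \hookrightarrow \mathit{Eq}(\widetilde U_{\alpha})$ as $\delta = j \circ \delta'$; by 2-out-of-3 again it suffices to prove that $\delta'$ and $j$ are both weak equivalences.

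I expect the inclusion $j$ to be the main obstacle, because it is precisely here that the homotopical heart of univalence --- that fibrewise weak equivalences agree up to homotopy with isomorphisms --- must be extracted. The strategy would be to rigidify: given an $n$-simplex $(x,y,p)$ of $\mathit{Eq}(\widetilde U_{\alpha})$ with $p : x^*\widetilde U_{\alpha} \to y^*\widetilde U_{\alpha}$ a weak equivalence over $\Delta[n]$, pass to minimal models using theorem \ref{minimalfibration} and apply lemma \ref{minimalEqIso}, which turns a fibrewise weak equivalence that is fibrewise homotopic to an isomorphism into an honest isomorphism. The delicate point is to perform this straightening coherently in the simplicial variable and compatibly with the pullback action defining $\mathit{Eq}$ and $\mathit{Iso}$, so as to produce a genuine homotopy inverse (or deformation retraction) witnessing that the cofibration $j$ is a weak equivalence.

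Finally, for $\delta'$ I would use the frame bundle picture. The first projection $\pi_1 : \mathit{Iso}(\widetilde U_{\alpha}) \to U_{\alpha}$ satisfies $\pi_1 \circ \delta' = 1_{U_{\alpha}}$, so by 2-out-of-3 it suffices to show $\pi_1$ is a weak equivalence. Its fibre over a name classifying a Kan complex $F$ consists of pairs $(y,i)$ with $i$ an isomorphism from $F$ onto another $\alpha$-small well-ordered fibration; this is the total space of the frame bundle of $F$, which, via the universal $G$-bundle construction with $G = \mathit{Aut}(F)$ recalled in the preceding remark, is weakly equivalent to the weakly contractible $\textbf{E}G$. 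Thus $\pi_1$ has weakly contractible fibres over a Kan base --- with local triviality again controlled by minimal fibration theory, theorem \ref{minimalbundle} --- and is therefore a weak equivalence. Assembling the three reductions shows that $\delta$, and hence the canonical map $U_{\alpha}^{\Delta[1]} \to \mathit{Eq}(\widetilde U_{\alpha})$, is a weak equivalence.
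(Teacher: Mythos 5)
Your opening reductions are fine: passing from the canonical map to the diagonal $\delta$, and factoring $\delta = j \circ \delta'$ through $\mathit{Iso}(\widetilde U_{\alpha})$, is legitimate, and in fact both $j$ and $\delta'$ are weak equivalences. The gap is in the step you yourself flag as the heart of the matter, the inclusion $j$, and it is not a repairable technicality. Lemma \ref{minimalEqIso} produces isomorphisms only when the fibrations involved are minimal, whereas the simplices of $\mathit{Eq}(\widetilde U_{\alpha})$ involve the pullbacks $x^*\widetilde U_{\alpha}$ and $y^*\widetilde U_{\alpha}$, which are arbitrary $\alpha$-small fibrations: these are in general weakly equivalent but \emph{not isomorphic}, and no straightening can change that. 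Concretely, let $x$ name $\Delta[0]$ and $y$ name an $\alpha$-small contractible Kan complex with more than one vertex (e.g.\ the nerve of the codiscrete groupoid on two objects): the fibre of $\mathit{Iso}(\widetilde U_{\alpha}) \to U_{\alpha} \times U_{\alpha}$ over the vertex $(x,y)$ is empty, while the fibre of $\mathit{Eq}(\widetilde U_{\alpha})$ over $(x,y)$ is not. Hence $j$ is not a fibrewise equivalence over $U_{\alpha} \times U_{\alpha}$, and any ``rigidification compatible with the pullback action'', i.e.\ carried out over a fixed pair $(x,y)$, is impossible: a homotopy inverse to $j$ must move the classifying simplices within their components of $U_{\alpha}$. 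In fact, granting your $\delta'$ step, the statement that $j$ is a weak equivalence is \emph{equivalent} to univalence, so your proposal defers rather than performs the essential work.

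This is precisely why the paper never compares $\mathit{Iso}$ and $\mathit{Eq}$ of the universal fibration itself. It first replaces $\widetilde U_{\alpha}$ by a fixed minimal retract $\pi : M \to U_{\alpha}$ (theorem \ref{minimalfibration}); the fibred weak equivalence $\widetilde U_{\alpha} \to M$ induces a fibred weak equivalence $\mathit{Eq}(\widetilde U_{\alpha}) \to \mathit{Eq}(M)$, and only for the minimal $M$ does lemma \ref{minimalEqIso} yield $\mathit{Eq}(M) \simeq \mathit{Iso}(M)$ over $U_{\alpha} \times U_{\alpha}$: minimality is exactly what makes weakly equivalent fibres isomorphic, so the empty-fibre phenomenon above disappears. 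The frame-bundle and $\mathbf{B}\mathit{Aut}(F)$ considerations are then applied to $M$ restricted to a connected component, where $M$ genuinely is a fibre bundle by theorem \ref{minimalbundle}. For the same reason, the justification you give for the $\delta'$ step is off: $\widetilde U_{\alpha} \to U_{\alpha}$ is \emph{not} locally trivial, so theorem \ref{minimalbundle} and the fibre-bundle/principal-bundle correspondence do not apply to it. The fibre of $\pi_1 : \mathit{Iso}(\widetilde U_{\alpha}) \to U_{\alpha}$ over a name for $F$ is indeed weakly contractible, but by a different, more elementary argument: a simplex of that fibre amounts to a simplex $y$ of $U_{\alpha}$ together with an isomorphism of $y^*\widetilde U_{\alpha}$ with the trivial $F$-bundle, i.e.\ in effect a well-ordering of the fibres of a trivial bundle, and such data over $\partial\Delta[n]$ always extends over $\Delta[n]$ by choosing well-orderings arbitrarily on the new fibres and reclassifying. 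That argument shows at once that $\pi_1$ is a trivial fibration, which is what the 2-out-of-3 step for $\delta'$ actually requires; weak contractibility of the fibres alone would not suffice without first knowing that $\pi_1$ is a fibration, a point your sketch also leaves unaddressed.
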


\begin{proof}
The idea of the proof follows the one of the classification of fibration in simplicial sets.\\
By theorem \ref{minimalfibration} we extract from the universal fibration a retract which is a minimal fibration $\pi : M \to U_{\alpha}$. Every fibre bundle is a fibration so that it is a pullback of the universal fibration $p$, hence by a simple diagram chase in the pullback square we have that it is also a pullback of $\pi$, i.e. $\pi$ is universal for fibre bundles.

$$\xymatrix@1{
& Y \ar[r] \ar[d]
& \widetilde{U}_{\alpha} \ar[r]_{r} \ar[d]_{p_{\alpha}} 
& M \ar[dl]^{\pi} \ar@/_1pc/[l]_{i} \\
& X \ar[r]
& U_{\alpha}
}$$

\nind
Observe that every fibred weak equivalence $E \to E'$ over a base $X$ induces a fibred weak equivalence $\mathit{Eq}(E) \to \mathit{Eq}(E')$ over $X \times X$.\\
Moreover, it follows from theorem \ref{minimalEqIso} that for a minimal fibration $M \to X$, there is a fibred weak equivalence $\mathit{Eq}(M) \to \mathit{Iso}(M)$ over $X \times X$.\\
We want to prove that $U^{\Delta[1]} \to \mathit{Eq}(\widetilde U)$ is a weak equivalence, so we compose with other weak equivalences and prove that the composite is a weak equivalence:
$$U^{\Delta[1]} \to \mathit{Eq}(\widetilde U) \mathop{\longrightarrow}^{\simeq} \mathit{Eq}(M) \mathop{\longrightarrow}^{\simeq} \mathit{Iso}(M)$$
\nind
We suppose without loss of generality that $U$ is connected, otherwise we can reproduce the same argument for each connected component.\\
Let $F$ be the fibre over a fixed point $u_0 \in U$ and consider the principal bundle associated to $\pi$, which is the frame bundle $\mathit{Fr}(F,M) \to U$. By the universality of $\pi$ this principal bundle is a universal principal $Aut(F)$-bundle, i.e. $B$ is a  $\textbf{B} Aut(F)$.\\
Now we apply the long exact sequence of a fibration:

$$\dots \to \pi_n(\textbf E G , *) \to \pi_n(\textbf B G , *) \to \pi_{n-1}(G , *) \to \pi_{n-1}(\textbf E G , *) \to \dots$$

\nind
Using the weak contractibility of $\textbf E G$ we have $\pi_n(\textbf B G , *) \cong \pi_{n-1}(G , *)$ and recalling the construction of higher homotopy groups of spheres as $\pi_0$ of iterated loop spaces we have that $\pi_{n-1}(\Omega (\textbf B G) , *) \cong \pi_{n-1}(G , *)$. Note that these two are simplicial groups, then by Moore's theorem \ref{Moore} they are Kan complexes. Since they are fibrant and cofibrant objects this weak equivalence is a homotopy equivalence by Whitehead's theorem \ref{White}.\\
Then we have $\Omega (U , u_0) \simeq Aut(F)$. Now let consider the diagram:

$$\xymatrix@1{
& U^{\Delta[1]} \ar[rr] \ar[dr]
& 
& \mathit{Iso}(M) \ar[dl] \\
& 
& U \times U
& 
}$$

\nind
Pulling it back along the inclusion $\{ u_0 \} \times U \to U \times U$ it is easy to check that we get the diagram:

$$\xymatrix@1{
& P(U , u_0) \ar[rr] \ar[dr]
& 
& \mathit{Fr}(F,M) \ar[dl] \\
& 
& B
& 
}$$

\nind
where $P(U , b_0)$ is the pointed path space. Recall that by theorem \ref{fibrefibration}, for a map between fibrations over a connected base to be a homotopy equivalence, it is enough that the induced map between the fibres over just one base point is a homotopy equivalence, and this is indeed the case by what we have proved before. Since the fibres of the two projections over a point are the same before and after pulling back the diagram, we can conclude as well that $U^{\Delta[1]} \to \mathit{Iso}(M)$ is a weak equivalence.
\end{proof}

\nind
We can finally notice that simplicial univalence is easy to obtain from the classification result for fibrations and from the theory of minimal fibrations. The difficulties in treating univalence are mainly given by coherence conditions.\\
Observe that this proof may suggest that a universal fibration can give rise to a more direct interpretation of type theory, if the usual coherence issues for pullbacks can be solved. In particular the uniqueness up to homotopy of the classification theorem entails that we automatically have coherence up to equivalence, which corresponds to equivalence of types.\\
This is just one hint for a possible motivation for weak Tarski universes, the difficulty of explaining in full detail motivations for weak Tarski universes is due to the mix of coherence conditions: we still want coherence theorems for substitution, whereas we want to weaken the ones for the computation rules for the universe. As the reader have noticed in the previous pages the strategy used to manage substitution and the various type constructors uses a strongly inaccessible cardinal in order to build a suitable category-theoretic universe, adding a second inaccessible to the same construction solves automatically all the coherence conditions for the universe.\\
We underline that the forthcoming paper \cite{LW} should solve all the issues for substitution and type constructors on Grothendieck $(\infty , 1 )$-topoi, except for the coherence conditions for the universe, whose weakening is the main topic of this thesis.\\
For this reason a detour into the subject of $(\infty , 1 )$-topoi would be needed to adequately motivate weak Tarski universes, but the treatment of these notions is beyond the aim of this thesis. 

The coherence theorems quoted in this chapter have been generalised to a certain extent in \cite{Shu1}, \cite{Shu2} and \cite{Cis}, to cover the cases of some well-behaved simplicial presheaves and the one of cubical sets.

In conclusion, we underline that Voevodsky's proof takes place in a classical metatheory and makes heavy use of inaccessible cardinals and the axiom of choice. The search for a consistency proof obtained by means of finitistic methods, like the ones given by normalisation theorems, is still an open problem.

%%%%%%%%%%%%%%%%%%%%%%%%%%%%%%%%%%%%%%%%%%%%%%%%%%%%%%%%%%%%%%
%%%%%%%%%%%%%%%%%%%%%%%%%%%%%%%%%%%%%%%%%%%%%%%%%%%%%%%%%%%%%%
%-----------------------------------------------------------
\chapter{Constructive Set Theory}

\lettrine{T}{he} intuitionistic and constructivist foundational program is not confined to logic. The development of intuitionistic logic is just the first step for a thoroughly rethinking of mathematics. \\
Type theory and category theory are deeply entangled with intuitionism, but also set theory can be rebuilt constructively. One of the advantage of this enterprise is to rely on the already familiar language of set theory, making constructive reasoning accessible to a wider audience. Moreover, sets and membership between sets are two of the most basic and fundamental concepts, then they doubtlessly deserve to be subject to a constructive analysis and clarification.\\
The sources for this chapter are the three articles by Peter Aczel on the type-theoretic interpretation \cite{Acz1}, \cite{Acz2},\cite{Acz3}, and the book draft on constructive set theory \cite{AR}.\\

An obvious requirement for an intuitionistic theory of sets is to have an intuitionistic underlying logic. It is immediate to see that this is not enough. Indeed, we have the following:

\begin{thm}
Consider the core theory given by the axioms of extensionality, separation, emptyset, and pair with intuitionistic logic.\\
The axiom of foundation in the form of a minimal $\in$-element: $\forall x \, [\exists u \in x \Rightarrow \exists y \, (y \in x  \land \forall z \, (z \in y \Rightarrow z \in x)) ]$, implies the law of excluded middle.\\
Moreover, if separation is limited to bounded formulae, excluded middle for bounded formulae can be derived.
\end{thm}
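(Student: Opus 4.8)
The plan is to run a Diaconescu-style argument, using the minimal-element form of foundation to decide an arbitrary proposition. First I would fix a formula $\phi$ and, from the emptyset and pair axioms, introduce $0 := \emptyset$ and $1 := \{0\}$; extensionality then gives $0 \neq 1$, since $0 \in 1$ whereas $0 \notin 0$. Using separation I would form the set $x := \{\, y \in \{0,1\} : y = 1 \lor (y = 0 \land \phi) \,\}$. The two bookkeeping facts driving everything are that $1 \in x$ holds unconditionally (the left disjunct is satisfied by $y = 1$), so $x$ is inhabited, and that $0 \in x \sse \phi$ (the disjunct $0 = 1$ is false, so membership of $0$ collapses to $\phi$).

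Next I would apply foundation to the inhabited set $x$ to obtain an $\in$-minimal element $m \in x$, that is, one with $\forall z\,(z \in m \Rightarrow z \notin x)$. Membership $m \in x$ unfolds to the disjunction $m = 1 \lor (m = 0 \land \phi)$, on which I perform an intuitionistically legitimate case split. In the right case $\phi$ holds outright. In the left case $m = 1 = \{0\}$, hence $0 \in m$, and minimality forces $0 \notin x$; since $0 \in x \sse \phi$ this yields $\neg\phi$. In either branch $\phi \lor \neg\phi$ is obtained, and as $\phi$ was arbitrary this is the law of excluded middle.

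For the second assertion I would inspect the logical complexity of the separation instance actually used. The defining condition $y = 1 \lor (y = 0 \land \phi)$ is bounded as soon as $\phi$ is, because the set-equalities $y = 0$ and $y = 1$ are expressible by $\Delta_0$ formulae (equality $a = b$ being $\forall z \in a\,(z \in b) \land \forall z \in b\,(z \in a)$). Thus the construction of $x$ needs only bounded separation, and the instance of excluded middle it produces is exactly $\phi \lor \neg\phi$ for the bounded $\phi$; so bounded separation yields bounded excluded middle.

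The main obstacle is constructive hygiene rather than any hard combinatorics: one must ensure the disjunction fueling the case analysis genuinely arises from the separation-defined membership of $m$, and is not smuggled in as a decidability of $m = 0$ versus $m = 1$, and that minimality is invoked in the correct polarity, namely that no element of $m$ lies in $x$. Checking $0 \neq 1$ and the equivalence $0 \in x \sse \phi$ from extensionality, while routine, is precisely what makes both the case split and the extraction of $\neg\phi$ valid.
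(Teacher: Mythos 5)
Your proof is correct and is essentially the paper's own argument: the paper forms the same set $A=\{\,y\in\{\emptyset,\{\emptyset\}\}\;:\;y=\{\emptyset\}\lor(y=\emptyset\land\phi)\,\}$, applies foundation (in its disjointness/minimality reading, which you both correctly use despite the typo $z\in x$ for $z\notin x$ in the statement), and performs the same intuitionistic case split on the defining disjunction to get $\phi$ or $\neg\phi$. Your only addition is an explicit verification of the ``moreover'' clause --- that set equality is expressible by a bounded formula, so bounded separation suffices for bounded $\phi$ --- which the paper asserts but does not argue; this is a refinement, not a different method.
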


\begin{proof}
The idea is a common trick: we form by emptyset and pair the set $2:=\{ \void, \{ \void \} \}$ and by separation a suitable subset made with a given formula $\phi$ in a way such that this formulation of the foundation axiom gives directly the excluded middle.\\
Let define $A: =\{ y \in 2 \, | \, y = \{ \void \}  \lor (y= \void \land \phi) \}$, it is nonempty because $\{ \void \} \in A$, so that by foundations it admits a minimal element $Y$ which as an element of $A$ must be $Y= \{ \void \}$ or $Y= \void \land \phi$. If $Y= \{ \void \}$, then $ \void \in Y$ and by minimality $\void \notin A$, hence by definition $\neg (\void = \void \land \phi)$ therefore $\neg \phi$.\\
On the other hand if $Y= \void \land \phi$, we have trivially $\phi$.
\end{proof}

Therefore, a careful analysis of the various axioms is needed together with a search of constructive substitutions for non-constructive principles. Following Aczel's work we will not confine ourselves with the little changes needed to keep the excluded middle out of the theory, but we will seek a predicative theory, at least to a certain reasonable extent.\\

The language of CZF is a first order language with the following primitive logical symbols $\bot, \land, \lor, \Rightarrow \forall x, \exists x$, with the restricted quantifiers $\forall x \in y$ and $\exists x \in y$, and two nonlogical relational symbols $\in$ and $=$. As we have said before the underlying logic is intuitionistic.\\
some basic axioms are retained from ZF, as the defining schemes for the restricted quantifiers: 
\begin{itemize}
\item $(\forall x \in y) \, \phi(x) \Leftrightarrow \forall x \, (x \in y \Rightarrow \phi(x))$;

\item $(\exists x \in y) \, \phi (x) \Leftrightarrow \exists x \, (x \in y \land \phi(x))$.
\end{itemize}
We also have the usual equality axioms:
\begin{itemize}
\item $x = y \Leftrightarrow \forall z \, (z \in x \Leftrightarrow z \in y)$;

\item $x = y \land y \in z \Rightarrow x \in z$.
\end{itemize}
And finally pairing and union:
\begin{itemize}
\item pairing: $\exists z \, (x \in z \land y \in z)$;

\item union: $\exists z \, (\forall y \in x)(\forall u \in y)(u \in z)$.
\end{itemize}
The first step for a predicative theory is to restrict the separation axiom to restricted formulae:
\begin{itemize}
\item restricted separation: for each restricted $\phi$ we have $\exists z \, [(\forall y \in z) (y \in x \land \phi(y)) \land (\forall y \in x )(\phi(y) \Rightarrow y \in z)]$.
\end{itemize}
Moreover, a weakening of the powerset axiom is also needed; we still want to be able to form the set of functions between two arbitrary sets (the \textit{exponentiation axiom}\index{axiom!exponentiation}), which is enough for the construction of the real numbers based on Cauchy sequences, but is too weak for other constructions as Dedekind's one which needs to manage relations. For these kinds of reasons we will introduce a strengthening of the exponentiation axiom, namely the \textit{subset collection axiom}\index{axiom!subset collection}. We present here the formal statement of the axiom, but its explanation is not immediate so we postpone its informal presentation to the sequel.\\
For the sake of readability we introduce the following notation: given a formula $\phi(x,y)$ we define $\phi'(a,b)$ as $(\forall x \in a)(\exists y \in b) \, \phi(x,y) \land (\forall y \in b)(\exists x \in a) \, \phi(x,y)$;
\begin{itemize}
\item subset collection: $\exists c \, \forall u \, [(\forall x \in a)(\exists y \in b) \, \phi(x,y) \Rightarrow (\exists d \in c)\, \phi'(a,d)$, where $u$ may occour free in $\phi(x,y)$.
\end{itemize}
The axiom of foundation is replaced with its classically equivalent \textit{set induction axiom}\index{axiom!set induction} which allows to prove property for all sets, provided that it can be proved with the induction hypothesis that all the elements of the set in consideration already satisfy that property. This axiom informally states that sets are built inductively from previously constructed sets.
\begin{itemize}
\item set induction scheme: $\forall y \, [(\forall x \in y) \, \phi(x) \Rightarrow \phi(y)] \Rightarrow \forall x \, \phi (x)$.
\end{itemize}
In order to compensate the predicative weakening we strengthen some other axioms: the axioms of replacement is substituted with the \textit{strong collection axiom}\index{axiom!strong collection} which can be seen as a version of replacement for relations; it is immediate to see that strong collection implies replacement;
\begin{itemize}
\item strong collection: $(\forall x \in a) \exists y \, \phi(x,y) \Rightarrow \exists b \, \phi'(a,b)$.
\end{itemize}
Finally the infinity axiom is replaced with the \textit{strong infinity axiom}\index{axiom!strong infinity} which states directly the existence of the least inductive set, instead of constructing it impredicatively as the intersection of all inductive sets:
\begin{itemize}
\item let $\mathit{Zero}(x)$\index{$\mathit{Zero}(x)$} be the formula $(\forall y \in x ) \bot$, and $\mathit{Succ}(x,y)$\index{$\mathit{Succ}(x,y)$} be the formula $(\forall z \in y)(z \in x) \land (y \in x) \land (\forall z \in x)(z \in y \lor z = y)$. The the axiom of infinity can be stated as $\exists z \, \mathit{Nat}(z)$, where $\mathit{Nat}(z)$\index{$\mathit{Nat}(z)$} is the conjunction of $(\forall x \in z)(\mathit{Zero}(x) \lor (\exists y \in z) \, \mathit{Succ}(y,z))$ with $(\exists x \in z) \, \mathit{Zero}(x)$ and $(\forall y \in z)(\exists x \in z) \, \mathit{Succ}(y,x)$.
\end{itemize}
The presentation of some of these axioms is not canonical, for example the pair axiom here states simply the existence of a set having as elements the two constituents of the pair. Anyway with restricted separation we can easily recover the usual formulation, and similarly for the other cases. These choices are made with the aim to simplify the proof of the type-theoretic interpretation.\\
In the following tabular we summarize the basic axioms:

\begin{center}
\begin{tabular}{|c|c|c|}
\hline
ZF & CZF & changes \\
\hline
\hline
extensionality & extensionality & none \\
\hline
pair & pair & none \\
\hline
union & union & none \\
\hline
separation & bounded separation & weakened \\
\hline
powerset & subset collection & weakened \\
\hline
foundation & set induction & reformulated\\
\hline
replacement & strong collection & strengthened\\
\hline
infinity & strong infinity & strengthened\\
\hline
\end{tabular}
\end{center}

\nind
Set induction is a weakening or a strengthening depending on the classical variant chosen for the foundation axiom.

The basic classical set-theoretic constructions like, pairs, products, disjoint unions and so on can be easily performed in CZF in a straightforward way, sometimes minor changes are necessary to avoid non-constructive arguments. We discuss pairs because they give a simple example of constructive reformulation of a classical proof. As we have noticed before, applying bounded separation to the set given by our form of pairing, we get the usual $\exists y \, \forall x \, (x \in y \Leftrightarrow x = a \lor x=b)$. This set is unique by extensionality. We form the usual pair $ ( a,b ) := \{ a, \{ a, b \}  \}$ and prove the following:

\begin{lem}
If $ ( a,b ) = ( c,d )$, then $a = c$ and $b = d$.
\end{lem}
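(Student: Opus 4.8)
The plan is to read off from extensionality exactly which membership equalities the hypothesis forces, and then to eliminate the unwanted ``mixed'' possibilities by exploiting the well-foundedness of $\in$ supplied by the set induction scheme. First I would unfold the definitions: by pairing together with bounded separation, $w \in \{a, \{a,b\}\}$ holds precisely when $w = a \lor w = \{a,b\}$, and likewise for $\{c, \{c,d\}\}$. Extensionality then turns the hypothesis $(a,b) = (c,d)$ into several disjunctions, of which I only need two: $a = c \lor a = \{c,d\}$ (because $a$ lies in the right-hand set) and $\{a,b\} = c \lor \{a,b\} = \{c,d\}$ (because $\{a,b\}$ lies in the right-hand set). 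All subsequent reasoning is by disjunction elimination on these genuine object-level disjunctions, so no appeal to excluded middle is needed and the argument stays intuitionistic.

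The key preliminary step is to record two consequences of set induction: $\forall x \, \neg(x \in x)$ and $\forall x \forall y \, \neg(x \in y \land y \in x)$. Both follow by instantiating the set induction scheme. For irreflexivity take $\phi(x) := \neg(x \in x)$; assuming $(\forall u \in x)\neg(u \in u)$ and $x \in x$, instantiating the hypothesis at $u = x$ gives $\neg(x \in x)$, a contradiction. The absence of $2$-cycles is obtained the same way, with $\phi(x) := \forall y\, \neg(x \in y \land y \in x)$: from the inductive hypothesis applied to an element $y \in x$, and then to $x$ itself, one contradicts the assumed conjunction $x \in y \land y \in x$. These are exactly the tools that make the short pairing code $\{a, \{a,b\}\}$ behave like a genuine ordered pair.

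Then I would carry out the case analysis. In the branch $a = c$ the first coordinate is settled, and it remains to deduce $b = d$: the second disjunction becomes $\{a,b\} = a \lor \{a,b\} = \{a,d\}$; the first alternative yields $a \in \{a,b\} = a$, i.e.\ $a \in a$, contradicting irreflexivity, while the second gives $\{a,b\} = \{a,d\}$, whence $b \in \{a,d\}$ yields $b = a \lor b = d$, and the degenerate alternative $b = a$ collapses $\{a,d\}$ to $\{a\}$ and forces $d = a = b$; either way $b = d$. In the branch $a = \{c,d\}$ I expect to derive a contradiction outright: if moreover $\{a,b\} = \{c,d\} = a$ then $a \in \{a,b\} = a$ again contradicts irreflexivity, and if instead $\{a,b\} = c$ then $a \in c$ (as $a \in \{a,b\}$) together with $c \in a$ (as $c \in \{c,d\} = a$) produces the $2$-cycle $a \in c \in a$, which is impossible. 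Hence the branch $a = \{c,d\}$ cannot occur, and disjunction elimination delivers $a = c \land b = d$.

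The main obstacle is organisational rather than deep: one must keep every case distinction as a constructive disjunction elimination and make sure the pathological mixed cases are closed off purely by the two cycle-exclusion lemmas, never by an implicit use of decidability of equality. The only genuinely load-bearing input beyond extensionality and pairing is the set induction scheme, used solely through $\forall x\,\neg(x \in x)$ and $\forall x\forall y\,\neg(x \in y \land y \in x)$.
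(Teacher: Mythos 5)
Your proof is correct, but it takes a genuinely different route from the paper's, and the difference is instructive. The paper's proof manipulates $\{a\}$ as an element of $(a,b)$, i.e.\ it silently works with the Kuratowski pair $\{\{a\},\{a,b\}\}$ rather than with the ``short'' pair $\{a,\{a,b\}\}$ that the text actually displays as the definition; on that reading a purely extensional case analysis suffices (the delicate branch $b=c$ is closed by observing that $(a,b)$ then collapses to $\{\{a\}\}$, forcing $c=d$), and set induction is never invoked. You instead take the displayed definition literally, and then your two regularity facts $\neg(x\in x)$ and $\neg(x\in y\wedge y\in x)$, both correctly derived from the set induction scheme, are not optional extras: the short pair provably fails to satisfy the ordered-pair property in the absence of a foundation-type principle, since nothing else excludes collisions such as $a=\{c,d\}$. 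Your case analysis is complete and constructive --- in the branch $a=c$ both subcases give $b=d$ (the degenerate one via $d=a=b$), and in the branch $a=\{c,d\}$ both subcases are refuted, by $a\in a$ and by the $2$-cycle $a\in c\in a$ respectively, after which ex falso quodlibet (available in intuitionistic logic) completes the disjunction elimination. In summary: the paper's argument is axiomatically lighter, using only extensionality and pairing, but it proves the lemma for the standard Kuratowski coding, whereas your argument needs set induction and is the one that actually establishes the lemma for the pair as defined in the text.
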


\begin{proof}
The usual classical proof uses the excluded middle to say that $a = c$ or not, and reason by cases. Instead we will argue focusing on the elements: $\{ a \}$ is an element of the left-hand set, so it is also an element of $( c,d )$, then either $\{ a \} = \{ c \}$ or $\{ a \} = \{ c,d\}$, and in either cases $a=c$. Similarly $\{ a,b \} = \{  c\}$ or $\{ a,b \} =  \{ c,d\}$. In either cases $b = c$ or $b = d$. If $b = c$, then $a = c = b$ so that $( a,b ) = \{ \{ a \} \}$ and hence $( c,d )$ must have a single element as well, so that $c=d$.
\end{proof}

Now we give some definitions about relations, and give an equivalent, version of subset collection.

\begin{defn}
\begin{enumerate}[label=(\alph*)]
\item[]

\item For a relation $R \subseteq A \times B$, we write $A \righttail B $\index{$\righttail \,$} iff $R$ is total, i.e. $(\forall x \in A)(\exists y \in B) \, ( x,y ) \in R$;

\item similarly, $R: A \leftrighttail B$\index{$\leftrighttail \,$} iff $(\forall x \in A)(\exists y \in B) \, ( x,y ) \in R \land (\forall y \in B)(\exists x \in A) \, (x,y) \in R$;

\item a set $C$ of subsets of $B$ is \textbf{$A$-full}\index{$A$-full set of subsets} iff $R: A \righttail B$ implies that $R: A \leftrighttail D $ for some $D \in C$.
\end{enumerate}
\end{defn}

\begin{thm}
The subset collection scheme is equivalent to the following axiom: for all sets $A$ and $B$, there exists a set $C$ that is an $A$-full set of subsets of $B$.
\end{thm}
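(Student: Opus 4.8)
The plan is to prove the two implications separately, working over the remaining axioms of CZF and using, crucially, strong collection in one direction and bounded separation in the other.

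For the direction from fullness to subset collection, I would fix sets $a,b$ and a formula $\phi(x,y)$ (possibly containing the parameter $u$). The key observation is that the set $C$ provided by the fullness axiom for the pair $a,b$ depends neither on $\phi$ nor on $u$, so I can set $c:=C$ once and for all. Then, given any $u$ for which the hypothesis $(\forall x\in a)(\exists y\in b)\,\phi(x,y)$ holds, I would convert $\phi$ into an honest set-sized relation: applying strong collection to $\psi(x,z):=(\exists y\in b)(z=(x,y)\wedge\phi(x,y))$ yields a set $R\subseteq a\times b$ which is total, i.e. $R:a\righttail b$. Fullness then produces $D\in C$ with $R:a\leftrighttail D$, and unwinding the two totality clauses of $\leftrighttail$ — together with the implication $(x,y)\in R\Rightarrow\phi(x,y)$ — delivers exactly $\phi'(a,D)$ with $D\in c$, which is the conclusion demanded by subset collection.

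For the converse, I would fix $A,B$ and apply the subset collection scheme with $a:=A$, $b:=B$, parameter $u$, and the formula $\phi(x,y):=y\in B\wedge(x,y)\in u$, obtaining a set $c$. I would then set $C:=\{d\in c: d\subseteq B\}$, which is legitimate by bounded separation since ``$d\subseteq B$'' is restricted; by construction $C$ is a set of subsets of $B$. To verify $A$-fullness, take any total relation $R:A\righttail B$ and instantiate $u:=R$. Totality of $R$ makes the hypothesis $(\forall x\in A)(\exists y\in B)\,\phi(x,y)$ true, so subset collection supplies $d\in c$ with $\phi'(A,d)$; the conjunct $y\in B$ built into $\phi$ forces every element of $d$ into $B$, so $d\subseteq B$ and $d\in C$, while the two clauses of $\phi'(A,d)$ read off precisely as $R:A\leftrighttail d$. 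Hence $C$ is $A$-full.

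The only genuinely delicate points concern unbounded formulae. In the first direction the relation $R$ must be manufactured by strong collection rather than by separation, because $\phi$ need not be restricted and so $\{(x,y)\in a\times b:\phi(x,y)\}$ is not directly available. In the second direction the clause $y\in B$ must be inserted into $\phi$, and bounded separation used to carve out $C$, precisely to guarantee that the witnesses $d$ are genuine subsets of $B$ rather than arbitrary sets. Everything else is a routine unwinding of the definitions of $\phi'$, $\righttail$ and $\leftrighttail$.
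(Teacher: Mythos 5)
Your proposal is correct and takes essentially the same route as the paper: one direction instantiates the subset collection scheme with $\phi(x,y) := (x,y) \in u$, and the converse combines fullness with strong collection applied to $\psi(x,z) := (\exists y \in B)(\phi(x,y) \land (x,y) = z)$, exactly as the paper does. Your additional step of carving out $C := \{ d \in c \, | \, d \subseteq B \}$ by bounded separation (and inserting the conjunct $y \in B$) is a careful refinement of a point the paper leaves implicit --- namely that the set produced by subset collection must literally consist of subsets of $B$ to meet the definition of $A$-fullness --- rather than a genuinely different argument.
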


\begin{proof}
This axiom is a special case of the subset collection scheme where $\phi(x,y)$ is chosen to be $( x,y ) \in R$. For the converse we will combine the axiom above with strong collection: let $C$ be an $A$-full set of subsets of $B$ and suppose that $(\forall x \in A) (\exists y \in B) \, \phi(x,y)$, we show that $\phi'(A,D)$ for some $D \in C$. Let $\psi(x,z)$ denote the formula $ (\exists y \in B)(\phi(x,y) \land ( x,y ) = z)$. Then by definition $(\forall x \in A) \exists z \, \psi(x,z)$, so that by strong collection there is a set $R$ such that $(\forall x \in A) (\exists z \in R) \, \psi(x,z) \land (\forall z \in R)(\exists x \in A ) \, \psi(x,z)$. Hence $R : A \righttail B$ and $\forall x \, \forall y \, (( x,y ) \in R \Rightarrow \phi(x,y))$. As $C$ is an $A$- full set of subsets of $B$ we can find a $D$ such that $R: A \leftrighttail B$. It follows that $\phi'(A,D)$.
\end{proof}

\begin{thm}
We have the following chain of implications: powerset $\Rightarrow$ subset collection $\Rightarrow$ exponentiation axiom.
\end{thm}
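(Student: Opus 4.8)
The plan is to prove the two implications separately, in both cases routing through the ``$A$-full'' reformulation of subset collection established in the previous theorem, since it is far more convenient to manipulate than the raw scheme.

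For powerset $\Rightarrow$ subset collection, I would fix sets $A$ and $B$ and simply take $C := \mathcal P(B)$, which is a set by powerset. To see it is $A$-full, given any total relation $R : A \righttail B$ I would form its range $D := \{ y \in B \mid (\exists x \in A)\,(x,y) \in R \}$; this is a subset of $B$ (hence a member of $\mathcal P(B)$), and it is obtained by \emph{bounded} separation since the defining formula uses only restricted quantifiers. One then checks $R : A \leftrighttail D$ directly: totality of $R$ into $B$ gives $(\forall x \in A)(\exists y \in D)\,(x,y)\in R$, while the definition of $D$ yields the reverse clause. Thus $\mathcal P(B)$ is $A$-full, and the previous theorem converts this into subset collection. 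This direction is essentially trivial once the $A$-full reformulation is available.

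For subset collection $\Rightarrow$ exponentiation, the goal is to show that for all sets $A,B$ the class ${}^A B$ of functions $A \to B$ is a set. The key move, and the main obstacle, is that the present notion of $A$-fullness bounds only the \emph{ranges} (subsets of $B$) of total relations, not the relations themselves; feeding in $R = f$ recovers only $\mathrm{range}(f) \subseteq B$, which is not enough to separate out the functions. To get around this I would apply $A$-fullness not to $B$ but to the product $A \times B$: let $C$ be an $A$-full set of subsets of $A \times B$, available since subset collection gives the $A$-full axiom for any base. Then for each function $f : A \to B$ I would consider the total relation $R_f := \{\, (x,(x,f(x))) \mid x \in A \,\} \subseteq A \times (A \times B)$, itself a set by bounded separation. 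Applying $A$-fullness yields $D \in C$ with $R_f : A \leftrighttail D$; unwinding the two clauses forces $\mathrm{graph}(f) \subseteq D$ and $D \subseteq \mathrm{graph}(f)$, i.e.\ $D = f$. Hence every function $f : A \to B$ is an element of $C$.

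Finally I would conclude by bounded separation: since the elements of $C$ are already subsets of $A \times B$, the predicate ``$f$ is a total single-valued relation on $A$ into $B$'' is expressible with restricted quantifiers only, so ${}^A B = \{\, f \in C \mid f \text{ is a function } A \to B \,\}$ is a set. The real content of the whole statement is this $A \times B$ trick in the second implication, which is precisely what lets the comparatively weak ``$A$-full set of subsets'' formulation still pin down each individual function rather than merely its range.
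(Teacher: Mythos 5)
Your proof is correct and takes essentially the same route as the paper: the relation $R_f$ to which you apply $A$-fullness over $A \times B$ is exactly the paper's $f' : A \to A \times B$, $f'(x) := (x, f(x))$, and the conclusion that $D$ equals the graph of $f$ followed by bounded separation is identical. The only difference is cosmetic: the paper dismisses the first implication as trivial, whereas you verify explicitly that $\mathcal P(B)$ is $A$-full.
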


\begin{proof}
The first implication is trivial. For the second one we consider besides every function $f:A \to B$, the associated $f': A \to A \times B$ defined by $f'(x) := ( x, f(x) )$, and then we apply the axiom above to $A \times B$, in order to obtain $C$, an $A$-full set of subsets of $A \times B$. Now we want to ensure that any $f$ is in $C$, so that applying bounded separation we can form the set of functions. $f'$ is a total relation, therefore we can find a set $D \in C$ such that $f':  A \leftrighttail D$. As $f'$ is a function $D = \{ f'(x) \, | \, x \in A \} = f$, so that $f \in C$, as required.
\end{proof}
\nind
The two previous theorems show that the axiom of subset collection can be conceived as a strengthening of the exponentiation axiom that allows to manage total relations.

\begin{thm}
The exponentiation axiom, together with the statement that $\{ \emptyset\}$ has a powerset, is equivalent to the full powerset axiom.
\end{thm}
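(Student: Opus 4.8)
The plan is to prove the two implications separately, the forward one being essentially immediate and the reverse one carrying all the content.

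For the easy direction, I would observe that if the full powerset axiom holds then in particular $\{\emptyset\}$ has a powerset, while by the previous theorem powerset implies subset collection, which in turn implies exponentiation; so both conjuncts follow at once.

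For the substantial direction, assume the exponentiation axiom together with the existence of $\Omega := \mathcal P(\{\emptyset\})$. The key constructive observation is that $\Omega$ serves as the object of truth values: a subset of $\{\emptyset\}$ is determined by whether or not $\emptyset$ belongs to it, and intuitionistically such subsets correspond bijectively to propositions, with $\{\emptyset\}$ itself and $\emptyset$ playing the roles of ``true'' and ``false''. Fix an arbitrary set $X$. First I would form $\Omega^X$, the set of all functions from $X$ to $\Omega$, which exists by exponentiation. To each subset $A \subseteq X$ I associate its characteristic function $\chi_A : X \to \Omega$ given by $\chi_A(x) := \{\, y \in \{\emptyset\} \mid x \in A \,\}$; this subset of $\{\emptyset\}$ is obtained by bounded separation, since $x \in A$ is a bounded formula, and the assignment $x \mapsto \chi_A(x)$ yields (via strong collection) a genuine element of $\Omega^X$. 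Conversely, to each $f \in \Omega^X$ I associate $\Phi(f) := \{\, x \in X \mid \emptyset \in f(x) \,\}$, again a legitimate instance of bounded separation once one rewrites $\emptyset \in f(x)$ as the bounded formula $(\exists y \in \bigcup\bigcup f)((x,y) \in f \wedge \emptyset \in y)$.

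Then I would check that $\Phi(\chi_A) = A$ for every $A \subseteq X$, which is a direct unwinding of the definitions using $A \subseteq X$; this shows that every subset of $X$ arises as $\Phi(f)$ for some $f \in \Omega^X$. Since every value of $\Phi$ is by construction a subset of $X$, the range of $\Phi$ is precisely the collection of all subsets of $X$. Applying strong collection (or replacement) to the total functional relation $f \mapsto \Phi(f)$ on the set $\Omega^X$ then produces this range as a genuine set, which is exactly $\mathcal P(X)$, establishing the powerset axiom.

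The main obstacle I anticipate is not any isolated hard step but the care needed to keep every construction predicatively and intuitionistically legitimate: one must verify that $\chi_A$ really is a total, single-valued function, that each separation instance used is genuinely bounded so that restricted separation suffices, and above all that the correspondence $A \leftrightarrow \chi_A$ is a bijection without any appeal to excluded middle. This last point is precisely where the hypothesis is spent, since the role played classically by the two-element set $2$ is here taken over by $\Omega = \mathcal P(\{\emptyset\})$, whose existence is exactly the extra assumption beyond exponentiation.
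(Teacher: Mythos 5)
Your proof is correct and follows essentially the same route as the paper's: the paper forms $C := \{\,\{x \in A \mid \emptyset \in f(x)\} \mid f \in B^A\,\}$ (your range of $\Phi$) by exponentiation, restricted separation and replacement, and shows surjectivity via exactly your characteristic function $f(x) = \{\,y \in \{\emptyset\} \mid x \in z\,\}$. Your additional care about the boundedness of the separation instances and the use of strong collection only makes explicit what the paper leaves implicit.
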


\begin{proof}
One direction is trivial, for the other the idea is that the set $B=\mathcal P(\{ \emptyset \})$ allows to define characteristic functions and therefore the set of all subsets.\\
For any set $A$ let $C := \{ \{ x \in A \, | \, \emptyset \in f(x) \} \, | \, f \in B^A \}$. This is a set by the exponentiation axiom, restricted separation and replacement. If $z \subseteq A$, let $f(x)$ be the set $\{ y \in \{ \emptyset \} \, | \, x \in z \}$ for $x \in A$. Then $f \in B^A$ and hence $z = \{ x \in A \, | \, \emptyset \in f(x) \} \in C$. Hence $C$ is the powerset of $A$.
\end{proof}
\nind
This theorem shows that in CZF, finite sets cannot have powersets. It may seem strange that the sets that are most accessible to our intuition lack powersets; especially because  finite sets provided to Cantor the intuitive background for the introduction of powersets for arbitrary infinite sets, and therefore full impredicativity. Probably, the best justification of this feature is again the type-theoretic interpretation of CZF; indeed, in Martin-L\"of type theory function types are available and the types for finite sets have fully justified rules which provide only canonical elements, and ways to introduce and eliminate them.\\
Now we want to prove that CZF with classical logic and ZF prove the same theorems, we start with the following two lemmas:

\begin{lem}\label{powlem}
The exponentiation axiom and restricted excluded middle, together imply the powerset axiom.
\end{lem}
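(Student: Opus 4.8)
The plan is to reduce everything to the previous theorem, which asserts that the exponentiation axiom together with the assumption that $\{\emptyset\}$ has a powerset yields the full powerset axiom. Since exponentiation is already among our hypotheses, it suffices to prove, using restricted excluded middle alone, that the singleton $\{\emptyset\}$ admits a powerset; the full powerset axiom then follows by that theorem.

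First I would form the set $2 := \{\emptyset, \{\emptyset\}\}$ by pairing followed by bounded separation, exactly as in the construction used to show that foundation implies excluded middle. The claim I then want to establish is that $2$ is precisely the powerset of $\{\emptyset\}$, that is, $\forall z\,(z \subseteq \{\emptyset\} \Leftrightarrow z \in 2)$. The right-to-left direction is immediate, since both $\emptyset$ and $\{\emptyset\}$ are subsets of $\{\emptyset\}$.

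The substantial direction is left-to-right, and here is where restricted excluded middle enters. Given $z \subseteq \{\emptyset\}$, the key move is to apply restricted excluded middle to the formula expressing $\emptyset \in z$. The point to verify carefully, and the main (if modest) obstacle, is that this formula is genuinely restricted: it should be written as $(\exists x \in z)\,\mathit{Zero}(x)$, where $\mathit{Zero}(x)$ abbreviates $(\forall y \in x)\bot$, so that all its quantifiers are bounded and restricted excluded middle does apply. Because $z \subseteq \{\emptyset\}$, the only possible member of $z$ is $\emptyset$, so this restricted formula indeed decides whether $\emptyset \in z$. If it holds, then $\emptyset \in z$ while every element of $z$ equals $\emptyset$, so by extensionality $z = \{\emptyset\}$; if it fails, then $z$ has no elements, whence $z = \emptyset$ again by extensionality. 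In either case $z \in 2$.

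This establishes that $2 = \mathcal P(\{\emptyset\})$, so $\{\emptyset\}$ has a powerset, and invoking the previous theorem together with the exponentiation axiom then delivers the full powerset axiom. I expect no further difficulties: apart from the single decidability step, the argument uses only pairing, bounded separation, and extensionality, all of which are available in the core theory.
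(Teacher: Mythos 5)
Your proof is correct and follows essentially the same route as the paper: reduce via the preceding theorem to showing $\{\emptyset\}$ has a powerset, then use restricted excluded middle on $\emptyset \in z$ to conclude that any $z \subseteq \{\emptyset\}$ equals $\emptyset$ or $\{\emptyset\}$, hence lies in $\{\emptyset,\{\emptyset\}\}$. Your explicit check that $\emptyset \in z$ can be written as the restricted formula $(\exists x \in z)\,\mathit{Zero}(x)$ is a point the paper glosses over, and is a worthwhile refinement rather than a different argument.
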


\begin{proof}
By the previous theorem it is enough to prove that $\{ \void \}$ has a powerset. In fact we show that $\{ \void , \{ \void \} \}$ is its powerset. So let $x \subseteq \{ \void ,\{ \void \} \}$, by hypothesis $\void \in x \lor \void  \notin x$. In the first case $x = \{ \void \}$. Whereas in the second $x=\void$. In either case $x \in \{ \void , \{ \void \} \}$.
\end{proof}

\begin{lem}
The full separation scheme is equivalent to the scheme: $\exists x \, (\phi \Leftrightarrow \void \in x)$, where $x$ is not free in $\phi$.
\end{lem}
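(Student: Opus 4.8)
The plan is to prove the two implications separately, relying on the base axioms of CZF (emptyset, pairing, union), bounded separation, strong collection, and the pairing lemma established above.

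First I would dispose of the easy direction, that full separation implies the scheme. Given any formula $\phi$ with $x$ not free, form the singleton $\{\emptyset\}$ (which exists by emptyset and pairing) and apply full separation with $\phi$ as defining condition to obtain $x := \{\,w \in \{\emptyset\} \mid \phi\,\}$. Since $\emptyset \in \{\emptyset\}$ holds unconditionally, we get $\emptyset \in x \Leftrightarrow (\emptyset \in \{\emptyset\} \land \phi) \Leftrightarrow \phi$, which is exactly the desired instance. The content is merely that full separation lets the truth value of an arbitrary $\phi$ be recorded as membership of $\emptyset$ in a set.

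The converse is the substantial direction. Fix a set $a$ and a formula $\phi(y)$; the goal is to produce $\{\,y \in a \mid \phi(y)\,\}$. For each $y$ the relevant instance of the scheme gives $\exists x\,(\phi(y) \Leftrightarrow \emptyset \in x)$, treating $y$ as a parameter. I would then package this as a total relation on $a$: setting $\chi(y,v) :\equiv \exists x\,(v = \langle y,x\rangle \land (\phi(y) \Leftrightarrow \emptyset \in x))$, we have $(\forall y \in a)\,\exists v\,\chi(y,v)$, so strong collection yields a set $R$ with $(\forall y \in a)(\exists v \in R)\,\chi(y,v)$ and $(\forall v \in R)(\exists y \in a)\,\chi(y,v)$. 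Thus every element of $R$ is an ordered pair $\langle y,x\rangle$ with $y \in a$ and $\phi(y) \Leftrightarrow \emptyset \in x$, and each $y \in a$ occurs as a first coordinate. Finally I would carve out the subset by bounded separation:
\[
s := \{\,y \in a \mid (\exists v \in R)(\exists x \in \textstyle\bigcup\bigcup R)\,(v = \langle y,x\rangle \land \emptyset \in x)\,\}.
\]
This defining formula is restricted: the second coordinate $x$ of any pair $v \in R$ lies in $\bigcup\bigcup R$ (which exists by two applications of union), so both quantifiers are bounded, and $v = \langle y,x\rangle$ and $\emptyset \in x$ are themselves bounded; hence bounded separation applies.

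It then remains to check $s = \{\,y \in a \mid \phi(y)\,\}$, and this is where the pairing lemma does the work. If $y \in a$ and $\phi(y)$, choose $v = \langle y,x\rangle \in R$ witnessing $\chi(y,v)$; since $\phi(y)$ together with $\phi(y) \Leftrightarrow \emptyset \in x$ forces $\emptyset \in x$, we get $y \in s$. Conversely, if $y \in s$ then some $v = \langle y,x\rangle \in R$ has $\emptyset \in x$; as $v \in R$ the pair satisfies $\chi$, and by the pairing lemma its coordinates are exactly $y$ and $x$, so $\phi(y) \Leftrightarrow \emptyset \in x$ holds and hence $\phi(y)$. The main obstacle, and the only point needing genuine care, is precisely this backward direction: the scheme supplies a witness $x$ for each $y$ but no canonical choice, so one cannot form a function $y \mapsto x_y$ and invoke replacement. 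The purpose of strong collection is exactly to gather all the witnesses into one set, after which the unbounded condition $\phi(y)$ has been traded for the bounded one ``$\emptyset$ lies in a witness paired with $y$'', which is all bounded separation needs.
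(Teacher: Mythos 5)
Your proof is correct and follows essentially the same route as the paper: full separation applied to $\{\emptyset\}$ for the forward direction, and strong collection to gather the witnesses followed by restricted separation for the converse. The only difference is cosmetic: the paper first normalizes each witness to a subset of $\{\emptyset\}$, making it unique so that strong collection returns a function $f$ and the final bounded condition is $\emptyset \in f(y)$, whereas you keep the witnesses arbitrary, collect them into a relation $R$, and recover the equivalence through the injectivity of pairing — your version is in fact more explicit about why the final separation instance is bounded (the $\bigcup\bigcup R$ bound).
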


\begin{proof}
Given full separation and a formula $\phi$ let $x:=\{ y \in \{ \void \} \, | \, \phi \}$ where $y$ is not free in $\phi$. Then $\void \in x \Leftrightarrow \phi$.\\
Conversely, the idea is to build a function from the hypothesis by strong collection such that $\phi(y)$ is equivalent to $\void \in f(y)$ and then to apply restricted separation. Indeed, by assumption there is a set $x$ such that $\phi(y) \Leftrightarrow \void \in x$, for each $y \in A$. We may assume that $x \subseteq \{ \void \}$ in which case $x$ is uniquely determined by $y \in A$. By strong collection there is a function $f$ with domain $A$ such that $(\forall y \in A)(\phi(y) \Leftrightarrow \void \in f(y))$. By restricted separation we can form the set $\{ y \in A \, | \, \void \in f(y) \} = \{ y \in A \, | \, \phi(y) \}$.
\end{proof}

\begin{thm}
CZF with classical logic and ZF prove the same theorems.
\end{thm}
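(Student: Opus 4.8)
The plan is to observe that CZF and ZF are formulated in the same first-order language, so the assertion reduces to showing that each axiom of one theory is a theorem of the other, once classical logic is assumed on the CZF side (ZF being classical already). Since extensionality, pairing and union are literally common to both presentations, the work concentrates on the five axioms that differ.

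First I would derive the ZF axioms inside CZF augmented with excluded middle. For \emph{full separation} I would invoke the preceding lemma, which reduces the full separation scheme to the scheme $\exists x\,(\phi \Leftrightarrow \emptyset \in x)$: given any $\phi$, classical logic yields $\phi \lor \neg\phi$, and one takes $x := \{\emptyset\}$ in the first case and $x := \emptyset$ in the second. For \emph{powerset} I would chain the results already established, namely that subset collection implies the exponentiation axiom, together with Lemma \ref{powlem}, which shows that exponentiation and restricted excluded middle (a fortiori full excluded middle) imply powerset. \emph{Foundation} follows from the set induction scheme by the standard classical argument: if a nonempty set $a$ had no $\in$-minimal element, then (using excluded middle to read off that every $y \in a$ contains some $z \in a$) set induction applied to $\phi(x) := x \notin a$ would yield $\forall x\,(x \notin a)$, contradicting nonemptiness. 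Finally, \emph{replacement} is an immediate consequence of strong collection, and ordinary \emph{infinity} is a weakening of strong infinity.

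Conversely I would verify the CZF axioms inside ZF, where classical logic is free of charge. \emph{Bounded separation} is a special case of full separation; \emph{subset collection} follows from powerset by the implication powerset $\Rightarrow$ subset collection proved earlier in the chapter; and the \emph{set induction} scheme follows from foundation by the usual classical contrapositive. The only point requiring genuine work is \emph{strong collection}: given $(\forall x \in a)\exists y\,\phi(x,y)$, I would first invoke the ZF collection principle to obtain a set $c$ with $(\forall x \in a)(\exists y \in c)\,\phi(x,y)$, and then cut down to $b := \{\, y \in c \mid (\exists x \in a)\,\phi(x,y)\,\}$ by separation, so that $\phi'(a,b)$ holds. \emph{Strong infinity} then follows from infinity and full separation, since the least set satisfying $\mathit{Nat}(z)$ can be carved out of any inductive set.

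I expect the main obstacle to be the derivation of \emph{strong collection} in ZF, since it rests on the collection principle, whose proof genuinely uses the rank function of the cumulative hierarchy — assigning to each $x \in a$ the least rank at which a witness of $\phi(x,y)$ occurs and gathering those witnesses via replacement, an argument that relies on foundation and replacement together. The remaining verifications are either direct appeals to results already proved in this chapter or routine classical arguments, so the proof will essentially be a careful bookkeeping of these two collections of implications.
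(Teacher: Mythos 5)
Your proposal is correct and follows essentially the same route as the paper: full separation is obtained from the lemma reducing it to the scheme $\exists x\,(\phi \Leftrightarrow \emptyset \in x)$ together with a classical case split on $\phi \lor \neg\phi$, and powerset is obtained from Lemma \ref{powlem}. The paper's own proof is far terser — it dismisses the direction that ZF proves all CZF theorems as ``clear'' and only verifies powerset and full separation explicitly — so your additional verifications (foundation from set induction, replacement from strong collection, and in particular the derivation of strong collection in ZF via the collection principle and separation) are compatible filled-in detail rather than a different approach.
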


\begin{proof}
Clearly ZF contains all the theorems proved by CZF with classical logic. Conversely, by \ref{powlem} the powerset axiom holds. Full separation is also a theorem because for every formula $\phi$ we have $\phi \lor \neg \phi$, so choose $x=\{ \void \}$ if $\phi$ and $x=\void$ if $\neg \phi$. In either case $\phi \Leftrightarrow \void \in x$ and we get full separation by the previous lemma.
\end{proof}

Let $\omega$ be the unique set such that $\mathit{Nat}(x)$, which existence is given by the strong infinity axiom. As usual define $0:= \void$ and $x^+ := x \cup \{ x \}$ for every $x \in \omega$, which give rise to the successor function $s:\omega \to \omega$, defined by $s(x):=x^+$.

\begin{thm}
The structure $\mathbb N := (\omega, 0,s)$, satisfies the Peano axioms.
\end{thm}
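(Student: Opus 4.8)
The plan is to verify each Peano axiom by unpacking the predicate $\mathit{Nat}(\omega)$ and leaning on the set induction scheme for the two substantive clauses. Recall that $\mathit{Nat}(\omega)$ asserts three things: that each element of $\omega$ is either zero or the successor of an element of $\omega$, that some element of $\omega$ satisfies $\mathit{Zero}$, and that $\omega$ is closed under the successor relation $\mathit{Succ}$. First I would record the bookkeeping axioms. Since $\mathit{Zero}(x)$ says exactly that $x$ is empty, extensionality makes the empty set unique, so $0 := \emptyset$ is the unique zero and the clause $(\exists x \in \omega)\mathit{Zero}(x)$ gives $0 \in \omega$. The closure clause, together with the fact that $\mathit{Succ}$ pins down the successor $y^+ = y \cup \{y\}$ uniquely (again by extensionality), shows that $s$ is a well-defined function $\omega \to \omega$; here $y^+$ exists by pairing and union. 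Finally $s(x) = x^+$ contains $x$, hence is inhabited and so nonempty, giving $s(x) \neq \emptyset = 0$, i.e. the axiom that $0$ is not a successor.

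Next I would treat injectivity of $s$. Suppose $x,y \in \omega$ with $x^+ = y^+$. Then $x \in x^+ = y^+ = y \cup \{y\}$ yields the disjunction $x \in y \lor x = y$, and symmetrically $y \in x \lor y = x$. A constructive case analysis reduces everything to the trivial case $x = y$, provided one can exclude the configurations $x \in y \land y \in x$ and $x \in x$. Both are instances of the absence of membership cycles, which I would derive from the set induction axiom: an $\in$-induction on the formula $x \notin x$ gives irreflexivity, and an $\in$-induction on $\forall y\,\neg(x \in y \land y \in x)$ gives the absence of $2$-cycles. With these in hand each nontrivial branch is contradictory, so $x = y$ and $s$ is injective.

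The heart of the argument, and the step I expect to be most delicate, is the induction principle: for any formula $\phi$, from $\phi(0)$ and $(\forall x \in \omega)(\phi(x) \Rightarrow \phi(s(x)))$ one must conclude $(\forall x \in \omega)\phi(x)$. Rather than constructing $\omega$ as an intersection of inductive sets, which would be impredicative, I would obtain induction as a consequence of set induction applied to the formula $\psi(x) := (x \in \omega \Rightarrow \phi(x))$. Assuming the induction hypothesis $(\forall z \in x)\psi(z)$ and a witness $x \in \omega$, the first clause of $\mathit{Nat}(\omega)$ supplies the disjunction that $x$ is zero or a successor of some $y \in \omega$. In the zero case $\phi(x)$ is the base hypothesis $\phi(0)$; in the successor case $x = y^+$ with $y \in \omega$, so $y \in x$ makes the induction hypothesis $\psi(y)$ applicable, and since $y \in \omega$ this gives $\phi(y)$, whence the step hypothesis applied at $y$ yields $\phi(y^+)$, which is $\phi(x)$. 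Thus $\psi(x)$ holds, and set induction delivers $(\forall x \in \omega)\phi(x)$. The subtlety here is that it is precisely the interplay between the ``every element is zero or a successor'' clause of $\mathit{Nat}$ and the $\in$-well-foundedness packaged in set induction that replaces the classical minimality of $\omega$; neither ingredient alone suffices constructively.
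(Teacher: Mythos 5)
Your proof is correct, and it follows the same overall skeleton as the paper (bookkeeping axioms, injectivity by case analysis on $x \in y \lor x = y$, induction derived from set induction), but it differs in the two substantive steps. For injectivity, the paper first uses mathematical induction on $\omega$ to show that every $x \in \omega$ is transitive and satisfies $x \notin x$, and then kills the bad cases via transitivity ($x \in y$ and $y \in x$ give $x \in x$); you instead establish irreflexivity and the absence of $2$-cycles as \emph{global} facts about all sets, each by a direct $\in$-induction (set induction), which avoids transitivity altogether and makes the injectivity argument independent of the induction principle for $\omega$ --- a reusable pair of lemmas, at the cost of two extra applications of set induction. For the induction principle itself, the paper says only that it follows ``easily'' from set induction; your derivation via $\psi(x) := (x \in \omega \Rightarrow \phi(x))$, splitting on the clause of $\mathit{Nat}(\omega)$ that every element of $\omega$ is zero or a successor, and noting that $\mathit{Succ}(y,x)$ pins down $x = y^+$ with $y \in x$ so the set-induction hypothesis applies, is exactly the argument the paper leaves implicit, and your closing remark --- that it is the combination of this clause with $\in$-well-foundedness that replaces the classical minimality of $\omega$ --- is the right way to see why the predicative formulation still yields full first-order induction.
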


\begin{proof}
The first two axioms - that $0$ is a number and that the successor of a number is again a number - hold by definition. The third axiom says that $0$ is not in the image of the successor function; indeed, $\void$ has no elements. Using set induction we easily obtain mathematical induction. \\
In order to prove the injectivity of $s$ we observe that every $x \in \omega$, $x$ is transitive (i.e. $(\forall y \in x) \, y \subseteq x$), and such that $x \notin x$. These properties follow easily by induction.\\
Let $x,y \in \omega$ be such that $x^+ = y^+$. As $x \in x^+$ we get $x \in y^+$, so that either $x \in y$ or $x = y$, and similarly with the roles of $x$ and $y$ shifted. If $x \in y$ then $x \in x$ for the transitivity, which is absurd. The only remaining possibility is that $x = y$.
\end{proof}

\begin{defn}
Given a set $B$ the set $C$ is called the \textbf{transitive closure}\index{transitive closure} iff $B \subseteq C$, $C$ is transitive and for every transitive set $X$ such that $B \subseteq X$, then $C \subseteq X$.
\end{defn}

\begin{thm}\label{trans}
In CZF every set has a transitive closure.
\end{thm}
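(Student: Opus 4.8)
The plan is to realise the transitive closure of a set $B$ as the union of the iterated unions of $B$, that is, as $C := \bigcup_{n \in \omega} U^n(B)$, where $U^0(B) := B$ and $U^{n+1}(B) := \bigcup U^n(B)$. The union axiom guarantees that $x \mapsto \bigcup x$ is a total operation on sets, so the only genuine work is to turn the informal recursion $n \mapsto U^n(B)$ into an honest function in CZF and then to verify the three defining clauses.

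First I would establish the recursion needed to define a function $f$ with domain $\omega$ satisfying $f(0) = B$ and $f(n^+) = \bigcup f(n)$. Calling a function $g$ an \emph{$n$-approximation} when $\mathrm{dom}(g) = n^+$, $g(0) = B$, and $g(k^+) = \bigcup g(k)$ for every $k$ with $k^+ \in \mathrm{dom}(g)$, I would prove by induction, using the Peano structure on $\mathbb N$ already obtained, that for each $n \in \omega$ there is a unique $n$-approximation: the base case is $\{(0,B)\}$, and from the $n$-approximation $g_n$ one builds the $(n^+)$-approximation $g_n \cup \{(n^+, \bigcup g_n(n))\}$. Since the predicate ``$g$ is the $n$-approximation'' is functional in $n$, strong collection applied to $\omega$ yields a set $S$ whose elements are exactly these approximations; then $f := \bigcup S$ is the desired function and $C := \bigcup \mathrm{ran}(f)$ is a set.

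Finally I would check the three conditions. For $B \subseteq C$, note $B = f(0) \in \mathrm{ran}(f)$, whence $B \subseteq \bigcup \mathrm{ran}(f) = C$. For transitivity, if $z \in y \in C$ then $y \in f(n)$ for some $n$, so $z \in \bigcup f(n) = f(n^+) \subseteq C$. For minimality, given any transitive $X$ with $B \subseteq X$, a straightforward induction shows $f(n) \subseteq X$ for all $n$: indeed $f(0) = B \subseteq X$, and if $f(n) \subseteq X$ then $f(n^+) = \bigcup f(n) \subseteq \bigcup X \subseteq X$, using that transitivity of $X$ gives $\bigcup X \subseteq X$; hence $C = \bigcup_n f(n) \subseteq X$.

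The main obstacle is the second step, the legitimacy of the recursion. One cannot define $C$ impredicatively as the intersection of all transitive supersets of $B$, since that quantifies over a totality containing the object being defined, so the iterated-union construction is forced, and its soundness rests precisely on collecting the approximations $g_n$ into a single set. This is exactly where strong collection does the work that replacement would do classically, and the fact that each $g_n$ is set-sized, having finite domain, is what makes the appeal to strong collection, rather than any impredicative comprehension, sufficient.
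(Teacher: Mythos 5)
Your proof is correct and takes essentially the same route as the paper's: both realise the transitive closure as the union over $n \in \omega$ of iterated unions of $B$ (the paper iterates $h(n+1) := h(n) \cup \bigcup h(n)$ while you iterate $f(n^+) := \bigcup f(n)$; both variants work), and both verify $B \subseteq C$, transitivity, and minimality by the same inductions. The only substantive difference is that the paper simply asserts that the sets "can be formed inductively," whereas you explicitly legitimise the recursion on $\omega$ via finite approximations collected into a set by strong collection — a worthwhile piece of care, since definition by recursion is not a primitive of CZF, but it does not change the underlying argument.
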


\begin{proof}
Let $B$ be any set. We can then form the sets defined inductively $h(0) := B$ and $h(n+1) := h(n) \cup \bigcup h(n)$, let now $ C := \bigcup_{n \in \mathbb N} h(n)$. As $B=h(0)$ we have $B \subseteq C$, and given $x \in y \in C$, by definition $y \in h(n)$ for some $n$, thus $x \subseteq \bigcup h(n) \subseteq h(n+1) \subseteq C$, and hence $x \in C$. Finally, suppose that $ B \subseteq D$, where $D$ is a transitive set. By induction on $n$ one readily establish that $h(n) \subseteq D$, whence $C \subseteq D$.
\end{proof}

Let now discuss choice principles. It can be shown that the full axiom of choice implies unacceptable instances of the excluded middle. However, as we have seen in \ref{choice} Martin-L\"of theory validates the type-theoretic axiom of choice almost by definition.\\
A choice principle which can be justified constructively is the \textit{axiom of dependent choices}\index{axiom!of dependent choices}. It is usually stated in the following form: for every binary total relation $\rho$ on a given set $X$ (i.e. $(\forall x \in X) (\exists y \in X) $ such that $x \, \rho \, y$), there exists a sequence $\{ x_n \}_{n \in \omega}$ such that $x_n \, \rho \, x_{n+1}$ for every $n \in \omega$. It allows to define sequences of elements whose choices depend over the previously chosen ones.\\
We are interested in the axiom \textit{scheme}\index{axiom!scheme of dependent choices} (DC for short)\index{DC} of dependent choices, where the relation $x \in X$ is replaced by an arbitrary formula with a free variable, and similarly the binary relation is replaced with an arbitrary formula with two free variables: for the sake of readability, given formulae $\theta (x)$ and $\phi(x,y)$, we call $\psi(x,z)$ the formula expressing that $z$ is a function, whose domain is $\omega$, such that $z(0)=x$ and for every natural number $n \in \omega$, $\theta(z(n)) \land \phi(z(n),z(n+1))$ holds.\\
The axiom scheme of dependent choices is then the following:

$$ \forall x \, (\theta(x) \Rightarrow \exists y \, (\theta(y) \land \phi(x,y))) \Rightarrow \forall x \, (\theta (x) \Rightarrow \exists z \, \psi(x,z)) $$

\nind
It is easy to see that it is implied by the full axiom of choice (in presence of full separation), and that in turn it implies the axiom of countable choice.\\
Dependent choice is sufficient to develop much of the usual choice-based classical mathematics, included large parts of functional analysis. Indeed, the form with total relations is equivalent to the Baire's lemma (see \cite{Gol} for the Bair lemma in complete metric spaces and \cite{Bla} for its version in locally compact Hausdorff spaces).\\
In \cite{Acz2}, the axiom scheme of dependent choices is considered as a way to extend CZF, justified by the type-theoretic interpretation where it holds for the terms of the type of sets. We emphasize that the the type-theoretic axiom of dependent choices and the interpretation inside the type of sets of the set-theoretic one, are not the same, although the former is used in the proof of the latter. In the next chapter we will see the generalisation of this proof to homotopy type theory with a weak Tarski universe.\\
In the previous mentioned article by Peter Aczel other choice principles are considered, a \textit{base}\index{base} is defined to be a set such that choice functions defined on it can always be found. The \textit{presentation axiom}\index{axiom!presentation} states that every set is the surjective image of a base. The intended meaning is that surjective maps from a base - presentations - correspond to the concrete ways in which sets are given us. This axiom is again justified by the type-theoretic interpretation. We have not considered this kind of choice principles because they are not stable under category-theoretic constructions like taking sheaves (see \cite{MvdB2}).\\

Finally, we discuss inductive definitions in CZF and the role of the other axiom which can be possibly added: the regular extension axiom. We start with a discussion on classes in CZF and inductively defined classes, then we introduce regular sets and the regular extension axiom and state the theorem asserting that every bounded inductive definition, inductively defines a set.\\
As in ZF, classes can be treated implicitly as formulae with a free variable:

\begin{defn}
\begin{enumerate}[label=(\alph*)]
\item[]

\item Given a set $x$ and a class $\Phi$ we write $x \in \Phi$ iff $\Phi(x)$;

\item $\Phi$ is a \textbf{sublcass}\index{subclass} of $\Psi$ iff $(\forall x \in \Phi ) \, x \in \Psi$, which means $\forall x \, (\Phi(x) \Rightarrow \Psi(x))$;

\item the \textbf{union of classes}\index{union of classes} is defined as the logical disjunction: $\Phi \lor \Psi$;

\item similarly, the \textbf{intersection of classes}\index{intersection of classes} is the conjunction: $\Phi \land \Psi$;

\item the \textbf{product of classes}\index{product of classes} is $(\Phi \times \Psi) (z) := \exists x \, \exists y \, (\Phi(x) \land \Psi(y) \land z= ( x,y ))$;

\item the \textbf{powerclass}\index{powerclass} is defined as $\mathcal P(\Phi)(x) := (\forall y \in x) \, \Phi(y)$;

\item the \textbf{class of sets}\index{class of sets}, written $V$, is $x=x$.
\end{enumerate}
\end{defn}

\nind
In the sequel we will use the notation $\{ x \, | \, \Phi(x) \}$ for the formula $\Phi$ interpreted as a class.\\

Proofs by transfinite recursion on ordinals are often used in the classical treatment of inductive definitions. But intuitionistic ordinals have a quite different behaviour, moreover accordingly to the constructive paradigm is generally preferable to build directly inductively defined classes. We start with the following definition:

\begin{defn}
For any class $\Phi$, the class $X$ is \textbf{$\Phi$-closed} iff $A \subseteq X$ implies $a \in X$ for every order pair $( a,A ) \in \Phi$.
\end{defn}

\nind
Being the first encounter with a class defined as a formula we unwind the definition: $X$ is $\Phi$-closed iff $\forall a \, \forall A \, [\Phi(( a,A )) \Rightarrow ((\forall z \in A) \, X(z) \Rightarrow X(a))]$.

\begin{thm}
For any class $\Phi$, there is a smallest $\Phi$-closed class $I(\Phi)$\index{$I(\Phi)$}.
\end{thm}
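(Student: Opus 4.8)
The classical route would define $I(\Phi)$ as the intersection of all $\Phi$-closed classes, but in CZF a class is only a formula, so I cannot quantify over $\Phi$-closed classes directly, and doing so would moreover be blatantly impredicative --- exactly the kind of move we are trying to avoid. The plan is therefore to give $I(\Phi)$ by an explicit first-order definition, as the class of those $a$ that admit a well-founded $\Phi$-\emph{derivation}. Concretely, I would call a set $p$ a \emph{$\Phi$-proof} iff $p = \langle a, B \rangle$ for some $a$ and some set $B$ each of whose elements is again a $\Phi$-proof, and such that, writing $A = \{ \mathrm{fst}(q) : q \in B \}$ (a set by replacement and bounded separation) for the set of conclusions of the members of $B$, we have $\langle a, A \rangle \in \Phi$. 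This recursive clause is legitimate because every $q \in B$ lies in the transitive closure of $p$ (Theorem \ref{trans}), so the predicate ``$p$ is a $\Phi$-proof'' is defined by $\in$-recursion and is justified by set induction. I then set $a \in I(\Phi)$ iff $\langle a, B \rangle$ is a $\Phi$-proof for some $B$.

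Next I would check that $I(\Phi)$ is $\Phi$-closed. Suppose $\langle a, A \rangle \in \Phi$ and $A \subseteq I(\Phi)$. Then $(\forall x \in A)\,\exists p\,[\,p \text{ is a } \Phi\text{-proof with } \mathrm{fst}(p) = x\,]$, so by \emph{strong collection} there is a set $B$ with $(\forall x \in A)(\exists p \in B)\,[\mathrm{fst}(p) = x]$ and $(\forall p \in B)(\exists x \in A)\,[\mathrm{fst}(p) = x]$. The two clauses say precisely that every element of $B$ is a $\Phi$-proof and that its set of conclusions equals $A$; hence $\langle a, B \rangle$ is a $\Phi$-proof and $a \in I(\Phi)$. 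Note that strong collection does the work here that an appeal to choice would do classically: I never have to single out one proof per element of $A$, only to collect a set of proofs covering $A$.

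Finally, leastness. Let $Y$ be an arbitrary $\Phi$-closed class; I must show $I(\Phi) \subseteq Y$, i.e. $\mathrm{fst}(p) \in Y$ for every $\Phi$-proof $p$. This I would prove by set induction on $p$: assuming the conclusion of every sub-proof $q \in B$ lies in $Y$, the conclusion-set $A = \{\mathrm{fst}(q) : q \in B\}$ satisfies $A \subseteq Y$, and since $\langle a, A\rangle \in \Phi$ and $Y$ is $\Phi$-closed we get $a = \mathrm{fst}(p) \in Y$. The same computation, carried out with $I(\Phi)$ itself in the role of $Y$ and phrased internally, yields the accompanying induction principle for $I(\Phi)$. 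The main obstacle is conceptual rather than computational: it is the impredicativity of the naive ``intersection of all closed classes'', which is dissolved by the derivation definition. The two delicate technical points to handle with care are (i) grounding the recursive definition of $\Phi$-proof by set induction on the $\in$-structure, and (ii) applying set induction in the leastness argument to the genuinely well-founded (transitive-closure, hence rank-decreasing) structure of proofs, since the sub-proofs $q \in B$ sit in the transitive closure of $p$ rather than being direct $\in$-members of it.
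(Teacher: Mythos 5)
Your proposal is correct, but note that the paper itself gives no argument for this theorem: its "proof" is a pointer to section 4.2 of Aczel's paper on choice principles, and the proof given there runs along a genuinely different route from yours. Aczel replaces the impredicative intersection by a transfinite iteration: writing $\Gamma(Y) = \{\, a \mid \exists A\, (\, ( a,A ) \in \Phi \land A \subseteq Y \,)\,\}$, he constructs an iteration class $J$ assigning to each set $a$ a stage $J^a = \Gamma\bigl(\bigcup_{x \in a} J^x\bigr)$ (the existence of $J$ is itself secured by a recursion lemma in which "good" sets of pairs serve as partial attempts), and then sets $I(\Phi) := \bigcup_a J^a$. Closure is proved by using strong collection to gather, for a premise-set $A \subseteq I(\Phi)$, a set of stage indices covering $A$; leastness is proved by set induction on the index. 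Your derivation-tree construction eliminates the stages entirely: the witnessing structure is local (a well-founded $\Phi$-proof attached to each element), strong collection glues sub-proofs into a proof at the closure step, and leastness is induction over the well-founded structure of derivations. The two arguments thus deploy exactly the same axioms (strong collection and set induction) in exactly the same roles, and differ only in the intermediate object: set-indexed stages versus self-contained derivations. Your version is more self-contained and mirrors the $W$-type/derivation viewpoint used throughout this thesis; Aczel's version has the practical advantage that the stage hierarchy $J^a$ is precisely the machinery reused for the next theorem of this chapter, that bounded inductive definitions define sets under REA.

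One point needs tightening. You justify the recursive definition of "$p$ is a $\Phi$-proof" by appeal to set induction, but set induction licenses \emph{proofs} by induction, not \emph{definitions} by recursion; to define a class predicate by $\in$-recursion (or recursion along the transitive closure) in CZF you need the class recursion theorem, whose own proof again uses strong collection to paste together set-sized partial attempts. Alternatively, you can sidestep recursion altogether with an explicit definition: say that $p$ is a $\Phi$-proof iff there is a set $D$ with $p \in D$ such that every $r \in D$ is of the form $( a,B )$ with $B \subseteq D$ and $( a, \{\mathrm{fst}(q) \mid q \in B\} ) \in \Phi$. One then checks, using strong collection (to combine the certifying sets $D_q$ of the sub-proofs $q \in B$ into a certifying set for $( a,B )$), that this predicate satisfies your recursion clause, after which your closure and leastness arguments go through verbatim. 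With either repair the argument is complete.
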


\begin{proof}
Refer to the theorem in section 4.2 of \cite{Acz2}.
\end{proof}

\nind
Usually a $\Phi$-closed class is defined by a system of rules, it is straightforward to extract the class involved from the rules. For example, the class of natural numbers can be characterised as the smallest class $\omega$ closed under the rules: $\void \in \omega$ and $a \cup \{ a \} \in \omega$ if $a \in \omega$. The class generating $\omega$ is $\Phi = \{ ( a \cup \{ a\} , \{ a\} ) \, | \, a \in V \}$.\\
Because of the previous theorem, classes are sometimes called \textit{inductive definitions}\index{inductive definition}, accordingly to the use of the expression in the natural language.\\
We now present the regular extension axiom and show its how it can be used to manage inductive definitions.

\begin{defn}
A class $A$ is \textbf{regular}\index{regular class} iff it is transitive, i.e. every element of $A$ is a subset of $A$; moreover, for every $a \in A$ and relation $R \subseteq a \times A$ if $(\forall x \in a) \, \exists y \, ( x,y ) \in R$, then there is a set $b \in A$ such that $$ (\forall x \in a ) (\exists y \in b) \, ( x,y ) \in R \land (\forall y \in b)(\exists x \in a) \, ( x,y ) \in R$$
\end{defn}

\nind
In particular if $R: a \to A$ then ran$R \in A$.\\
For example it is easy to check that $\omega$ is a regular set.\\
The \textbf{regular extension axiom}\index{axiom!regular extension} (or REA\index{REA} for short) states that:
\begin{itemize}
\item  every set is a subset of a regular set.
\end{itemize}
As we have seen in the previous theorem the class $I(\Phi)$ always exists, we expect that $I(\Phi)$ is a set whenever $\Phi$ is a set, and in certain well-behaved cases when $\Phi$ is a proper class. This is the case, but in CZF, even when $\Phi$ is a set, the regular extension axiom is needed.

\begin{defn}
An inductive definition $\Phi$ is \textbf{bounded}\index{bounded inductive definition} iff 
\begin{enumerate}[label=(\roman*)]

\item for each set $A$ the class $\Phi_A := \{ x \, | \, ( x,A ) \in \Phi\}$ is a set, it is a condition of smallness on the fibres;

\item there is a set $B$ such that if $( a, A ) \in \Phi$, then $A$ is an image of a set in $B$, it is a condition of smallness for the elements in the image of $\Phi$. The set $B$ is called a \textbf{bound}\index{bound of an inductive definition} for $\Phi$.
\end{enumerate}
\end{defn}

\nind
Observe that if $\Phi$ is a set then it is automatically bounded with bound the image: $\{ A \, | \, \exists a \, ( a,A ) \in \Phi \}$. The class of natural numbers $\Phi=\{ ( \void , \void ) \} \cup \{ ( a \cup \{a \} , \{a\} ) \, | \, a \in V \}$ is bounded with bound $\{ \void , \{ \void \} \}$.

\begin{thm}
In CZF+REA, every bounded inductive definition, inductively defines a set.
\end{thm}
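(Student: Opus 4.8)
The plan is to realise $I(\Phi)$ as a set by generating it in cumulative stages indexed along the membership relation of a sufficiently large regular set, whose existence is supplied by REA, while using boundedness to keep every stage a set.

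First I would introduce the one-step operator $\Gamma(X) := \{ a \mid \exists A \, ( A \subseteq X \land ( a,A ) \in \Phi ) \}$, so that a class $X$ is $\Phi$-closed exactly when $\Gamma(X) \subseteq X$, and $\Gamma$ is monotone. The first lemma to establish is that $\Gamma$ sends sets to sets. Given a set $X$ and the bound $B$, every $A$ occurring with $A \subseteq X$ is the image of some $b \in B$ under a function $b \to X$; since $B$ is a set and each function set $X^b$ is a set by exponentiation, strong collection yields a set $\mathcal A$ collecting all such images. By boundedness condition (i) each fibre $\Phi_A$ is a set, so a further application of strong collection and union produces the set $\bigcup_{A \in \mathcal A} \Phi_A$, which is precisely $\Gamma(X)$.

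Next I would invoke REA to choose a regular set $C$ with $B \in C$ (hence $B \subseteq C$ by transitivity), pass to the inflationary operator $\Gamma^{+}(Z) := Z \cup \Gamma(Z)$, and define by set induction along the membership relation of $C$ the stages $V_x := \Gamma^{+}\bigl( \bigcup_{y \in x} V_y \bigr)$ for $x \in C$. Each $V_x$ is a set: the family $\{ V_y \mid y \in x \}$ is a set by replacement (a consequence of strong collection), its union is a set, and $\Gamma^{+}$ preserves sets by the first lemma. A second set induction shows the stages are cumulative and all contained in $I(\Phi)$, since $\Gamma(I(\Phi)) \subseteq I(\Phi)$. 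Putting $J := \bigcup_{x \in C} V_x$ yields a set, again by replacement and union over the set $C$, with $J \subseteq I(\Phi)$.

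The heart of the argument, and the step I expect to be the main obstacle, is showing that $J$ is itself $\Phi$-closed, for then minimality of $I(\Phi)$ forces $I(\Phi) \subseteq J$ and hence $J = I(\Phi)$. Suppose $( a, A ) \in \Phi$ with $A \subseteq J$. By boundedness $A$ is the image of some $b \in B \subseteq C$ under a function $f$ into $J$, and each value $f(t)$ lies in some stage $V_x$ with $x \in C$; this gives a total relation $S \subseteq b \times C$ sending each point of $b$ to an index. Here regularity is used decisively: since $b \in C$, the relation reflects to a set of indices $d \in C$ with $S : b \leftrighttail d$, so that $A \subseteq \bigcup_{y \in d} V_y$. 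Because the stages are cumulative, $\bigcup_{y \in d} V_y \subseteq V_d$, whence $a \in \Gamma\bigl( \bigcup_{y \in d} V_y \bigr) \subseteq V_d \subseteq J$. The delicate point is exactly this passage from a pointwise choice of indices to one uniform stage — where the smallness of the branching must be matched against the reflection property of the regular set — together with verifying that every ambient construction stays within the predicative means of CZF.
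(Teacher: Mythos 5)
Your proposal reconstructs, correctly in its overall architecture, the proof of Aczel that the thesis merely cites (theorem 5.2 of \cite{Acz3}): the one-step operator $\Gamma$, the lemma that boundedness makes $\Gamma$ send sets to sets via exponentiation, strong collection and union, the stages $V_x$ indexed along a regular set $C$ supplied by REA, cumulativity and containment in $I(\Phi)$, and the identification of $I(\Phi)$ with the set $J = \bigcup_{x \in C} V_x$ by showing that $J$ is $\Phi$-closed and invoking minimality.

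There is, however, a genuine gap at exactly the point you flag as delicate. Regularity of $C$, as defined in the thesis and in CZF generally, is a property quantified over \emph{set} relations $R \subseteq b \times C$. The relation you apply it to, $S = \{ (t,x) \in b \times C \mid f(t) \in V_x \}$, has not been shown to be a set: in CZF a subclass of a set need not be a set, and restricted separation does not help here because the formula $f(t) \in V_x$ is not restricted --- the stages $V_x$ form a class function defined by $\in$-recursion, so membership in a stage is expressed by an unbounded formula. As written, the ``decisive'' appeal to regularity is therefore not licensed. The repair is short and is the one Aczel uses: apply strong collection to the total class relation, i.e.\ to the statement that for every $t \in b$ there exists $z$ such that $z = ( t , x )$ for some $x \in C$ with $f(t) \in V_x$; strong collection returns a \emph{set} $R$ of such pairs, which is a total relation from $b$ to $C$, and regularity of $C$ (with $b \in C$, which already follows from $B \subseteq C$ --- you do not need $B \in C$, REA gives the inclusion directly) then produces the index set $d \in C$ reflecting $R$, after which your conclusion ($A \subseteq \bigcup_{y \in d} V_y \subseteq V_d$, hence $a \in V_d \subseteq J$) goes through unchanged.
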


\begin{proof}
See theorem 5.2 in \cite{Acz3}.
\end{proof}

%%%%%%%%%%%%%%%%%%%%%%%%%%%%%%%%%%%%%%%%%%%%%%%%%%%%%%%%%%%%%%
%%%%%%%%%%%%%%%%%%%%%%%%%%%%%%%%%%%%%%%%%%%%%%%%%%%%%%%%%%%%%%
%-----------------------------------------------------------
\chapter{Constructive Set Theory from a Weak Tarski Universe}

\lettrine{A}{s} we have seen in the previous chapters the type theoretic interpretation of CZF is the main conceptual justification of this kind of constructive set theory. In this chapter we shall generalise the standard interpretation to homotopy type theory with a weak Tarski universe following the three articles by Peter Aczel \cite{Acz1}, and selected parts of \cite{Acz2} and \cite{Acz3}.\\
The generalisation has two main issues: %When the interpretation generalises to weak Tarski universes two issues emerge 
firstly we need to reformulate statements and proofs when the Russell style universe is replaced by a Tarski one, and secondly to adapt every definitional equality given by the computation rule of the universe to an equivalence and make the necessary lemmas work with equivalences. The first issue is straightforward although tedious, whereas the second needs some actual rethinking especially in some points like in the discussion about dependent choices and mainly in lemma \ref{mainlem} which uses the function extensionality axiom, and seems needed for the restricted separation axiom.\\
We have to remark that some additional reformulations were needed because the kind of type theory used in \cite{Acz2} (and subsumed by \cite{Acz3}) has extensional identity types and a closed universe, i.e. an elimination rule for the universe stating a recursion principle over type constructors.\\
In this chapter the set-theoretic symbol for equality will be denoted as $\doteq$\index{$\doteq$}.

The idea behind the type-theoretical interpretation is to organize all small types in a well-founded tree as a $W$-type building a type of iterative sets, analogous to the usual cumulative hierarchy. Then, to define a notion of extensional equality and use the type constructors to induce similar operations inside this type of sets.

\begin{defn}
The \textbf{type of iterative sets}\index{type!of iterative sets} is defined as $V := ( W x:U) El(x)$\index{$V$} where $U$ is the universe.
\end{defn}

\nind
Therefore we have the following rules which can be used as an alternative direct definition of the type of sets:

\begin{prooftree}
\AxiomC{$a:U$}
\AxiomC{$b:El(a) \rightarrow V$}
\BinaryInfC{$\textsf{sup}(a,b) : V$}
\end{prooftree}

\begin{prooftree}
\AxiomC{$c:V$}
\AxiomC{$x:U, y:El(x) \to V, z: (\Pi v:El(x)) C(y(v)) \vdash d(x,y,z) : C(\textsf{sup}(x,y))$}
\BinaryInfC{$\textsf T(c, d(x,y,z)) : C(c)$}
\end{prooftree}

\begin{prooftree}
\AxiomC{$a:U$}
\AxiomC{$f:El(a) \to V$}
\AxiomC{$x:U, y:El(x) \to V, z: (\Pi v:El(x)) C(y(v)) \vdash d(x,y,z) : C(\textsf{sup}(x,y))$}
\TrinaryInfC{$\textsf T(\textsf{sup}(a,f), d(x,y,z))=d(a,f, \lambda v.T(f(v), d(x,y,z))) : C(c)$}
\end{prooftree}

\nind
Where for the sake of readability we have written $f(v)$ instead of $\textsf{ap}(f,v)$. The elimination and the computation rules for this type simply express transfinite recursion over the cumulative hierarchy.

\begin{lem}
There is a function assigning $\overbar \alpha : U$\index{$\overbar \alpha$} and $\tilde{\alpha} : El(\overbar \alpha) \to V$\index{$\tilde{\alpha}$}, to any $\alpha : V$. Moreover, if $\alpha = \textsf{sup}(a,f)$, then $\overbar \alpha = a$ and $\tilde{\alpha} =f$.
\end{lem}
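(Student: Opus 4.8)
The plan is to obtain both assignments by the recursion principle for $V$, that is, by the eliminator $\textsf{T}$. Since $V$ is the $W$-type $(Wx:U)El(x)$, its rules are exactly the ordinary $W$-type rules displayed above, and in particular the computation rule for $\textsf{T}$ holds as a genuine definitional equality; consequently nothing in this lemma is affected by the weakening of the universe, which touches only the reflection rules $\pi$, $i_a$, $n$, and the like. So I expect a clean, purely recursive argument with no homotopical content.

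First I would construct $\overbar{(-)} : V \to U$ by applying $\textsf{T}$ with the constant family $C(c) := U$. The premiss then asks, in the context $x:U$, $y:El(x)\to V$, $z:(\Pi v:El(x))C(y(v))$, for a term $d(x,y,z):U$, and I would simply take $d(x,y,z):=x$. Putting $\overbar\alpha := \textsf{T}(\alpha , d(x,y,z))$ yields $\overbar\alpha : U$ for every $\alpha:V$, and the computation rule gives the definitional equality $\overbar{\textsf{sup}(a,f)} = a$ for all $a:U$ and $f:El(a)\to V$.

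Next I would construct $\tilde{(-)}$, whose codomain depends on $\overbar{(-)}$, by a second application of $\textsf{T}$, this time with the family $C(c) := El(\overbar c) \to V$. The premiss now asks for a term of type $C(\textsf{sup}(x,y)) = El(\overbar{\textsf{sup}(x,y)}) \to V$; invoking the equality $\overbar{\textsf{sup}(x,y)} = x$ obtained in the first step, this type is definitionally $El(x)\to V$, and since $y:El(x)\to V$ already sits in the context I would take $d(x,y,z):=y$. Setting $\tilde\alpha := \textsf{T}(\alpha , d(x,y,z))$ then gives $\tilde\alpha : El(\overbar\alpha)\to V$ for every $\alpha:V$, and the computation rule gives $\tilde{\textsf{sup}(a,f)} = f$, which together with the first step establishes the second assertion of the lemma.

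The only delicate point I anticipate is the dependent typing in this second recursion: I must check that the chosen $d(x,y,z):=y$ genuinely inhabits $C(\textsf{sup}(x,y))$. This is precisely where the definitional equality $\overbar{\textsf{sup}(x,y)} = x$ from the first step is used, applied through the structural rule that transports a judgement $a:A$ along a definitional equality of types $A=B$ (so that $El(\overbar{\textsf{sup}(x,y)})=El(x)$ as types). Everything else is a routine instance of $W$-recursion, and I would present the two constructions in this order so that the family $C(c)=El(\overbar c)\to V$ is already meaningful when the second eliminator is invoked.
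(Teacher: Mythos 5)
Your proof is correct, but it follows a genuinely different route from the paper. You build the two assignments by two successive applications of the eliminator $\textsf{T}$: first $\overbar{(-)}:V\to U$ with the constant family $C(c):=U$, then $\tilde{(-)}$ with the \emph{dependent} family $C(c):=El(\overbar c)\to V$, whose type-checking requires transporting $y:El(x)\to V$ along the definitional equality $\overbar{\textsf{sup}(x,y)}=x$ (via congruence of $El$ and the conversion rule) --- the delicate point you correctly isolate. The paper instead performs a \emph{single} recursion into the sum type: it defines $\tau : V \to (\Sigma x:U)(El(x)\to V)$ with the constant family $C:=(\Sigma x:U)(El(x)\to V)$ and $\tau(\textsf{sup}(a,f)):=(a,f)$, and then sets $\overbar\alpha:=\textsf{p}(\tau(\alpha))$ and $\tilde\alpha:=\textsf{q}(\tau(\alpha))$; the computation rules for $\textsf{T}$ and for the projections then give $\overbar{\textsf{sup}(a,f)}=a$ and $\tilde{\textsf{sup}(a,f)}=f$ definitionally, and the typing $\textsf{q}(\tau(\alpha)):El(\textsf{p}(\tau(\alpha)))\to V$ is exactly what the rule for $\textsf{q}$ provides. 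The trade-off: the paper's packaging into a $\Sigma$-type keeps the recursion family constant, so no conversion along definitional equalities is ever needed, at the mild cost of going through projections; your version avoids the $\Sigma$-type but leans on the structural conversion machinery, and it yields the stated computation rules in one step rather than through a chain of projection computations. Both arguments are purely recursive and, as you note, entirely unaffected by the weakening of the universe.
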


\begin{proof}
We define $\tau : V \to (\Sigma x:U)(El(x) \to V)$ by transfinite recursion on $V$ using the elimination rule for $W$-types: let $C$ be the constant family of types $(\Sigma x:U)(El(x) \to V)$, hence it suffices to derive $C(\textsf{sup}(x,y))$ from $x:U$ and $y:El(x) \to V$. We give the following definition: $\tau (\textsf{sup}(a,f))=(a,f) : (\Sigma x:U)(El(x) \to V)$. Now let $\overbar \alpha = \textsf{p}(\tau(\alpha))$ and $\tilde{\alpha} = \textsf{q}(\tau(\alpha))$.
\end{proof}

\nind
We may think at these $\overbar \alpha : U$ and $\tilde{\alpha}:El(\overbar \alpha) \to V$ as a presentation of the iterative set $\alpha$ as the supremum of the image of the function $\tilde{\alpha}$.

\begin{defn}
Let $\mathcal L$ be the language of set theory and $\mathcal L_V$\index{$\mathcal L_V$} the language obtained by adding to $\mathcal L$ a constant for each term $\alpha : V$.
\end{defn}

\nind
Now we give the definition of interpretation of a set theoretic formula in type theory. In order to do this we need to define a notion of extensional equality in $V$.

\begin{defn}
We define the \textbf{extensional equality}\index{equality!extensional}\index{extensional!equality} as a bisimulation relation by double transfinite recursion on the canonical elements of $V$. Explicitly $( \textsf{sup}(a,f) \doteq \textsf{sup}(b,g) )$ is defined to be:
\begin{center}
$(\Pi x:El(a)) (\Sigma y:El(b)) ( f(x) \doteq g(y) ) \times (\Pi y: El(b)) (\Sigma x:El(a)) ( f(x) \doteq g(y) )$\index{$\doteq$}
\end{center}

\nind
We can unwind this double transfinite recursion as the iteration of two simple recursions. We firstly define: $$F(\textsf{sup}(a,f)) := \lambda \beta . (\Pi x:El(a)) (\Sigma y:El(\overbar \beta )) F(f(x)) \times (\Pi y: El(\overbar \beta)) (\Sigma x:El(a)) F(f(x))(\tilde{\beta}(y))$$
and then $(\alpha \doteq \beta) := F(\alpha)(\beta)$.
\end{defn}

\begin{lem}
The extensional equality is an equality, i.e. for all $\alpha, \beta, \gamma : V$ we have a term inside the following types:
\begin{enumerate}[label=(\roman*)]

\item $\alpha \doteq \alpha$;

\item $(\alpha \doteq \beta) \to (\beta \doteq \alpha)$;

\item $(\alpha \doteq \beta) \times (\beta \doteq \gamma) \to (\alpha \doteq \gamma)$.
\end{enumerate}
\end{lem}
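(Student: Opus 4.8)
The plan is to prove all three properties by transfinite induction on $V$, using the elimination rule for the $W$-type as the induction principle, together with the presentation lemma which lets us write any $\beta:V$ as $\textsf{sup}(\overbar\beta,\tilde\beta)$ so that the recursive definition of $\doteq$ unfolds. Since $(\alpha\doteq\beta)$ is defined through $F(\textsf{sup}(a,f))(\beta)$, each clause reduces a question about $\alpha\doteq\beta$ to questions about $f(x)\doteq\tilde\beta(y)$, i.e. about immediate subtrees; this is exactly what makes a single induction on one argument, with the remaining arguments universally quantified in the motive, suffice.

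For reflexivity (i) I would induct on $\alpha$ with motive $C(\alpha):=(\alpha\doteq\alpha)$. In the step $\alpha=\textsf{sup}(a,f)$ the inductive hypothesis supplies $z:(\Pi v:El(a))(f(v)\doteq f(v))$. Unfolding, $\textsf{sup}(a,f)\doteq\textsf{sup}(a,f)$ is $(\Pi x:El(a))(\Sigma y:El(a))(f(x)\doteq f(y)) \times (\Pi y:El(a))(\Sigma x:El(a))(f(x)\doteq f(y))$. For the left conjunct, given $x$ I take $y:=x$ together with the witness $z(x):f(x)\doteq f(x)$; symmetrically for the right conjunct. Pairing the two gives the required term.

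For symmetry (ii) I induct on $\alpha$ with the stronger motive $C(\alpha):=(\Pi\beta:V)\,(\alpha\doteq\beta)\to(\beta\doteq\alpha)$, quantifying over $\beta$ so that the hypothesis applies to subtrees paired with an arbitrary $\beta$. In the step $\alpha=\textsf{sup}(a,f)$, write $\beta=\textsf{sup}(\overbar\beta,\tilde\beta)$ by the lemma. A proof of $\alpha\doteq\beta$ is a pair $(p,q)$ with $p:(\Pi x:El(a))(\Sigma y:El(\overbar\beta))(f(x)\doteq\tilde\beta(y))$ and $q:(\Pi y:El(\overbar\beta))(\Sigma x:El(a))(f(x)\doteq\tilde\beta(y))$. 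To produce $\beta\doteq\alpha$ I swap the roles: its left conjunct $(\Pi y:El(\overbar\beta))(\Sigma x:El(a))(\tilde\beta(y)\doteq f(x))$ is obtained from $q$ by applying the inductive hypothesis to turn each $f(x)\doteq\tilde\beta(y)$ into $\tilde\beta(y)\doteq f(x)$, and the right conjunct comes from $p$ in the same way. Transitivity (iii) follows the same template, inducting on $\alpha$ with motive $C(\alpha):=(\Pi\beta,\gamma:V)\,(\alpha\doteq\beta)\times(\beta\doteq\gamma)\to(\alpha\doteq\gamma)$; here the two bisimulation witnesses are composed, chaining a matching index $y:El(\overbar\beta)$ through the middle set and applying the inductive hypothesis to the subtrees.

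The routine part is the unfolding and the diagonal/swap/compose bookkeeping; the main obstacle is organizing the induction so that the inductive hypotheses are available in the right form. In particular, for symmetry and transitivity the motive must universally quantify over the remaining arguments, since otherwise the hypothesis on subtrees cannot be confronted with an arbitrary $\beta$ (or $\gamma$), and one must invoke the presentation lemma to expose $\overbar\beta,\tilde\beta$ before the recursive clause of $\doteq$ will reduce. A secondary point of care is that each $\Sigma$-witness must be extracted with the projections $\textsf{p},\textsf{q}$ and re-paired, and in the transitive case the middle index chosen from $q$ must be fed consistently into both halves of the composite; none of this requires the extensionality or univalence machinery, only the $W$-elimination rule and ordinary manipulation of $\Pi$ and $\Sigma$.
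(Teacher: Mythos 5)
Your part (i) is exactly the paper's proof: a single transfinite recursion with the diagonal witness $\lambda x.(x,d(x))$. For (ii) and (iii), however, there is a genuine gap. The extensional equality is defined by recursion on its \emph{first} argument only: $(\alpha\doteq\beta):=F(\alpha)(\beta)$, where $F$ is given by $W$-elimination, so $F(\textsf{sup}(a,f))(\beta)$ unfolds judgementally for \emph{any} $\beta$ (the second argument is handled by the presentation functions $\overbar\beta,\tilde\beta$), but $F(\beta)(-)$ is a stuck type whenever $\beta$ is a variable. In your symmetry step the goal is $(\beta\doteq\alpha)=F(\beta)(\textsf{sup}(a,f))$ with $\beta$ a bound variable, and in your transitivity step the hypothesis $(\beta\doteq\gamma)=F(\beta)(\gamma)$ is likewise stuck, so you can neither inhabit the former by "swapping" nor project components out of the latter: neither type computes to a $\Pi/\Sigma$-form. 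Your appeal to the presentation lemma does not repair this. That lemma supplies the terms $\overbar\beta,\tilde\beta$ for every $\beta:V$, but the equations $\overbar\beta=a$, $\tilde\beta=f$ hold judgementally only when $\beta$ is literally $\textsf{sup}(a,f)$; for a variable $\beta$ one does not get $\beta=\textsf{sup}(\overbar\beta,\tilde\beta)$ as a definitional equality, and the $W$-eliminator fires only on canonical elements. This is precisely why the paper says (ii) and (iii) require a \emph{double} and a \emph{triple} transfinite recursion, in contrast to the single recursion of (i).

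The fix is either the paper's: keep your (correctly generalised) motives $(\Pi\beta:V)(\alpha\doteq\beta)\to(\beta\doteq\alpha)$, etc., but inside the step for $\alpha=\textsf{sup}(a,f)$ perform a further $W$-elimination on $\beta$ (and, for transitivity, on the remaining argument as needed), purely so that the stuck occurrences of $F$ compute — the outer inductive hypothesis then does all the real work, exactly as in your swap/compose bookkeeping; or, alternatively, transport the unfolded term along the propositional canonicity $Id_V(\textsf{sup}(\overbar\beta,\tilde\beta),\beta)$ established in the remark following the lemma, since $F(\textsf{sup}(\overbar\beta,\tilde\beta))(\alpha)$ does unfold. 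Note that this second route is not the "only $W$-elimination and $\Pi/\Sigma$ manipulation" you claim suffices: the identity term $Id_V(\textsf{sup}(\overbar\beta,\tilde\beta),\beta)$ is itself produced by another transfinite recursion over $V$, so either way the additional recursion on the remaining arguments is unavoidable in this intensional setting.
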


\begin{proof}
We discuss the first, the other two require respectively a double and a triple transfinite recursion. We want to find a term in the type: $$(\textsf{sup}(a,f) \doteq \textsf{sup}(a,f))= (\Pi x : El(a))(\Sigma y:El(a))(f(x) \doteq f(y)) \times (\Pi y:El(a))(\Sigma x:El(a))(f(x) \doteq f(y))$$
a proof of this type is constructed by a single transfinite recursion starting from a proof $d(x): (f(x) \doteq f(x))$, and is given by $r_0=(z_1,z_2)$ where $z_1 = \lambda x.(x,d(x)) (\Pi x:El(a))(\Sigma x:El(a))(f(x) \doteq f(x))$, and similarly for $z_2$.
\end{proof}

\begin{rem}\label{mainrem}
In order to manage the weakening of the universe we shall use often the principle of indiscernibility of identicals.\\
By the elimination rule for the identity types we have that a proof of the identity $p:Id_A(x,y)$ induces a proof of the extensional equality $f(x) \doteq f(y)$, for every $f:A \to V$.\\
Indeed, we define a family of types $x,y:A,p:Id_A(x,y) \vdash C(x,y,p)=(f(x) \doteq f(y))$. By the previous theorem we have a proof $c=\lambda z.r_0(z) : (\Pi z:A)(f(z) \doteq f(z))$, therefore by the elimination rule we obtain $h: (\Pi x,y:A)(\Pi p:Id_A(x,y))(f(x) \doteq f(y))$ such that $h(z,z,\textsf{refl}_z)=c(z)$.
\end{rem}

\begin{rem}
For any $\alpha : V$ we have a form of extensional canonicity: $\alpha \doteq \textsf{sup}(\overbar \alpha, \tilde{\alpha})$.\\
Indeed, let $g=\lambda x.\textsf{sup}(\overbar x, \tilde{x}): V \to V$, by construction $g(\alpha)= \alpha$ for all $\alpha = \textsf{sup}(a,f)$, then we have a canonical proof of the identity $\textsf{refl} : Id_V(g(\textsf{sup}(a,f)), \textsf{sup}(a,f))$, hence by transfinite recursion over $V$ we get an identity term defined on all $V$, namely $T(\alpha, \lambda x,y,z. \textsf{refl}(x,y,z)) : Id_V(g(\alpha), \alpha)$, that induces a proof of the extensional equality $g(\alpha) \doteq \alpha$, by the previous remark.
\end{rem}

\begin{defn}
We define the \textbf{type-theoretic interpretation}\index{type-theoretic interpretation of CZF} recursively on the structure of the formulae in $\mathcal L_V$:
\begin{enumerate}

\item $\llbracket \alpha \doteq \beta \rrbracket = (\alpha \doteq \beta)$;

\item $\llbracket \alpha \in \beta \rrbracket = (\Sigma y:El(\overbar \beta)) \llbracket \alpha \doteq \tilde{\beta}(y) \rrbracket$;

\item $\llbracket \bot \rrbracket = \mathbf{N_0}$;

\item $\llbracket \phi \Rightarrow \psi \rrbracket = \llbracket \phi \rrbracket \to \llbracket \psi \rrbracket$;

\item $\llbracket \phi \land \psi \rrbracket = \llbracket \phi \rrbracket \times \llbracket \psi \rrbracket$;

\item $\llbracket \phi \lor \psi \rrbracket = \llbracket \phi \rrbracket + \llbracket \psi \rrbracket$;

\item $\llbracket (\forall x \in \alpha)\phi(x) \rrbracket = (\Pi x: El(\overbar \alpha)) \llbracket \phi(\tilde{\alpha}(x)) \rrbracket$;

\item $\llbracket (\exists x \in \alpha) \phi(x) \rrbracket = (\Sigma x:El(\overbar \alpha)) \llbracket \phi(\tilde{\alpha}(x)) \rrbracket$;

\item $\llbracket \forall x \, \phi(x) \rrbracket = (\Pi \alpha:V) \llbracket \phi(\alpha) \rrbracket$;

\item $\llbracket \exists x \, \phi(x) \rrbracket = (\Sigma \alpha : V) \llbracket \phi(\alpha) \rrbracket$.
\end{enumerate}
\end{defn}

\begin{defn}
We say that a set theoretic formula $\phi(x_1, \dots , x_n)$ is \textbf{valid}\index{valid} in the interpretation iff the type of the interpretation of its universal closure $\llbracket \forall x_1, \dots, x_n \, \phi(x_1, \dots , x_n) \rrbracket$ is inhabited. 
\end{defn}

\nind
In order to prove our theorems we will perform the construction of a term in the type $\llbracket \phi(x_1, \dots , x_n) \rrbracket$ leaving the last step of $\lambda$-abstraction always implicit.

The first step for the construction of a model of CZF in type theory is to take care of the underlying logic, i.e. the Curry-Howard correspondence.

\begin{thm}
If $\phi_1, \dots \phi_n \vdash \phi$ in intuitionistic predicate logic and $\phi_1, \dots \phi_n$ are valid in the interpretation then so is $\phi$.
\end{thm}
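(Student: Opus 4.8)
The plan is to proceed by induction on the derivation of the sequent $\phi_1,\dots,\phi_n \vdash \phi$ in intuitionistic predicate logic, exhibiting for each inference rule a type-theoretic construction that transforms terms inhabiting the interpretations of the premisses into a term inhabiting the interpretation of the conclusion. Concretely, I would first fix a natural-deduction presentation of intuitionistic predicate logic, so that the hypotheses correspond to a context of variables $h_1 : \llbracket \phi_1 \rrbracket, \dots, h_n : \llbracket \phi_n \rrbracket$, and the goal is to produce a term of type $\llbracket \phi \rrbracket$ in that context. The heart of the argument is that the interpretation $\llbracket - \rrbracket$ was defined in the preceding definition precisely so as to commute with the logical connectives: implication goes to the function type, conjunction to the product, disjunction to the sum, absurdity to $\mathbf{N_0}$, and the two quantifiers to dependent products and sums over $V$ (or over $El(\overbar\alpha)$ in the bounded case). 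This is exactly the Curry--Howard correspondence, so each logical rule becomes a term former.

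For the propositional connectives I would run through the rules mechanically: the introduction and elimination rules for $\land$ use pairing and the projections $\textsf{p},\textsf{q}$ on the product type; those for $\lor$ use the injections $\textsf{i},\textsf{j}$ and the eliminator $\textsf{D}$ for the disjoint sum; those for $\Rightarrow$ use $\lambda$-abstraction and the application $\textsf{ap}$; and \emph{ex falso} uses the eliminator $\textsf{R}_0$ for the empty type $\mathbf{N_0}$, which yields a term of any type from a term of $\llbracket \bot \rrbracket$. The structural rules (weakening, contraction, exchange, the axiom rule) are handled by the corresponding structural rules of the type theory, which are available since variables may be duplicated, discarded and permuted in a context. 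For the quantifier rules I would use that $\llbracket \forall x\,\phi(x)\rrbracket = (\Pi \alpha : V)\llbracket\phi(\alpha)\rrbracket$ and $\llbracket \exists x\,\phi(x)\rrbracket = (\Sigma \alpha:V)\llbracket\phi(\alpha)\rrbracket$: universal introduction is $\lambda$-abstraction over $V$, universal elimination is application to a term $\alpha : V$, existential introduction is pairing, and existential elimination is the eliminator $\textsf{E}$ for the dependent sum.

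The one delicate point, and the step I expect to be the main obstacle, concerns the rules for equality in predicate logic. Intuitionistic logic with equality includes reflexivity and the substitution (Leibniz) schema, and these must be validated against the \emph{extensional} equality $\doteq$ rather than the identity type $Id$. Reflexivity is supplied by part (i) of the lemma on extensional equality. For the substitution rule one must show that from $\alpha \doteq \beta$ and $\llbracket\phi(\alpha)\rrbracket$ one can construct $\llbracket\phi(\beta)\rrbracket$; this requires an auxiliary induction on the structure of $\phi$, using symmetry and transitivity of $\doteq$ (parts (ii) and (iii) of that lemma) at the atomic cases $\doteq$ and $\in$, and propagating through the connectives and quantifiers. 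Here the care needed for the weak universe enters, since the atomic clauses unfold through $El(\overbar\alpha)$ and the principle of indiscernibility recorded in Remark~\ref{mainrem} is the tool that lets a proof of equality act on the family $\tilde\alpha : El(\overbar\alpha)\to V$. Assembling this congruence lemma and then threading it through the equality rules is the real content; everything else is a direct transcription of the connective clauses of the interpretation into the corresponding term formers of Martin-L\"of type theory.
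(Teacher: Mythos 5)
Your proposal is correct and is essentially the paper's argument: the paper disposes of this theorem by citing Martin-L\"of's \textit{Intuitionistic Type Theory}, where it is shown (without any use of the universe) that the natural-deduction rules for intuitionistic predicate logic are special cases of the type-theoretic rules, and the induction on derivations you describe --- connectives and quantifiers becoming the corresponding term formers, structural rules of the logic becoming the structural rules of the type theory --- is precisely the content of that citation, just written out rather than outsourced.

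One scoping remark. The ``delicate point'' you flag about equality is not actually part of this theorem in the paper's setup: in the language of CZF as presented here, $=$ is a \emph{nonlogical} relational symbol, so the logic whose soundness is asserted is pure intuitionistic predicate logic without equality rules. Reflexivity and the substitution (Leibniz) schema are set-theoretic axioms, and the paper validates them separately --- reflexivity in the lemma showing that $\doteq$ is an equality, and substitution in the lemma that every formula of $\mathcal L_V$ is extensional in every variable, which is proved by exactly the induction on formula structure you sketch, using symmetry and transitivity of $\doteq$ at the atomic cases. So nothing in your argument fails; you have merely absorbed into the logical soundness theorem material that the paper distributes over later lemmas, and the purely logical part --- the only part this theorem claims --- goes through as you describe, with no interference from the weak universe.
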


\begin{proof}
See \cite{ML1} where it is shown without any use of the universe that the rules for intuitionistic natural deduction are particular cases of the rules for type-theoretic constructors.
\end{proof}

\nind
The reformulation of the following lemma is not easy as the others and seems to need function extensionality to be carried out.

\begin{lem}\label{mainlem}
For each restricted formula $\phi \in \mathcal L_V$ the type $\llbracket \phi \rrbracket$ is equivalent to a small type.
\end{lem}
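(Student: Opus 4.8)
The plan is to argue by induction on the structure of the restricted formula $\phi$. To make the induction go through I would prove the slightly stronger statement that, for each restricted $\phi$ with free variables among $x_1, \dots, x_n$, there is a term $a_\phi : U$ in the context $x_1, \dots, x_n : V$ together with a term of type $\mathit{Equiv}(\llbracket \phi \rrbracket, El(a_\phi))$. Producing an actual name $a_\phi$ that depends on the parameters, rather than merely asserting that some equivalent small type exists, is essential: the clause for a bounded universal quantifier will require a function $El(\overbar\alpha) \to U$ giving a name for each instance $\phi(\tilde\alpha(x))$, and this is precisely the data the induction hypothesis must supply.

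Two families of auxiliary facts drive the argument. The first is \emph{closure of smallness under the type formers}, read off directly from the weak universe rules: for $a : U$ and $b : El(a) \to U$ the canonical terms $\textsf{ceq}_{\pi}, \textsf{ceq}_{\sigma}, \dots$ give equivalences $El(\pi(a,b)) \simeq (\Pi x : El(a))El(b(x))$ and $El(\sigma(a,b)) \simeq (\Sigma x : El(a))El(b(x))$, where $\sigma$ is the name for dependent sums, and likewise for $+$, for $\mathbf{N_0}$, and for $Id$ via the name $i_a(b,c)$. The second is that \emph{equivalence is a congruence for the type formers}: from an equivalence $e : A \simeq A'$ and a fibrewise family of equivalences $E(x) : B(x) \simeq B'(e(x))$ one builds equivalences $(\Pi x : A)B(x) \simeq (\Pi x : A')B'(x)$, $(\Sigma x : A)B(x) \simeq (\Sigma x : A')B'(x)$, and similarly for $\times$ and $+$. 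Combining the two, every interpretation clause is treated uniformly: the induction hypothesis hands over names and fibrewise equivalences for the subformulas, the congruence facts transport these through the type former appearing in the clause, and the universe equivalences then repackage the result as $El$ of a name, all glued together by transitivity of $\mathit{Equiv}$.

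Now the cases. For the base clause $\alpha \doteq \beta$ I would first record, by a double transfinite recursion on $\alpha$ and $\beta$, that $(\alpha \doteq \beta)$ is always equivalent to a small type: it is built from $El(\overbar\alpha)$ and $El(\overbar\beta)$ using only $\Pi$, $\Sigma$ and $\times$ applied to the recursively smaller equality types, each of which preserves smallness by the two families of facts above. The clause $\llbracket \alpha \in \beta \rrbracket = (\Sigma y : El(\overbar\beta))\llbracket \alpha \doteq \tilde\beta(y)\rrbracket$ is then a $\Sigma$ over the small type $El(\overbar\beta)$ of a fibrewise-small family, hence small; and $\llbracket \bot \rrbracket = \mathbf{N_0}$ is small outright. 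The inductive clauses for $\Rightarrow$, $\land$, $\lor$ use closure of smallness under function, product and coproduct types, while the clauses for $(\forall x \in \alpha)$ and $(\exists x \in \alpha)$ use closure under $\Pi$ and $\Sigma$ over the small base $El(\overbar\alpha)$, with the name-valued induction hypothesis supplying the family $b : El(\overbar\alpha) \to U$.

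The main obstacle — and the reason, as the preamble indicates, that function extensionality enters — is the congruence property for $\Pi$ and for function types. Given the fibrewise equivalences, the natural candidate map $(\Pi x : A)B(x) \to (\Pi x : A')B'(x)$ has an evident candidate inverse, but checking that the two round-trips are homotopic to the respective identities forces one to compare functions that agree only pointwise, and converting such pointwise agreement into an identity of functions is exactly \textsf{funext} (equivalently, homotopy induction). The same remark applies to the $\Rightarrow$ clause, which is the non-dependent instance of $\Pi$. By contrast, the congruences for $\Sigma$, $\times$ and $+$ are established by induction on pairs and injections and so are pure Martin-L\"of constructions, as are all the universe equivalences $\textsf{ceq}$; thus function extensionality is localized precisely to the domain- and codomain-changing step for dependent products.
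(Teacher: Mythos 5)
Your proposal is correct and follows essentially the same route as the paper: induction on the formula, the atomic case $\alpha \doteq \beta$ by double transfinite recursion, smallness pushed through $\Pi$ and $\Sigma$ by combining a congruence-of-equivalences step with the weak universe rules, and function extensionality localized exactly to the dependent-product/function-type congruence (with the $\Sigma$ case handled via the characterisation of identity in sum types). Your strengthened induction hypothesis producing a name $a_\phi : U$ in context is the same content as the paper's inhabitation of $(\Sigma e:U)\,\mathit{Equiv}(\llbracket\phi\rrbracket, El(e))$, just unpacked.
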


\begin{proof}
We prove the statement by induction on the structure of the formula. Atomic restricted formulae are of the form $\alpha \doteq \beta$ or $\alpha \in \beta$. In the first case we proceed by double transfinite recursion: we consider the judgement $\alpha, \beta : V \vdash C(\alpha, \beta) = (\Sigma e:U) \mathit{Equiv}( \llbracket \alpha \doteq \beta \rrbracket , El(e))$, suppose to have canonical terms $\alpha = \textsf{sup}(a,f)$, $\beta = \textsf{sup}(b,g)$ and to already have terms $t(x,y):C(f(x),g(y))$. Now we want to find a term in the type: 
$$C(\textsf{sup}(a,f), \textsf{sup}(b,g)) = (\Sigma e:U) \mathit{Equiv}( \llbracket \textsf{sup}(a,f) \doteq \textsf{sup}(b,g) \rrbracket , El(e))$$
Recalling the recursive definition of extensional equality we see that it is enough to prove that if we have a family of equivalences $(\Sigma e : U) \mathit{Equiv} (B(x), El(e))$ parametrized by a type $El(\overbar \alpha)$, then forming the dependent product of this family give rise to a corresponding equivalence $(\Sigma e':U) \mathit{Equiv}((\Pi x:El(\overbar \alpha) ) B(x) , El(e'))$ and similarly for the sum.\\
So we have the following situation:

$$\xymatrix@1{
El(e) \ar[r]^{\tau}
& B(x)  \ar@/_1pc/[l]_{\theta}  \ar@/^1pc/[l]_{t} 
}$$

\nind
Form the dependent product of these two types over $El(\overbar \alpha)$, note that $El(e)$ is constant over $El(\overbar \alpha)$ so we get simply the function type for it. We can easily define maps $\tau'$, $\theta'$ and $t'$ by composition, i.e. $\tau'(f) = f \tau$ and similarly for the others. Hence:

$$\xymatrix@1{
El(exp(\overbar \alpha , e)) \ar[r] 
& (El(\overbar \alpha) \to El(e)) \ar[r]^{\tau'}  \ar@/^1pc/[l] \ar@/_1pc/[l]
& (\Pi x : El(\overbar \alpha))B(x) \ar@/^1pc/[l]^{t'} \ar@/_1pc/[l]_{\theta ' }}$$

\nind
Where the first is the equivalence given by the rule of the weak universe. Let $ P = (\Pi x : El(\overbar \alpha))B(x)$. We just need to check that the homotopies $\eta : (\Pi s: B(x))Id_{B(x)}(\tau \theta (s), s)$ and $\epsilon$ induce homotopies $\eta ' : (\tau ' \theta ' \sim id_P) = (\Pi f:P) Id_P (\tau ' \theta ' (f), f)$ and similarly for $\epsilon$.
But is just an application of function extensionality, in fact for every $s:B(x)$ we have a pointwise identification of the functions: $\eta (s) : Id_{B(x)}(\tau \theta (s) , s)$, by function extensionality we get a proof of the identity $Id_P(\tau \theta f , f)$ for every $f : P$. For $\epsilon$ the proof is analogous and simpler because we don't have to manage a dependent product but just a function type.\\
The case of dependent sums is similar and it relies on theorem \ref{idsum} in order to construct an term in the identity type of the dependent sum.\\
For the other kind of atomic restricted formula we need to show that $(\Sigma x:El(\overbar \beta)) \llbracket \alpha \doteq  \tilde{\beta}(x) \rrbracket$ is equivalent to a small type. This follows exactly as in the case of extensional equality since we know that $\llbracket \alpha \doteq \tilde{\beta}(x) \rrbracket$ is equivalent to a small type.\\
The rest of the induction is straightforward: the case of restricted quantification was already covered whereas the cases of connectives can be easily reconstructed following the kind of argument we needed to manage $\Sigma$ and $\Pi$ with function extensionality and theorem \ref{idsum}.
\end{proof}

\begin{defn}
In CZF we say that $\alpha$ is a \textbf{subset}\index{subset} of $\beta$, written $\alpha \subseteq \beta$ iff $\forall x \in \alpha \, (x \in \beta)$.
\end{defn}

\begin{rem}
The following are valid:
\begin{enumerate}%[label=\arabic*)]
\item $u \doteq v \Leftrightarrow (\forall x \in u)(\exists y \in v)(x \doteq y) \land (\forall y \in v)(\exists x \in u)(x \doteq y) $;

\item $u \in v \Leftrightarrow (\exists y \in v)(u \doteq y)$;

\item $u \doteq v \Leftrightarrow (\alpha \subseteq \beta \land \beta \subseteq \alpha)$
\end{enumerate}
\end{rem}

\begin{defn}
A family of types over the type of iterative sets $x:V \vdash B(x) \; \mathit{type}$ is \textbf{extensional}\index{extensional!family of types} iff 
$$(\forall x,y  \in V)((x \doteq y) \land B(x) \Rightarrow B(y))$$
Similarly, a set-theoretic formula $\phi \in \mathcal L_V$ is \textbf{extensional}\index{extensional!formula} iff $(\forall x,y \in V)[(x \doteq y \land \phi(x)) \Rightarrow \phi(y)]$.
\end{defn}

\begin{lem}
Every formula in $\mathcal L_V$ is extensional in every variable.
\end{lem}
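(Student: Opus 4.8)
The plan is to proceed by induction on the structure of the formula $\phi \in \mathcal{L}_V$, proving simultaneously the extensionality clause $(\forall x,y \in V)[(x \doteq y \land \phi(x)) \Rightarrow \phi(y)]$ for every free variable occurring in $\phi$. Concretely, for a distinguished free variable this amounts to constructing, for all $x, y : V$, a term of type $(x \doteq y) \to \llbracket \phi(x) \rrbracket \to \llbracket \phi(y) \rrbracket$, after which the universal closure follows by the usual $\lambda$-abstractions. The entire base of the induction rests on the lemma asserting that $\doteq$ is reflexive, symmetric and transitive; in particular I will use symmetry freely to flip $x \doteq y$ into $y \doteq x$ whenever the induction hypothesis must be applied in the opposite direction.

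For the atomic cases I would unwind the interpretation directly. For $\phi \equiv (u \doteq \beta)$ with the substituted variable in the first slot, a proof of $x \doteq \beta$ together with $y \doteq x$ (from symmetry) yields $y \doteq \beta$ by transitivity; the second slot is symmetric. For $\phi \equiv (u \in \beta)$, recall that $\llbracket u \in \beta \rrbracket = (\Sigma w : El(\overbar\beta))\, \llbracket u \doteq \tilde\beta(w) \rrbracket$, so a proof of $x \in \beta$ supplies an index $w$ with $x \doteq \tilde\beta(w)$, and transitivity with $y \doteq x$ gives $y \doteq \tilde\beta(w)$, hence $y \in \beta$. Changing instead the second argument of $\in$, say from $\beta$ to $\beta'$ with $\beta \doteq \beta'$, I would use the bisimulation witnesses packaged inside $\beta \doteq \beta'$ to transport the index $w : El(\overbar\beta)$ to some $w' : El(\overbar{\beta'})$ with $\tilde\beta(w) \doteq \tilde{\beta'}(w')$, and then conclude by transitivity.

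The connective cases are immediate from the induction hypothesis: for $\land$ and $\lor$ one simply applies extensionality componentwise, while for $\Rightarrow$ one combines the hypothesis for the premise (applied in the direction $y \doteq x$) with the hypothesis for the conclusion (applied in the direction $x \doteq y$); the case $\bot$ is vacuous since $\textbf N_0$ carries no free variable. For the unbounded quantifiers $\forall x\,\phi$ and $\exists x\,\phi$, where the substituted variable $u$ is some other parameter of $\phi$, the interpretations are $(\Pi \alpha : V)\llbracket\phi(\alpha)\rrbracket$ and $(\Sigma \alpha : V)\llbracket\phi(\alpha)\rrbracket$, and extensionality is inherited fibrewise from the induction hypothesis on the body.

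The hard part will be the bounded quantifiers $(\forall x \in \alpha)\phi$ and $(\exists x \in \alpha)\phi$ when the substituted variable is the bound $\alpha$ itself. Here the interpretation $(\Pi w : El(\overbar\alpha))\,\llbracket\phi(\tilde\alpha(w))\rrbracket$ depends on $\alpha$ both through the index type $El(\overbar\alpha)$ and through the family member $\tilde\alpha(w)$, so changing $\alpha$ to $\alpha'$ alters the very domain of quantification. My plan is to exploit the definitional form of $\alpha \doteq \alpha'$: for each $w' : El(\overbar{\alpha'})$ its second bisimulation component produces a $w : El(\overbar\alpha)$ with $\tilde\alpha(w) \doteq \tilde{\alpha'}(w')$, so from a proof over all of $El(\overbar\alpha)$ I obtain, fibre by fibre, a proof of $\llbracket\phi(\tilde\alpha(w))\rrbracket$ which the induction hypothesis on $\phi$ transports along $\tilde\alpha(w) \doteq \tilde{\alpha'}(w')$ to $\llbracket\phi(\tilde{\alpha'}(w'))\rrbracket$; the existential case uses the first bisimulation component dually. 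When $\alpha$ additionally occurs inside $\phi$ one applies the induction hypothesis a second time, in the remaining variable, which is legitimate precisely because the inductive statement asserts extensionality in every variable at once.
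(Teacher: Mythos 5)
Your proposal is correct and follows essentially the same route as the paper: structural induction, with the atomic equality case handled by transitivity (and symmetry), the membership case handled by transitivity in the element slot and by unwinding the bisimulation witnesses of $\doteq$ in the container slot, and the remaining cases inherited from the induction hypothesis. The only difference is one of thoroughness: the paper dismisses the non-atomic steps as straightforward, whereas you spell out the genuinely nontrivial one — the bounded quantifiers when the bound itself varies — using exactly the re-indexing argument the paper implicitly relies on.
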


\begin{proof}
As usual we proceed by induction on the structure of the formula. First of all consider the case of atomic formulae: $(\alpha \doteq \beta)$ is extensional because of the transitivity. So consider $\beta \in \gamma$, we want to prove extensionality in the first variable, we have a term in $\llbracket \alpha \doteq \beta \rrbracket$ and a term in $\llbracket \beta \in \gamma \rrbracket = (\Sigma x : El(\overbar \gamma)) \llbracket \tilde{\gamma}(x) \doteq \beta \rrbracket$ and a simple application of transitivity is enough to obtain a term in $\llbracket \alpha \in \gamma \rrbracket = (\Sigma x : El(\overbar \gamma)) \llbracket \tilde{\gamma}(x) \doteq \alpha \rrbracket$. For extensionality in the other variable suppose that we have a term in the type $\llbracket \gamma \doteq \delta \land \alpha \in \delta \rrbracket = (\gamma \doteq \delta) \times (\Sigma y:El(\overbar \delta))(\alpha \doteq \tilde{\delta}(y))$, and we want a proof of $\llbracket \alpha \in \delta \rrbracket = (\Sigma x: El(\overbar \gamma))(\tilde{\gamma}(x) \doteq \alpha)$. Recalling the definition of extensional equality we have that for all $x: El(\overbar \delta)$ exists a $y: El(\overbar \gamma)$ such that $\tilde{\gamma}(y) \doteq \tilde{\delta}(x)$ and we have the statement.\\
The inductive steps for the non-atomic formulae are straightforward.
\end{proof}

\begin{lem}
If $\phi(x)$ is extensional in $x$, then the structural defining axioms for the restricted quantifiers are valid.
\end{lem}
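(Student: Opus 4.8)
The plan is to unfold both sides of each defining biconditional through the interpretation clauses and then exhibit maps in both directions; the only substantial ingredient is the extensionality of $\phi$ just established, together with the reflexivity and symmetry of extensional equality proved earlier.

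For the universal scheme I would compare
$$\llbracket (\forall x \in \alpha)\phi(x) \rrbracket = (\Pi x:El(\overbar\alpha))\llbracket\phi(\tilde\alpha(x))\rrbracket$$
with
$$\llbracket \forall x\,(x\in\alpha \Rightarrow \phi(x))\rrbracket = (\Pi\beta:V)\big((\Sigma y:El(\overbar\alpha))\llbracket\beta\doteq\tilde\alpha(y)\rrbracket \to \llbracket\phi(\beta)\rrbracket\big).$$
From left to right, given $f$ in the first type together with $\beta:V$, $y:El(\overbar\alpha)$ and a witness of $\beta\doteq\tilde\alpha(y)$, I apply $f$ at $y$ to get a term of $\llbracket\phi(\tilde\alpha(y))\rrbracket$ and then invoke extensionality of $\phi$ along $\tilde\alpha(y)\doteq\beta$ (using symmetry of $\doteq$) to land in $\llbracket\phi(\beta)\rrbracket$. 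From right to left, given $g$ in the second type and $x:El(\overbar\alpha)$, I instantiate $g$ at $\beta:=\tilde\alpha(x)$ and feed it the pair $(x,r)$, where $r$ witnesses the reflexive instance $\tilde\alpha(x)\doteq\tilde\alpha(x)$, obtaining a term of $\llbracket\phi(\tilde\alpha(x))\rrbracket$.

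The existential scheme is dual. Here I would compare
$$\llbracket (\exists x \in \alpha)\phi(x) \rrbracket = (\Sigma x:El(\overbar\alpha))\llbracket\phi(\tilde\alpha(x))\rrbracket$$
with
$$\llbracket \exists x\,(x\in\alpha \land \phi(x))\rrbracket = (\Sigma\beta:V)\big((\Sigma y:El(\overbar\alpha))\llbracket\beta\doteq\tilde\alpha(y)\rrbracket \times \llbracket\phi(\beta)\rrbracket\big).$$
From left to right I would send $(x,q)$ to $(\tilde\alpha(x),((x,r),q))$ with $r$ the reflexivity witness; from right to left I would send $(\beta,((y,p),s))$ to $(y,s')$, where $s'$ arises from $s:\llbracket\phi(\beta)\rrbracket$ by extensionality of $\phi$ along the witness $p$ of $\beta\doteq\tilde\alpha(y)$.

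I do not expect a genuine obstacle: each direction is a pairing or a projection composed with a single use of extensionality, and the four maps assemble into the two function-pairs inhabiting $\llbracket\,\cdot\Leftrightarrow\cdot\,\rrbracket$. The only points demanding care are orienting the extensional equality correctly, so that symmetry of $\doteq$ is inserted exactly where the interpretation of $\in$ forces the comparison $\beta\doteq\tilde\alpha(y)$ rather than $\tilde\alpha(y)\doteq\beta$, and remembering to supply the reflexivity witness from the lemma that $\doteq$ is an equality.
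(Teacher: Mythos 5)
Your proposal is correct and follows essentially the same route as the paper: in the forward direction you apply the bounded-quantifier proof at the witnessing index and transfer along the extensional equality (the paper leaves the use of symmetry of $\doteq$ implicit where you make it explicit), and in the backward direction you make the same ``trivial choice'' $z=x$ with the reflexivity witness $r_0$. You even spell out the existential case, which the paper dismisses as ``similar,'' and your treatment of it is correct.
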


\begin{proof}
We give the details just in the case of the universal quantifier, the existential one is similar. We have a proof of $t=(t_1,t_2): \llbracket (x \doteq y \land \phi) \Rightarrow \phi(y) \rrbracket \times \llbracket (\forall x \in y) \phi(x) \rrbracket = [ (x \doteq y) \times \llbracket \phi(x)  \rrbracket \to \llbracket \phi(y)\rrbracket ] \times (\Pi x : El(\overbar y)) \llbracket \phi(\tilde{y}(x)) \rrbracket$, and we need a proof of $(\Pi x : V) \llbracket x \in y \Rightarrow \phi(x) \rrbracket = (\Pi x : V)[(\Sigma z: El(\overbar y))(x \doteq \tilde{y}(z)) \to \llbracket \phi(x) \rrbracket ]$. Therefore we can suppose to have a proof given by a couple $(t,s)$ where $s=(s_1,s_2): (\Sigma z: El(\overbar y))(x \doteq \tilde{y}(z))$. Applying $t_2$ to $s_1$ we get a proof of $\llbracket \phi(\tilde{y}(s_1)) \rrbracket$, whereas $s_2$ is a proof of $(x \doteq \tilde{y}(s_1) )$, so we have the desired term inside $\llbracket \phi(x)\rrbracket$ and we conclude with a $\lambda$-abstraction on $x$.\\
Conversely, we have a term in $[(x \doteq y) \times \llbracket \phi(x) \rrbracket \to \llbracket \phi(y) \rrbracket] \times (\Pi x : V)[(\Sigma z : El(\overbar y))(x \doteq \tilde{y}(z)) \to \llbracket \phi(x) \rrbracket]$ and we want a term in $(\Pi x : El(\overbar y)) \llbracket \phi(\tilde{y}(x))\rrbracket$. Consider $x : El(\overbar y)$, then $\tilde{y}(x) : V$ and we know that for every $z: El(\overbar y)$ such that $\tilde{y}(x) \doteq \tilde{y}(z)$ we get a term in $\llbracket \phi(x) \rrbracket$. We make the trivial choice $z=x$ and then we find a term in $\llbracket \phi(\tilde{y}(x)) \rrbracket$.
\end{proof}

\nind
Before the main theorem we state a triviality: the formula $\llbracket (\forall x \in \alpha)(x \in \alpha) \rrbracket$ is valid.\\
Indeed, we already know that $\alpha \doteq \alpha$ is valid, in fact we have constructed a term $r_0(\alpha) : \llbracket \alpha \doteq \alpha \rrbracket$. Define $\alpha^* := \lambda x. (x, r_0(\tilde{\alpha}(x))) : \llbracket (\forall x \in \alpha ) (x \in \alpha) \rrbracket$. In fact $\alpha^*(a) : \llbracket \tilde{\alpha}(a) \in \alpha \rrbracket$ for each set $\alpha$ and each term $a:El(\overbar \alpha)$.\\
From this simple fact is clear that $El(\overbar \alpha)$ has the right to be thought as the type of elements of the set $\alpha$.

\begin{thm}\label{mainthm}
Every basic axiom of CZF is valid in the interpretation.
\end{thm}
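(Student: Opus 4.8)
The plan is to verify the axioms one at a time, in each case exhibiting an explicit inhabitant of the interpreted type, following Aczel's type-theoretic interpretation but systematically replacing every appeal to a computation rule for the universe by the corresponding canonical equivalence supplied by the weak Tarski structure. The recurring tools will be: the presentation $\overbar{\alpha}:U$, $\tilde{\alpha}:El(\overbar{\alpha})\to V$ of an iterative set; the fact that $\doteq$ is an equivalence relation together with Remark \ref{mainrem}, by which a path yields an extensional equality; the already established extensionality of every formula of $\mathcal L_V$ and the validity of the structural axioms for the restricted quantifiers; and, crucially, Lemma \ref{mainlem}, ensuring that a restricted formula is interpreted by a type equivalent to a small one.

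First I would dispose of the structurally straightforward axioms. Extensionality and its membership reformulations follow by unwinding the recursive definition of $\doteq$ and combining it with the witness $\alpha^*$ of $(\forall x\in\alpha)(x\in\alpha)$. For pairing I would take a name $2:U$ with $El(2)$ equivalent to $\textbf N_2$ and set the pair to be $\textsf{sup}(2,h)$ for $h:El(2)\to V$ selecting $\alpha$ and $\beta$; union is obtained as $\textsf{sup}$ over the name given by the $\Sigma$-closure of $U$, whose $El$ is canonically equivalent to $(\Sigma x:El(\overbar\alpha))El(\overbar{\tilde\alpha(x)})$. Set induction is immediate from the elimination rule for the $W$-type $V$, and strong collection follows from the type-theoretic axiom of choice (Theorem \ref{choice}): from a term of $(\Pi x:El(\overbar\alpha))(\Sigma\beta:V)\llbracket\phi(\tilde\alpha(x),\beta)\rrbracket$ one extracts a function $f:El(\overbar\alpha)\to V$ and takes $b:=\textsf{sup}(\overbar\alpha,f)$. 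Strong infinity requires building $\omega$ from the name $n:U$ of $\textbf N$ and the canonical equivalence $\textsf{ceq}_n:El(n)\simeq\textbf N$, after which $\mathit{Nat}$ is verified by ordinary induction.

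The delicate cases are bounded separation and subset collection, and I expect subset collection to be the main obstacle. For separation, given a restricted $\phi$ and $\alpha:V$, Lemma \ref{mainlem} furnishes for each $x$ a name $e_x:U$ with $El(e_x)\simeq\llbracket\phi(\tilde\alpha(x))\rrbracket$; the $\Sigma$-closure then yields a name indexing the separated set $\textsf{sup}$ of $(\Sigma x:El(\overbar\alpha))El(e_x)$, the defining property being checked by transporting membership proofs across these equivalences. Subset collection is harder: following Aczel I would reduce it to producing, for $\alpha,\beta:V$, an $\alpha$-full family of subsets of $\beta$, the natural candidate being built from the exponential name $exp(\overbar\alpha,\overbar\beta):U$ given by the $\Pi$-structure and indexing the graphs of functions $El(\overbar\alpha)\to El(\overbar\beta)$. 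The obstruction specific to the weak universe is that the computation rules now deliver only equivalences, so the identifications $\textsf{ceq}_\pi:El(\pi(a,b))\simeq(\Pi x:El(a))El(b(x))$ and its sum analogue must be threaded coherently through every totality and fullness verification; this is exactly where function extensionality (already exploited in Lemma \ref{mainlem}) and Theorem \ref{idsum} on identities in sum types carry the weight, and where the bookkeeping is heaviest.
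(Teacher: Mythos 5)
Your proposal is correct and follows essentially the same route as the paper's proof: an axiom-by-axiom verification that builds each required set as a $\textsf{sup}$ over a name supplied by the weak Tarski universe ($n_2$ for pairing, $\sigma$ for union, $exp$ for subset collection, $n$ for infinity), threads the canonical equivalences through via Remark \ref{mainrem} and transitivity of $\doteq$, invokes Lemma \ref{mainlem} for restricted separation, the type-theoretic axiom of choice for strong collection, and $W$-type elimination for set induction. The only divergence is in your forecast of where the difficulty lies: in the paper subset collection in fact generalises essentially word for word from Aczel's argument, and the genuinely delicate cases are restricted separation and infinity, both of which lean on Lemma \ref{mainlem} (and, separately, the treatment of dependent choices).
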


\begin{proof}
We have already checked the defining schemes for the restricted quantifiers and the equality axiom $x \doteq y \land y \in z \Rightarrow x \in z$.
\begin{enumerate}[label=(\arabic*)]

\item Extensionality: the formula $x \in \alpha$ is extensional, so that by the previous lemma $\alpha \subseteq \beta \Leftrightarrow (\forall x \in \alpha)(x \in \beta) \Leftrightarrow \forall x \, (x \in \alpha \Rightarrow x \in \beta)$, and similarly $\beta \subseteq \alpha \Leftrightarrow \forall x \, (x \in \beta \Rightarrow x \in \alpha)$, hence $\alpha \doteq \beta \Leftrightarrow (\alpha \subseteq \beta \land \beta \subseteq \alpha ) \Leftrightarrow \forall x \, (x \in \alpha \Leftrightarrow x \in \beta)$.

\item Set induction: let $B= \llbracket \forall y \, (\forall x \in y) \, \phi(x) \to \phi(y)\rrbracket$, we need to define for every set $\alpha : V$ a function $h(\alpha): B \to \llbracket \phi(\alpha) \rrbracket$ in order to conclude by $\lambda$-abstraction. We define this function by transfinite recursion on the canonical elements of $V$: we take a term $b:B$ and apply it to our set $\alpha= \textsf{sup}(a,b)$ gaining a term in $(\forall x \in \alpha) \, \phi(x) \to \phi(\alpha)$. In order to obtain a desired term in $\llbracket \phi(\alpha)\rrbracket$ we just need a function in $\llbracket (\forall x \in y) \, \phi(x) \rrbracket$ which is given by the recursion hypothesis as follows $\lambda x.h(f(x))(b)$.

\item Pairing: given sets $\alpha, \beta : V$ we define $g': \textbf N_2 \to V$ by cases: $g'(1)= \alpha$ and $g'(2)=\beta$. Now we want to use the equivalence given by the weak Tarski universe in order to define an other function $g:El(n_2) \to V$, we will take its supremum to be the pair $\{ \alpha , \beta \}$. In fact let $\textsf{ceq}_{N_2}=(\tau, \theta, t, \eta, \epsilon)$ be the canonical term in the equivalence type

$$\xymatrix@1{
El(n_2) \ar[r]^{\tau} \ar@/_2pc/[rr]^{g}
& \mathbf N_2 \ar[r]^{g'}  \ar@<1ex>[l]  \ar@/_1pc/[l]_{\theta} 
& V }$$

Then we define $g=g'\tau : El(n_2) \to V$ and $\gamma = \textsf{sup}(n_2 , g) : V$. In order to conclude it suffices to find a term in $\llbracket \alpha \in \gamma \land \beta \in \gamma \rrbracket$, consider now $\gamma^*(\theta(1)) : \llbracket \tilde{\gamma}(\theta(1)) \in \gamma \rrbracket = \llbracket g(\theta(1)) \in \gamma \rrbracket = \llbracket g'(\tau \theta (1)) \in \gamma \rrbracket$. Thanks to the homotopy $\eta(1): Id_{N_2}(\tau \theta (1), 1)$ and by remark \ref{mainrem} we have that $g'( \tau \theta (1)) \doteq g'(1)$ so that from a term in $\llbracket g'(\tau \theta (1)) \in \gamma \rrbracket$ we get a term in $\llbracket g'(1) \in \gamma \rrbracket = \llbracket \alpha \in \gamma \rrbracket$ by an application of the transitivity of $\doteq$, therefore the term $(\gamma^*(\theta(a)), \gamma^*(\theta (2)))$ give rise to the desired term.

\item Union: the proof is similar to the previous one; indeed, for every $\alpha : V$ let $A=(\Sigma x : El(\overbar \alpha))El(\overline{ \tilde{\alpha}(x) })$. As before we want to apply the type constructor that matches the set theoretic operation, in this case it is clearly the dependent sum. Define $g': A \to V$ by the elimination rule on the pairs $(x,y)$ with $x : El(\overbar \alpha)$ and $y:El(\overline{\tilde{\alpha}(x)})$ as $g'((x,y))=\widetilde{\tilde{\alpha}(x)}(y)$, so that:

%\[\xymatrixcolsep{3pc}
$$
\xymatrix@1{
El(\sigma(\overbar \alpha, \overline{\tilde{\alpha}(x)})) \ar[r]^-{\tau} \ar@/_2pc/[rr]^{g}
& A \ar[r]^{g'}  \ar@<1ex>[l]  \ar@/_1pc/[l]_{\theta} 
& V }$$

and we take the corresponding set inside $V$, namely $\gamma = \textsf{sup}(\sigma(\overbar \alpha , \overline{\tilde{\alpha}(x)}), g)$. In order to conclude we want a term inside the type $\llbracket (\forall x \in \alpha)(\forall y \in x)( y \in \gamma)\rrbracket$; unwinding the definition of interpretation this type is equal to $(\Pi x : El(\overbar \alpha))(\Pi y: El(\overline{\tilde{\alpha}(x)}) )(\widetilde{\tilde{\alpha}(x)}(y) \in \gamma)$. Now observe that $\widetilde{\tilde{\alpha}(x)}(y) = g'((x,y)) \doteq g'(\tau \theta(x,y)) = g(\theta(x,y)))$ so is enough to find a term inside $\llbracket g(\theta(x,y)) \in \gamma \rrbracket$; and the needed term is $\gamma^*(\theta(x,y)) : \llbracket g(\theta(x,y)) \in \gamma \rrbracket$.

\item Restricted separation: by a previous lemma every restricted formula $\phi \in \mathcal L_V$ give rise to a type equivalent to a small one. Let $\phi(x) \in \mathcal L_V$ be our formula restricted in all its variables except of $x$ which is free; next consider $(\exists x \in \alpha) \, \phi(x)$ which is bounded, so its interpretation $A = (\Sigma x : El(\overbar \alpha)) \llbracket\phi(\tilde{\alpha}(x)) \rrbracket$ is equivalent to a small type, in symbols $(\Sigma e:U) \mathit{Equiv}(El(e), A)$. Define $g':A \to V$ by $g'((x,v)) = \tilde{\alpha}(x)$ where $x : El(\overbar \alpha)$ and $v: \llbracket \phi(\tilde{\alpha}(x)) \rrbracket$ and the associated $g$ as usual:

$$\xymatrix@1{
El(e) \ar[r]^{\tau} \ar@/_2pc/[rr]^{g}
& A \ar[r]^{g'}  \ar@<1ex>[l]  \ar@/_1pc/[l]_{\theta} 
& V }$$

And we consider $\gamma = \textsf{sup}(e,g) : V$. Now we want a term in $\llbracket (\forall y \in \gamma)(y \in x \land \phi(y)) \rrbracket \times \llbracket (\forall y \in x)(\phi(y) \Rightarrow y \in \gamma) \rrbracket$. For the first half we define a function $h'_1$ on $A$ by recursion over pairs as follows $h'_1(x,v) = (\alpha^*(x), v) : \llbracket \tilde{\alpha}(x) \in \alpha \rrbracket \times \llbracket \phi(\tilde{\alpha}(x)) \rrbracket$, and $h_1 = \tau h'_1$. Hence $h_1 \theta$ give rise to the desired term for the first half of our axiom. The second half is similar: for $x : El( \overbar \alpha)$ and $v: \llbracket \phi(\tilde{\alpha}(x)) \rrbracket$ we define $h'_2(x)(v) = \gamma^*((x,v)) : \llbracket \tilde{\gamma}(x,v) \in \gamma \rrbracket = \llbracket g(x,v) \in \gamma \rrbracket = \llbracket \tilde{\alpha}(x) \in \gamma \rrbracket$, as usual $h_2 \theta$ determines the desired term.

\item Strong collection: we start with a premiss, given a formula $\phi(x,y) \in \mathcal L_V$ with at most $x$ and $y$ free, let $\alpha , \beta : V$ be sets such that $\overbar \alpha = \overbar \beta$ and for every $a: El(\overbar \alpha)$ we have this term $f(a) : \llbracket \phi(\tilde{\alpha}(a), \tilde{\beta}(a))\rrbracket$. This yields a term in $\llbracket \phi'(\alpha , \beta) \rrbracket$ where we recall that $\phi' (\alpha,\beta) = (\forall x \in \alpha)(\exists y \in \beta) \, \phi(x,y) \land (\forall y \in \beta)(\exists x \in \alpha) \, \phi(x,y)$. Indeed, this term is $K(f) = (\lambda x.(x, f(x)) , \lambda x. (x, f(x)))$.\\
Now turn to the strong collection axiom: let $\phi(x,y)$ as above and $\alpha : V$. We start with a term $a: \llbracket (\forall x \in \alpha) \exists y \, \phi(x,y) \rrbracket$ and we want to construct a term in the type $ \llbracket \exists z \, \phi'(\alpha, z)\rrbracket$. In order to achieve this consider the term $a(x)$ in the dependent sum and project it  $b=\lambda x.\textsf{p}(a(x)) : El(\overbar \alpha) \to V$ and $c=\lambda x.\textsf{q}(a(x)) : (\Pi x : El(\overbar \alpha)) \llbracket \phi(\tilde{\alpha}(x), b(x))\rrbracket$ where $\textsf{p}$ and $\textsf{q}$ are the canonical projections.\\
Now define $\beta = \textsf{sup}(\alpha, b)$, so that $K(c) : \llbracket \phi'(\alpha, \beta) \rrbracket$ and hence $d(a) = (\beta, K(c))$ is the desired term.

\item Subset collection: given sets $\alpha , \beta : V$ we use the equivalence given by the universe to perform type-theoretically the needed construction. Consider $z: El(\overbar \alpha) \to El(\overbar \beta)$ so that $z(x) : El(\overbar \beta)$ and $\tilde{\beta}(z(x))$; now define $G': (El(\overbar \alpha) \to El(\overbar \beta)) \to V$ as $G'=\lambda x.\textsf{sup}(\overbar \alpha , \tilde{\beta}z)$ and $G=G' \tau$, given by our usual picture: 

$$
\xymatrix@1{
El(\exp(\overbar \alpha , \overbar \beta)) \ar[r]^{\tau} \ar@/_2pc/[rr]^{G}
& (El(\overbar \alpha) \to El(\overbar \beta)) \ar[r]^-{G'}  \ar@<1ex>[l]  \ar@/_1pc/[l]_{\theta} 
& V }$$

Let $\phi_u(x,y) \in \mathcal L_V$ with at most $u$, $x$ and $y$ free and let $\psi_u(\alpha , \beta)$ denote $(\forall x \in \alpha )(\exists y \in \beta) \, \phi_u(x,y)$. Finally let $\delta : V$ be another set and take $a : \llbracket \psi_{\delta}(\alpha , \beta) \rrbracket$ to be a given term from which we wish to construct a term inside $\llbracket (\exists d \in \gamma ) \phi'_u(\alpha , d) \rrbracket$. Then consider $b= \lambda x.\textsf{p}(a(x)) : El(\overbar \alpha) \to El(\overbar \beta)$ and $c=\lambda x. \textsf{q}(a(x)) : (\Pi x:El(\overbar \alpha)) \llbracket \phi_{\delta}(\tilde{\alpha}(x) , \tilde{\beta}(b(x))) \rrbracket$, by which we construct $K(c) : \llbracket \phi'_{\delta}(\alpha , G'(b)) \rrbracket$. Hence $\llbracket (\exists z \in \gamma) \, \phi'_{\delta}(\alpha, z) \rrbracket = (\Sigma z:El(exp(\overbar \alpha, \overbar \beta))) \llbracket \phi'_{\delta}(\alpha , G(z)) \rrbracket$ and $G(\theta(b)) = G'(\tau \theta (b)) \doteq G'(b)$ so that $d(\delta , a) = (\theta(b), K(c))$ gives us the desired term.

\item Infinity: let $f'_0 : \textbf N_0 \to V$ and $g'_0 : \textbf N_0 \to \textbf N_0$ be the canonical functions and $f_0$, $g_0$ the associated functions defined on $El(n_0)$:

$$\xymatrix@1{
El(n_0) \ar[r]^{\tau_1} \ar@/_2pc/[rr]^{f_0}
& \mathbf N_0 \ar[r]^{f'_0}  \ar@<1ex>[l]  \ar@/_1pc/[l]_{\theta_1} 
& V 
}$$

and

$$\xymatrix@1{
El(n_0) \ar[r]^{\tau_2} \ar@/_2pc/[rr]^{g_0}
& \mathbf N_0 \ar[r]^{g'_0}  \ar@<1ex>[l]  \ar@/_1pc/[l]_{\theta_2} 
& \mathbf N_0
}$$

Let $\emptyset = \textsf{sup}(n_0 , f_0)$. We have $g_0 : \llbracket \mathit{Zero}(\emptyset) \rrbracket = (\Pi y:El(n_0)) N_0 = (El(n_0) \to N_0)$. Now for every set $\alpha : V$ define the function $h'(\alpha) : El(\overbar \alpha) + \textbf N_1 \to V$ by cases as $h'(\alpha)(\textsf{i}(x)) = \tilde{\alpha}(x)$ and $h'(\alpha)(\textsf{j}(1)) = \alpha$; consider the following diagram:

$$\xymatrix@1{
El(plus(\overbar \alpha , n_1)) \ar[r]^{\tau_3} \ar@/_2pc/[rrr]^{h(\alpha)}
& El(\overbar \alpha) + El(n_1) \ar[r]^-{\tau_4}  \ar@<1ex>[l]  \ar@/_1pc/[l]_{\theta_3} 
& El(\overbar \alpha)+ \mathbf N_1 \ar[r]^-{h'(\alpha)} \ar@<1ex>[l] \ar@/_1pc/[l]_-{\theta_4}
& V
}$$

and define $\tau = \tau_4 \tau_3$, $\theta = \theta_3 \theta_4$ and $h(\alpha) = h'(\alpha) \tau$. By the kind of argument of lemma \ref{mainlem} we know that $El(\overbar \alpha) + El(n_1)$ is equivalent to $El(\overbar \alpha) + \textbf N_1$, so that by composition we have an equivalence from $El(plus(\overbar \alpha , n_1))$ to $El(\overbar \alpha)+ \textbf N_1$.\\
We then form the supremum of this function $S(\alpha) = \textsf{sup}(plus(\overbar \alpha , n_1) , h(\alpha))$. Hence $S(\alpha)^*(\theta j(1)) : \llbracket \widetilde{S(\alpha)}(\theta j(1) \in S(\alpha)) \rrbracket = \llbracket h(\alpha)(\theta j(1)) \in S(\alpha) \rrbracket$ and by homotopy $h(\alpha)(\theta j(1)) = h'(\alpha)(\tau \theta j(1)) \doteq h'(\alpha)(j(1)) = \alpha$.\\
Now we prove that $\mathit{Succ}(\alpha , S(\alpha))$ is valid. recall that $\llbracket \mathit{Succ}(\alpha , S(\alpha)) \rrbracket = \llbracket (\forall z \in \alpha)(z \in S(\alpha)) \rrbracket \times \llbracket (\alpha \in S(\alpha)) \rrbracket \times \llbracket (\forall z \in S(\alpha))(z \in \alpha \lor z \doteq \alpha) \rrbracket$, we have already found a term in the type in the middle. Now we construct a term in the first type: consider firstly $g_1(\alpha) = \lambda x. S(\alpha)^*(\theta i (x))$; it yields a term in $\llbracket (\forall x \in \alpha) x \in S(\alpha) \rrbracket$.

Finally define $g'_2(\alpha) : (\Pi u: El(\overbar \alpha) + \textbf N_1 ) \llbracket h'(\alpha)(u) \in \alpha \lor h'(\alpha)(u) \doteq \alpha\rrbracket$ by cases as $g'_2(\alpha)(i(x)) = i(\alpha^*(x))$ and $g'_2(\alpha)(j(1)) = j(r_0(\alpha))$. What we need is a term in $(\Pi u:El(plus(\overbar \alpha , n_1))) \llbracket h(\alpha)(u) \in \alpha \lor h(\alpha)(u) \doteq \alpha \rrbracket = (\Pi u:El(\overline{S(\alpha)})) \llbracket \tilde{S(\alpha)}(u) \in \alpha \lor \tilde{S(\alpha)}(u) \doteq \alpha \rrbracket$ and this is easy since $g_2(\alpha) = g'_2(\alpha)\tau$ gives rise to a term in the desired type. Let $g(\alpha)$ be the term obtained in the type $\mathit{Succ}(\alpha , S(\alpha))$.\\
Then we define the set of natural numbers from the type $\textbf N$, so define the map $\Delta' : \textbf N \to V$ by recursion as $\Delta(0) = \emptyset$ and $\Delta'(s(n)) = S(\Delta'(n))$; consider the diagram:

$$\xymatrix@1{
El(n) \ar[r]^{\tau_4} \ar@/_2pc/[rr]^{\Delta}
& \mathbf N \ar[r]^{\Delta'}  \ar@<1ex>[l]  \ar@/_1pc/[l]_{\theta_4} 
& V 
}$$

Define the set of natural numbers as $\omega = \textsf{sup}(n,\Delta) : V$. First of all we check the validity of $(\exists x \in \omega) \, \mathit{Zero}(x)$ so we need a term in $(\Sigma x \in El(n)) \llbracket \mathit{Zero}(\Delta(x)) \rrbracket$ thanks to $\Delta (\theta_4 (0)) = \Delta'(\tau_4 \theta_4 (0)) \doteq \Delta'(0) = \emptyset$ we have that $g_0 : \llbracket \mathit{Zero}(\emptyset) \rrbracket$ together with $\theta_4(0)$ gives rise to the desired term.\\
Similarly for $\llbracket (\forall x \in \omega) \, \mathit{Zero}(x) \lor (\exists y \in x) \, \mathit{Succ}(y,x) \rrbracket$ we define $f': (\Pi x : \textbf N) \llbracket \mathit{Zero}(\Delta'(x)) \lor (\exists y \in \Delta'(x)) \, \mathit{Succ}(y,\Delta'(x)) \rrbracket$ by recursion as $f'(0) = i(g_0)$ and $f'(s(n)) = j(n,g(\Delta'(n)))$ so that by precomposition with $\tau_4$ we get the desired term. Similarly for $h' = \lambda y.(s(y),g(\Delta'(y))) : (\Pi y: \textbf N)(\Sigma x: \textbf N) \llbracket \mathit{Succ}(y,x) \rrbracket$ and we conclude as usual.

\end{enumerate}
\end{proof}

Now we focus on choice principle, we shall see that each instance of the set-theoretic axiom scheme of dependent choices is valid in the interpretation.\\
First of all we prove in Martin-L\"of type theory that the type theoretic scheme of dependent choice holds.

\begin{thm}[type-theoretic DC]\label{dc}
The following type is inhabited:
$$[(\Pi x:A)(B(x) \to (\Sigma y:A)(B(y) \times F(x,y)))] \to [(\Pi x:A)(B(x) \to (\Sigma z: \textbf N \to A)G(x,z))] $$

\nind
where $G(x,z) := Id_A(z(0) , x) \times (\Pi n: \textbf N)[B(z(n)) \times F(z(n) , z(s(n)))]$.
\end{thm}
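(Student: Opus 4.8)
The plan is to mimic the proof of the type-theoretic axiom of choice (Theorem \ref{choice}): we are handed a ``step function'' and we iterate it along $\textbf N$ by recursion. The essential idea is that, in order to keep applying the hypothesis at each stage, we must carry along not merely the element of $A$ produced so far but also its witness of $B$. Accordingly, I would work with the type of \emph{states} $T := (\Sigma a:A)B(a)$ rather than with $A$ alone.

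First I would assume a term $h:(\Pi x:A)(B(x) \to (\Sigma y:A)(B(y) \times F(x,y)))$ and fix $x:A$ together with $p:B(x)$, so that $(x,p):T$ is an initial state. From $h$ I define a step $\sigma:T \to T$ by setting, for $t:T$, $\sigma(t) := (\,\textsf{p}(h(\textsf{p}(t))(\textsf{q}(t))) ,\, \textsf{p}(\textsf{q}(h(\textsf{p}(t))(\textsf{q}(t))))\,)$, where the first component is the next element of $A$ and the second its $B$-witness, both extracted from the pair delivered by $h$. Using the recursion principle $\textsf{R}$ for $\textbf N$ I then define $w:\textbf N \to T$ by $w(0):=(x,p)$ and $w(s(n)):=\sigma(w(n))$, and finally set $z := \lambda n.\,\textsf{p}(w(n)) : \textbf N \to A$. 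This $z$ is the required sequence.

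It remains to inhabit $G(x,z)$. For the first factor, the computation rules for $\textbf N$-recursion and for the first projection give $z(0) = \textsf{p}(w(0)) = \textsf{p}((x,p)) = x$ definitionally, so $\textsf{refl}_A(x) : Id_A(z(0),x)$. For the second factor I would produce, uniformly in $n:\textbf N$, a term of $B(z(n)) \times F(z(n),z(s(n)))$: its left component is $\textsf{q}(w(n)) : B(\textsf{p}(w(n)))$, which is $B(z(n))$ by definition of $z$, and its right component is $\textsf{q}(\textsf{q}(h(z(n))(\textsf{q}(w(n)))))$. A $\lambda$-abstraction over $n$ then yields the $\Pi$-type, and pairing with $\textsf{refl}_A(x)$ completes the construction (the outermost abstractions over $h$, $x$, $p$ being left implicit as usual).

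The delicate point, and essentially the only one requiring care, is that the right component above must land in $F(z(n),z(s(n)))$ and not merely in $F(z(n),y)$ for some unrelated $y$. This is guaranteed by a chain of definitional equalities: since $\sigma(w(n))$ is written as a literal pair its first projection computes, and together with the computation rule $w(s(n)) = \sigma(w(n))$ one obtains $z(s(n)) = \textsf{p}(w(s(n))) = \textsf{p}(h(z(n))(\textsf{q}(w(n))))$ definitionally, which is precisely the index appearing in the $F$-slot of the $\Sigma$-type returned by $h$. Keeping these reductions aligned --- in particular packaging the $B$-proof inside the state type $T$ so that the recursion can feed it back into $h$ at the next stage --- is the crux of the argument; once this definitional bookkeeping is arranged, no further induction on $n$ is needed, since every verification holds uniformly.
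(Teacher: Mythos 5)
Your proposal is correct and takes essentially the same route as the paper: your state type $T$, step map $\sigma$, and recursively defined $w:\textbf N \to T$ are precisely the paper's $C := (\Sigma x:A)B(x)$, $g$, and $e$, and your extraction of the three components of $G(x,z)$ --- including the definitional-equality bookkeeping showing the $F$-component lands in $F(z(n),z(s(n)))$ --- matches the paper's construction. No gaps.
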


\begin{proof}
If $f$ is an element of the premiss i.e. $f: (\Pi x:A)(B(x) \to (\Sigma y:A)(B(y) \times F(x,y)))$ then define $C := (\Sigma x:A)B(x)$ and let:
\begin{center}$h := \lambda u. f(\textsf{p}(u))(\textsf{q}(u)) : C \to (\Sigma y:A)( B(y) \times F(\textsf{p}(u) , y))$
$$g := \lambda u.(\textsf{p}(h(u)) , \textsf{p}(\textsf{q}(h(u)))) : C \to C$$
$$k := \lambda u. \textsf{q}(\textsf{q}(h(u))) : (\Pi u:C)F(\textsf{p}(u) , \textsf{p}(g(u))) $$
\end{center}

\nind
Now let $a:A$ and $b:B(a)$. Then by recursion over $\textbf N$ we may define $e: \textbf N \to C$ by $e(0) := (a,b)$ and $e(s(n)) := g(e(n))$. Observe that we can derive the following inhabitation judgements:
\begin{center}
$\lambda z.\textsf{p}(e(z)) : \textbf N \to A$
$$ \textsf{refl} : Id_A(\textsf{p}(e(0)) , a)$$
$$\lambda z. (\textsf{q}(e(z)) , k(e(z))) : (\Pi z:\textbf N)[B(\textsf{p}(e(z))) \times F(\textsf{p}(e(z)) , \textsf{p}(es(z)))]$$
\end{center}

\nind
hence putting these three together we obtain the consequent of DC.
\end{proof}

\nind
Using the basic axiom it is easy to define the standard set-theoretic notions, interpreted in type theory. So for example a set $\alpha : V$ is a \textit{relation} iff the following holds $(\Pi \beta \in \alpha)(\Sigma \gamma \in V)(\Sigma \delta \in V)(\beta \doteq ( \gamma , \delta ))$.\\
Let now introduce an useful function: if $\alpha , \beta : V$ such that $\overbar \alpha = \overbar \beta$ then define $T(\alpha , \beta) :=\textsf{sup}(\overbar \alpha , \lambda x. ( \tilde{\alpha}(x) , \tilde{\beta}(x) )) : El(\overbar \alpha) \to V$.

\begin{lem}
\begin{enumerate}[label=(\roman*)]
\item[]

\item If $\alpha , \beta : V$ with $\overbar \alpha = \overbar \beta$ then $T(\alpha , \beta)$ is a relation with domain $\alpha$ and range $\beta$;

\item If $\alpha , \gamma : V$ such that $\gamma$ is a relation with domain $\alpha$ then, for some $\beta : V$ with $\overbar \alpha = \overbar \beta$, $T(\alpha , \beta) \subseteq \gamma$.

\end{enumerate}
\end{lem}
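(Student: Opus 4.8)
The plan is to read off the structure of $T(\alpha,\beta)$ directly from its definition and then construct the required inhabitants by hand. Since $T(\alpha,\beta) = \textsf{sup}(\overbar\alpha, \lambda x.(\tilde\alpha(x),\tilde\beta(x)))$, we have $\overbar{T(\alpha,\beta)} = \overbar\alpha$ and $\widetilde{T(\alpha,\beta)} = \lambda x.(\tilde\alpha(x),\tilde\beta(x))$ definitionally, so the elements of $T(\alpha,\beta)$ are exactly the ordered pairs $(\tilde\alpha(x),\tilde\beta(x))$ indexed by $x : El(\overbar\alpha)$. Here $(-,-)$ denotes the realisation in $V$ of the Kuratowski ordered pair; I would first record the congruence property that $u \doteq u'$ and $v \doteq v'$ imply $(u,v)\doteq(u',v')$, together with the projection facts, since these are used repeatedly.

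For part (i), to witness that $T(\alpha,\beta)$ is a relation I would build a term in $\llbracket (\forall w \in T(\alpha,\beta))(\exists u)(\exists v)(w \doteq (u,v))\rrbracket = (\Pi x : El(\overbar\alpha))(\Sigma u,v : V)\llbracket (\tilde\alpha(x),\tilde\beta(x)) \doteq (u,v)\rrbracket$ by sending each $x$ to the triple $(\tilde\alpha(x),\tilde\beta(x),r_0)$, where $r_0$ is the canonical reflexivity term for $\doteq$. That the domain is $\alpha$ and the range is $\beta$ then amounts to the two biconditionals $w \in \mathrm{dom}(T(\alpha,\beta)) \Leftrightarrow w \in \alpha$ and $w \in \mathrm{ran}(T(\alpha,\beta)) \Leftrightarrow w \in \beta$; in each case the left-to-right direction reads off the first (resp. second) projection of a pair $(\tilde\alpha(x),\tilde\beta(x))$, and the right-to-left direction feeds the membership witness $\alpha^*(x)$ (resp. $\beta^*(x)$) back in, using $\overbar\alpha = \overbar\beta$ for the range. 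All of this is routine once the pairing congruence is available.

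Part (ii) is where the actual content lies. Unwinding the hypothesis that $\gamma$ is a relation with domain $\alpha$ yields, after applying the membership witnesses $\alpha^*(x)$, a term of type $(\Pi x : El(\overbar\alpha))(\Sigma v : V)\llbracket (\tilde\alpha(x),v) \in \gamma\rrbracket$. I would then use the $\Sigma$-projections pointwise — this is precisely the type-theoretic axiom of choice of Theorem \ref{choice}, which is free in Martin-L\"of type theory — to extract a function $\lambda x.\textsf{p}(\dots) : El(\overbar\alpha) \to V$ and set $\beta := \textsf{sup}(\overbar\alpha, \lambda x.\textsf{p}(\dots))$. By construction $\overbar\beta = \overbar\alpha$ definitionally, and $\widetilde{T(\alpha,\beta)}(x) = (\tilde\alpha(x),\tilde\beta(x))$ with $\tilde\beta(x)$ the chosen witness, so the stored second projections $\textsf{q}(\dots)$ give terms in $\llbracket (\tilde\alpha(x),\tilde\beta(x)) \in \gamma\rrbracket$ for every $x$, i.e. a term in $\llbracket (\forall w \in T(\alpha,\beta))(w \in \gamma)\rrbracket = \llbracket T(\alpha,\beta) \subseteq \gamma\rrbracket$.

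The main obstacle I anticipate is bookkeeping with extensional equality rather than any deep idea: the witnesses supplied by the domain hypothesis match $\tilde\alpha(x)$ only \emph{up to} $\doteq$, so converting them into genuine inhabitants of the membership types requires the pairing congruence together with the extensionality of the formula $w \in \gamma$ (established in the lemma that every formula of $\mathcal L_V$ is extensional in every variable). One must be careful that these transports are carried out uniformly in $x$, so that the final $\lambda$-abstraction type-checks and so that $\beta$ is genuinely a total function of $x$ rather than depending on incidental choices.
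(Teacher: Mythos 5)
Your proposal is correct and follows essentially the same route as the paper: part (i) is the paper's direct construction (each element of $T(\alpha,\beta)$ is the pair $(\tilde{\alpha}(x),\tilde{\beta}(x))$, with diagonal choices of indices and the reflexivity term $r_0$ as witnesses), and part (ii) is exactly the paper's argument, extracting $b : El(\overbar \alpha) \to V$ from the domain hypothesis via the type-theoretic axiom of choice of Theorem \ref{choice} (i.e.\ pointwise $\Sigma$-projections) and setting $\beta := \textsf{sup}(\overbar \alpha , b)$. Your extra remarks on the pairing congruence for $\doteq$ and on uniformity in $x$ are sound bookkeeping that the paper leaves implicit.
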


\begin{proof}
\begin{enumerate}[label=(\roman*)]
\item[]

\item Let $\gamma$ be the set $T(\alpha , \beta)$. Then by choosing $x=y=u$ we get a term in:
$$(\Pi u:El(\overbar \alpha))(\Sigma x:El(\overbar \alpha))(\Sigma y: El(\overbar \alpha)) (\tilde{\gamma}(u) \doteq ( \tilde{\alpha}(x) , \tilde{\beta}(y) )) $$
so that we have a proof of:
$$(\forall u \in \gamma)(\exists x \in \alpha )(\exists y \in \beta)(u \doteq ( x,y )) $$

hence $\gamma$ is a relation whose domain is a subset of $\alpha$ and whose range is a subset of $\beta$. Also, by choosing $u=y=x$, we get:
$$(\forall x \in \alpha)(\exists u \in \gamma)(\exists y \in \beta) (u \doteq ( x,y )) $$
thus  $\alpha$ is a subset of the domain of $\gamma$, and hence $\gamma $ has domain $\alpha$. Similarly, $\gamma $ has range $\beta$.

\item Let $\gamma : V$ be a relation with domain $\alpha$, then $(\Pi x:El(\overbar \alpha))(\Sigma x:V)(( \tilde{\alpha}(x) , z ) \in \gamma) $. By the type-theoretic axiom of choice \ref{choice} there is $b:El(\overbar \alpha) \to V$ such that $(\forall x:A)(( \tilde{\alpha}(x) , b(x) ) \in \gamma)$, so that if $\beta := \textsf{sup}(\overbar \alpha , b)$ then $\beta : V$ with $\overbar \beta = \overbar \alpha$ and $(\Pi x \in T(\alpha , \beta)) (x \in y)$.

\end{enumerate}
\end{proof}

\nind
Now we give a definition in type theory, which is internal to type theory and is not an extensional notion, but is closely related to choice principles and will be one of the key ingredient of the proof, being a bridge between the extensional equality $\doteq$ and the intensional identity types of sets.

\begin{defn}
A set $\alpha : V$ is \textbf{injectively presented}\index{injectively presented set} iff for all $x_1 , x_2 : El(\overbar \alpha)$ we have $$(\tilde{\alpha}(x_1) \doteq \tilde{\alpha}(x_2)) \to Id_{El(\overbar \alpha)} (x_1 , x_2)$$
\end{defn}

\begin{lem}\label{dclem}
Let $\alpha : V$ be injectively presented. Then:
\begin{enumerate}[label=(\roman*)]

\item If $ \beta : V$, such that $\overbar \beta = \overbar \alpha$ and $\delta : V$, then for all $x:El(\overbar \alpha)$ we have $$(( \tilde{\alpha}(x) , \delta ) \in T(\alpha , \beta)) \Leftrightarrow (\delta \doteq \tilde{\beta}(x)) $$

\item If $\gamma : V$ then $\gamma$ is a function with domain $\alpha$ iff $\gamma \doteq T(\alpha , \beta)$ for some $\beta : V$ such that $\overbar \beta = \overbar \alpha$; 

\item If $\beta_1 , \beta_2 : V$ such that $\overbar \beta_1 = \overbar \beta_2 = \overbar \alpha$ then $$(T(\alpha , \beta_1) \doteq T(\alpha , \beta_2)) \Leftrightarrow (\forall x \in \alpha) (\tilde{\beta_1}(x) \doteq \tilde{\beta_2}(x))$$
\end{enumerate}
\end{lem}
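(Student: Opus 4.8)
The plan is to reduce all three parts to two ingredients. The first is the type-theoretic analogue of the pairing lemma of Chapter~5: for iterative sets, $\llbracket (\gamma_1,\delta_1)\doteq(\gamma_2,\delta_2)\rrbracket$ is inhabited exactly when both $\llbracket\gamma_1\doteq\gamma_2\rrbracket$ and $\llbracket\delta_1\doteq\delta_2\rrbracket$ are. This holds in the interpretation because the pairing lemma is a theorem of the core set theory, and validity is preserved by Theorem~\ref{mainthm} together with soundness of intuitionistic logic. The second ingredient is the hypothesis that $\alpha$ is injectively presented: this is precisely what converts an \emph{extensional} equality $\tilde\alpha(x)\doteq\tilde\alpha(u)$ into an \emph{intensional} identity $Id_{El(\overbar\alpha)}(x,u)$, along which, by Remark~\ref{mainrem} applied to the family $\tilde\beta:El(\overbar\alpha)\to V$, one may transport to obtain $\tilde\beta(x)\doteq\tilde\beta(u)$.

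For (i), I would unfold membership: by definition $(\tilde\alpha(x),\delta)\in T(\alpha,\beta)$ means $(\Sigma u:El(\overbar\alpha))\,\llbracket(\tilde\alpha(x),\delta)\doteq(\tilde\alpha(u),\tilde\beta(u))\rrbracket$, since $\widetilde{T(\alpha,\beta)}(u)=(\tilde\alpha(u),\tilde\beta(u))$. From left to right, apply the pairing lemma to the witness $u$ to extract $\tilde\alpha(x)\doteq\tilde\alpha(u)$ and $\delta\doteq\tilde\beta(u)$; injective presentation yields $Id_{El(\overbar\alpha)}(x,u)$, Remark~\ref{mainrem} gives $\tilde\beta(x)\doteq\tilde\beta(u)$, and symmetry and transitivity of $\doteq$ then produce $\delta\doteq\tilde\beta(x)$. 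Conversely, from $\delta\doteq\tilde\beta(x)$ I would take the witness $u:=x$ and invoke the pairing lemma in the reverse direction, using reflexivity $\tilde\alpha(x)\doteq\tilde\alpha(x)$ together with the hypothesis, to build the required element of the $\Sigma$-type.

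Part (iii) follows the same template: the backward implication uses only the pairing lemma and the observation that $\doteq$ on the two suprema $T(\alpha,\beta_i)=\textsf{sup}(\overbar\alpha,\lambda x.(\tilde\alpha(x),\tilde\beta_i(x)))$ reduces, since both share the index type $El(\overbar\alpha)$, to componentwise equality of the paired values (choosing the witness $y:=x$ on each side); the forward implication extracts from each witness an equality $\tilde\alpha(x)\doteq\tilde\alpha(y)$, promotes it to $Id(x,y)$ by injective presentation, transports $\tilde\beta_2$ along it, and concludes $\tilde\beta_1(x)\doteq\tilde\beta_2(x)$. For (ii), the backward direction combines part~(i)---which shows $T(\alpha,\beta)$ is single-valued up to $\doteq$---with the previous lemma, which already records that $T(\alpha,\beta)$ is a relation with domain $\alpha$; extensionality of the predicate ``is a function with domain $\alpha$'' then transfers the property across $\gamma\doteq T(\alpha,\beta)$. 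For the forward direction I would feed the relation $\gamma$ to the previous lemma to obtain some $\beta$ with $\overbar\beta=\overbar\alpha$ and $T(\alpha,\beta)\subseteq\gamma$, and then use functionality of $\gamma$, together with the fact that $T(\alpha,\beta)$ already meets every $x\in\alpha$, to upgrade the inclusion to $\gamma\doteq T(\alpha,\beta)$.

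The main obstacle I anticipate lies in the forward directions of (i) and (iii), where the interplay between the extensional equality $\doteq$ delivered by the pairing lemma and the intensional identity demanded by injective presentation must be managed explicitly: one has to ensure the transport supplied by Remark~\ref{mainrem} is applied to the correct family and composed in the right order with the symmetry and transitivity proofs for $\doteq$, since the types involved are only propositionally, not definitionally, equal. Establishing the pairing lemma in the interpretation with enough precision to decompose a single given witness---rather than merely asserting the set-theoretic biconditional---is the one place where I would expect to write out the $\Sigma$- and $\Pi$-manipulations in full detail.
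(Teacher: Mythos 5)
Your proposal is correct and follows essentially the same route as the paper: unfold membership in $T(\alpha,\beta)$ as a $\Sigma$-type of pair equalities, decompose via the (interpreted) pairing lemma, use injective presentation to promote $\tilde\alpha(x)\doteq\tilde\alpha(y)$ to $Id_{El(\overbar\alpha)}(x,y)$, transport via Remark~\ref{mainrem}, and then derive (iii) from the same reduction and (ii) from the previous lemma on $T(\alpha,\beta)$ together with part~(i). The only cosmetic difference is that you re-run the pairing-plus-injectivity argument inline for (iii) where the paper simply cites part~(i), and you make explicit the soundness justification for the pairing lemma that the paper leaves implicit.
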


\begin{proof}
\begin{enumerate}[label=(\roman*)]
\item[]

\item Let $\beta : V$ such that $\overbar \beta = \overbar \alpha$ and $\delta : V$, $x: El(\overbar \alpha)$ . Then $( \tilde{\alpha}(x , \delta ) \in T(\alpha , \beta))$ iff $(\Sigma y : El(\overbar \alpha))(( \tilde{\alpha}(x) , \delta ) \doteq ( \tilde{\alpha}(y) , \tilde{\beta}(y) )) $ iff $(\Sigma y : El(\overbar \alpha))(\tilde{\alpha}(x) \doteq \tilde{\alpha}(y) \times \delta \doteq \tilde{\beta}(y))$ iff $(\Sigma y : El(\overbar \alpha)) (Id_{El(\overbar \alpha)}(x,y) \times \delta \doteq \tilde{\beta}(y))$ iff $\delta \doteq \tilde{\beta}(x)$.

\item If $\gamma : V$ is a function with domain $\alpha$ then it is in particular a relation with domain $\alpha$ hence by the second part of the previous theorem there is $\beta : V$ such that $\overbar \alpha = \overbar \beta$ and $T(\alpha , \beta) \subseteq \gamma$. As $\gamma$ is a function with domain $\alpha$ and $T(\alpha , \beta)$ is a relation with domain $\alpha$ it follows that $T(\alpha , \beta) \doteq \gamma$. For the converse let $\gamma \doteq T(\alpha , \beta)$ where $\beta : V$ such that $\overbar \beta = \overbar \alpha$. By the first part of the previous theorem $\gamma$ is a relation with domain $\alpha$. Also, for $x,y : El(\overbar \alpha)$, if $( \tilde{\alpha}(x) , \tilde{\beta}(y) ) \in \gamma$ then by the first part of this theorem we have $\tilde{\beta}(y) \doteq \tilde{\beta}(x)$. Hence:
$$(\forall x \in \alpha)(\forall y_1 \in \beta)(\forall y_2 \in \beta)(( x,y_1 ) \in \gamma \land ( x , y_2 ) \in \gamma \Rightarrow(y_1 \doteq y_2)) $$
so that $\gamma$ is a function with domain $\alpha$.

\item Let $\beta_1 , \beta_2 : V$ such that $\overline{\beta_1} = \overline{\beta_2} = \overbar \alpha$. Then $T(\alpha , \beta_1) \subseteq T(\alpha , \beta_2)$ iff $(\Pi x:El(\overbar \alpha))(( \tilde{\alpha}(x) , \tilde{\beta_1}(x) ) \in T(\alpha , \beta_2))$ iff $(\Pi x:El(\overbar \alpha))(\tilde{\beta_1}(x) \doteq \tilde{\beta_2}(x))$. Similarly for the converse.

\end{enumerate}
\end{proof}

\begin{lem}
The set of natural numbers $\omega$ is injectively presented.
\end{lem}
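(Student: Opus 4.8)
The plan is to unfold the definition of $\omega$ and reduce the assertion to an injectivity property of the recursively defined map $\Delta' : \textbf N \to V$. Since $\omega = \textsf{sup}(n,\Delta)$ we have $\overbar{\omega} = n$ and $\tilde{\omega} = \Delta = \Delta' \tau$, where $\tau : El(n) \to \textbf N$ and $\theta : \textbf N \to El(n)$ are the two directions of the equivalence $\textsf{ceq}_n : \mathit{Equiv}(El(n),\textbf N)$. Thus being injectively presented means producing, for $x_1,x_2 : El(n)$ out of a term of $(\Delta'(\tau(x_1)) \doteq \Delta'(\tau(x_2)))$, a term of $Id_{El(n)}(x_1,x_2)$. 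Because $\tau$ is an equivalence, I would first note that it suffices to build a path in $\textbf N$: from any $p : Id_{\textbf N}(\tau(x_1),\tau(x_2))$ one forms $\textsf{ap}_\theta(p) : Id_{El(n)}(\theta\tau(x_1),\theta\tau(x_2))$ and concatenates it with the homotopy $\theta\tau \sim 1_{El(n)}$ carried by $\textsf{ceq}_n$ to obtain a term of $Id_{El(n)}(x_1,x_2)$. Hence everything reduces to the key lemma: for all $m,k : \textbf N$, the type $(\Delta'(m) \doteq \Delta'(k)) \to Id_{\textbf N}(m,k)$ is inhabited.

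To prove the key lemma I would exploit that $\textbf N$ has decidable equality and a decidable linear order, so that for fixed $m,k$ we may reason by trichotomy. If $m = k$ the conclusion is $\textsf{refl}$. Otherwise, say $m < k$ (the case $k < m$ being symmetric), and I would derive a contradiction from the hypothesis $\Delta'(m) \doteq \Delta'(k)$, which forces $Id_{\textbf N}(m,k)$ via $\textbf N_0$-elimination. The contradiction comes from two facts about the numeral representatives. First, membership nesting: $\Delta'(m) \in \Delta'(k)$ whenever $m < k$; this uses $\alpha \in S(\alpha)$, already established in the Infinity clause of Theorem \ref{mainthm}, together with transitivity of $\Delta'(k)$. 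Second, irreflexivity: the interpretation of $\Delta'(m) \notin \Delta'(m)$ is inhabited. Granting these, from $\Delta'(m) \in \Delta'(k)$ and $\Delta'(k) \doteq \Delta'(m)$ the already-validated equality axiom $x \doteq y \land y \in z \Rightarrow x \in z$ yields $\Delta'(m) \in \Delta'(m)$, contradicting irreflexivity and producing the required term of $\textbf N_0$.

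The main obstacle is establishing the two structural facts about the $\Delta'(m)$, namely transitivity together with nesting, and irreflexivity. These are exactly the type-theoretic analogues of the properties used in the proof that $\mathbb N := (\omega,0,s)$ satisfies the Peano axioms (transitivity of each numeral and $x \notin x$, from which injectivity of the successor was extracted there). I would prove them by induction on the numeral, carrying the statements through $\Delta'(0) = \emptyset$ and $\Delta'(s(m)) = S(\Delta'(m))$; the base case uses that $El(\overbar{\emptyset}) = El(n_0)$ is equivalent to the empty type $\textbf N_0$, and the successor step uses the explicit description of $S(\alpha)$ whose elements are those of $\alpha$ together with $\alpha$ itself. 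The only genuinely delicate point is that each membership and equality here is interpreted as a type, so the ``contradiction'' must be delivered as an actual term of $\textbf N_0$ and the inductive hypotheses must be phrased as inhabitations of the interpreted formulas $\llbracket \Delta'(m) \in \Delta'(k)\rrbracket$ and $\llbracket \Delta'(m)\notin\Delta'(m)\rrbracket$; once this bookkeeping is set up, the arguments run parallel to the classical ordinal computation.

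Finally, I would remark that, unlike the corresponding step in Lemma \ref{mainlem}, this proof needs neither function extensionality nor the weakening of the universe beyond the single equivalence $\textsf{ceq}_n$: the weak computation rule enters only in transporting the path from $\textbf N$ back to $El(n)$ in the first paragraph, and the rest is carried out in plain Martin-L\"of type theory using the decidability of equality on $\textbf N$.
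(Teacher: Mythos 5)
Your reduction is exactly the paper's: unfold $\omega = \textsf{sup}(n,\Delta)$ with $\Delta = \Delta'\tau$, and factor the desired map as
$(\Delta'\tau(x_1) \doteq \Delta'\tau(x_2)) \to Id_{\textbf N}(\tau(x_1),\tau(x_2)) \to Id_{El(n)}(\theta\tau(x_1),\theta\tau(x_2)) \to Id_{El(n)}(x_1,x_2)$,
the last two arrows coming from $\textsf{ap}_\theta$ and the homotopy data of $\textsf{ceq}_n$. Where you genuinely diverge is in the key lemma $(\Delta'(m) \doteq \Delta'(k)) \to Id_{\textbf N}(m,k)$. The paper proves it by a direct double induction on $m$ and $k$, reading the required identity off the bisimulation itself: trivially at $(0,0)$, by emptiness of $El(n_0)$ in the mixed cases, and in the successor case by extracting $\Delta'(x) \doteq \Delta'(y)$ from $\Delta'(s(x)) \doteq \Delta'(s(y))$. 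You instead decide the order relation between $m$ and $k$ first and then refute the hypothesis in the off-diagonal cases via nesting, transitivity and irreflexivity of the numerals. Both routes are sound, and in fact they rest on the same structural content: the paper's successor step, glossed as ``easy'', needs essentially the transitivity and irreflexivity facts you prove (just as in the Peano-axioms theorem of the CZF chapter), so your version makes that hidden content explicit. What the paper's route buys is economy: it needs no order on $\textbf N$ and no decidability, only induction and the bisimulation. Your route costs extra infrastructure: defining $<$ (or decidable equality) as a family over $\textbf N \times \textbf N$ requires recursion into the universe, so your closing claim that nothing beyond the single equivalence $\textsf{ceq}_n$ is used from the weak universe is an overstatement --- you also need the names $n_0$, $n_1$ with their computation equivalences, and transport in $El$-valued families, both to inhabit the trichotomy and to convert inhabitants of $El(n_0)$ into $\textbf N_0$.

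One further slip to fix: the step from $\Delta'(m) \in \Delta'(k)$ and $\Delta'(m) \doteq \Delta'(k)$ to $\Delta'(m) \in \Delta'(m)$ is not an instance of the equality axiom $x \doteq y \land y \in z \Rightarrow x \in z$, which substitutes equals in the \emph{left} argument of $\in$; what you need is extensionality of membership in its \emph{right} argument, $u \in v \land v \doteq w \Rightarrow u \in w$. That fact is available --- it is part of the paper's lemma that every formula of $\mathcal L_V$ is extensional in every variable, used throughout Theorem \ref{mainthm} --- but it is the correct thing to cite here.
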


\begin{proof}
First of all recall the definition of the set of natural numbers in type theory $\omega := \textsf{sup}(n , \Delta)$ with the usual diagram:

$$\xymatrix@1{
El(n) \ar[r]^{\tau} \ar@/_2pc/[rr]^{\Delta}
& \mathbf N \ar[r]^{\Delta'}  \ar@<1ex>[l]  \ar@/_1pc/[l]_{\theta} 
& V 
}$$

\nind
Our goal is to prove $(\Delta(x) \doteq \Delta(y)) \to Id_{El(n)} (x,y)$. In order to do this we split the map in three as: $(\Delta' \tau(x) \doteq \Delta' \tau(y)) \to Id_{\textbf N}(\tau(x) , \tau(y)) \to Id_{El(n)}(\theta \tau(x) , \theta \tau(y)) \to Id_{El(n)} (x,y)$.\\
The latter is given as usual by the principle of indiscernibility of identicals, the second is simply the application of $\theta$ to a path, whereas the first can be obtained by a routine double induction on $\textbf N$ on $\tau(x)$ and within that on $\tau(y)$, at the key steps two terms are needed one for the empty set and one for the successors: $(\void \doteq \void) \to Id_{\textbf N}(0,0)$ and $(\Delta'(s(x)) \doteq \Delta'(s(y))) \to Id_{\textbf N}(x,y)$, extracting these is easy.
\end{proof}

\begin{thm}
The set-theoretic axiom of dependent choice is valid in the interpretation, more generally for any extensional family of types $B(x)$ over $V$ and for any family of types $F(x,y)$ over $V \times V$ extensional in each argument, such that
$$(\Pi x : V)(B(x) \to (\Sigma y : V)(B(x) \times F(x,y))) $$
Then for each $\alpha : V$ such that $B(\alpha)$ there is $\delta : V$ such that $\delta$ is a function with domain $\omega$, moreover $( \void , \alpha ) \in \delta$ and for every $x \in \omega$ and for all $\beta , \gamma : V$
$$( \llbracket ( x , \beta ) \in \delta  \rrbracket \times \llbracket ( S(x) , \gamma  ) \in \delta \rrbracket) \to (B(\beta) \times F(\beta , \gamma)) $$
\nind
where $S(x)$ is the successor of $x$.
\end{thm}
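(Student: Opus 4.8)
The plan is to transport the type-theoretic scheme of dependent choice (theorem \ref{dc}) across the equivalence that presents $\omega$, and then to repackage the resulting type-level sequence as an iterative set via the $T(-,-)$ construction. First I would instantiate theorem \ref{dc} with $A := V$ and with the given extensional families $B$ and $F$; the hypothesis of the present theorem supplies the premiss of \ref{dc}, so for each $\alpha : V$ with a witness of $B(\alpha)$ I obtain a function $z : \textbf N \to V$ together with a term of $G(\alpha,z)$, that is, a proof of $Id_V(z(0),\alpha)$ and a term of $(\Pi n:\textbf N)[B(z(n)) \times F(z(n),z(s(n)))]$.

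Next I would turn $z$ into the value-part of a set. Recalling $\omega = \textsf{sup}(n,\Delta)$ with $\Delta = \Delta' \tau$ and the equivalence $\tau : El(n) \to \textbf N$ with inverse $\theta$, I set $\beta := \textsf{sup}(\overbar\omega, \lambda x. z(\tau(x)))$, so that $\overbar\beta = \overbar\omega$ and $\tilde\beta(x) \doteq z(\tau(x))$, and then define $\delta := T(\omega,\beta)$. Because $\omega$ is injectively presented (proved just above), I may apply lemma \ref{dclem}(ii) to conclude that $\delta$ is a function with domain $\omega$; this disposes of the first required clause.

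For $(\void,\alpha) \in \delta$ I would evaluate at the domain element $x_0 := \theta(0)$. The homotopy $\tau\theta \sim \mathrm{id}$ gives $Id_{\textbf N}(\tau(x_0),0)$, whence $\Delta(x_0) \doteq \Delta'(0) = \void$ and $z(\tau(x_0)) \doteq z(0)$ by remark \ref{mainrem}; combining the latter with $Id_V(z(0),\alpha)$ (again through \ref{mainrem}) yields $\tilde\delta(x_0) = (\tilde\omega(x_0),\tilde\beta(x_0)) \doteq (\void,\alpha)$, and then $\delta^*(x_0)$ together with the extensionality of membership gives the claim.

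The last and hardest clause is the step relation. Given $x \in \omega$ with $(x,\beta')\in\delta$ and $(S(x),\gamma')\in\delta$, I would first use $x \in \omega$ to obtain $x' : El(n)$ with $x \doteq \Delta(x')$, and write $k := \tau(x')$, so $x \doteq \Delta'(k)$. Lemma \ref{dclem}(i), valid since $\omega$ is injectively presented, then forces $\beta' \doteq z(k)$. The subtle point is aligning the set-theoretic successor with the type-theoretic one: from $\Delta'(s(k)) = S(\Delta'(k))$ and $S(x) \doteq \Delta'(s(k))$ I would recover, via the injective presentation of $\omega$ and the homotopies for $\tau$ and $\theta$, a domain element presenting $S(x)$ whose index is $s(k)$, and a second application of \ref{dclem}(i) gives $\gamma' \doteq z(s(k))$. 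Finally the index-$k$ component of the term of $G(\alpha,z)$ provides $B(z(k)) \times F(z(k),z(s(k)))$, and extensionality of $B$ in its argument and of $F$ in each argument transports this along $\beta' \doteq z(k)$ and $\gamma' \doteq z(s(k))$ to the desired $B(\beta') \times F(\beta',\gamma')$. The main obstacle is precisely this successor alignment, since $S$ lives in $V$ while $s$ lives in $\textbf N$ and the two are tied together only through the recursive definition of $\Delta'$ and the injectivity of the presentation of $\omega$.
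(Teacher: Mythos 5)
Your proposal is correct and follows essentially the same route as the paper's proof: instantiate the type-theoretic dependent choice theorem at $A := V$, package the resulting sequence as $\delta := T(\omega, \textsf{sup}(n, z\tau))$, and use the injective presentation of $\omega$ together with lemma \ref{dclem} and the homotopies $\tau\theta \sim \mathrm{id}$ to handle the base case at $\theta(0)$ and the successor alignment via $\theta s \tau$. The only cosmetic difference is that you extract a presenting index from $x \in \omega$ and transport by extensionality, whereas the paper quantifies directly over $x : El(n)$; the substance is identical.
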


\begin{proof}
The idea is to use the type-theoretic axiom of dependent choice and then obtain the set-theoretic analogue using the fact that $\omega$ is injectively presented and the previous constructed function $T(\alpha , \beta)$.\\
Let $\alpha : V$ be such that $B(\alpha)$. Then by the type theoretic axiom of dependent choice \ref{dc} exists $c':N \to V$ such that $Id_{V}(c'(0), \alpha)$ and $B(c'(n)) \times F(c'(n),c'(s(n)))$. Then consider the diagram:

$$\xymatrix@1{
El(n) \ar[r]^{\tau} \ar@/_2pc/[rr]^{c}
& N \ar[r]^{c'}  \ar@<1ex>[l]  \ar@/_1pc/[l]_{\theta} 
& V 
}$$

\nind
and define $\eta := \textsf{sup}(n,c)$, recall that $\omega = \textsf{sup}(n, \Delta)$, hence $\overbar \eta = \overbar \omega$. If $\delta = T(\omega , \eta)$. $\omega$ is injectively presented by the previous lemma then by the second part of \ref{dclem} $\delta$ is a function with domain $\alpha$. As $( \void , \alpha )  \doteq ( \void , c'(0) ) \doteq ( \Delta\theta(0) , c\theta(0) ) \doteq ( \tilde{\omega}\theta(0) ,  \tilde{\eta}\theta(0) ) \doteq \tilde{\delta}\theta(0)$, by the usual trick it follows that $( \void , \alpha  ) \in \delta$.\\
Finally, let $x:El(n)$ and $\beta , \gamma : V$ such that $(  \tilde{\omega}(x) , \beta )\in \delta$ and $( S(\tilde{\omega}(x)) , \gamma ) \in \delta$. Observe that $( \tilde{\omega}(x) , \beta ) \in \delta$ implies directly by the first part of \ref{dclem} that $\beta \doteq \tilde{\eta}(x) = c(x) =c'(\tau(x)) $. In order to use the extensionality of the families $B$ and $F$ we just need the analogous for $\gamma$. So consider the successor internal to $El(n)$ given by $\theta s \tau (x)$, we have that $S(\tilde{\omega}(x)) \doteq \tilde{\omega}(\theta s \tau (x))$, which is a simple calculation. Therefore by the first part of \ref{dclem} we obtain $\gamma \doteq \tilde{\eta}(\theta s \tau (x)) = c(\theta s \tau (x)) = c'(s \tau (x))$, so that we have the thesis.

\end{proof}

\nind
Finally, we study the regular extension axiom.

\begin{thm}
REA is valid in the interpretation.
\end{thm}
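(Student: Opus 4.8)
For the final theorem the goal is to show that the type $\llbracket \forall x\,\exists y\,(\mathrm{Reg}(y)\wedge x\subseteq y)\rrbracket$ is inhabited, where $\mathrm{Reg}(y)$ is the formula expressing that $y$ is transitive and reflects total relations (the defining clauses of a regular set). So for each $\alpha:V$ the plan is to produce an iterative set $\rho:V$ together with terms witnessing that $\rho$ is transitive, that $\rho$ satisfies the regularity reflection clause, and that $\llbracket\alpha\subseteq\rho\rrbracket$ is inhabited; by the lemma that every formula of $\mathcal L_V$ is extensional it is enough to treat a canonical $\alpha=\textsf{sup}(a,f)$. I would follow Aczel \cite{Acz3}, transposing his construction of regular sets into the present interpretation.

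For the construction of $\rho$, I would first build the type-theoretic analogue of the transitive closure of $\alpha$ (the argument of Theorem \ref{trans}, carried out inside $V$ by recursion over $\mathbf N$), and then close it under the operation that, from an element already collected with index name $c:U$ and a function from $El(c)$ into the elements collected so far, forms the corresponding image set. The crucial point is that this closure can be realised as a single iterative set whose index is a small type assembled from $U$ by means of its own closure under $\sigma$, $\pi$, $+$ and the $W$-constructor: the resulting $W$-type structure is exactly what guarantees that $\rho$ is closed under images of total relations, the universe's reflection of the type constructors supplying the ``regular bound'' that an $\omega$-iteration alone could not provide.

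To verify regularity, given $a\in\rho$ and a total relation $R: a\righttail\rho$, I would invoke the type-theoretic axiom of choice (Theorem \ref{choice}) to convert totality into an actual choice function; since $a$ occurs in $\rho$ with an index name $\overbar a:U$, this yields a family $El(\overbar a)\to V$ whose values lie among the elements of $\rho$, and its supremum $b$ is again a stage of the closure, hence $b\in\rho$ with $R: a\leftrighttail b$ by construction. This step reuses the apparatus already developed for dependent choice: the relation set $T(\alpha,\beta)$, the notion of injectively presented set and Lemma \ref{dclem}, together with the validity of strong and subset collection established in Theorem \ref{mainthm}. Transitivity of $\rho$ and $\llbracket\alpha\subseteq\rho\rrbracket$ then follow from the fact that the closure contains the transitive closure of $\alpha$.

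Finally there is the weak-universe bookkeeping. Every identification that the strong-Tarski argument would take judgementally — $El(\pi(a,b))=(\Pi x:El(a))El(b(x))$, $El(\sigma(a,b))=(\Sigma\ldots)$, and so on — is now available only as the canonical equivalence $\textsf{ceq}$, so each use must be mediated by transport along $\textsf{ceq}$, by indiscernibility of identicals (Remark \ref{mainrem}) and by transitivity of $\doteq$, precisely as in the proof of Theorem \ref{mainthm}. I expect the genuine difficulty to lie in the construction of $\rho$ itself: one must check that the operation producing the new names in $U$ at each stage is coherent up to these equivalences, and that the regularity reflection property is preserved under transport across them. This is where function extensionality and the equivalence-transport results (Lemma \ref{mainlem} and Theorem \ref{idsum}) are needed, and where the weakening of the universe makes the argument substantially more delicate than in Aczel's definitional-equality setting.
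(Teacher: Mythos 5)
Your plan follows the same route as the paper's proof (Aczel's argument from \cite{Acz3} transposed to the weak-universe setting): reduce via the transitive closure \ref{trans} to covering a transitive set $\alpha_0$; realise the covering set as the image of a $W$-type, using the universe's reflection of $W$-types — in the paper, $\rho := \textsf{sup}(w(\overbar{\alpha_0},\overline{\tilde{\alpha_0}(x)}),h)$, where $h'$ sends a tree $\textsf{sup}(a,f)$ to $\textsf{sup}(\overline{\tilde{\alpha_0}(a)},\lambda u.\,h'(f(u)))$; verify the reflection property with the type-theoretic axiom of choice \ref{choice}; and mediate every would-be judgemental equality by the canonical equivalences $\tau,\theta$ and Remark \ref{mainrem}. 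The genuine gap is your final step: the claim that transitivity of $\rho$ and $\llbracket \alpha \subseteq \rho \rrbracket$ ``follow from the fact that the closure contains the transitive closure of $\alpha$.'' In the $W$-type realisation the elements of $\rho$ are (extensionally equal to) images $h(c)$ of trees, and the elements of the transitive set are \emph{not} among them by construction; that $\rho$ contains $\alpha_0$ is exactly what must be proved, and it is the most delicate part of the paper's argument. There one shows $\llbracket \beta \in \alpha_0 \rrbracket \to \llbracket \beta \in \rho \rrbracket$ by set-induction on $\beta$: the transitivity of $\alpha_0$ together with the induction hypothesis gives $\beta \subseteq \rho$, and then the reflection property applied to the extensional family $F(x,y) = (x \doteq y)$ produces an element of $\rho$ extensionally equal to $\beta$. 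Without this set-induction your $\rho$ may well be regular, but nothing shows that it extends $\alpha$. (Transitivity of $\rho$ is likewise proved by induction over trees, not inherited from $\mathrm{TC}(\alpha)$.)

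A second, related problem is that the machinery you invoke for the regularity step is partly misplaced, and this hides the structural fact that makes the step work. Injectively presented sets, $T(\alpha,\beta)$ and Lemma \ref{dclem} belong to the dependent-choices theorem; they are not used for REA and could not be, since nothing guarantees that the elements of $\rho$ are injectively presented. What is actually needed is (i) the reflection property stated for arbitrary extensional families $F$ over $V \times V$ (which subsumes set-relations $R$ via $F(x,y) := \llbracket (x,y) \in R \rrbracket$), and (ii) the fact that every element of $\rho$ has index name, up to $\doteq$, of the form $\overline{\tilde{\alpha_0}(a)}$ for some $a : El(\overbar{\alpha_0})$ — the branching types of the $W$-type are only the fibres of the original family. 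This uniformity is what allows the function produced by the type-theoretic axiom of choice, $f : El(\overline{\tilde{\alpha_0}(a)}) \to El(w(\overbar{\alpha_0},\overline{\tilde{\alpha_0}(x)}))$, to be repackaged as a new tree $\textsf{sup}(a,\tau f)$ whose image under $h'$ is the required $\beta' \in \rho$. Your looser description — ``an element already collected with index name $c : U$ and a function from $El(c)$ into the elements collected so far'' — would demand closure under images indexed by arbitrary previously collected names, which a single plain $W$-type does not obviously provide; and the closure of $U$ under $\sigma$, $\pi$, $+$ plays no role here, only the reflection $w$ of $W$-types does.
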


\begin{proof}
In CZF every set is a subset of a transitive set, for example the transitive closure \ref{trans}. Hence it suffices to show that if $\alpha_0 : V$ is transitive, then there is a regular set $\alpha :V$ such that $\alpha_0 \subseteq \alpha$. Then it suffices to show that $\alpha $ is transitive and that for each family of types $F$ over $V \times V$ we have:

\begin{itemize}
\item[(*)]$(\forall x \in \beta)(\exists y \in \alpha)F(x,y) \Rightarrow (\exists \beta ' \in \alpha) F'(\beta , \beta ')$ 
\end{itemize}

\nind
where $F'(\beta , \beta ') := (\forall x \in \beta)(\exists y \in \beta ')F(x,y) \times (\forall y \in \beta ')(\exists x \in \beta)F(x,y)$.\\
So let $\alpha_0 : V$ be transitive and define $A_0 := El(\alpha_0)$ hence we have $b_0 := \lambda x. \overline{\tilde{\alpha}(x)} : El(\alpha_0) \to U$ and then $B_0(x) := El( b_0(x))$. Now we form the $W$-type of this family: $A := (W x:A_0)B_0(x) = (W x:El(\overbar \alpha_0))El(\overline{\tilde{\alpha}(x)})$, and the usual diagram:

%\[\xymatrixcolsep{3pc}
$$
\xymatrix@1{
El(w(\overbar \alpha_0 , \overline{\tilde{\alpha}_0(x)})) \ar[r]^-{\tau} \ar@/_2pc/[rr]^{h}
& A \ar[r]^{h'}  \ar@<1ex>[l]  \ar@/_1pc/[l]_{\theta} 
& V 
}$$

\nind
where $h'$ is defined by transfinite recursion over the $W$-type as $h'(\textsf{sup}(a,f)) := (\textsf{sup} \, u:B_0(a)) h'f(u)$ given $a:A_0$ and $f:B_0(a) \to A$. We define the desired extension as $\alpha := \textsf{sup}(w(\overbar \alpha_0 , \overline{\tilde{\alpha_0}(x)}) , h)$.\\

Now we prove an intermediate step: let $\beta : V$, if $\beta \doteq \gamma$ for some $\gamma : V$ such that $\overbar \gamma = \overline{\tilde{\alpha_0}(a)}$ for some $x:El(\overbar \alpha_0 )$, then for every extensional family of types $F$ over $V \times V$ the $(*)$ is valid.\\
Assume that $(\Pi x:El(\overbar \beta))(\Sigma y : El(\overbar \alpha))F(\tilde{\beta}(x) , \tilde{\alpha}(y))$. Recall the definition of $\alpha$, hence by the hypotheses we have $(\Pi x:El(\overline{\tilde{\alpha_0}(a)}))(\Sigma y : El(w(\overbar \alpha_0 , \overline{\tilde{\alpha_0}(a)})))  F(\tilde{\beta}(x) , h(y))$. Hence by the type-theoretic axiom of choice there is an $f:B_0(a) \to El(w(\overbar \alpha_0 , \overline{\tilde{\alpha_0}(a)}))$ such that the following condition $[*]$ holds: $(\Pi x:B_0(a))F(\tilde{\gamma}(x) , hf(x))$.\\
Next consider the composition $\displaystyle B_0 \mathop{\longrightarrow}^{\tau f} A \mathop{\longrightarrow}^{h'} V$ and consider the supremum $\textsf{sup}(a , \tau f) : A$. We define the desired set as $\beta ' := h'(\textsf{sup}(a , \tau f)) = (\textsf{sup} \, u:B_0(a))h f(u)$ hence $\overbar \beta ' = \overbar \gamma$ and $\tilde{\beta '} = hf$. This yields to $(\Pi x:B_0(a))F(\tilde{\gamma}(x) , \tilde{\beta '}(x))$ by condition $[*]$. Therefore we have $F'(\gamma , \beta ')$. As $\beta \doteq \gamma$ we get $F'(\beta , \beta ')$ by extensionality, as desired.\\

Now return to our set $\alpha$, we show that it is regular. For simplicity define $w:= w(\overbar \alpha_0 , \overline{\tilde{\alpha_0}(a)})$. Let $\beta \in \alpha$ then $\beta \doteq h(c)$ for some $c: El(w)$, we want a map:
$$(\Sigma c:A) \llbracket \beta \doteq h'(c) \rrbracket \to \llbracket \beta \subseteq \alpha \rrbracket \times \llbracket (\forall x \in \beta)(\exists y \in \alpha)F(x,y) \Rightarrow (\exists \beta ' \in \alpha) F'(\beta , \beta ') \rrbracket$$

\nind
and we build it on canonical terms: so by induction suppose $c= \textsf{sup}(a,f)$ for $a:El(\overbar \alpha_0)$ and $f:B_0(a) \to A$, then $\beta \doteq h'(\textsf{sup}(a,f)) = (\textsf{sup} \, u:B_0(a))h'f(u)$. As $h'f(u) \in \alpha$ for $u:B_0(a)$ it follows that $\beta \subseteq \alpha$. Finally, note that the assumptions of the intermediate step hold with $\gamma = h'(c)$.

It remains to show that $\alpha_0 \subseteq 	\alpha$. We show that $\llbracket \beta \in \alpha_0 \rrbracket \to \llbracket \beta \in \alpha \rrbracket$ by set-induction on $\beta :V$. As inductive hypothesis we assume $(\forall y \in \beta)(y \in \alpha_0 \Rightarrow y \in \alpha)$. Now if $\beta \in \alpha_0$ then $\beta \doteq \tilde{\alpha_0}(a)$ for some $a:El(\overbar \alpha_0)$ so the assumption of the intermediate step holds for $\gamma = \tilde{\alpha_0}(a)$, hence $(*)$ holds for $F(x,y) = (x \doteq y)$. As $\alpha_0$ is transitive by the induction hypothesis we have $\beta \subseteq \alpha$, from this proof we want a proof of the thesis.\\
Note that $(\forall x \in \beta)(\exists y \in \alpha)(x \doteq y) \Rightarrow (\exists \beta ' \in \alpha)(\beta \doteq \beta ')$ is equivalent to $(\forall x \in \beta)(x \in \alpha) \Rightarrow (\beta \in \alpha)$.

\end{proof}

\section*{Conclusions}

Let start with some remarks: extensionality, set-induction and subset collection do not need any change in the previous theorems, the standard proofs generalise word by word. For the other axioms some changes are needed in order to handle the weakening of the universe. In particular the three nontrivial points are given by restricted separation and infinity which rely on lemma \ref{mainlem}, and the discussion about dependent choices which needs few more words than the usual reformulations.\\
Observe that we have used function extensionality just for lemma \ref{mainlem}. This yields to the question if it is necessary for the lemma and for restricted separation. However, we will not address this question here.

We cannot omit a brief comparison with the treatment of the cumulative hierarchy in \cite{UFP}. It is defined as an higher inductive type with the aim to provide a model of ZFC (for some improvements on the definition see \cite{Led}). Thank to its definition is possible to prove that for $x,y:V$ the type $(x \doteq y) \to Id_V(x,y)$ is inhabited. In spite of this result this notion seems unable to prove strong collection and subset collection. Finally, in order to build the desired model of ZFC an additional axiom is added to type theory, namely the set-theoretical axiom of choice, which yields full excluded middle.\\

This generalised interpretation has implications on the status of the axiom of univalence, in fact univalence is a statement about the universe. Now we know that weak univalent Tarski universes - which are easier to obtain in semantics - are good as the strict ones, if we are interested in obtaining models of constructive set theory.\\
Therefore one may wonder if there are substantial differences between the consequences of the existence of strict univalent universes and weak ones. We expect that differences between these two variants emerge when the proof-theoretic and computational structure of the theory are analysed. Indeed, the rules for the weak universe break the symmetry of the computational rules introducing equivalences instead of syntactical definitional equalities.\\

In addition, this result may be seen as a hint that also (some of) the other coherence conditions can be weakened, hence that homotopical models can provide motivations and play a role in the development of type theoretical system with explicit substitution, as suggested in \cite{AW}(for example they are studied in \cite{ACCL}). These systems have strong syntactical motivations and are studied with the hope to fill the gap between type theories and concrete implementations, in fact substitutions are usually treated implicitly, hiding relevant computations in the meta-theoretical level.\\

In our opinion the next step in the direction of the study of weak univalent universes is to check if they imply function extensionality. We expect this to be true. Indeed, the main reason for the introduction of an extensional principle like function extensionality - with the risk to break the computational content of the theory - is the study of the models in which this principle is validated, which are also the main motivation for weak universes.

%%%%%%%%%%%%%%%%%%%%%%%%%%%%%%%%%%%%%%%%%%%%%%%%%%%%%%%%%%%%%%
%%%%%%%%%%%%%%%%%%%%%%%%%%%%%%%%%%%%%%%%%%%%%%%%%%%%%%%%%%%%%%
%%%%%%%%%%%%%%%%%%%%%%%%%%%%%%%%%%%%%%%%%%%%%%%%%%%%%%%%%%%%%%
%%%%%%%%%%%%%%%%%%%%%%%%%%%%%%%%%%%%%%%%%%%%%%%%%%%%%%%%%%%%%%
%-----------------------------------------------------------
\appendix

\chapter{Locally Cartesian Closed Categories}
\label{lccc}

\lettrine{I}{n} this appendix we recall the very basics about locally cartesian closed categories, we refer the reader to \cite{Joh} chapter A1.5.

\begin{defn}
A category $\mathcal C$ is \textbf{locally cartesian closed}\index{category!locally cartesian closed} (lccc for short) iff every slice $\mathcal C / A$ is cartesian closed.
\end{defn}

\nind
Observe that \textbf{Cat} is not locally cartesian closed because pullback functors do not in general preserve coequalizers.
Given an object $B$ in $\mathcal C$ we use the same letter for the unique morphism $B \to 1$, thus $B^*: \mathcal C \to \mathcal C / B$ denotes the right adjoint of the forgetful functor $\Sigma_B : \mathcal C / B \to \mathcal C$. $B^*(A)$ is the product projection $A \times B \to B$.

\begin{lem}
Let $\mathcal C$ be a category with finite limits. An object $B$ of $\mathcal C$ is exponentiable iff the functor $B^*: \mathcal C \to \mathcal C /B$ has a right adjoint $\Pi_B : \mathcal C / B \to \mathcal C$.
\end{lem}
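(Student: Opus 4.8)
The plan is to exploit the factorisation of the product functor $-\times B$ through the slice $\mathcal C/B$. First I would record that for every object $A$ one has $\Sigma_B(B^*A)=A\times B$, so that the endofunctor $-\times B$ decomposes as $\Sigma_B\circ B^*$, and recall from the setup that $\Sigma_B\dashv B^*$. Granting this, the implication from the existence of $\Pi_B$ to exponentiability is immediate: if $B^*\dashv\Pi_B$, then $-\times B=\Sigma_B\circ B^*$ is a composite of left adjoints, hence itself a left adjoint, with right adjoint $\Pi_B\circ B^*$; thus exponentials exist and $A^B\cong\Pi_B(B^*A)$.

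For the converse I would construct $\Pi_B$ explicitly from the exponentials. Write $(-)^B$ for the right adjoint of $-\times B$, and let $p:1\to B^B$ denote the transpose of the identity $\mathrm{id}_B:1\times B\cong B\to B$. Given an object $f:X\to B$ of $\mathcal C/B$, I would define $\Pi_B f$ as the pullback of $f^B:X^B\to B^B$ along $p$; this uses that $\mathcal C$ has finite limits, so that both the terminal object and the pullback are available. The assignment $f\mapsto\Pi_B f$ is then functorial by functoriality of $(-)^B$ and of pullbacks.

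The heart of the argument is to produce a bijection $\mathcal C(A,\Pi_B f)\cong(\mathcal C/B)(B^*A,f)$ natural in $A$. A morphism $B^*A\to f$ in $\mathcal C/B$ is a map $g:A\times B\to X$ with $f\circ g$ equal to the projection $\pi:A\times B\to B$; transposing across $-\times B\dashv(-)^B$ turns $g$ into $\tilde g:A\to X^B$. The key computation — which I expect to be the main obstacle — is that this projection $\pi$ transposes to the composite $p\circ{!_A}:A\to B^B$, where $!_A:A\to 1$ is the unique map to the terminal object, so that the constraint $f\circ g=\pi$ becomes exactly $f^B\circ\tilde g=p\circ{!_A}$. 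By the universal property of the pullback defining $\Pi_B f$, such $\tilde g$ correspond precisely to maps $A\to\Pi_B f$, and one checks this correspondence is natural in $A$. This establishes $B^*\dashv\Pi_B$ and completes the proof; the only delicate points are the identification of the transpose of $\pi$ and the verification that the bijection respects the pullback universal property coherently in $A$.
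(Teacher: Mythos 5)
Your proof is correct and follows essentially the same route as the paper: the decomposition $-\times B=\Sigma_B\circ B^*$ handles one direction, and for the converse the paper likewise defines $\Pi_B(f)$ as the pullback of $f^B$ along the transpose $\bar q:1\to B^B$ of the projection $1\times B\to B$ (your $p$), obtaining the adjunction from the same correspondence of transposes. Your ``key computation'' that the projection $A\times B\to B$ transposes to $\bar q\circ{!_A}$ is exactly the step the paper leaves implicit in its phrase ``and hence to morphisms $h:C\times B\to A$ such that $fh$ is the product projection.''
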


\begin{proof}
Observe that $(-) \times B$ is the composite $\displaystyle \mathcal C \mathop{\longrightarrow}^{B^*} \mathcal C / B \mathop{\longrightarrow}^{\Sigma_B} \mathcal C$, so if $\Pi_B$ exists we may define $(-)^B$ as the composite $\Pi_B \circ B^*$.\\
Conversely, suppose $B$ exponentiable. Given $f:A \to B$ form the pullback:

$$\xymatrix@1{
& \Pi_B(f) \ar[r] \ar[d] 
& A^B \ar[d]^{f^B} \\
& 1 \ar[r]^{\bar q}
& B^B
}$$

\nind
where $\bar q$ is obtained by adjunction from the projection $q:1 \times B \to B$. Now for any object $C$ of $\mathcal C$, morphisms $C \to \Pi_B(f)$ correspond to morphisms $\bar h : C \to A^B$ such that $f^B \bar h = \bar q C$, and hence t morphisms $h:C \times B \to A$ such that $fh$ is the product projection $C \times B \to B$, i.e. to morphisms $B^*(C) \to f$ in $\mathcal C / B$. It is straightforward to verify that this correspondence is natural in $C$ and $f$.
\end{proof}

\begin{cor}
A category with finite limits is locally cartesian closed iff for every morphism $f:A \to B$, the base change functor $f^*: \mathcal C / B \to \mathcal C / A$ has a right adjoint $\Pi_f$.
\end{cor}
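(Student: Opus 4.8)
The plan is to reduce the statement entirely to the previous lemma by exploiting the fact that a slice of a slice is again a slice. First I would observe that, by definition, $\mathcal C$ is locally cartesian closed precisely when each slice $\mathcal C / B$ is cartesian closed. Since $\mathcal C$ has finite limits, every slice $\mathcal C / B$ automatically has finite products: the terminal object is $1_B$ and the binary product of $g : C \to B$ and $f : A \to B$ is their pullback $C \times_B A$ over $B$. Hence $\mathcal C / B$ is cartesian closed if and only if every object $f : A \to B$ of $\mathcal C / B$ is exponentiable in $\mathcal C / B$.

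The crucial observation I would then use is the canonical isomorphism of categories $(\mathcal C / B)/f \cong \mathcal C / A$, valid for every $f : A \to B$, which sends an object $(h : D \to A,\ f h)$ of the iterated slice to $h : D \to A$. Under this identification I would check that the functor which the previous lemma (applied inside the category $\mathcal C / B$ to the object $f$) calls ``$f^*$'' — namely the right adjoint $C \mapsto C \times f$ to the forgetful functor $(\mathcal C / B)/f \to \mathcal C / B$, computed as a product in $\mathcal C / B$ — coincides with the base change functor $f^* : \mathcal C / B \to \mathcal C / A$. Indeed, the product of $g : C \to B$ with $f$ in $\mathcal C / B$ is the pullback $C \times_B A$, and viewing the projection $C \times_B A \to A$ as an object over $A$ is exactly the value of base change $f^*(g)$.

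With this dictionary in place, the corollary becomes a direct translation of the previous lemma. Applying that lemma in the category $\mathcal C / B$ to the object $f$, the object $f$ is exponentiable in $\mathcal C / B$ if and only if this functor $f^*$ has a right adjoint, which I would name $\Pi_f : \mathcal C / A \to \mathcal C / B$. Combining the three equivalences: $\mathcal C$ is locally cartesian closed iff every $\mathcal C / B$ is cartesian closed, iff every object $f : A \to B$ of every slice is exponentiable, iff every base change functor $f^*$ admits a right adjoint $\Pi_f$. This is the claimed statement.

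I expect the main obstacle to be the verification of the iterated-slice identification $(\mathcal C / B)/f \cong \mathcal C / A$ together with the compatibility that the product-with-$f$ functor in $\mathcal C / B$ transports to the genuine base change functor along $f$ in $\mathcal C$. This is a diagram chase with nested pullbacks, and while entirely routine, it is where all the content sits: once it is established, the previous lemma does the rest with no further computation. I would therefore present that compatibility carefully and then invoke the lemma formally.
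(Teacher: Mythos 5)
Your proof is correct and follows essentially the same route as the paper: the paper's proof also reduces the statement to the previous lemma via the isomorphism $(\mathcal C / B)/f \cong \mathcal C / A$, merely leaving the compatibility of product-with-$f$ in $\mathcal C/B$ with the base change functor as an easy check. Your more careful spelling-out of that identification (and of why each slice inherits the needed limits) is exactly the content the paper compresses into one line.
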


\begin{proof}
It is easy to check that we have an isomorphism $(\mathcal C / B) / f \cong \mathcal / A$, so that we have reduced the statement to the previous lemma.
\end{proof}

\begin{lem}
let $\mathcal C$ be a category and $F: \mathcal C \to \textbf{Set}$ a functor. Then there is a category $\mathcal F$ such that the slice category $(\textbf{Set}^{\mathcal C}) / F$ is equivalent to the functor category $\textbf{Set}^{\mathcal F}$. Moreover, if $\mathcal C$ is small [with small $\mathit{Hom}$-sets], then so is $\mathcal F$.
\end{lem}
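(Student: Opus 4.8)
The plan is to take $\mathcal F$ to be the \emph{category of elements} of $F$: its objects are pairs $(C,x)$ with $C\in\mathrm{Ob}(\mathcal C)$ and $x\in F(C)$, and a morphism $(C,x)\to(D,y)$ is an arrow $f\colon C\to D$ of $\mathcal C$ such that $F(f)(x)=y$, with composition and identities inherited from $\mathcal C$. The size claim is then immediate and I would dispose of it first: the objects of $\mathcal F$ form the coproduct $\coprod_{C\in\mathrm{Ob}(\mathcal C)}F(C)$, a set whenever $\mathrm{Ob}(\mathcal C)$ is a set and each $F(C)$ is a set; and each hom-collection $\mathcal F((C,x),(D,y))$ is a subset of $\mathcal C(C,D)$, hence small whenever $\mathcal C$ has small hom-sets. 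So smallness of $\mathcal C$ (in either sense bracketed in the statement) transfers to $\mathcal F$.

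The core is to exhibit quasi-inverse functors between $(\textbf{Set}^{\mathcal C})/F$ and $\textbf{Set}^{\mathcal F}$. In one direction, given an object of the slice, that is a functor $G\colon\mathcal C\to\textbf{Set}$ together with a natural transformation $\alpha\colon G\to F$, I would define $\Phi(\alpha)\colon\mathcal F\to\textbf{Set}$ on objects by taking fibres, $\Phi(\alpha)(C,x):=\alpha_C^{-1}(x)\subseteq G(C)$. For a morphism $f\colon(C,x)\to(D,y)$ the naturality square for $\alpha$ shows $\alpha_D(G(f)(g))=F(f)(\alpha_C(g))=F(f)(x)=y$ whenever $\alpha_C(g)=x$, so $G(f)$ restricts to a map $\alpha_C^{-1}(x)\to\alpha_D^{-1}(y)$, which I take to be $\Phi(\alpha)(f)$; functoriality is inherited from $G$. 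In the other direction, given $H\colon\mathcal F\to\textbf{Set}$ I would set $\Psi(H)(C):=\coprod_{x\in F(C)}H(C,x)$, with the evident structure map $\beta_C\colon\Psi(H)(C)\to F(C)$ projecting the summand indexed by $x$ to $x$; the action of $\Psi(H)$ on $f\colon C\to D$ sends the copy $H(C,x)$ to the copy $H(D,F(f)(x))$ via $H(f\colon(C,x)\to(D,F(f)(x)))$, and $\beta$ is natural by construction. Both $\Phi$ and $\Psi$ must also be defined on morphisms: a map of slice objects is a natural transformation commuting with the maps to $F$, and it restricts fibrewise to a natural transformation of the associated $\mathcal F$-functors, and symmetrically.

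Finally I would check that $\Phi$ and $\Psi$ are mutually quasi-inverse. For $\Phi\Psi(H)$ one computes $\Phi(\Psi(H))(C,x)=\beta_C^{-1}(x)=H(C,x)$ essentially on the nose, giving a natural isomorphism $\Phi\Psi\cong\mathrm{id}$. For $\Psi\Phi(\alpha)$ one gets $\coprod_{x\in F(C)}\alpha_C^{-1}(x)\cong G(C)$, the canonical decomposition of a set into the fibres of the map $\alpha_C$, and this isomorphism is natural in $C$ and compatible with the projections to $F$, so it is an isomorphism in the slice.

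The main obstacle I anticipate is purely the bookkeeping of \emph{naturality}: at every stage one must verify compatibility with the maps to $F$ (for morphisms in the slice) and with the structure maps of $\mathcal F$ (for the functor side), and that the two comparison isomorphisms are themselves natural in the relevant variable. None of these computations is deep, but they are numerous, so I would organise the verification around the single observation that a set equipped with a map to $F(C)$ is the same data as an $F(C)$-indexed family of fibres, applied coherently across all $C$ and all arrows of $\mathcal C$. With that dictionary in place the equivalence is forced, and the result follows.
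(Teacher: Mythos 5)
Your proposal is correct and takes essentially the same approach as the paper: you construct $\mathcal F$ as the category of elements of $F$ (the paper presents its objects as the disjoint union $\coprod_{A}F(A)$ rather than as pairs, which is the same thing), define $\Phi$ by taking fibres $\alpha_C^{-1}(x)$ and $\Psi$ by disjoint unions $\coprod_{x\in F(C)}H(C,x)$, and check they are quasi-inverse. Your write-up actually spells out the naturality and quasi-inverse verifications that the paper dismisses as ``easy to check,'' so it is, if anything, more complete.
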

\begin{proof}
We define $\mathcal F$ as a category structured over $\mathcal C$ its objects are the elements of the disjoint union of sets $\coprod_{A \in Ob(\mathcal C)} F(A)$, the underlying object of $x$ being the unique $A$ such that $x \in F(A)$. Sources and targets are defined by $f:x \to y$ iff $F(f)(x) = y$. The fact that $\mathcal F$ is small (or has small $\mathit{Hom}$-sets) if $\mathcal C$ is, follows automatically from the definition.\\
Given an object $\alpha : G \to F$ of $\textbf{Set}^{\mathcal C} / F$, we define a functor $\Phi(\alpha) : \mathcal F \to \textbf{Set}$ by:
$$\Phi(\alpha)(x) := \{ y \in G(A) \, | \, \alpha_A(y) = x \} $$
if $x \in F(A)$, with $\Phi(\alpha)(f)(y) = G(f)(y)$ whenever this makes sense. It is clear that a morphism $\gamma : \alpha \to \beta$ in $\textbf{Set}^{\mathcal C} / F$ induces a natural transformation $\Phi(\gamma) : \Phi(\alpha) \to \Phi(\beta)$, so that $\Phi$ becomes a functor $\textbf{Set}^{\mathcal C} / F \to \textbf{Set}^{\mathcal F}$.\\
Conversely, given a functor $H: \mathcal F \to \textbf{Set}$, let $\Psi(H)$ denote the functor $\mathcal C \to \textbf{Set}$ defined by:
$$\Psi(H)(A) := \coprod_{x \in F(A)} H(x)$$
with $\Psi(H)(f)$, for $f:A \to B$ in $\mathcal C$, defined as the union of the functions $H(f:x \to F(f)(x))$ over all $x \in F(A)$. $\Psi(H)$ comes equipped with an obvious natural transformation to $F$, and $\Psi$ itself is a functor $\textbf{Set}^{\mathcal F} \to \textbf{Set}^{\mathcal C} / F$. It is easy to check that $\Psi$ and $\Psi$ are actually quasi-inverses.
\end{proof}

\begin{thm}
For every small category $\mathcal C$, the functor category $\textbf{Set}^{\mathcal C}$ is locally cartesian closed.
\end{thm}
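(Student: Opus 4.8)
The plan is to deduce this theorem as an immediate corollary of the two preceding lemmas, using the corollary that characterizes local cartesian closedness via right adjoints to base change functors. The key observation is that the slice of a functor category over an arbitrary object is again a functor category, so the property of ``being locally cartesian closed'' will follow once we check cartesian closedness of $\textbf{Set}^{\mathcal C}$ together with a stability statement.

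First I would recall the characterization just proved: by the corollary, a category with finite limits is locally cartesian closed iff for every morphism $f : A \to B$ the base change functor $f^* : \mathcal D/B \to \mathcal D/A$ has a right adjoint. Since $\textbf{Set}^{\mathcal C}$ is a presheaf category (or copresheaf category, here covariant functors), it is complete and cocomplete with limits and colimits computed pointwise, so it certainly has finite limits. Thus the theorem reduces to producing right adjoints to all base change functors. The cleanest route, however, is to reduce directly to the cartesian closed case: it suffices to show every slice $\textbf{Set}^{\mathcal C}/F$ is cartesian closed.

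The heart of the argument is the lemma immediately above, which gives for each functor $F : \mathcal C \to \textbf{Set}$ a category $\mathcal F$ together with an equivalence $\textbf{Set}^{\mathcal C}/F \simeq \textbf{Set}^{\mathcal F}$, with $\mathcal F$ small whenever $\mathcal C$ is small. So the plan is: fix an object of $\textbf{Set}^{\mathcal C}$, i.e.\ a functor $F$; apply the lemma to obtain the small category $\mathcal F$ and the equivalence $\textbf{Set}^{\mathcal C}/F \simeq \textbf{Set}^{\mathcal F}$; then invoke the standard fact that any functor category $\textbf{Set}^{\mathcal F}$ with $\mathcal F$ small is cartesian closed (the exponential $G^H$ being given by the usual end/Yoneda formula $G^H(x) = \textbf{Set}^{\mathcal F}(\mathcal F(x,-) \times H, G)$, and cartesian closedness being preserved under equivalence of categories). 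Since every slice $\textbf{Set}^{\mathcal C}/F$ is thereby cartesian closed, $\textbf{Set}^{\mathcal C}$ is locally cartesian closed by definition.

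I expect the main obstacle to be purely bookkeeping rather than conceptual: one must make sure the equivalence of the previous lemma is applied to the right object (an arbitrary slice $\mathcal C/A$ versus the whole category $\mathcal C$) and that cartesian closedness genuinely transfers along the equivalence of categories, which it does since it is a property stated entirely in terms of products and adjunctions that are preserved by equivalences. A subtle point worth checking is smallness: the lemma guarantees $\mathcal F$ is small (or locally small) exactly when $\mathcal C$ is, which is precisely the hypothesis of the theorem, so the standard cartesian-closedness-of-presheaves result applies without set-theoretic difficulty. Beyond that, the proof is essentially a one-line citation of the two lemmas, and no further calculation is required.
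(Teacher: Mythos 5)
Your proposal is correct and follows essentially the same route as the paper: reduce via the preceding lemma (slices of $\textbf{Set}^{\mathcal C}$ are equivalent to presheaf categories on small categories) to cartesian closedness, given by the Yoneda-forced exponential $G^F(A) = \textbf{Set}^{\mathcal C}(\mathit{Hom}_{\mathcal C}(A,-) \times F, G)$. The only difference is that the paper writes out this exponential construction in full (action on morphisms, evaluation map, transpose) where you cite it as a standard fact, which is a matter of exposition rather than of mathematical substance.
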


\begin{proof}
By the previous lemma every slice is equivalent to another category of presheaves, then it suffices to show that $\textbf{Set}^{\mathcal C}$ is cartesian closed. The idea of the proof is a common argument: if exponentials exists, their definition is forced by the Yoneda lemma. Indeed, if $G^F$ exists, then elements of $G^F(A)$ must correspond bijectively to morphisms $\mathit{Hom}_{\mathcal C}(A, -) \to G^F$, and hence to morphisms $\mathit{Hom}_{\mathcal C}(A, -) \times F \to G$. The functor category $\textbf{Set}^{\mathcal C }$ has small $\mathit{Hom}$-sets, so we take the set of all such morphisms as the definition of $G^F(A)$ on objects. Given a morphism $f:A \to B$ in $\mathcal C$, $G^F(f)$ is defined as the precomposition with $\mathit{Hom}_{\mathcal C}(f,-) \times 1 : \mathit{Hom}_{\mathcal C}(B, -) \times F \to \mathit{Hom}_{\mathcal C}(A,-) \times F$.\\
Next we define the evaluation map $ev: G^F \times F \to G$ by $ev_A(\phi, x) := \phi_A(1_A, x)$ where $\phi : \mathit{Hom}_{\mathcal C}(A,-) \times F \to G$ and $x \in F(A)$. Finally, given a morphism $\theta :H \times F \to G$ we define its exponential transpose as $\bar \theta : H \to G^F$ by 
$$(\bar \theta_A(z) )_B (f,x) := \theta_B (H(f)(z) , x)$$
for $z \in H(A)$, $f:A \to B$ and $x \in F(B)$.
\end{proof}

\printindex

\end{document}